\newtheorem{thm}{Theorem}[section]
\newtheorem{cor}[thm]{Corollary}
\newtheorem{prop}[thm]{Proposition}
\newtheorem{lem}[thm]{Lemma}
\newtheorem{quest}[thm]{Question}
\theoremstyle{definition}
\newtheorem{defn}[thm]{Definition}
\newtheorem{fact}[thm]{Fact}
\theoremstyle{remark}
\newtheorem{rem}[thm]{Remark}
\let\c@equation\c@thm
\numberwithin{equation}{section}
\def\Ind{\setbox0=\hbox{$x$}\kern\wd0\hbox to 0pt{\hss$\mid$\hss} \lower.9\ht0\hbox to 0pt{\hss$\smile$\hss}\kern\wd0} 
\def\Notind{\setbox0=\hbox{$x$}\kern\wd0\hbox to 0pt{\mathchardef \nn=12854\hss$\nn$\kern1.4\wd0\hss}\hbox to 0pt{\hss$\mid$\hss}\lower.9\ht0 \hbox to 0pt{\hss$\smile$\hss}\kern\wd0} 
\def\ind{\mathop{\mathpalette\Ind{}}} 
\def\nind{\mathop{\mathpalette\Notind{}}} 
\title{On Kim-Independence}
\author{Itay Kaplan and Nicholas Ramsey}
\date{\today}
\thanks{The first author would like to thank the Israel Science Foundation
for partial support of this research (Grant no. 1533/14). }
\begin{document}

\begin{abstract}
We study NSOP$_{1}$ theories.  We define \emph{Kim-independence}, which generalizes non-forking independence in simple theories and corresponds to non-forking at a generic scale.  We show that Kim-independence satisfies a version of Kim's lemma, local character, symmetry, and an independence theorem and that, moreover, these properties individually characterize NSOP$_{1}$ theories.  We describe Kim-independence in several concrete theories and observe that it corresponds to previously studied notions of independence in Frobenius fields and vector spaces with a generic bilinear form.  
\end{abstract}

\maketitle

\setcounter{tocdepth}{1}
\tableofcontents

\section{Introduction}

The class of simple theories was one of the first classes of unstable theories to receive extensive study.  The starting point is \emph{Classification Theory}, where, in the course of studying stable theories, Shelah isolates \emph{local character} as a key property of non-forking independence and observes a dichotomy in the way local character can fail, a theorem we now recognize as saying that a non-simple theory must have the tree property of the first or second kind \cite[Theorem III.7.11]{shelah1990classification}.  Shortly after the publication of the first edition of \cite{shelah1990classification}, Shelah defined the class of simple theories and characterized them in terms of a certain chain condition of the Boolean algebra of non-weakly dividing formulas, which in turn led to consistency results on their saturation spectra \cite{shelah1980simple}.  The aim of that work was to obtain an `outside' set-theoretic definition of the class to support the claim that simplicity marked a dividing line.  In separate developments, questions concerning concrete examples created the need for new methods to treat unstable structures.  Hrushovski and Pillay used local stability and $S_{1}$-rank in the study of the definability of groups in pseudo-finite and PAC fields in \cite{hrushovski1994groups}, and these methods were situated in the broader context of PAC structures studied by Hrushovski \cite{hrushovski1991pseudo}, where an independence theorem was proved.  Moreover, Lachlan's far-reaching theory of smoothly approximated structures furnished examples of tame unstable theories.  After Kantor, Liebeck, and Macpherson \cite{KLM} classified the primitive smoothly approximable structures, Cherlin and Hrushovski \cite{cherlin2003finite} used stability theoretic methods concerning independence and amalgamation to describe how these primitive pieces fit together to form a quasi-finite structure.  

Kim's thesis and subsequent work by Kim and Pillay showed how to regard these developments as instances of a common theory, with non-forking independence at its center \cite{kim1998forking}, \cite{kim1997simple}.  Kim proved that in a simple theory, forking and dividing coincide, non-forking independence is symmetric and transitive, and Kim and Pillay proved that the independence theorem holds over models.  Moreover, Kim showed that symmetry and transitivity of non-forking both individually \emph{characterize} the simple theories, and Kim and Pillay showed that any independence relation satisfying the basic properties of non-forking independence must actually coincide with non-forking independence, giving both a striking characterization of the simple theories and a powerful method for showing that a particular theory is simple, namely by observing that it has an independence relation of the right kind.  

Here, we study the class of NSOP$_{1}$ theories.  These are the theories which do not have the property SOP$_{1}$, which form a class of theories that properly contain the simple theories and which are contained inside the class of theories without the tree property of the first kind.  SOP$_{1}$ was defined by D\v{z}amonja and Shelah in their study of the $\unlhd^{*}$-order \cite{dvzamonja2004maximality} and later studied by Shelah and Usvyatsov in \cite{shelah2008more}.  The NSOP$_{1}$ theories were characterized as the theories satisfying a weak independence theorem for invariant types by Chernikov and the second-named author in \cite{ArtemNick}.  This characterization provided a point of contact between the combinatorics of  model-theoretic tree properties and the study of definability in particular algebraic examples.   Chatzidakis \cite{ZoePAC2}, \cite{chatzidakis2002properties} studied independence in $\omega$-free PAC fields and, more generally, Frobenius fields and showed that the independence theorem holds for these structures even though they are not simple.  Similarly, Granger showed in his thesis that the model companion of the theory of infinite-dimensional vector spaces with a bilinear form is not simple but nonetheless comes equipped with a good notion of independence.  The amalgamation criterion of \cite{ArtemNick} established that these structures have NSOP$_{1}$ theory by appealing to the existence of these independence relations, but what was missing was a theory of independence in NSOP$_{1}$ theories more generally.  The purpose of this paper is to establish exactly such a theory.  

One central tool in the study of forking in simple theories is Kim's lemma:  in a simple theory, a formula divides over a set $A$ if and only if it divides with respect to some Morley sequence over $A$ if and only if it divides for all Morley sequences over $A$.  In \cite{chernikov2012forking}, this was shown to hold over models in NTP$_{2}$ theories, provided that the Morley sequence is a strict invariant Morley sequence.  In the setting of NSOP$_{1}$ theories, we find a new phenomenon:  forking which is \emph{never} witnessed by a generic sequence.  In fact, we show that any NSOP$_{1}$ theory with a universal witness to dividing must be simple (Proposition \ref{nomorley} below) and that forking need not equal dividing in an NSOP$_{1}$ theory.  Nonetheless, we find that, by restricting attention to the forking that is witnessed by a generic sequence, one can recover many of the properties of forking in simple theories.  We show moreover that this kind of simplicity at a generic scale is characteristic of NSOP$_{1}$ theories.

There is considerable freedom in the choice of notion of generic sequence.  One suggestion which inspired our work is due to Kim, who proposed in his 2009 talk on NTP$_{1}$ theories \cite{KimNTP1} that one might develop an independence theory for NTP$_{1}$ theories or a subclass therein by considering only formulas which divide with respect to every non-forking Morley sequence.  Compared to invariance or finite satisfiability, forking is a relatively weak notion of independence and this notion proved unwieldy at the beginning stages of developing the theory presented here.  However, Hrushovski's study of $q$-dividing \cite{hrushovski2012stable} and Malliaris and Shelah's characterization of NTP$_{1}$ theories in terms of higher formulas \cite{malliaris2015model} provided evidence that one might be able to build a theory around an investigation of formulas that divide with respect to a Morley sequence in a global invariant or finitely satisfiable type.  Building off this work, we introduce the notion of Kim-dividing -- a formula \emph{Kim-divides} over a set $A$ if it divides with respect to a Morley sequence in a global $A$-invariant type -- and the associated notion of independence, \emph{Kim-independence}.  Our first observation is that a theory is NSOP$_{1}$ if and only if Kim-dividing satisfies a version of Kim's lemma over models, where a formula divides with respect to a Morley sequence in \emph{some} global invariant type extending the type of the parameters if and only if it divides with respect to \emph{every} Morley sequence in an appropriate invariant type.  

From Kim's lemma for Kim-dividing, many familiar properties of non-forking independence follow:  Kim-forking equals Kim-dividing, Kim-independence satisfies extension and a version of the chain condition, etc.  In subsequent sections, we investigate additional properties of Kim-independence in NSOP$_{1}$ theories and prove that, in many cases, these properties are characteristic of NSOP$_{1}$.  In Section \ref{localcharsection} we observe a form of local character for Kim-independence in the context of NSOP$_{1}$ theories.  In Section \ref{symmetrysection}, we show additionally that Kim-independence is symmetric over models.  The argument there centers upon the notion of a \emph{tree Morley sequence} which is defined in terms of indiscernible trees.  We show that tree Morley sequences always witness Kim-dividing and prove a version of the chain condition for them.  In Section \ref{itsection}, we prove the independence theorem.  In Section \ref{forkingsection}, we prove that in an NSOP$_{1}$ theory a formula Kim-divides over a model if and only if it divides with respect to every non-forking Morley sequence in the parameters and this too characterizes NSOP$_{1}$ theories.  This means that Kim-independence could have been defined from the outset in essentially the way Kim proposed, but curiously, proving anything about this notion without making use of invariant types seems quite difficult.  In Section \ref{mtsection}, we state our main theorem:  Kim's lemma for Kim-dividing, symmetry over models, and the independence theorem both hold in NSOP$_{1}$ theories and individually characterize NSOP$_{1}$ theories.  We also show that the simple theories can be characterized in several new ways in terms of Kim-independence.  In particular, we show that Kim-independence coincides with non-forking over models if and only if the theory is simple, which means that our theorems imply the corresponding facts for non-forking independence in a simple theory.  

We conclude the paper with Section \ref{examplesection} where we describe Kim-independence explicitly in several concrete examples.  We show it may be described in purely algebraic terms in the case of Frobenius fields, where Kim-independence turns out to coincide with \emph{weak independence}, as defined by Chatzidakis.  We also show that in Granger's two-sorted theory of a vector space over an algebraically closed field with a generic bilinear form, Kim-independence is closely related to Granger's $\Gamma$-independence and may be given a simple algebraic description.  These results suggest the naturality and robustness of Kim-dividing, but also serve to explain the simplicity-like phenomena observed in these concrete examples on the basis of a general theory.  We additionally describe a combinatorial example of a NSOP$_{1}$ theory, based on a variant of $T^{*}_{feq}$ introduced by D\v{z}amonja and Shelah, which furnishes counter-examples to some \emph{a priori} possible strengthenings of the results we prove.  In particular, we give the first example of a simple non-cosimple type, answering a question of Chernikov \cite{ChernikovNTP2}, and the first example of an NSOP$_{3}$ theory in which every complete type has a global non-forking extension but forking does not equal dividing, answering a question of Conant \cite{conant2014forking}.    

\section{Syntax}

In this section we will define SOP$_{1}$ and prove its equivalence with a syntactic property of a different form.  This will allow us to relate SOP$_{1}$ to dividing.  We will often work with arrays and trees.  Suppose \((c_{ij})_{i < \kappa, j < \lambda}\) is an array.  Write \(\overline{c}_{i} = (c_{i,j})_{j < \lambda}\) for the $i$th row of the array and \(\overline{c}_{<i}\) for the sequence of rows with index less than $i$, i.e. \((\overline{c}_{k})_{k < i}\).  Suppose $\mathcal{T}$ is a tree, $(a_{\eta})_{\eta \in \mathcal{T}}$ is a collection of tuples indexed by $\mathcal{T}$.  We write $\unlhd$ for the tree partial order and $<_{lex}$ for the lexicographic order on $\mathcal{T}$.  For a node $\eta \in \mathcal{T}$, write $a_{\unlhd \eta}$ for the sequence $\langle a_{\nu} :\nu \unlhd \eta \rangle$, and likewise $a_{\vartriangleleft \eta}$ for $\langle a_{\nu} :\nu \vartriangleleft \eta \rangle$.  We use the notation $a_{\unrhd \eta}$ and $a_{\vartriangleright \eta}$ similarly.  If the tree $\mathcal{T}$ is contained in $2^{<\kappa}$ or $\omega^{<\kappa}$, we write $0^{\alpha}$ to denote the element of the tree of length $\alpha$ consisting of all zeros.  Throughout the paper, $T$ denotes a complete theory and $\mathbb{M} \models T$ is a monster model of $T$.  

\begin{defn} \cite[Definition 2.2]{dvzamonja2004maximality} \label{sop1def}
The formula $\varphi(x;y)$ has SOP$_{1}$ if there is a collection of tuples $(a_{\eta})_{\eta \in 2^{<\omega}}$ so that 
\begin{itemize}
\item For all $\eta \in 2^{\omega}$, $\{\varphi(x;a_{\eta | \alpha}) : \alpha < \omega\}$ is consistent.
\item For all $\eta \in 2^{<\omega}$, if $\nu \unrhd \eta \frown \langle 0 \rangle$, then $\{\varphi(x;a_{\nu}), \varphi(x;a_{\eta \frown 1})\}$ is inconsistent.
\end{itemize}
We say $T$ is SOP$_{1}$ if some formula has SOP$_{1}$ modulo $T$.  $T$ is NSOP$_{1}$ otherwise.  
\end{defn}

The following lemma is close to \cite[Lemma 5.2]{ArtemNick}, but with a key strengthening which will allow us to relax the $2$-inconstency in the definition of SOP$_{1}$ to a version with $k$-inconsistency.   

\begin{lem}\label{fancy}
Suppose \((c_{i,j})_{i < \omega, j < 2}\) is an array where $c_{i,j} = (d_{ij},e_{ij})$ for all $i,j$ and \(\chi_{1}(x;y)\) and $\chi_{2}(x;z)$ are formulas over \(C\).  Write $\psi(x;y,z)$ for $\chi_{1}(x;y) \wedge \chi_{2}(x;z)$ and suppose 
\begin{enumerate}
\item For all \(i < \omega\), \(e_{i,0} \equiv_{C c_{<i,0}e_{<i,1}} e_{i,1}\).
\item \(\{\psi(x;c_{i,0}) : i < \omega\}\) is consistent.
\item \(j \leq i \implies \{\chi_{1}(x;d_{i,0}), \chi_{2}(x;e_{j,1})\}\) is inconsistent.
\end{enumerate}
then \(T\) has SOP\(_{1}\).  
\end{lem}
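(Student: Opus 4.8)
The plan is to construct a tree witnessing SOP$_1$ in the sense of Definition \ref{sop1def} out of the given array, using condition (1) to manufacture the necessary indiscernibility and condition (3) to get the inconsistency along the tree branches. The basic strategy mirrors \cite[Lemma 5.2]{ArtemNick}: the key point of condition (1) is that the sequence $(e_{i,1})_{i<\omega}$ is a coheir-type sequence over the appropriate base, so we may extract from $(c_{i,0}c_{i,1})_{i<\omega}$ a mutually indiscernible pair of sequences, or more precisely an indiscernible sequence of pairs, after passing to a Morley sequence; then a standard ``tree from an array'' argument will produce the SOP$_1$ tree.

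First I would use compactness and Ramsey to replace the array by one in which the sequence of rows $(c_{i,0},c_{i,1})_{i<\omega}$ is $C$-indiscernible, while preserving (1)--(3); note that (1) is preserved because it is a condition on the EM-type-like data that survives extraction, and (2), (3) are clearly preserved since they are about consistency of fixed formula-sets. Next, the heart of the matter: using (1) repeatedly, I would build a tree $(a_\eta)_{\eta\in 2^{<\omega}}$ by setting, for $\eta$ of length $n$, $a_\eta$ to be a copy of $c_{n,0}$ if $\eta$ ends in $0$ (or is the appropriate left-spine node) and a copy of $d_{n,0}$-together-with-$e_{n,1}$-data when $\eta$ ends in $1$ — the slogan being that going ``right'' at level $i$ uses the $\chi_2$-part $e_{i,1}$, which by (1) looks like $e_{i,0}$ over everything below, so that a branch through $2^\omega$ sees a sequence realizing $\{\psi(x;c_{i,0}):i<\omega\}$ up to the points where it turned right, and hence is consistent by (2). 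The second bullet of SOP$_1$ — that $\{\varphi(x;a_\nu),\varphi(x;a_{\eta\frown 1})\}$ is inconsistent for $\nu\unrhd\eta\frown 0$ — comes directly from (3): $a_{\eta\frown 1}$ carries $e_{j,1}$ for $j=\mathrm{lh}(\eta)$, and any $\nu$ extending $\eta\frown 0$ sits at level $i>j$ and carries $d_{i,0}$, and (3) says exactly $\{\chi_1(x;d_{i,0}),\chi_2(x;e_{j,1})\}$ is inconsistent. One must take $\varphi(x;a_\eta)$ to be $\psi$ evaluated at the appropriate reassembled parameter, and check the indexing of which half of each $a_\eta$ is ``active''.

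The main obstacle is the bookkeeping that makes the tree construction actually go through: one needs the $e$'s appearing on incomparable branches to be \emph{simultaneously} arrangeable — i.e. one must produce a single array (or tree of parameters) in which every branch is a legitimate copy of the original column $(c_{i,0})_i$ and the cross-branch inconsistency (3) holds globally, not just along one branch. This is precisely where condition (1) does the work: it is a coheir/invariance-type statement that lets one repeatedly ``reflect'' $e_{i,1}$ down to $e_{i,0}$, and the right way to organize this is to build the tree by induction on levels, at each level using (1) to find the needed copies via an automorphism fixing $C$ and everything below the current level. I would set this up carefully as a finite induction (building $(a_\eta)_{\eta\in 2^{\le n}}$ for each $n$ and taking a limit by compactness), and the strengthening over \cite[Lemma 5.2]{ArtemNick} alluded to in the text is that this works with the weaker hypothesis (1) stated only over $C c_{<i,0} e_{<i,1}$ rather than over more, which is exactly what is needed to later relax $2$-inconsistency to $k$-inconsistency in the SOP$_1$ definition.
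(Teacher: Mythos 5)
You have correctly identified all the raw ingredients the paper uses: Ramsey/compactness to make the rows indiscernible, the automorphisms furnished by hypothesis (1), hypothesis (3) as the source of the cross-branch inconsistency, $\psi=\chi_1\wedge\chi_2$ as the SOP$_1$ formula, and a finite-tree induction followed by compactness. What is missing is the actual organizing principle of the induction, which you flag as the ``main obstacle'' but then describe in a way that would not straightforwardly work. You describe a level-by-level assignment (``for $\eta$ of length $n$, $a_\eta$ is a copy of $c_{n,0}$ if $\eta$ ends in $0$ and $d_{n,0}$-together-with-$e_{n,1}$-data if it ends in $1$''), i.e.\ a bottom-up growth of the tree. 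But to verify the SOP$_1$ inconsistency condition at the moment one places the node $a_{\eta\frown\langle 1\rangle}$, one needs the \emph{entire} subtree above $a_{\eta\frown\langle 0\rangle}$ to already exist; growing the tree by levels puts $a_{\eta\frown\langle 1\rangle}$ there before that subtree does, so the bookkeeping you worry about genuinely does not close.

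The paper's proof resolves this by building the tree \emph{top-down}, adding a new root at each stage rather than new leaves. After extending the array to one indexed by $\mathbb{Z}$ (which your proposal omits and which the top-down growth requires), stage $0$ is the single node $c_{0,0}$, and at stage $n+1$ one sets the new root to $c_{-(n+1),0}$, the left subtree to be the stage-$n$ tree $S_n$, and the right subtree to be $\sigma(S_n)$ for one automorphism $\sigma$ sending $e_{-n,0}\mapsto e_{-n,1}$ and fixing $c_{<-n,0}e_{<-n,1}$, obtained from hypothesis (1). This is a single global reflection of the whole existing tree, not a per-node or per-level choice; the $d$-components inside the right subtree are moved by $\sigma$ as well (they are not literally $d_{i,0}$), which your per-level description elides. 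Hypothesis (3), together with the induction invariant that each branch has the same type over $c_{<-n,0}e_{<-n,1}$ as $(c_{i,0})_{-n\le i\le 0}$, then gives exactly the required inconsistency between $\psi(x;c'_{\langle 1\rangle})$ and $\psi$ at any node of the left subtree, and the consistency of branches follows from (2) and the same invariant. So your proposal is in the same spirit as the paper's proof, but the top-down root-adding construction is the missing idea, and without it the induction you sketch does not go through.
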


\begin{proof}
By adding constants, we may assume $C = \emptyset$.  By Ramsey and compactness, we may assume $(\overline{c}_{i})_{i < \omega}$ is a $C$-indiscernible sequence.  By compactness again, we may extend the array to an array whose rows are indexed by the integers $(\overline{c}_{i})_{i \in \mathbb{Z}}$.  We will construct, for each $n < \omega$, a tree $(c_{\eta})_{\eta \in 2^{\leq n}}$ so that 
\begin{enumerate}
\item If $\nu \in 2^{n}$, then  $\{\psi(x;c_{\nu | i}) : i \leq n\}$ is consistent.
\item If $\nu \in 2^{<n}$ and $\nu \frown \langle 0 \rangle \vartriangleleft \eta$ then $\{\psi(x;c_{\eta}), \psi(x;c_{\nu \frown \langle 1 \rangle})\}$ is inconsistent.  
\item If $\eta \in 2^{n}$, $(c_{\nu})_{\nu \unlhd \eta} \equiv_{c_{< -n,0}e_{< -n,1}} (c_{i,0})_{-n \leq i \leq 0}$.
\end{enumerate}
To define $(c_{\eta})_{\eta \in 2^{\leq 0}}$, we put $c_{\emptyset} = c_{0,0}$.  Now suppose we are given $S_{n} = (c_{\eta})_{\eta \in 2^{\leq n}}$ satisfying the requirements.  There is an automorphism $\sigma$ taking $e_{-n,0}$ to $e_{-n,1}$ fixing $c_{<-n,0}e_{<-n,1}$.  Define $S_{n+1} = (c'_{\eta})_{\eta \in 2^{\leq n+1}}$ by $c'_{\emptyset} = c_{-(n+1),0}$ and, for all $\eta \in 2^{\leq n}$, $c'_{\langle 0 \rangle \frown \eta} = c_{\eta}$, $c'_{\langle 1 \rangle \frown \eta} = \sigma(c_{\eta})$.  Clearly all branches have the same type over $c_{<-(n+1),0}e_{<-(n+1),1}$ as $(c_{i,0})_{-(n+1) \leq i \leq 0}$.  Write $c'_{\eta} = (d'_{\eta},e'_{\eta})$ for all $\eta \in 2^{\leq n+1}$.  Now note that in both $S_{n}$ and $\sigma(S_{n})$ conditions (1) and (2) are preserved and that $\psi(x;c'_{\langle 1 \rangle})$ is inconsistent with $\psi(x;c'_{\langle 0 \rangle \frown \eta})$ for any $\eta \in 2^{\leq n}$ since $\chi_{2}(x;e_{-n,1}) \wedge \chi_{1}(x;d_{\eta})$ is consistent if and only if $\chi_{2}(x;e_{-n,1}) \wedge \chi_{1}(x;d_{i,0})$ is consistent, for $i = l(\eta) -n$.  Likewise, instantiating $\psi(x;y)$ along any branch through this tree yields something consistent:  any branch in $S_{n}$ or $\sigma(S_{n})$ has the same type over $c_{-(n+1),0}$ as $(c_{i,0})_{-n \leq i \leq 0}$ and $\{\psi(x;c_{i,0}) : -(n+1) \leq i \leq 0\}$ is consistent.  We conclude by compactness.  
\end{proof}

\begin{lem}\label{karyversion}
Suppose $\varphi(x;y)$ is a formula, $k$ is a natural number, and $(\overline{c}_{i})_{i \in I}$ is an infinite sequence with $\overline{c}_{i} = (c_{i,0}, c_{i,1})$ satisfying:
\begin{enumerate}
\item For all $i \in I$, $c_{i,0} \equiv_{\overline{c}_{<i}} c_{i,1}$. 
\item $\{\varphi(x;c_{i,0}) : i \in I\}$ is consistent.
\item $\{\varphi(x;c_{i,1}) : i \in I\}$ is $k$-inconsistent.
\end{enumerate}
Then $T$ has SOP$_{1}$.  
\end{lem}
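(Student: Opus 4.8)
The plan is to reduce to Lemma \ref{fancy}. We begin with the usual normalizations: adding names for the relevant parameters we may work over $\emptyset$; by Ramsey and compactness we may assume $(\overline{c}_i)_{i\in I}$ is an indiscernible sequence, and then (again by compactness) that $I$ is any convenient linear order, say $\mathbb{Z}$; and shrinking $k$ if necessary we may assume $k$ is least such that $\{\varphi(x;c_{i,1}):i\in I\}$ is $k$-inconsistent, so that by indiscernibility every $(k-1)$-element subset of $\{\varphi(x;c_{i,1}):i\in I\}$ is consistent.

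Now set $\chi_1(x;y):=\varphi(x;y)$, $\chi_2(x;z_1,\dots,z_{k-1}):=\bigwedge_{l=1}^{k-1}\varphi(x;z_l)$, and $\psi:=\chi_1\wedge\chi_2$; the goal is to manufacture an array $(b_{i,j})_{i<\omega,j<2}$ with $b_{i,j}=(d_{i,j},e_{i,j})$ satisfying the hypotheses of Lemma \ref{fancy}. Fix indices $\gamma^{(1)}<\dots<\gamma^{(k-2)}$ at the very bottom of $I$, and then pick indices $\beta_i<\alpha_i$ $(i<\omega)$ so that all the $\gamma^{(l)}$ lie below everything and $\beta_0<\alpha_0<\beta_1<\alpha_1<\cdots$. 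Put $d_{i,j}:=c_{\alpha_i,j}$ and $e_{i,j}:=(c_{\beta_i,j},c_{\gamma^{(1)},1},\dots,c_{\gamma^{(k-2)},1})$. The design principle is that $e_{i,0}$ and $e_{i,1}$ differ in only one coordinate, and that the only element ever moved from the $0$-column to the $1$-column is a single element ($c_{\beta_i,0}$, resp.\ $c_{\alpha_i,0}$); this is exactly what makes hypothesis (1) of the present lemma — which supplies an automorphism moving $c_{i,0}$ to $c_{i,1}$ over only the initial segment $\overline{c}_{<i}$ — strong enough, so that no ``block swap'' is needed. The role of the $(k-1)$-fold conjunction $\chi_2$ and of the fixed generic tuple $\overline{\gamma}$ is to turn the given $k$-inconsistency into the $2$-inconsistency that \ref{fancy} needs, and the interleaving $\beta_0<\alpha_0<\beta_1<\dots$ is there to absorb the ``diagonal'' case $j=i$ in condition (3).

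With this setup conditions (1) and (3) of Lemma \ref{fancy} are quick. For (1): all parameters in $c_{<i,0}e_{<i,1}$, together with the $c_{\gamma^{(l)},1}$, sit at indices $<\beta_i$, so the automorphism given by hypothesis (1) at the index $\beta_i$ fixes them and sends $c_{\beta_i,0}$ to $c_{\beta_i,1}$, whence $e_{i,0}\equiv_{c_{<i,0}e_{<i,1}}e_{i,1}$. For (3): if $j\le i$ then $c_{\beta_j,1}$ and all $c_{\gamma^{(l)},1}$ sit at indices $<\alpha_i$, so by hypothesis (1) at $\alpha_i$ the pair $\{\chi_1(x;d_{i,0}),\chi_2(x;e_{j,1})\}$ is consistent iff $\varphi(x;c_{\alpha_i,1})\wedge\varphi(x;c_{\beta_j,1})\wedge\bigwedge_l\varphi(x;c_{\gamma^{(l)},1})$ is; but the latter mentions $k$ pairwise distinct elements of the $1$-column and so is inconsistent by the $k$-inconsistency hypothesis.

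Condition (2) of Lemma \ref{fancy} is where I expect the main obstacle to be. It amounts to the consistency of $\{\varphi(x;c_{\alpha_i,0}),\varphi(x;c_{\beta_i,0}):i<\omega\}\cup\{\varphi(x;c_{\gamma^{(l)},1}):1\le l\le k-2\}$, i.e.\ of the (consistent, by hypothesis) $0$-column type together with $k-2$ generic instances from the $1$-column — a mixed set, which is not handled by the one-element-at-a-time swap used everywhere else. I would attack it by compactness: a single finite subconfiguration involves only finitely many of the $\alpha_i,\beta_i$, all at indices above those of the $\gamma^{(l)}$; the $1$-column part is consistent by minimality of $k$; and, because the $\gamma^{(l)}$ were placed below everything, hypothesis (1) should let one pass between the $0$- and $1$-column versions of the generic coordinates, one index at a time, while keeping a realization of the $0$-column instances. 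If this cannot be pushed through with the literal generics, the fallback is to choose $\overline{\gamma}$ instead as a tuple of the correct type over the low part of the sequence (so that the single-swap arguments for (1) and (3) still go through) for which the consistency in (2) holds by construction. Granting (2), Lemma \ref{fancy} applies and gives that $T$ has SOP$_1$.
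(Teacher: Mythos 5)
Your reduction to Lemma \ref{fancy} is set up sensibly, and your verifications of conditions (1) and (3) of that lemma are correct: the conditioning parameters all sit at indices below $\beta_i$ (resp.\ below $\alpha_i$), so hypothesis (1) of the present lemma supplies the needed automorphism, and in (3) you really do assemble $k$ distinct $1$-column instances. But the proof is not complete, and the hole is exactly where you suspect it is: condition (2) of Lemma \ref{fancy}, i.e.\ the consistency of $\{\varphi(x;c_{\alpha_i,0}),\varphi(x;c_{\beta_i,0}):i<\omega\}\cup\{\varphi(x;c_{\gamma^{(l)},1}):l\le k-2\}$, simply does not follow from hypotheses (1)--(3). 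The swapping mechanism you propose cannot close it: hypothesis (1) gives $c_{\gamma,0}\equiv_{\overline{c}_{<\gamma}}c_{\gamma,1}$ only over the part of the sequence \emph{below} $\gamma$, while the $0$-column instances you need to preserve sit at indices \emph{above} the $\gamma^{(l)}$; an automorphism witnessing that equivalence is free to move all of the $c_{\alpha_i,0},c_{\beta_i,0}$, so ``one index at a time'' swaps do not preserve a realization of the $0$-column part. (With $k$ minimal you do get that any mixed \emph{pair} is consistent, but already one low $1$-column instance against two high $0$-column instances is out of reach of this argument, and in general such mixed sets can be inconsistent.) Your fallback of replacing $\overline{\gamma}$ by a tuple of ``the correct type over the low part'' does not repair this, because your verification of (3) uses that the $\gamma^{(l)}$ are literally $k-2$ distinct $1$-column elements, so that $k$-inconsistency applies; a tuple with the right type only over a low segment no longer controls its joint behaviour with $c_{\beta_j,1},c_{\alpha_i,1}$, while demanding the right type over the whole sequence reintroduces exactly the unproved consistency.

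The missing idea is the threshold argument the paper uses. One considers the mixed partial types $\Gamma_{l,l'}(x)$, which interleave $0$-column instances (on dense blocks of indices) with $1$-column instances, and takes $n<k$ \emph{maximal} such that $\Gamma_{0,n}$ is consistent. Maximality does double duty: consistency of $\Gamma_{-n,0}$ together with the positive $0$-column tail yields condition (2), with the fixed ``extra'' parameters being a finite fragment $\Delta\subseteq\Gamma_{-n,0}$ rather than a block of raw $1$-column elements; and the inconsistency of $\Gamma_{-n,1}$, compressed by compactness into $\Delta\cup\{\varphi(x;c_{0,1})\}\cup\{\varphi(x;c_{(j+1)/N,0}):j<N-1\}$, is what delivers condition (3) — note that there the inconsistency comes from crossing the threshold, not directly from $k$-inconsistency of the $1$-column. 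So your scheme gets (3) cheaply at the price of an unjustified (2), whereas the paper's choice of auxiliary parameters is engineered so that (2) and (3) hold simultaneously; without some form of that maximality argument the reduction to Lemma \ref{fancy} does not go through.
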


\begin{proof}
By compactness and Ramsey, it suffices to prove this when $I = \mathbb{Q}$ -- so suppose $(c_{i,0},c_{i,1})_{i \in \mathbb{Q}}$ is an indiscernible sequence with $c_{i,0} \equiv_{\overline{c}_{<i}} c_{i,1}$, $\{\varphi(x;c_{i,0}) :i \in \mathbb{Q}\}$ is consistent, and $\{\varphi(x;c_{i,1}) : i \in \mathbb{Q}\}$ is $k$-inconsistent.  

For integers $l < l'$, define a partial type $\Gamma_{l,l'}(x)$ by 
$$
\{\varphi(x;c_{i,0}) : i \in (l+m, l+m+1) ,m \in \omega, m < l'-l\} \cup \{\varphi(x;c_{l+m,1}) : m < l' - l, m \in \omega\}.   
$$
Let $\Gamma_{l,l}(x) = \emptyset$.  Note that if $\Gamma_{l,l'}(x)$ is consistent then $\Gamma_{l+z,l'+z}(x)$ is consistent for any integer $z$ by indiscernibility of the sequence $(\overline{c}_{i})_{i \in \mathbb{Q}}$.  Let $n \in \omega$ be maximal so that $\Gamma_{0,n}(x)$ is consistent.  Note that $\Gamma_{0,0}(x)$ is consistent, as it is the empty partial type and we have 
$$
\Gamma_{0,k}(x) \vdash \{\varphi(x;c_{i,1}) : i \in \omega, i < k\},
$$
which is inconsistent, so $0 \leq n < k$.  So now we know $\Gamma_{-n,0}(x)$ is consistent and $\Gamma_{-n,1}(x) = \Gamma_{-n,0}(x) \cup \Gamma_{0,1}(x)$ is inconsistent.  By indiscernibility and compactness, we may fix some integer $N > 0$ so that 
$$
\Gamma_{-n,0}(x) \cup \{\varphi(x;c_{0,1})\} \cup \{\varphi(x;c_{\frac{j+1}{N}, 0}) : j \in \omega, j < N-1\}
$$
is inconsistent.  Now choose $\Delta(x) \subseteq \Gamma_{-n,0}(x)$ finite so that 
$$
\Delta(x) \cup \{\varphi(x;c_{0,1})\}  \cup \{\varphi(x;c_{\frac{j+1}{N}, 0}) : j \in \omega, j < N-1\}
$$
is inconsistent.  Let $z$ indicate the tuple of variables $(y_{0}, \ldots, y_{N-2})$ and let $\chi(x;z)$ be the formula $\chi(x;z) = \bigwedge_{i < N} \varphi(x;y_{i}) \wedge \bigwedge \Delta(x)$.  Let $(a_{i,j})_{i < \omega, j < 2}$ be defined as follows:
$$
a_{i,0} = (c_{i,0};d_{i,0}) = (c_{i,0}; c_{i+ \frac{1}{N},0}, \ldots, c_{i + \frac{N-1}{N}, 0}).
$$
Now choose $d_{i,1}$ so that $c_{i,0}d_{i,0} \equiv_{\overline{c}_{<i}} c_{i,1}d_{i,1}$ -- this is possible as $c_{i,0} \equiv_{\overline{c}_{<i}} c_{i,1}$.  Then we put $a_{i,1} = (c_{i,1}, d_{i,1})$.  Let $\psi(x;yz) = \varphi(x;y) \wedge \chi(x;z)$.  

To conclude, we have to establish the following:

\textbf{Claim}:  The array $(a_{i,j})_{i < \omega, j < 2}$ and the formulas $\varphi(x;y)$, $\chi(x;z)$ satisfy the following:
\begin{enumerate}
\item $a_{i,0} \equiv_{a_{<i,0}, c_{<i,1}} a_{i,1}$.
\item $\{\psi(x;a_{i,0}) : i < \omega\}$ is consistent.
\item If $l \leq l'$ then $\{\varphi(x;c_{l,1}) , \chi(x;d_{l',0})\}$ is inconsistent.
\end{enumerate}
\emph{Proof of claim}:  (1) follows from the fact that $a_{i,0} \equiv_{\overline{c}_{<i}} a_{i,1}$ and both $a_{<i,0}$ and $c_{<i,1}$ are enumerated in $\overline{c}_{<i}$.  Note that $\Gamma_{-n,0}(x)$ is consistent so, by indiscernibility, 
$$
\Gamma_{-n,0}(x) \cup \{\varphi(x;c_{i,0}) : i \in [0, \infty) \cap \mathbb{Q}\}
$$
is consistent, which establishes (2).  Finally, if $l \leq l'$, then $\{\varphi(x;c_{l,1}) , \chi(x;d_{l',0})\}$ implies 
$$
\{\varphi(x;c_{l,1})\} \cup \{\varphi(x;c_{l' + \frac{j+1}{N}}) : j \in \omega, j < N-1\} \cup \Delta(x).
$$  
By indiscernibility of $(\overline{c}_{i})_{i \in \mathbb{Q}}$ and the fact that $l \leq l'$, this set is consistent if and only if 
$$
\{\varphi(x;c_{0,1})\} \cup \{\varphi(x;c_{\frac{j+1}{N}, 0}) : j \in \omega, j < N-1\} \cup \Delta(x)
$$
is consistent.  As this latter set is inconsistent, this shows (3), which proves the claim.
The lemma now follows by Lemma \ref{fancy}.  
\end{proof}

Finally, we note that the criterion for SOP$_{1}$ from Lemma \ref{karyversion} is an equivalence.  This was implicit in \cite{ArtemNick}, at least in its $2$-inconsistent version, but we think that the property described by Lemma \ref{karyversion} is, in most cases, the more fruitful way of thinking about SOP$_{1}$ and therefore worth making explicit.  

\begin{prop}\label{arrayequivalent}
The following are equivalent, for a complete theory $T$:
\begin{enumerate}
\item $T$ has SOP$_{1}$.
\item There is a formula $\varphi$ and an array $(c_{i,j})_{i < \omega, j < 2}$ so that: 
\begin{enumerate}
\item $c_{i,0} \equiv_{\overline{c}_{<i}} c_{i,1}$ for all $i < \omega$.
\item $\{\varphi(x;c_{i,0}) : i < \omega\}$ is consistent.
\item $\{\varphi(x;c_{i,1}) :i < \omega\}$ is $2$-inconsistent.
\end{enumerate}
\item There is a formula $\varphi$ and an array $(c_{i,j})_{i < \omega, j < 2}$ so that: 
\begin{enumerate}
\item $c_{i,0} \equiv_{\overline{c}_{<i}} c_{i,1}$ for all $i < \omega$.
\item $\{\varphi(x;c_{i,0}) : i < \omega\}$ is consistent.
\item $\{\varphi(x;c_{i,1}) :i < \omega\}$ is $k$-inconsistent for some $k$.
\end{enumerate}
\end{enumerate}
\end{prop}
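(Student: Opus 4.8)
The plan is to run the cycle $(1)\Rightarrow(2)\Rightarrow(3)\Rightarrow(1)$. Two of these are cheap: $(2)\Rightarrow(3)$ holds because a $2$-inconsistent set of formulas is $k$-inconsistent with $k=2$, and $(3)\Rightarrow(1)$ is nothing but Lemma \ref{karyversion} applied (with $I=\omega$) to the sequence $(\overline{c}_i)_{i<\omega}$ provided by (3) --- its clauses (a), (b), (c) are exactly the hypotheses (1), (2), (3) of that lemma, so that lemma yields that $T$ has SOP$_1$. The real content is therefore $(1)\Rightarrow(2)$, which is implicit in \cite{ArtemNick}; here is how I would argue it.

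Suppose $\varphi(x;y)$ has SOP$_1$, witnessed by a tree $(a_\eta)_{\eta\in 2^{<\omega}}$ as in Definition \ref{sop1def}. I would read the array off the leftmost branch together with the ``$1$-children'' hanging from it: put $c_{i,0}=a_{0^{i+1}}$ and $c_{i,1}=a_{0^i\frown\langle 1\rangle}$ for $i<\omega$. Then (b) is immediate, since $\{\varphi(x;c_{i,0}):i<\omega\}\subseteq\{\varphi(x;a_{0^{\alpha}}):\alpha<\omega\}$, and the latter is consistent because $0^{\omega}\in 2^{\omega}$. For (c), given $i<j$ we have $0^j\frown\langle 1\rangle\unrhd 0^{i+1}=0^i\frown\langle 0\rangle$, so the inconsistency clause of Definition \ref{sop1def} with $\eta=0^i$ and $\nu=0^j\frown\langle 1\rangle$ makes $\{\varphi(x;c_{j,1}),\varphi(x;c_{i,1})\}$ inconsistent; hence $\{\varphi(x;c_{i,1}):i<\omega\}$ is $2$-inconsistent.

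What is not free is (a), namely $a_{0^{i+1}}\equiv_{(a_{0^{m+1}},\,a_{0^m\frown\langle 1\rangle})_{m<i}}a_{0^i\frown\langle 1\rangle}$. The idea is to first replace the witnessing tree by an $s$-indiscernible one --- indiscernible for quantifier-free types in the language $(\unlhd,\wedge,<_{lex})$ on the index set $2^{<\omega}$, where $\wedge$ is the meet --- that still witnesses SOP$_1$ for $\varphi$, and then to check that the index map sending $0^{i+1}$ to $0^i\frown\langle 1\rangle$ and fixing $\{0^{m+1}:m<i\}\cup\{0^m\frown\langle 1\rangle:m<i\}$ pointwise is a partial isomorphism of $(2^{<\omega};\unlhd,\wedge,<_{lex})$. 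The point of this routine computation is that $0^i\frown\langle 1\rangle$ is itself not among the fixed nodes, so that although ``swapping the two cones at $0^i$'' reverses the lexicographic order of $0^i\frown\langle 0\rangle$ and $0^i\frown\langle 1\rangle$, no lexicographic conflict arises on the finitely many nodes actually involved; by $s$-indiscernibility the partial isomorphism lifts to an automorphism of $\mathbb{M}$, which gives (a). The main obstacle is thus the preliminary homogenization: arranging a witness tree that is simultaneously $s$-indiscernible and still satisfies both clauses of Definition \ref{sop1def}. For the consistency clause this is transparent, since initial segments of branches are $\unlhd$-chains and every $\unlhd$-chain in the original tree lies along a single branch and is therefore $\varphi$-consistent; the inconsistency clause is the delicate part, and for it I would invoke \cite{ArtemNick} (or the general theory of indiscernible trees) rather than reprove it here.
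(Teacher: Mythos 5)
Your cycle $(1)\Rightarrow(2)\Rightarrow(3)\Rightarrow(1)$ is exactly the paper's decomposition: $(2)\Rightarrow(3)$ is trivial, $(3)\Rightarrow(1)$ is Lemma~\ref{karyversion}, and for $(1)\Rightarrow(2)$ the paper simply cites the proof of Proposition~5.6 of \cite{ArtemNick}, which is precisely the homogenize-to-an-$s$-indiscernible-tree-then-read-off-the-array argument you reconstruct (and your verification that the swap $0^{i+1}\leftrightarrow 0^{i}\frown\langle 1\rangle$ is a partial $L_s$-isomorphism over $\{0^{m+1},\,0^m\frown\langle 1\rangle : m<i\}$ is correct). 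So the proposal is correct and takes essentially the same route; you have merely unpacked the step the paper leaves to its reference.
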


\begin{proof}
(3)$\implies$(1) is Lemma \ref{karyversion}.  

(1)$\implies$(2).  This follows from the proof of \cite[Proposition 5.6]{ArtemNick}.%Suppose \(T\) has SOP\(_{1}\) witnessed by \(\varphi\).  By compactness, we may assume that we have a tree of tuples \((a_{\eta})_{\eta \in 2^{< \kappa}}\) for \(\kappa\) very large so that
%\begin{itemize}
% \item For all \(\eta \in 2^{\kappa}\), \(\{\varphi(x;a_{\eta | \alpha}) : \alpha < \kappa\}\) is consistent 
%\item \(\eta \frown 0 \underline{\vartriangleleft} \nu \in 2^{<\kappa}\), then \(\{\varphi(x;a_{\eta \frown 1}), \varphi(x;a_{\nu})\}\) is inconsistent.
%\end{itemize}
%In this situation, the proof of Proposition 5.6 in \cite{ArtemNick} shows that in $2^{\kappa}$, there are nodes $\eta_{i},\nu_{i} \in 2^{<\kappa}$ for $i < \omega$ so that the sequence \((\eta_{i}, \nu_{i})_{i < \omega}\) satisfies 
%\begin{itemize}
% \item \(\{\varphi(x;a_{\eta_{i}}) : i < \omega\}\) is consistent
%\item \(a_{\nu_{i}},a_{\eta_{i}}\) have the same type over \(a_{\nu_{<i}},a_{\eta_{<i}}\) 
%\item \(\{\varphi(x;a_{\nu_{i}}), \varphi(x;a_{\nu_{j}})\}\) is inconsistent for \(i \neq j\).
%\end{itemize}
%This shows (2).  

(2)$\implies$(3) is obvious.  
\end{proof}

\begin{rem}
Though the configurations described in (2) and (3) are not obviously preserved by expansion, SOP$_{1}$ as defined in Definition \ref{sop1def} clearly is.  It follows, then, that one can take $(\overline{c}_{i})_{i < \omega}$ to be indiscernible with respect to some Skolemization in the language $L^{Sk}$ of $T$ and, moreover, obtain $c_{i,0} \equiv^{L^{Sk}}_{\overline{c}_{<i}} c_{i,1}$ for all $i < \omega$ (in fact, this is what the proof of \cite[Proposition 5.6]{ArtemNick} directly shows).  
\end{rem}

\section{Kim-dividing}

\subsection{Averages and Invariant Types}

\begin{defn}
A global type $q \in S(\mathbb{M})$ is called $A$\emph{-invariant} if $b \equiv_{A} b'$ implies $\varphi(x;b) \in q$ if and only if $\varphi(x;b') \in q$.  A global type $q$ is $\emph{invariant}$ if there is some small set $A$ such that $q$ is $A$-invariant.  If $q(x)$ and $r(y)$ are $A$-invariant global types, then the type $(q \otimes r)(x,y)$ is defined to be $\text{tp}(a,b/\mathbb{M})$ for any $b \models r$ and $a \models q|_{\mathbb{M}b}$.  We define $q^{\otimes n}(x_{0},\ldots, x_{n-1})$ by induction:  $q^{\otimes 1} = q$ and $q^{\otimes n + 1} = q(x_{n}) \otimes q^{\otimes n}(x_{0},\ldots, x_{n-1})$.  When $M$ is a model, write $a \ind^{i}_{M} b$ to mean $\text{tp}(a/Mb)$ extends to a global $M$-invariant type.  
\end{defn}

\begin{fact}\cite[Chapter 2]{simon2015guide} \label{tensor}
Given a global $A$-invariant type $q$ and positive integer $n$, $q^{\otimes n}$ is a well-defined $A$-invariant global type.  If $N \supset A$ is an $|A|^{+}$-saturated model and $p \in S(N)$ satisfies $\varphi(x;b) \in p \iff \varphi(x;b') \in p$ whenever $b,b' \in N$ and $b \equiv_{A} b'$, then $p$ extends uniquely to a global $A$-invariant type.  
\end{fact}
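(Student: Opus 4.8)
The plan is to establish the two assertions in turn, treating the second one — unique extension of a suitably invariant type over an $|A|^{+}$-saturated model — as the main tool, since it is also what makes the inductive definition of $q^{\otimes n}$ meaningful in the first place.

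First I would handle the unique extension statement. For uniqueness, suppose $q \in S(\mathbb{M})$ is an $A$-invariant extension of $p$. Given any $b \in \mathbb{M}$, the type $\text{tp}(b/A)$ is a type over the small set $A$, so $|A|^{+}$-saturation of $N$ yields $b' \in N$ with $b' \equiv_{A} b$; then $A$-invariance of $q$ gives $\varphi(x;b) \in q \iff \varphi(x;b') \in q \iff \varphi(x;b') \in p$, so $q$ is determined by $p$. For existence, define $q$ by putting $\varphi(x;b) \in q$ if and only if $\varphi(x;b') \in p$ for some $b' \in N$ realizing $\text{tp}(b/A)$; the invariance hypothesis on $p$ makes this independent of the choice of $b'$, and it is then immediate that $q$ is $A$-invariant, extends $p$, and decides every formula. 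The one point needing care is consistency: given $\varphi_{i}(x;b_{i}) \in q$ for $i < n$, choosing the witnesses $b'_{i} \in N$ one at a time is not enough, so I would instead use $|A|^{+}$-saturation of $N$ to realize the full type $\text{tp}(b_{0} \dots b_{n-1}/A)$ by a single tuple $(b'_{0}, \dots, b'_{n-1}) \in N$; by well-definedness each $\varphi_{i}(x;b'_{i}) \in p$, so $\{\varphi_{i}(x;b'_{i}) : i < n\}$ is consistent, and applying an automorphism of $\mathbb{M}$ over $A$ sending $b'_{i} \mapsto b_{i}$ shows $\{\varphi_{i}(x;b_{i}) : i < n\}$ is consistent. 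This ``pass to the joint type of the parameters'' move is the only real subtlety in this half.

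For the statement about $q^{\otimes n}$, note first that the unique-extension result, applied with $\mathbb{M}$ in the role of $N$ and a larger monster model $\mathbb{M}' \succ \mathbb{M}$ as the ambient structure, shows that every $A$-invariant global type has a canonical $A$-invariant extension to any larger model; this is exactly what gives meaning to ``$q|_{\mathbb{M}b}$'' in the definition of $q \otimes r$. To see that $q \otimes r$ is well defined for $A$-invariant $q,r$: if $b,b' \models r$ with $a \models q|_{\mathbb{M}b}$ and $a' \models q|_{\mathbb{M}b'}$, pick $\sigma \in \text{Aut}(\mathbb{M}'/\mathbb{M})$ with $\sigma(b) = b'$; since $\sigma$ fixes $\mathbb{M} \supseteq A$ pointwise it fixes the canonical extension of $q$ setwise, so $\sigma(a) \models q|_{\mathbb{M}b'}$, hence $\sigma(a) \equiv_{\mathbb{M}b'} a'$, whence $\text{tp}(ab/\mathbb{M}) = \text{tp}(\sigma(a)b'/\mathbb{M}) = \text{tp}(a'b'/\mathbb{M})$. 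A variant of the same argument — using that an $A$-invariant global type is invariant, as a set of formulas with parameters, under all of $\text{Aut}(\mathbb{M}/A)$, not merely $\text{Aut}(\mathbb{M})$ — shows that $q \otimes r$ is again $A$-invariant: given $c \equiv_{A} c'$ in $\mathbb{M}$, take $\tau \in \text{Aut}(\mathbb{M}/A)$ with $\tau(c) = c'$, extend it over $A$ to $\mathbb{M}'$, observe it fixes the canonical extensions of $q$ and $r$, and check that it carries a realization witnessing $\varphi(x,y;c) \in q \otimes r$ to one witnessing $\varphi(x,y;c') \in q \otimes r$. Finally $q^{\otimes n}$ is treated by induction: $q^{\otimes 1} = q$ is $A$-invariant by hypothesis, and if $q^{\otimes n}$ is a well-defined $A$-invariant global type then so is $q^{\otimes n+1} = q(x_{n}) \otimes q^{\otimes n}(x_{0}, \dots, x_{n-1})$ by the two observations just made, with $q^{\otimes n}$ in the role of $r$. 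The main obstacle here is purely bookkeeping — keeping track of the ambient monster models and the canonical extensions of the types to them; conceptually everything reduces to the unique-extension statement together with automorphism-chasing.
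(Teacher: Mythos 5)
Your proof is correct and follows essentially the standard argument from the cited source (Simon's book, Chapter 2); the paper itself states this as a fact without proof. The two key points you identify — realizing the joint type $\text{tp}(b_0\dots b_{n-1}/A)$ in $N$ to get finite consistency of the candidate extension, and using the unique extension over the monster to make sense of $q|_{\mathbb{M}b}$ before chasing automorphisms of a larger model — are exactly the substance of the standard proof.
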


\begin{defn}
Suppose $q$ is an $A$-invariant global type and $I$ is a linearly ordered set.  By a \emph{Morley sequence in }$q$ \emph{over} $A$ \emph{of order type} $I$, we mean a sequence $(b_{\alpha})_{\alpha \in I}$ such that for each $\alpha \in I$, $b_{\alpha} \models q|_{Ab_{<\alpha}}$ where $b_{<\alpha} = (b_{\beta})_{\beta < \alpha}$.  Given a linear order $I$, we will write $q^{\otimes I} = q^{\otimes I}(x_{\alpha} : \alpha \in I)$ for the $A$-invariant global type so that if $\overline{b} \models q^{\otimes I}$ then $b_{\alpha} \models q|_{\mathbb{M}b_{<\alpha}}$ for all $\alpha \in I$.  
\end{defn}

The above definition of $q^{\otimes I}$ generalizes the finite tensor product $q^{\otimes n}$ -- given any global $A$-invariant type $q$ and linearly ordered set $I$, one may easily show that $q^{\otimes I}$ exists and is $A$-invariant, by Fact \ref{tensor} and compactness.  

\begin{defn}
Let \(I \subseteq \mathbb{M}^{n}\) be a collection of tuples, \(A \subseteq \mathbb{M}\) a set, and \(\mathcal{D}\) an ultrafilter over \(I\).  We define the \emph{average type of } \(\mathcal{D}\) \emph{over } \(A\) to be the type defined by 
\[
\text{Av}(\mathcal{D},A) = \{ \varphi(x;a) : a \in A \text{ and } \{b \in I : \mathbb{M} \models \varphi(b;a)\} \in \mathcal{D}\}.
\]
\end{defn}

\begin{fact}\cite[Lemma 4.1]{shelah1990classification}\label{average}
Let $I \subseteq \mathbb{M}^{n}$ be a collection of tuples and $\mathcal{D}$ an ultrafilter on $I$.  
\begin{enumerate}
\item For every set $C$, $\text{Av}(\mathcal{D},C)$ is a complete type over $C$.
\item The global type $\text{Av}(\mathcal{D},\mathbb{M})$ is $I$-invariant.
\item For any model $M \models T$, if $p \in S^{n}(M)$, there is some ultrafilter $\mathcal{E}$ on $M^{n}$ so that $p = \text{Av}(\mathcal{E},M)$.  
\end{enumerate}
\end{fact}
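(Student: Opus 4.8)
The plan is to dispose of the three clauses in turn; each is a routine consequence of the finite intersection property for ultrafilters together with the fact that a formula consistent with the relevant elementary diagram is realized. For (1), I would first use that $\mathcal{D}$ is an \emph{ultra}filter: for each $\varphi(x;a)$ with $a \in C$, exactly one of $\{b \in I : \mathbb{M} \models \varphi(b;a)\}$ and its complement in $I$ lies in $\mathcal{D}$, so $\text{Av}(\mathcal{D},C)$ decides $\varphi(x;a)$ versus $\neg\varphi(x;a)$ for every such $\varphi$. For consistency, given $\varphi_{1}(x;a_{1}),\ldots,\varphi_{k}(x;a_{k}) \in \text{Av}(\mathcal{D},C)$, all the sets $\{b \in I : \mathbb{M} \models \varphi_{i}(b;a_{i})\}$ lie in $\mathcal{D}$, hence so does their intersection, which is in particular nonempty; any element of it realizes $\bigwedge_{i}\varphi_{i}(x;a_{i})$. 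Thus $\text{Av}(\mathcal{D},C)$ is a finitely satisfiable, complete type over $C$.

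For (2) I would just unwind $I$-invariance: if $b \equiv_{I} b'$, then for every tuple $a \in I$ the formula $\varphi(a;y)$ has parameters drawn from $I$, so $\mathbb{M} \models \varphi(a;b)$ if and only if $\mathbb{M} \models \varphi(a;b')$. Consequently $\{a \in I : \mathbb{M} \models \varphi(a;b)\}$ and $\{a \in I : \mathbb{M} \models \varphi(a;b')\}$ coincide as subsets of $I$, so one lies in $\mathcal{D}$ exactly when the other does; this is precisely the statement that $\varphi(x;b) \in \text{Av}(\mathcal{D},\mathbb{M}) \iff \varphi(x;b') \in \text{Av}(\mathcal{D},\mathbb{M})$, so the global type is $I$-invariant (and it is a complete global type by (1) applied with $C = \mathbb{M}$).

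For (3), given $p \in S^{n}(M)$, I would form the family $\mathcal{F} = \{\varphi(M;a) : \varphi(x;a) \in p\}$ of subsets of $M^{n}$, where $\varphi(M;a) := \{b \in M^{n} : M \models \varphi(b;a)\}$. Since $p$ is a consistent type and $M$ is a model, for any $\varphi_{1}(x;a_{1}),\ldots,\varphi_{k}(x;a_{k}) \in p$ the conjunction $\bigwedge_{i}\varphi_{i}(x;a_{i})$ lies in $p$, so the sentence $\exists x \bigwedge_{i}\varphi_{i}(x;a_{i})$ belongs to the elementary diagram of $M$; hence $\bigcap_{i}\varphi_{i}(M;a_{i}) \neq \emptyset$, and $\mathcal{F}$ has the finite intersection property. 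Extend it to an ultrafilter $\mathcal{E}$ on $M^{n}$. Then $\varphi(x;a) \in p$ forces $\varphi(M;a) \in \mathcal{E}$, i.e. $p \subseteq \text{Av}(\mathcal{E},M)$, and since both sides are complete types over $M$ by (1), they are equal.

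I do not expect a genuine obstacle here; the only point requiring care is the appeal to the model axioms in (3) — that a formula in $p$, or a finite conjunction of such, is realized \emph{inside} $M$ — which is what makes $\mathcal{F}$ a nonempty family with the finite intersection property. Morally, all three parts just record that average types are the syntactic trace on definable sets of an ultrafilter on tuples.
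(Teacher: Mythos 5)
Your proof is correct: all three clauses are verified by exactly the standard ultrafilter/finite-intersection arguments, including the only delicate points (completeness via the ultrafilter dichotomy, invariance because the relevant definable subsets of $I$ coincide when $b \equiv_{I} b'$, and realizability of finite conjunctions \emph{inside} $M$ by elementarity for part (3)). The paper states this as a Fact cited from Shelah without proof, and your argument is essentially the classical one being invoked, so there is nothing further to reconcile.
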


One important consequence of Fact \ref{average} for us is that every type over a model $M$ extends to a global $M$-invariant type:  given $p \in S(M)$, one chooses an ultrafilter $\mathcal{D}$ so that $\text{Av}(\mathcal{D},M) = p$.  Then $\text{Av}(\mathcal{D},\mathbb{M})$ is a global type extending $p$ which is $M$-invariant.  In the arguments below, it will often be convenient to produce global invariant types through a particular choice of ultrafilter.  

\begin{fact} \cite[Remark 2.16]{chernikov2012forking}
Write $a \ind^{u}_{A} b$ to mean that $\text{tp}(a/Ab)$ is finitely satisfiable in $A$ -- the $u$ is for ``ultrafilter" as this is equivalent to asserting $\text{tp}(a/Ab) = \text{Av}(\mathcal{D},Ab)$ for some ultrafilter $\mathcal{D}$ on $A$.  The relation $\ind^{u}$ satisfies both left and right extension over models:
\begin{enumerate}
\item (Left extension)  If $M$ is a model and $a \ind^{u}_{M} b$ then for all $d$, there is some $b' \equiv_{Ma} b$ so that $ad \ind^{u}_{M} b'$.  
\item (Right extension)  If $M$ is a model and $a \ind^{u}_{M} b$ then for all $c$, there is some $a' \equiv_{Mb} a$ so that $a' \ind^{u}_{M} bc$.  
\end{enumerate}
\end{fact}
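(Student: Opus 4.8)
The plan is to treat this as two standard facts about coheirs, using throughout the ultrafilter presentation: $a \ind^{u}_{M} b$ holds exactly when $\text{tp}(a/Mb) = \text{Av}(\mathcal{D},Mb)$ for some ultrafilter $\mathcal{D}$ on $M$, and then $\text{Av}(\mathcal{D},\mathbb{M})$ is a global extension finitely satisfiable in $M$ (Fact \ref{average}). Right extension should be essentially immediate: given $a \ind^{u}_{M} b$, fix a global type $q(x) \supseteq \text{tp}(a/Mb)$ finitely satisfiable in $M$ and realize $a' \models q|_{Mbc}$; then $a' \equiv_{Mb} a$ since $q|_{Mb} = \text{tp}(a/Mb)$, while $\text{tp}(a'/Mbc) = q|_{Mbc}$ remains finitely satisfiable in $M$, so $a' \ind^{u}_{M} bc$. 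No obstacle here.

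Left extension is the substantive half, and the naive approach --- realize a global coheir $q(x) \supseteq \text{tp}(a/Mb)$ over $Mbd$ and transport back by an automorphism --- does not work, since finite satisfiability in $M$ is neither symmetric nor preserved when the left-hand tuple grows from $a$ to $ad$, so controlling $\text{tp}(a^{*}/Mbd)$ tells us nothing about $\text{tp}(a^{*}d/Mb)$. Instead I would fix a global $q(x) \supseteq \text{tp}(a/Mb)$ finitely satisfiable in $M$ and consider the partial type $\Sigma(x,z) = q(x) \cup \text{tp}(ad/M)(x,z)$, which is consistent because both pieces restrict in $x$ to $\text{tp}(a/M)$. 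The key point --- which I expect to be the main difficulty --- is that $\Sigma$ is finitely satisfiable in $M$: given a finite fragment, gather its $q$-part into one $\varphi(x;\overline{n})$ (parameters from $\mathbb{M}$) and its $\text{tp}(ad/M)$-part into one $\psi(x,z;\overline{m})$ (parameters from $M$); since $\exists z\,\psi(x,z;\overline{m}) \in \text{tp}(a/M) = q|_{M}$, finite satisfiability of $q$ in $M$ supplies $\overline{e}$ from $M$ with $\models \varphi(\overline{e};\overline{n}) \wedge \exists z\, \psi(\overline{e},z;\overline{m})$, and then elementarity $M \preceq \mathbb{M}$ applied to $\exists z\, \psi(\overline{e},z;\overline{m})$ gives $\overline{f}$ from $M$ with $\models \psi(\overline{e},\overline{f};\overline{m})$; the $M$-point $(\overline{e},\overline{f})$ realizes the fragment.

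Granting this, I would extend $\Sigma$ to a complete global type $p(x,z)$ finitely satisfiable in $M$ (as the average over $M$ of an ultrafilter containing the finite-satisfaction sets, as in Fact \ref{average}). Then the reduct of $p$ to $x$ is $q$ and $p|_{M} = \text{tp}(ad/M)$, so realizing $(a^{*},d^{*}) \models p|_{Mb}$ yields $a^{*} \equiv_{Mb} a$, $(a^{*},d^{*}) \equiv_{M} (a,d)$, and $\text{tp}(a^{*}d^{*}/Mb)$ finitely satisfiable in $M$. Two conjugations then finish the job: an automorphism fixing $Mb$ and sending $a^{*} \mapsto a$ turns $d^{*}$ into some $\widetilde{d}$ with $a\widetilde{d} \ind^{u}_{M} b$ and, since it fixes $M$, with $\widetilde{d} \equiv_{Ma} d$; then an automorphism fixing $Ma$ and sending $\widetilde{d} \mapsto d$ carries $b$ to a $b' \equiv_{Ma} b$ with $ad \ind^{u}_{M} b'$, as required. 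Everything past the finite-satisfiability verification is routine bookkeeping with automorphisms.
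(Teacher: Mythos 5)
Your proof is correct. The paper does not prove this statement at all---it is quoted as a fact with a citation to Chernikov--Kaplan---and your argument is essentially the standard one behind that citation: right extension by realizing a global coheir, and left extension by checking that $q(x)\cup\operatorname{tp}(ad/M)(x,z)$ is finitely satisfiable in $M$ (using $\exists z$ plus elementarity of $M$), completing to a global type finitely satisfiable in $M$, and transferring by automorphisms. One minor simplification: the global coheir $q$ is not needed for the key claim, since the same $\exists z$-argument shows $\operatorname{tp}(a/Mb)(x)\cup\operatorname{tp}(ad/M)(x,z)$ is already finitely satisfiable in $M$, and one can then extend that partial type directly to a complete type over $Mb$ finitely satisfiable in $M$.
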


\begin{defn}\label{indiscernibletype}
Suppose $M \models T$ and $\overline{a} = (a_{i})_{i < \omega}$ is an $M$-indiscernible sequence.  A global $M$-invariant type $q \supseteq \text{tp}(\overline{a}/M)$ is called an \emph{indiscernible type} if whenenver $\overline{a}' \models q$, $\overline{a}'$ is $\mathbb{M}$-indiscernible.  
\end{defn}

\begin{defn}
A collection of sequences $(\overline{a}_{\alpha})_{\alpha < \kappa}$ where $\overline{a}_{\alpha} = \langle a_{\alpha,i} : i < \lambda \rangle$ is called a \emph{mutually indiscernible array} over a set of parameters $C$ if, for each $\alpha < \kappa$, the sequence $\overline{a}_{\alpha}$ is an indiscernible sequence over $C\overline{a}_{\neq \alpha}$.  
\end{defn}

The following two lemmas are essentially \cite[Lemma 8]{adler2014kim}.  We include a proof for completeness.  

\begin{lem}
If $\overline{a} = (a_{i})_{i < \omega}$ is an $M$-indiscernible sequence, there is an indiscernible global $M$-invariant type $q \supseteq \text{tp}(\overline{a}/M)$.  
\end{lem}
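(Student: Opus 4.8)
The plan is to produce the type first over a small, sufficiently saturated model and then pass to a global $M$-invariant type via Fact \ref{tensor}. Fix a small $|M|^{+}$-saturated model $N \supseteq M$. I claim it suffices to build a type $p(\overline{x}) \in S(N)$, in the variables $\overline{x} = (x_{i})_{i < \omega}$ matching $\overline{a}$, such that: (i) $p \supseteq \text{tp}(\overline{a}/M)$; (ii) some (equivalently, every) realization of $p$ is $N$-indiscernible; and (iii) $\varphi(\overline{x};b) \in p \iff \varphi(\overline{x};b') \in p$ whenever $b, b' \in N$ and $b \equiv_{M} b'$. Given such a $p$, Fact \ref{tensor} (with $A = M$) yields a global $M$-invariant type $q \supseteq p$, which extends $\text{tp}(\overline{a}/M)$ by (i). That $q$ is an indiscernible type then follows by a short computation: if $\overline{a}' = (a'_{i})_{i < \omega} \models q$ and $b \in \mathbb{M}$, pick $b^{*} \in N$ with $b^{*} \equiv_{M} b$ (possible as $N$ is $|M|^{+}$-saturated); then for increasing index tuples $i_{0} < \dots < i_{n-1}$ and $j_{0} < \dots < j_{n-1}$ from $\omega$, the formula $\varphi(x_{i_{0}}, \ldots, x_{i_{n-1}}; b)$ lies in $q$ iff $\varphi(x_{i_{0}}, \ldots, x_{i_{n-1}}; b^{*})$ does (invariance of $q$), iff it lies in $p$ (as $q|_{N} = p$ and $b^{*} \in N$), iff $\varphi(x_{j_{0}}, \ldots, x_{j_{n-1}}; b^{*})$ lies in $p$ (by (ii)), iff $\varphi(x_{j_{0}}, \ldots, x_{j_{n-1}}; b)$ lies in $q$ (reversing the first two steps); hence $\overline{a}'$ is $\mathbb{M}$-indiscernible.

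To build $p$, I would first extend $\overline{a}$, by compactness and Ramsey, to an $M$-indiscernible sequence $(a_{i})_{i < \lambda}$ with $\lambda$ a regular cardinal far larger than $|N| + |T|$, and then extract from it an $N$-indiscernible sequence $\overline{b} = (b_{j})_{j < \omega}$ over $N$ (so that every finite increasing tuple of $\overline{b}$ realizes, over $N$, a type realized by some finite increasing tuple of $(a_{i})_{i<\lambda}$). Since all finite increasing tuples of $(a_{i})_{i<\lambda}$ realize the same type over $M \subseteq N$, one automatically gets $\overline{b} \equiv_{M} \overline{a}$, so $p := \text{tp}(\overline{b}/N)$ satisfies (i), and (ii) holds by construction. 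The one thing left to arrange is (iii).

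Securing (iii) — forcing the extracted type to depend, on parameters from $N$, only on their type over $M$ — is where I expect the main difficulty to lie, and it is why this is not a purely formal statement. (The naive candidate, the limit type of $(a_{i})_{i<\lambda}$ along an ultrafilter on $\lambda$, is no good: it need not be $M$-invariant, as one already sees in the random graph.) The extraction of $\overline{b}$ is governed by a choice of ultrafilter subject to the usual Erd\H{o}s--Rado homogeneity requirements ensuring $\overline{b}$ comes out $N$-indiscernible; requirement (iii) asks in addition that, for each of the boundedly many pairs $c \equiv_{M} c'$ with $c, c' \in N$, the ultrafilter not separate the traces of $c$ and $c'$ on the long sequence. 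Since $|N|$ is small while $\lambda$ is enormous, these boundedly many extra conditions can be imposed simultaneously with the homogeneity conditions; carrying this out carefully is the crux. The remaining ingredients — the extension to a long indiscernible sequence, the Erd\H{o}s--Rado extraction, and the transfer through Fact \ref{tensor} — are all routine.
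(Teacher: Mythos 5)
Your reduction in the first paragraph (build $p\in S(N)$ over an $|M|^{+}$-saturated $N\supseteq M$ with properties (i)--(iii), then transfer through Fact \ref{tensor} and check indiscernibility of the resulting global type by replacing an arbitrary parameter $b$ with some $b^{*}\in N$, $b^{*}\equiv_{M}b$) is correct and is exactly how the paper finishes. The gap is in the step you yourself flag as the crux: producing $p$ with property (iii). Your plan is to stretch $\overline{a}$ to an arbitrary long $M$-indiscernible sequence $(a_{i})_{i<\lambda}$ and then arrange (iii) during the extraction of an $N$-indiscernible sequence, by choosing the extraction/ultrafilter so as not to separate the traces of $M$-conjugate parameters. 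This cannot work in general, and the obstruction is not a counting issue that a huge $\lambda$ resolves: the problem is the position of the long sequence relative to $N$, which your construction does not control. For a concrete failure, work in the random graph: pick $c\equiv_{M}c'$ in $N\setminus M$ and an $M$-indiscernible sequence (with the prescribed EM-type over $M$) every element of which is $R$-related to $c$ and $R$-unrelated to $c'$. Then for \emph{every} sequence locally based on it over $N$ (no matter which ultrafilter or Erd\H{o}s--Rado extraction you use), the extracted type contains $x_{0}Rc$ and $\neg x_{0}Rc'$, so (iii) fails; indeed the symmetric difference of the two traces is all of $\lambda$, so no ultrafilter "avoids separating" them. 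So the extra conditions you want to impose are not jointly satisfiable with local basedness for a badly chosen long sequence, and nothing in your argument rules such a sequence out.

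The missing idea is to get invariance \emph{before} extracting, rather than trying to impose it during extraction. Since $M$ is a model, $\text{tp}(\overline{a}/M)$ (as a type in $\omega$ variables) extends to a global $M$-invariant type $r$, e.g.\ a coheir via Fact \ref{average}. Realize $r|_{N}$ by $\overline{b}$; then $\text{tp}(\overline{b}/N)$ already satisfies (iii), being the restriction of an $M$-invariant global type, and $\overline{b}\equiv_{M}\overline{a}$. Now extract an $N$-indiscernible sequence $\overline{c}$ from $\overline{b}$: property (ii) holds by construction, (i) holds because every increasing finite tuple from $\overline{b}$ has the same type over $M$ as the corresponding tuple of $\overline{a}$ (by $M$-indiscernibility), and (iii) is inherited because any violation $\models\varphi(c_{\overline{i}};n)\leftrightarrow\neg\varphi(c_{\overline{i}};n')$ with $n\equiv_{M}n'$ in $N$ would be witnessed by an increasing tuple of $\overline{b}$, contradicting the $M$-invariance of $r$. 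With this replacement for your second and third paragraphs, your first paragraph completes the proof exactly as in the paper.
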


\begin{proof}
Let $N$ be an $|M|^{+}$-saturated elementary extension of $M$ of size $\kappa$ and let $r \supseteq \text{tp}(\overline{a}/M)$ be an arbitrary global $M$-invariant type extending $\text{tp}(\overline{a}/M)$.  %We consider the following partial type $\Gamma((x_{i})_{i < \omega})$:
%\begin{eqnarray*}
%\Gamma((x_{i})_{i < \omega}) &=& \text{tp}((a_{i})_{i < \omega}/M) \\
%&\cup& \{\varphi(x_{\overline{i}};b) \leftrightarrow \varphi(x_{\overline{j}};b) : \overline{i},\overline{j} \text{ increasing tuples from } \omega, l(\overline{i}) = l(\overline{j}), b \in N\} \\
%&\cup& \{\varphi(x_{\overline{i}};b) \leftrightarrow \varphi(x_{\overline{i}};c) : \overline{i} \in \omega, b, c \in N, b \equiv_{M} c\}.
%\end{eqnarray*} 
Let $\overline{b} = (b_{i})_{i < \omega} \models r|_{N}$.  %Given any finite subset $\Delta \subseteq \Gamma$, there is an $n$ so that $\Delta$ only describes $k$-tuples from $(x_{i})_{i < \omega}$ for $k \leq n$.  Color increasing $n$-tuples of $\lambda$ by $(\alpha_{0}, \ldots, \alpha_{n-1}) \mapsto \text{tp}(d_{i_{0}}, \ldots, d_{i_{n-1}}/N)$.  By choice of $\lambda$ and Erdos-Rado, there is an infinite homogeneous subset $X = \{\alpha_{i} : i < \omega\} \subset \lambda$.  Then it is easy to check $(d_{\alpha_{i}})_{i < \omega} \models \Delta$.  
By Ramsey and compactness, we may extract from $\overline{b}$ an $N$-indiscernible sequence $(c_{i})_{i < \omega}$.  Clearly $\text{tp}(\overline{c}/N)$ extends $\text{tp}(\overline{a}/M)$.  It is also $M$-invariant:  if not, there are $n \equiv_{M} n'$ in $N$, an increasing $k$-tuple $\overline{i}$ from $\omega$, and a formula $\varphi$ so that
$$
\models \varphi(c_{\overline{i}};n) \leftrightarrow \neg \varphi(c_{\overline{i}}, n').
$$
Then there is an increasing $k$-tuple $\overline{j}$ so that
$$
\models \varphi(b_{\overline{j}};n) \leftrightarrow \neg \varphi(b_{\overline{j}};n'),
$$
since the sequence $\overline{c}$ is extracted from $\overline{b}$.  This contradicts the fact that $\overline{b}$ realizes an $M$-invariant type over $N$.  
%Any complete type over $N$ extending $\Gamma((x_{i})_{i < \omega})$ will be $M$-invariant and its realizations will be $N$-indiscernible.  
By Fact \ref{tensor}, the type $\text{tp}(\overline{c}/N)$ determines a unique $M$-invariant extension to $\mathbb{M}$.  Call it $q$.  Then $q$ is an indiscernible type.  %Let $\overline{e} \models q|_{\mathbb{M}}$.  Then if $\overline{e}$ is not $\mathbb{M}$-indiscernible, there are increasing $n$-tuples $\overline{i}, \overline{j}$ from $\omega$ so that
%$$
%\models \varphi(e_{\overline{i}},m) \leftrightarrow \neg \varphi(e_{\overline{j}},m),
%$$
%for some $\varphi$ and $m \in \mathbb{M}$.  Let $m' \in N$ be any element with $\text{tp}(m'/M) = \text{tp}(m/M)$ (possible by the $|M|^{+}$-saturation of $N$).  Then, by invariance, 
%$$
%\models \varphi(e_{\overline{i}},m') \leftrightarrow \neg \varphi(e_{\overline{j}},m'),
%$$
%contradicting the fact that $q$ extends $\text{tp}(\overline{d}/N)$ which encodes that every realization is an $N$-indiscernible sequence.  
\end{proof}

\begin{lem}\label{pathtype}
Suppose $M \models T$, $\overline{a} = (a_{i})_{i < \omega}$ is an $M$-indiscernible sequence, and $q \supseteq \text{tp}(\overline{a}/M)$ is a global $M$-invariant indiscernible type.  Let $(\overline{a}_{i})_{i < \omega} \models q^{\otimes \omega}|_{M}$ with $\overline{a}_{0} = \overline{a}$, where $\overline{a}_{i} = (a_{i,j})_{j < \omega}$.  Then $(\overline{a}_{i})_{i < \omega}$ is a mutually indiscernible array over $M$.  
\end{lem}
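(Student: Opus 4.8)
The plan is to isolate a general propagation principle for invariant types and then induct on the number of rows of the array. First I would unwind the hypothesis $(\overline{a}_i)_{i<\omega} \models q^{\otimes \omega}|_M$: since $q$ is $M$-invariant it is fixed by every automorphism of $\mathbb{M}$ over $M$, and from this one checks in the usual way that the restriction to $M$ of a realization of $q^{\otimes\omega}$ is a Morley sequence in $q$ over $M$, i.e. $\overline{a}_i \models q|_{M\overline{a}_{<i}}$ for every $i < \omega$. Together with the assumption that $q$ is an indiscernible type this already shows that each row $\overline{a}_i$ is indiscernible over $M\overline{a}_{<i}$, because any full global realization of $q$ is $\mathbb{M}$-indiscernible, hence $M\overline{a}_{<i}$-indiscernible, and being indiscernible over a small parameter set is a property of the type over that set.

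The key observation is the following propagation principle: if $B$ is a small set, $\overline{c} = (c_j)_{j \in J}$ is a $B$-indiscernible sequence, $p$ is a global $B$-invariant type, and $d \models p|_{B\overline{c}}$, then $\overline{c}$ is still indiscernible over $B d$. Indeed, given increasing tuples $\overline{j}, \overline{j}'$ of equal length from $J$, we have $\overline{c}_{\overline{j}} \equiv_B \overline{c}_{\overline{j}'}$ by $B$-indiscernibility, hence $\varphi(\overline{c}_{\overline{j}}; y) \in p \iff \varphi(\overline{c}_{\overline{j}'}; y) \in p$ for every formula $\varphi$ over $B$ by $B$-invariance of $p$; since $\text{tp}(d/B\overline{c}) = p|_{B\overline{c}}$, this says precisely that $\overline{c}_{\overline{j}} \equiv_{Bd} \overline{c}_{\overline{j}'}$.

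Now I would prove by induction on $n \geq 1$ that for every $i < n$ the row $\overline{a}_i$ is indiscernible over $M\overline{a}_{[0,n)\setminus\{i\}}$. The case $n=1$ is the hypothesis that $\overline{a}_0 = \overline{a}$ is $M$-indiscernible. For the step from $n$ to $n+1$: the row $i = n$ is indiscernible over $M\overline{a}_{<n} = M\overline{a}_{[0,n+1)\setminus\{n\}}$ by the remark ending the first paragraph. For $i < n$, set $B := M\overline{a}_{[0,n)\setminus\{i\}}$; by the inductive hypothesis $\overline{a}_i$ is $B$-indiscernible, and since $B\overline{a}_i = M\overline{a}_{<n}$ we have $\overline{a}_n \models q|_{B\overline{a}_i}$, while $q$ is $B$-invariant because it is $M$-invariant and $M \subseteq B$. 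Applying the propagation principle with $\overline{c} = \overline{a}_i$, $d = \overline{a}_n$, $p = q$ yields that $\overline{a}_i$ is indiscernible over $B\overline{a}_n = M\overline{a}_{[0,n+1)\setminus\{i\}}$, completing the induction.

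Finally, fixing $i$, the sets $M\overline{a}_{[0,n)\setminus\{i\}}$ for $n > i$ form an increasing chain whose union is $M\overline{a}_{\neq i}$; since indiscernibility over a set has finite character, $\overline{a}_i$ is indiscernible over $M\overline{a}_{\neq i}$, which is exactly the assertion that $(\overline{a}_i)_{i<\omega}$ is a mutually indiscernible array over $M$. I do not anticipate a real obstacle here; the points needing care are the bookkeeping identity $B\overline{a}_i = M\overline{a}_{<n}$ and the observation that $M$-invariance of $q$ is inherited over the larger base $B$. This is also where both hypotheses on $q$ are used: the indiscernible-type hypothesis to start each row off, and invariance (via the propagation principle) to carry indiscernibility of a row past all of the later rows.
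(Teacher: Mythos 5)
Your proposal is correct and follows essentially the same route as the paper: an induction on the number of rows in which the newest row is handled via the indiscernible-type hypothesis and the earlier rows via the $M$-invariance of $q$ (your ``propagation principle'' is exactly the step the paper performs implicitly when it says that $\overline{a}_{n+1}\models q|_{M\overline{a}_{\leq n}}$ entails indiscernibility of $\overline{a}_i$ over the enlarged base). The only differences are presentational: you isolate the propagation step as a lemma and spell out the finite-character passage to the full infinite array, both of which the paper leaves implicit.
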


\begin{proof}
We prove by induction on $n$ that $(\overline{a}_{i})_{i \leq n}$ is mututally indiscernible over $M$.  For $n=1$, there's nothing to prove.  Suppose it's been shown for $n$ and consider $(a_{i})_{i \leq n+1}$.  As $q$ is an indiscernible type, $\overline{a}_{n+1}$ is $M\overline{a}_{\leq n}$-indiscernible.  For $i \leq n$, we know, by induction, that $\overline{a}_{i}$ is $M\overline{a}_{< i}\overline{a}_{i+1}\ldots \overline{a}_{n}$-indiscernible.  As $\overline{a}_{n+1} \models q|_{M\overline{a}_{\leq n}}$, this entails $\overline{a}_{i}$ is indiscernible over $M\overline{a}_{<i}\overline{a}_{i+1}\ldots \overline{a}_{n+1}$, which completes the induction.  
\end{proof}

\subsection{Kim-dividing}

In this subsection, we define Kim-dividing and Kim-forking, the fundamental notions explored in this paper.  To start, we will need the definition of $q$-dividing, introduced by Hrushovski in \cite[Section 2.1]{hrushovski2012stable}:

\begin{defn}
Suppose $q(y)$ is an $A$-invariant global type.  The formula $\varphi(x;y)$ $q$-divides over $A$ if for some (equivalently, any) Morley sequence $\langle b_{i} : i < \omega \rangle$ in $q$ over $A$, $\{\varphi(x;b_{i}) : i < \omega \}$ is inconsistent.  
\end{defn}

We note that we will consistently use the letters $p,q,r$ to refer to types, $n,m,k,l$ to refer to numbers.  In this way, no confusion between $q$-dividing and the more familiar $k$-dividing will arise.  

The related notion of a \emph{higher formula} was introduced by Malliaris and Shelah in \cite{malliaris2015model} on the way to a new characterization of NTP$_{1}$ theories:  

\begin{defn} \cite[Definition 8.6]{malliaris2015model}
A \emph{higher formula} is a triple $(\varphi,A, \mathcal{D})$ where $\varphi = \varphi(x;y)$ is a formula, $A$ is a set of parameters, and $\mathcal{D}$ is an ultrafilter on $A^{l(y)}$ so that, if $q = \text{Av}(\mathcal{D},\mathbb{M})$ and $\langle b_{i} : i < \omega \rangle \models q^{\otimes \omega}|_{A}$ then $\{\varphi(x;b_{i}) : i < \omega\}$ is consistent.  
\end{defn}

We can rephrase the above definition as:  $(\varphi,A,\mathcal{D})$ is a higher formula if, setting $q = \text{Av}(\mathcal{D},\mathbb{M})$, $\varphi(x;y)$ does not $q$-divide over $A$.  

\begin{defn}
We say that a formula $\varphi(x;b)$ \emph{Kim-divides} over $A$ if there is some $A$-invariant global type $q \supseteq \text{tp}(b/A)$ so that $\varphi(x;y)$ $q$-divides.  The formula $\varphi(x;b)$ \emph{Kim-forks} over $A$ if $\varphi(x;b) \vdash \bigvee_{i<k} \psi_{i}(x;c^{i})$ and each $\psi_{i}(x;c^{i})$ Kim-divides over $A$.  A type Kim-forks if it implies a formula which does.  If $\text{tp}(a/Ab)$ does not Kim-fork over $A$, we write $a \ind^{K}_{A} b$.  
\end{defn}

We call this notion \emph{Kim}-dividing to make explicit the fact that this definition was inspired by a suggestion of Kim in his 2009 BIRS talk \cite{KimNTP1}, where he proposed an independence relation based on instances of dividing that are witnessed by every appropriate Morley sequence.  A rough connection between Kim's notion and ours is provided by Theorem \ref{kimslemmaforindk} below, which shows that, in an NSOP$_{1}$ theory, dividing with respect some invariant Morley sequence is equivalent to dividing with respect to all.  An even tighter connection is established by Theorem \ref{kimslemmaforforking}, which shows that we can drop the assumption that the Morley sequences are generated by an invariant type.  (We note that for technical reasons our notion is still different from Kim's -- the proposal of \cite{KimNTP1} forces a kind of base monotonicity and we do not).  

In general, we only know that a type over $A$ has a global $A$-invariant extension when $A$ is a model.  Thus, when working with Kim-independence below, we will restrict ourselves almost entirely to the case where the base is a model.  

The next two propositions explain how the notions of higher formula and $q$-dividing interact with SOP$_{1}$.  

\begin{prop}\label{higher}
Suppose $T$ has SOP$_{1}$.  Then there is a model $M\models T$, a formula $\varphi(x;b)$, and ultrafilters $\mathcal{D}_{0}, \mathcal{D}_{1}$ on $M$ with
$$
\text{Av}(\mathcal{D}_{0}, M) = \text{Av}(\mathcal{D}_{1},M) = \text{tp}(b/M),
$$
so that $(\varphi,M, \mathcal{D}_{0})$ is higher but $(\varphi,M, \mathcal{D}_{1})$ is not higher.  
\end{prop}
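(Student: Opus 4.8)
The plan is to read off the two required ultrafilters directly from the two columns of an SOP$_1$-configuration, arranging the model so that both columns determine the same average type over it.

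First I would extract a concrete configuration. Since $T$ has SOP$_1$, Proposition~\ref{arrayequivalent} provides a formula $\varphi(x;y)$ and an array $(c_{i,j})_{i<\omega,\,j<2}$ (over some parameters, which we absorb into constants) with $c_{i,0}\equiv_{\overline{c}_{<i}}c_{i,1}$ for all $i$, with $\{\varphi(x;c_{i,0}):i<\omega\}$ consistent, and with $\{\varphi(x;c_{i,1}):i<\omega\}$ $k$-inconsistent for some $k$. By the remark following Proposition~\ref{arrayequivalent} we may moreover assume that $(\overline{c}_i)_{i<\omega}$ is indiscernible with respect to a fixed Skolemization $L^{Sk}$ of $T$ and that $c_{i,0}\equiv^{L^{Sk}}_{\overline{c}_{<i}}c_{i,1}$ for all $i<\omega$. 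Let $M:=\operatorname{dcl}^{L^{Sk}}(\{\overline{c}_i:i<\omega\})$; this is an $L^{Sk}$-elementary, hence $L$-elementary, substructure of $\mathbb{M}$, and by construction every element of $M$ is an $L^{Sk}$-term applied to finitely many of the $\overline{c}_i$.

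Next, fix a nonprincipal ultrafilter $\mathcal{U}$ on $\omega$ and let $\mathcal{D}_0,\mathcal{D}_1$ be its pushforwards to $M^{l(y)}$ along $i\mapsto c_{i,0}$ and $i\mapsto c_{i,1}$ respectively, i.e.\ $S\in\mathcal{D}_j\iff\{i<\omega:c_{i,j}\in S\}\in\mathcal{U}$. I would then check that $\text{Av}(\mathcal{D}_0,M)=\text{Av}(\mathcal{D}_1,M)$. Given $\psi(x;a)$ with $a\in M$, write $a=t(\overline{c}_{i_1},\dots,\overline{c}_{i_m})$ for an $L^{Sk}$-term $t$ and $i_1<\dots<i_m$; by $L$-indiscernibility of $(\overline{c}_i)_{i<\omega}$ the truth value of $\psi(c_{i,j};a)$ is constant for all $i>i_m$, and by $c_{i,0}\equiv^{L^{Sk}}_{\overline{c}_{<i}}c_{i,1}$ (restricted to $\overline{c}_{i_1}\cdots\overline{c}_{i_m}$, which lies below $i$) this eventual value is the same for $j=0$ and $j=1$; since $\mathcal{U}$ contains the cofinite filter, this gives $\psi(x;a)\in\text{Av}(\mathcal{D}_0,M)\iff\psi(x;a)\in\text{Av}(\mathcal{D}_1,M)$. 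Let $p\in S(M)$ be this common type (complete over $M$ by Fact~\ref{average}) and choose $b\models p$, so $\text{Av}(\mathcal{D}_0,M)=\text{Av}(\mathcal{D}_1,M)=\text{tp}(b/M)$.

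Finally, put $q_j:=\text{Av}(\mathcal{D}_j,\mathbb{M})$; by Fact~\ref{average} each $q_j$ is a global $M$-invariant type extending $p$. Since $\mathcal{D}_j$ is the $\mathcal{U}$-average along the $\emptyset$-indiscernible sequence $(c_{i,j})_{i<\omega}$ and $\mathcal{U}$ refines the cofinite filter, $q_j$ restricted to $\{c_{i,j}:i<\omega\}$ is the associated limit type; hence, by the standard properties of Morley sequences in such average types, a Morley sequence $(b^j_n)_{n<\omega}\models q_j^{\otimes\omega}|_M$ extends $(c_{i,j})_{i<\omega}$ to an indiscernible sequence realizing, up to reindexing, the EM-type of $(c_{i,j})_{i<\omega}$ over $\emptyset$. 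Consequently $\{\varphi(x;b^j_n):n<\omega\}$ has exactly the same consistency behaviour as $\{\varphi(x;c_{i,j}):i<\omega\}$ (only the set of instances of $\varphi$ matters, so the reindexing is harmless). For $j=0$ this set is consistent, so $\varphi(x;y)$ does not $q_0$-divide over $M$ and $(\varphi,M,\mathcal{D}_0)$ is higher; for $j=1$ it is $k$-inconsistent, so $\varphi(x;y)$ $q_1$-divides over $M$ and $(\varphi,M,\mathcal{D}_1)$ is not higher, as required.

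The main obstacle is the middle step: forcing the two columns to yield ultrafilters with the \emph{same} average type over $M$. The relation $c_{i,0}\equiv_{\overline{c}_{<i}}c_{i,1}$ only controls types over the earlier part of the sequence, so to exploit it we must take $M$ small enough to be governed by the sequence itself — which is exactly what the passage to the $L^{Sk}$-indiscernible configuration from the remark buys us, since then $M=\operatorname{dcl}^{L^{Sk}}(\{\overline{c}_i:i<\omega\})$ contains no parameters beyond $L^{Sk}$-terms in the $\overline{c}_i$'s. The remaining ingredient, that a Morley sequence in the $\mathcal{U}$-limit type of an indiscernible sequence again realizes that sequence's EM-type, is routine.
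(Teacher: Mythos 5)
Your proposal is correct and follows essentially the same approach as the paper's proof: Skolemize, take $M$ to be the Skolem hull of the indiscernible array, put ultrafilters on the two columns, and observe that the averages over $M$ agree while exactly one column witnesses inconsistency along a Morley sequence. The only cosmetic difference is that the paper uses an array indexed by $\omega+1$ so that both averages can be read off immediately as $\text{tp}_L(c_{\omega,j}/M)$, whereas you verify the equality of $\text{Av}(\mathcal{D}_0,M)$ and $\text{Av}(\mathcal{D}_1,M)$ by hand via a term-by-term indiscernibility argument.
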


\begin{proof}
Fix a Skolemization $T^{\text{Sk}}$ of $M$.  As $T$ has SOP$_{1}$, there is, by Proposition \ref{arrayequivalent}, a formula $\varphi(x;y)$ and an array $(c_{i,j})_{i < \omega + 1, j < 2}$ such that 
\begin{enumerate}
\item $(\overline{c}_{i})_{i < \omega + 1}$ is an indiscernible sequence (with respect to the Skolemized language)
\item $c_{\omega,0} \equiv^{L^{Sk}}_{\overline{c}_{<\omega}} c_{\omega,1}$.
\item $\{\varphi(x;c_{i,0}) : i < \omega+1\}$ is consistent.
\item If $i < j$, then $\{\varphi(x;c_{i,1}), \varphi(x;c_{j,1})\}$ is inconsistent.
\end{enumerate}  
Put $M = \text{Sk}(\overline{c}_{<\omega})$.  For $j=0,1$, let $\mathcal{D}_{j}$ be any non-principal ultrafilter on $M$, concentrating on $\langle c_{i,j} : i < \omega \rangle$ and set $q_{j} = \text{Av}_{L}(\mathcal{D}_{j},\mathbb{M})$ for $j=0,1$.  Note that $q_{0}|_{M} = \text{tp}_{L}(c_{\omega,0}/M)= \text{tp}_{L}(c_{\omega,1}/M) = q_{1}|_{M}$ by (2).  By (3), $\varphi(x;y)$ does not $q_{0}$-divide, hence $(\varphi,M, \mathcal{D}_{0})$ is higher.  However, by (4), $\{\varphi(x;c_{j,1}) : j < \omega\}$ is $2$-inconsistent hence $\varphi(x;y)$ $q_{1}$-divides, so $(\varphi,M,\mathcal{D}_{1})$ is not higher.  
\end{proof}

\begin{prop}\label{qdiv}
Suppose $A$ is a set of parameters and $\varphi(x;b)$ is a formula which $q$-divides over $A$ for some global $A$-invariant type $q \supseteq \text{tp}(b/A)$.  If there is some global $A$-invariant $r \supseteq \text{tp}(b/A)$ such that $\varphi(x;y)$ does not $r$-divide, then $T$ has SOP$_{1}$.  
\end{prop}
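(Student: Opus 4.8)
The plan is to extract from the two invariant types an array witnessing clause (3) of Proposition \ref{arrayequivalent}, whence SOP$_1$. After adding constants for $A$ — which changes neither SOP$_1$ nor the hypotheses — we may assume $A = \emptyset$; write $p = \text{tp}(b)$. Two preliminary observations fix the endpoints of the construction. First, since $\varphi$ $q$-divides and every Morley sequence in $q$ is indiscernible and has type $q^{\otimes\omega}$ over the base, a standard compactness argument gives a single $k$ so that for \emph{every} Morley sequence $\langle b_i : i < \omega\rangle$ in $q$, the set $\{\varphi(x;b_i) : i<\omega\}$ is $k$-inconsistent. Second, since $\varphi$ does not $r$-divide, for every Morley sequence $\langle c_i : i<\omega\rangle$ in $r$ the set $\{\varphi(x;c_i) : i<\omega\}$ is consistent.

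I would then construct the array $(c_{i,j})_{i<\omega,\,j<2}$ by recursion on $i$, maintaining: (a) $\langle c_{i,1} : i<\omega\rangle$ is a Morley sequence in $q$; (b) $\langle c_{i,0} : i<\omega\rangle$ is a Morley sequence in $r$; (c) $c_{i,0} \equiv_{\overline c_{<i}} c_{i,1}$. Given $\overline c_{<i}$, the step amounts to finding a complete type $s_i(y)$ over $\overline c_{<i}$ that extends \emph{both} $q|_{c_{<i,1}}$ and $r|_{c_{<i,0}}$; granted such an $s_i$, take $c_{i,0}, c_{i,1} \models s_i$, and (a)–(c) hold at once. By the first observation column $1$ is then $k$-inconsistent for $\varphi$, by the second column $0$ is consistent for $\varphi$, and (c) gives exactly the matching condition of Proposition \ref{arrayequivalent}(3).

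The crux — and the step I expect to be the main obstacle — is that $q|_{c_{<i,1}} \cup r|_{c_{<i,0}}$ must remain consistent at every stage, i.e.\ the common extension $s_i$ must continue to exist; this is not delivered by (a)–(c) on their own, essentially because $q$ and $r$ differ on the growing base, so the recursion has to be carried out with foresight. Concretely, I would build alongside the array a coherent system of ``witness'' elements $w_i$ with $w_i \models q|_{c_{<i,1}}$ and $w_i \models r|_{c_{<i,0}}$, choosing $w_i$ and row $i$ so that an appropriate $w_{i+1}$ can still be located — this is where the invariance of $q$ and $r$ enters, via the usual stretching of invariant types to reposition a realization as the generic point of a given type over some other realization of $p$. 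An alternative, and perhaps cleaner, route is to target Lemma \ref{fancy} instead of Proposition \ref{arrayequivalent}: there the matching requirement is only $e_{i,0} \equiv_{c_{<i,0} e_{<i,1}} e_{i,1}$ on a sub-tuple, and the inconsistency is the triangular condition relating $\chi_1$-instances on column $0$ to $\chi_2$-instances on column $1$, which one arranges by letting the column-$0$ data encode a tuple of $\varphi$-instances from a Morley sequence in $q$ (exactly as in the proof of Lemma \ref{karyversion}) while using non-$r$-dividing only to keep column $0$ consistent; this decouples the two sides enough to make the coherence bookkeeping routine.
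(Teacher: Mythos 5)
Your high-level target is right (produce an array as in Proposition~\ref{arrayequivalent}(3), or feed Lemma~\ref{karyversion}), and your two preliminary observations ($k$-inconsistency for $q$-Morley sequences, consistency for $r$-Morley sequences) are correct, but the proposal has a genuine unresolved gap exactly where you flag it: you want each row $i$ to realize a single complete type $s_i$ extending both $q|_{c_{<i,1}}$ and $r|_{c_{<i,0}}$, and there is no reason such a common extension should exist --- $q$ and $r$ agree on $M$ but can disagree wildly on the growing base, so $q|_{c_{<i,1}}\cup r|_{c_{<i,0}}$ is simply a partial type whose consistency you never establish. The proposed fix (maintaining a coherent system of witnesses $w_i$) is only gestured at, and the alternative route through Lemma~\ref{fancy} is not spelled out in a way that removes the same obstruction. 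In its current form this is a plan with a hole, not a proof.

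The paper's argument sidesteps this entirely by taking the tensor product $q\otimes r$, which is an $A$-invariant global type in the pair of variables, and letting $(c_{i,1},c_{i,0})_{i\in\mathbb Z}\models (q\otimes r)^{\otimes\mathbb Z}|_M$. You do \emph{not} need $c_{i,0}$ and $c_{i,1}$ to realize the same type over $\overline c_{<i}$, and you do not need a common extension of $q$ and $r$: automatically $(c_{i,1})$ is a Morley sequence in $q$, $(c_{i,0})$ is a Morley sequence in $r$, and the matching condition comes for free from invariance, since $\overline c_{>i}$ realizes a global $A$-invariant type over $Ac_{i,0}c_{i,1}$ and $c_{i,0}\equiv_A c_{i,1}$ (both realize $\mathrm{tp}(b/A)$). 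Note they get $c_{i,0}\equiv_{\overline c_{>i}} c_{i,1}$ rather than $\equiv_{\overline c_{<i}}$; indexing over $\mathbb Z$ and reversing the order then yields exactly the input to Lemma~\ref{karyversion}. This ``reverse the Morley sequence'' device is the thing your construction was implicitly fighting against: invariant types make the \emph{future} of the sequence equivalence-preserving over any fixed pair, whereas your recursion was trying to force agreement over the \emph{past}, which is where the coherence requirement has teeth.
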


\begin{proof}
As $\varphi(x;y)$ $q$-divides over $A$, there is $k$ so that instances of $\varphi(x;y)$ instantiated on a Morley sequence of $q$ are $k$-inconsistent.  

Let $(c_{i,1},c_{i,0})_{i \in \mathbb{Z}} \models (q \otimes r)^{\otimes \mathbb{Z}}|_{M}$.  We have to check that the sequence satisfies the following properties:
\begin{enumerate}
\item $\{\varphi(x;c_{i,0}) : i \in \mathbb{Z}\}$ is consistent
\item $\{\varphi(x;c_{i,1}) : i \in \mathbb{Z}\}$ is $k$-inconsistent
\item $c_{i,0} \equiv_{\overline{c}_{>i}} c_{i,1}$ for all $i \in \mathbb{Z}$.
\end{enumerate}
Note that $(c_{i,0})_{i \in \mathbb{Z}} \models r^{\otimes \mathbb{Z}}|_{M}$ so (1) follows from our assumption that $\varphi(x;y)$ does not $r$-divide.  Likewise, $(c_{i,1})_{i \in \mathbb{Z}} \models q^{\otimes \mathbb{Z}}|_{M}$ so (2) follows from the fact that $\varphi(x,y)$ $q$-divides.  Finally, for any $i \in \mathbb{Z}$, we have $\overline{c}_{>i}$ realizes a global $M$-invariant type over $Mc_{i,0}c_{i,1}$.  Hence (3) follows from the fact that $c_{i,0} \equiv_{M} c_{i,1}$.  
\end{proof}

\begin{thm}\label{kimslemmaforindk}
The following are equivalent for the complete theory $T$:
\begin{enumerate}
\item $T$ is NSOP$_{1}$
\item Ultrafilter independence of higher formulas:  for every model $M \models T$, and ultrafilters $\mathcal{D}$ and $\mathcal{E}$ on $M$ with $\text{Av}(\mathcal{D},M) = \text{Av}(\mathcal{E},M)$, $(\varphi,M, \mathcal{D})$ is higher if and only if $(\varphi, M,\mathcal{E})$ is higher. 
\item Kim's lemma for Kim-dividing:  For every model $M \models T$ and $\varphi(x;b)$, if $\varphi(x;y)$ $q$-divides for some global $M$-invariant $q \supseteq \text{tp}(b/M)$, then $\varphi(x;y)$ $q$-divides for \emph{every} global $M$-invariant $q \supseteq \text{tp}(b/M)$.  
\end{enumerate}
\end{thm}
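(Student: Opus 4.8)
The plan is to establish the cycle of implications $(1)\Rightarrow(3)\Rightarrow(2)\Rightarrow(1)$. Almost all of the content has already been isolated in Propositions \ref{higher} and \ref{qdiv}, so the real work here is bookkeeping: unwinding the definitions of $q$-dividing and of a higher formula, and keeping track of which global invariant types are visible to each of the three reformulations.

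For $(1)\Rightarrow(3)$ I would argue by contraposition. Suppose (3) fails, so there are a model $M\models T$, a formula $\varphi(x;b)$, and global $M$-invariant types $q,r\supseteq\text{tp}(b/M)$ with $\varphi(x;y)$ $q$-dividing but not $r$-dividing. Applying Proposition \ref{qdiv} with $A=M$ immediately yields that $T$ has SOP$_1$, i.e. (1) fails. (Once the cycle is closed this also gives $(1)\Leftrightarrow(3)$.)

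For $(3)\Rightarrow(2)$, fix $M\models T$ and ultrafilters $\mathcal{D},\mathcal{E}$ on $M$ with $\text{Av}(\mathcal{D},M)=\text{Av}(\mathcal{E},M)$, and pick any $b$ realizing this common complete type over $M$. By Fact \ref{average} the global types $\text{Av}(\mathcal{D},\mathbb{M})$ and $\text{Av}(\mathcal{E},\mathbb{M})$ are finitely satisfiable in $M$, hence $M$-invariant, and both extend $\text{tp}(b/M)$. By the reformulation recorded just after the definition of a higher formula, $(\varphi,M,\mathcal{D})$ is higher exactly when $\varphi(x;y)$ does not $\text{Av}(\mathcal{D},\mathbb{M})$-divide over $M$, and likewise for $\mathcal{E}$; so clause (3) applied to these two types is precisely the statement that $(\varphi,M,\mathcal{D})$ is higher iff $(\varphi,M,\mathcal{E})$ is higher, which is (2). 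I would flag here that this implication is not obviously reversible: (3) quantifies over \emph{all} global $M$-invariant extensions of $\text{tp}(b/M)$, whereas (2) only sees the ones that are finitely satisfiable in $M$; this is exactly why the argument must be routed through NSOP$_1$ rather than closed by a direct $(2)\Rightarrow(3)$.

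For $(2)\Rightarrow(1)$, again by contraposition: if $T$ has SOP$_1$, Proposition \ref{higher} supplies a model $M$, a formula $\varphi(x;b)$, and ultrafilters $\mathcal{D}_0,\mathcal{D}_1$ on $M$ with $\text{Av}(\mathcal{D}_0,M)=\text{Av}(\mathcal{D}_1,M)$ such that $(\varphi,M,\mathcal{D}_0)$ is higher while $(\varphi,M,\mathcal{D}_1)$ is not, a direct violation of (2). This closes the cycle. I do not expect a genuine obstacle inside the proof of the theorem itself; the substantive difficulties are the extraction of the SOP$_1$-configuration in Proposition \ref{higher} and the array surgery (which ultimately feeds into Lemma \ref{karyversion}) in Proposition \ref{qdiv}, both of which are already available.
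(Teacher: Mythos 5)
Your proof is correct and follows essentially the same route as the paper: the cycle $(1)\Rightarrow(3)\Rightarrow(2)\Rightarrow(1)$, with $(1)\Rightarrow(3)$ as the contrapositive of Proposition \ref{qdiv}, $(3)\Rightarrow(2)$ by noting that finitely satisfiable types are $M$-invariant, and $(2)\Rightarrow(1)$ as the contrapositive of Proposition \ref{higher}. Your additional remark about why $(2)\Rightarrow(3)$ does not go through directly is apt and reflects the actual reason the cycle must be closed via NSOP$_1$.
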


\begin{proof}
(1)$\implies$(3) is the contrapositive of Proposition \ref{qdiv}.  

(2)$\implies$(1) is the contrapositive of Proposition \ref{higher}.

(3)$\implies$(2):  Immediate, since every type finitely satisfiable in $M$ is $M$-invariant.  
\end{proof}

\begin{rem}
Note that the proof gives a bit more:  if $T$ is NSOP$_{1}$, (2) is true over arbitrary sets and (3) is true over an arbitrary set $A$ as well, though this may be vacuous if $\text{tp}(b/A)$ does not extend to a global $A$-invariant type.  
\end{rem}

\subsection{The basic properties of Kim-independence}

Theorem \ref{kimslemmaforindk}, a kind of Kim's lemma for Kim-dividing, already gives a powerful tool for proving that in NSOP$_{1}$ theories Kim-independence enjoys many of the properties known to hold for non-forking independence in simple theories.  

We will frequently use the following easy observation.  The proof is exactly as in the case of dividing.  See, e.g., \cite[Lemma 1.5]{grossberg2002primer} or \cite[Lemma 1.4]{shelah1980simple}.

\begin{lem} \label{basiccharacterization}{(Basic Characterization of Kim-dividing)}
Suppose $T$ is an arbitrary complete theory.  The following are equivalent:
\begin{enumerate}
\item $\text{tp}(a/Ab)$ does not Kim-divide over $A$.  
\item For any global $A$-invariant $q \supseteq \text{tp}(b/A)$ and $I = \langle b_{i} : i < \omega \rangle \models q^{\otimes \omega}|_{A}$ with $b_{0} = b$, there is $a' \equiv_{Ab} a$ such that $I$ is $Aa'$-indiscernible.  
\item For any global $A$-invariant $q \supseteq \text{tp}(b/A)$ and $I = \langle b_{i} : i < \omega \rangle \models q^{\otimes \omega}|_{A}$ with $b_{0} = b$, there is $I' \equiv_{Ab} I$ such that $I'$ is $Aa$-indiscernible.  
\end{enumerate}
\end{lem}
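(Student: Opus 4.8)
The plan is to follow the classical proof of the basic characterization of dividing (cf. \cite[Lemma 1.5]{grossberg2002primer}, \cite[Lemma 1.4]{shelah1980simple}) essentially verbatim, observing that the argument survives intact once ``$A$-indiscernible sequence through $b$'' is replaced by ``Morley sequence in a global $A$-invariant type over $A$ beginning with $b$''. Concretely, I would prove the cyclic chain $(1)\Rightarrow(2)\Rightarrow(3)\Rightarrow(1)$. Throughout I use that $q|_{A}=\text{tp}(b/A)$ for any global $A$-invariant $q\supseteq\text{tp}(b/A)$ (so a Morley sequence in $q$ over $A$ may always be taken to start with $b$), that such a Morley sequence is $A$-indiscernible, and that, by the definition of $q$-dividing, inconsistency of $\{\varphi(x;b_{i}):i<\omega\}$ for one Morley sequence $\langle b_{i}\rangle$ in $q$ over $A$ is equivalent to inconsistency for all of them.

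For $(3)\Rightarrow(1)$ I argue the contrapositive. If $\text{tp}(a/Ab)$ Kim-divides over $A$, fix $\varphi(x;b)\in\text{tp}(a/Ab)$ and a global $A$-invariant $q\supseteq\text{tp}(b/A)$ such that $\varphi(x;y)$ $q$-divides, and take $I=\langle b_{i}:i<\omega\rangle\models q^{\otimes\omega}|_{A}$ with $b_{0}=b$, so $\{\varphi(x;b_{i}):i<\omega\}$ is inconsistent. If $I'=\langle b_{i}':i<\omega\rangle\equiv_{Ab}I$ were $Aa$-indiscernible, then $I'$ also realizes $q^{\otimes\omega}|_{A}$, so $\{\varphi(x;b_{i}'):i<\omega\}$ is inconsistent; but $\models\varphi(a;b)=\varphi(a;b_{0}')$ together with $Aa$-indiscernibility of $I'$ forces $\models\varphi(a;b_{i}')$ for all $i$, a contradiction. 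Hence no such $I'$ exists and $(3)$ fails for this $q,I$. The implication $(2)\Rightarrow(3)$ is immediate: given $q$ and $I$, use $(2)$ to obtain $a'\equiv_{Ab}a$ with $I$ being $Aa'$-indiscernible, pick $\sigma\in\text{Aut}(\mathbb{M}/Ab)$ with $\sigma(a')=a$, and set $I'=\sigma(I)$.

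The substantive step is $(1)\Rightarrow(2)$. Fix $q$ and $I=\langle b_{i}:i<\omega\rangle\models q^{\otimes\omega}|_{A}$ with $b_{0}=b$, and recall $I$ is $A$-indiscernible. For each $i$ let $p(x;b_{i})$ denote the copy of $\text{tp}(a/Ab)$ with $b$ replaced by $b_{i}$. First, $\bigcup_{i<\omega}p(x;b_{i})$ is consistent: a finite subset lies, after conjoining, in $\{\varphi(x;b_{i}):i<n\}$ for a single $\varphi(x;b)\in\text{tp}(a/Ab)$, and if this were inconsistent then $\varphi(x;y)$ would $q$-divide over $A$, so $\varphi(x;b)$ — hence $\text{tp}(a/Ab)$ — would Kim-divide over $A$, contradicting $(1)$. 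Choose $a''\models\bigcup_{i<\omega}p(x;b_{i})$; then $a''b_{i}\equiv_{A}ab$ for all $i$, so in particular $b_{i}\equiv_{Aa''}b_{0}$ for all $i$. Now extract from $I$ an $Aa''$-indiscernible sequence $J=\langle b_{i}':i<\omega\rangle$ based on $I$ over $Aa''$ (extending $I$ first to a long Morley sequence in $q$ over $A$ if one wishes, this being harmless as all Morley sequences in $q$ over $A$ are $A$-indiscernible with the same EM-type over $A$). Since $\text{tp}(b_{j}/Aa'')$ is independent of $j$, every element of $J$ has type $\text{tp}(b_{0}/Aa'')$ over $Aa''$, whence $a''b_{i}'\equiv_{A}ab$, i.e.\ $a''\models p(x;b_{i}')$ for all $i$; and $J$ has the same EM-type over $A$ as $I$, both being $A$-indiscernible of order type $\omega$, so $J\equiv_{A}I$. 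Finally pick $\tau\in\text{Aut}(\mathbb{M}/A)$ with $\tau(J)=I$ and put $a'=\tau(a'')$: then $I=\tau(J)$ is $Aa'$-indiscernible and $a'b=\tau(a''b_{0}')\equiv_{A}a''b_{0}'\equiv_{A}ab$, so $a'\equiv_{Ab}a$, as required.

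I do not expect a serious obstacle. The only points needing care are (i) that inconsistency of $\{\varphi(x;b_{i}):i<\omega\}$ along a Morley sequence in $q$ genuinely amounts to $q$-dividing — this is exactly where the hypothesis that $I$ is a Morley sequence in an invariant type (hence $A$-indiscernible) and the ``some $\iff$ any'' clause in the definition of $q$-dividing are used — and (ii) the routine bookkeeping that the extracted sequence $J$ is simultaneously $Aa''$-indiscernible, $A$-conjugate to $I$, and has each element realizing $p(x;-)$ over $Aa''$.
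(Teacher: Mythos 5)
Your proof is correct and follows exactly the route the paper intends: the paper omits the proof, pointing to the classical ``basic characterization of dividing'' argument, and your write-up is precisely that argument transported to the Kim-dividing setting, with ``$A$-indiscernible sequence beginning with $b$'' systematically replaced by ``Morley sequence in a global $A$-invariant type over $A$ beginning with $b$'' and the ``some $\iff$ any'' clause of $q$-dividing used in place of the corresponding fact for dividing. The one place worth tightening is the extraction step in $(1)\Rightarrow(2)$: to extract an $Aa''$-indiscernible $J$ you must first stretch $I$ to a long Morley sequence $\hat{I}\models q^{\otimes\kappa}|_A$, and then $a''$ should be chosen realizing $\bigcup_{i<\kappa}p(x;\hat{b}_i)$ (same finite-consistency argument) rather than merely $\bigcup_{i<\omega}p(x;b_i)$; your parenthetical acknowledges this but the order of quantifiers in the text has $a''$ fixed before the stretching.
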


Note that in an NSOP$_{1}$ theory, by Kim's Lemma for Kim-dividing, we could have replaced (2) by:  \emph{there is} a global $A$-invariant $q \supseteq \text{tp}(b/A)$ and $I = \langle b_{i} : i < \omega \rangle \models q^{\otimes \omega}|_{A}$ with $b_{0} = b$, so that for some $a' \equiv_{Ab} a$ $I$ is $Aa'$-indiscernible (and similarly for (3)), provided $\text{tp}(b/A)$ extends to a global $A$-invariant type.    

The following proposition is proved by the same argument one uses to prove forking = dividing  via Kim's lemma, as in \cite[Theorem 2.5]{grossberg2002primer} or \cite[Corollary 3.16]{chernikov2012forking}.

\begin{prop}\label{kforkingequalskdividing}{(Kim-forking = Kim-dividing)}
Suppose $T$ is NSOP$_{1}$.  If $M \models T$, if $\varphi(x;b)$ Kim-forks over $M$ then $\varphi(x;b)$ Kim-divides over $M$.
\end{prop}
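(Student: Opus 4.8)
The plan is to run the classical Kim's-lemma argument that upgrades ``forking equals dividing'' from the simple setting, using Theorem \ref{kimslemmaforindk} as the replacement for the usual Kim's lemma. Suppose $\varphi(x;b)$ Kim-forks over $M$, so $\varphi(x;b) \vdash \bigvee_{i<k}\psi_i(x;c^i)$ where each $\psi_i(x;c^i)$ Kim-divides over $M$. The goal is to show $\varphi(x;b)$ itself Kim-divides over $M$, i.e.\ that it $q$-divides for some (equivalently, by Theorem \ref{kimslemmaforindk}, every) global $M$-invariant $q \supseteq \operatorname{tp}(b/M)$.

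First I would fix a global $M$-invariant type $q \supseteq \operatorname{tp}(bc^0\cdots c^{k-1}/M)$ — this exists because $M$ is a model — and let $\langle b_jc^0_j\cdots c^{k-1}_j : j<\omega\rangle \models q^{\otimes\omega}|_M$ be a Morley sequence in $q$ over $M$ with first term $bc^0\cdots c^{k-1}$. It suffices to show $\{\varphi(x;b_j):j<\omega\}$ is inconsistent. Suppose toward a contradiction it is consistent, realized by some $a$. Then for each $j<\omega$ we have $a \models \varphi(x;b_j)$, hence $a \models \bigvee_{i<k}\psi_i(x;c^i_j)$ (applying an automorphism fixing $M$ sending the first term to the $j$-th term, using that the implication $\varphi(x;b)\vdash\bigvee\psi_i(x;c^i)$ is over $M$). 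By the pigeonhole principle there is a single $i^*<k$ and an infinite subset $S\subseteq\omega$ with $a\models\psi_{i^*}(x;c^{i^*}_j)$ for all $j\in S$; reindexing, $\langle c^{i^*}_j : j\in S\rangle$ is a Morley sequence in the corresponding pushforward $q_{i^*} := (\text{coordinate projection})_*q$, which is a global $M$-invariant type extending $\operatorname{tp}(c^{i^*}/M)$. Then $\{\psi_{i^*}(x;c^{i^*}_j):j\in S\}$ is consistent, so $\psi_{i^*}(x;y)$ does not $q_{i^*}$-divide over $M$. But $\psi_{i^*}(x;c^{i^*})$ Kim-divides over $M$ by assumption, so by Kim's lemma for Kim-dividing (Theorem \ref{kimslemmaforindk}, (1)$\Rightarrow$(3)) it $q_{i^*}$-divides over $M$ — contradiction. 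Hence $\{\varphi(x;b_j):j<\omega\}$ is inconsistent, so $\varphi(x;b)$ $q'$-divides where $q'$ is the pushforward of $q$ to the $b$-coordinate, giving that $\varphi(x;b)$ Kim-divides over $M$.

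The one point requiring a little care is the bookkeeping with pushforwards: I need that if $\langle (b_j,c^0_j,\ldots,c^{k-1}_j) : j<\omega\rangle$ is a Morley sequence in the $M$-invariant type $q$, then each coordinate subsequence is a Morley sequence in the corresponding pushforward $M$-invariant type, and that passing to an infinite subsequence preserves this. Both are routine from the definition of $q^{\otimes\omega}$ and the fact that a subsequence of a Morley sequence in an invariant type is again one. The genuinely nontrivial input is Theorem \ref{kimslemmaforindk}, which is what licenses replacing ``$\psi_{i^*}$ Kim-divides, i.e.\ $p$-divides for \emph{some} invariant $p$'' with ``$\psi_{i^*}$ $q_{i^*}$-divides for the \emph{particular} $q_{i^*}$ we produced''; this is exactly the analogue of how Kim's lemma is used in the simple case, and it is the only place NSOP$_1$ is used. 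So the main obstacle is not really an obstacle at all once Theorem \ref{kimslemmaforindk} is in hand — it is simply ensuring the pigeonhole and reindexing are set up so that the hypothesis of Theorem \ref{kimslemmaforindk} applies to each $\psi_{i^*}$ with a global $M$-invariant type in the correct variables.
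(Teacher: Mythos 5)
Your proof is correct and follows essentially the same route as the paper: take a Morley sequence in a single global $M$-invariant type extending the type of the concatenated tuple $bc^{0}\cdots c^{k-1}$, apply pigeonhole to a would-be realization, and use Kim's lemma for Kim-dividing (Theorem \ref{kimslemmaforindk}) to contradict the consistency of the $\psi_{i^{*}}$-instances along the coordinate subsequence. The only cosmetic difference is that the paper realizes the invariant type concretely as an ultrafilter average $\mathrm{Av}(\mathcal{D},\mathbb{M})$, whereas you take an arbitrary global $M$-invariant extension; both exist since $M$ is a model, so this changes nothing.
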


\begin{proof}
Suppose $\varphi(x;b) \vdash \bigvee_{j< k} \psi_{j}(x;c^{j})$ where each $\psi_{i}(x;c^{i})$ Kim-divides over $M$.  Fix an ultrafilter $\mathcal{D}$ on $M$ so that $(b,c^{0}, \ldots, c^{k-1}) \models \text{Av}(\mathcal{D}, M)$.  Let $(b_{i},c_{i}^{0}, \ldots, c^{k-1}_{i})_{i < \omega}$ be a Morley sequence in $\text{Av}(\mathcal{D},\mathbb{M})$.  Then $(b_{i})_{i < \omega}$ is an $M$-invariant Morley sequence.  We must show $\{\varphi(x;b_{i}) : i < \omega \}$ is inconsistent.  Suppose not -- let $a \models \{\varphi(x;b_{i}) : i < \omega\}$.  We have $\varphi(x;b_{i}) \vdash \bigvee_{j < k} \psi_{j}(x;c^{j}_{i})$ so for each $i < \omega$, there is $j(i) < k$ so that $\models \psi_{j(i)}(a;c^{j(i)}_{i})$.  By the pigeonhole principle, there is $j_{*} < k$ so that $X = \{i < \omega : j(i) = j_{*}\}$ is infinite.  Then $(c^{j_{*}}_{i})_{i \in X}$ is an $M$-invariant Morley sequence in $\text{tp}(c^{j_{*}}/M)$.  As $T$ is NSOP$_{1}$, Kim-dividing over $M$ is witnessed by any $M$-invariant Morley sequence so $\{\psi_{j_{*}}(x;c^{j_{*}}_{i}) : i \in X\}$ is inconsistent.  But $a \models \{\psi_{j_{*}}(x;c^{j_{*}}_{i}) : i \in X\}$, a contradiction.  
\end{proof}

\begin{prop}\label{extension}{(Extension over Models)}
Suppose $M$ is a model, and $a \ind^{K}_{M} b$.  Then for any $c$, there is $a' \equiv_{Mb} a$ so that $a' \ind^{K}_{M} bc$.
\end{prop}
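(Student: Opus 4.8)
The plan is to run the standard extension argument, exactly as one does for ordinary non-forking: exhibit a consistent partial type over $Mbc$ whose realizations restrict to $\text{tp}(a/Mb)$ and ``avoid Kim-forking over $M$'', and then observe that any realization does the job.

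Write $p(x) = \text{tp}(a/Mb)$, and recall from the definitions that a (partial) type Kim-forks over $M$ precisely when it implies a disjunction of formulas Kim-dividing over $M$; in particular the formulas $\psi(x;d)$ with $d$ from $Mbc$ that Kim-fork over $M$ are closed under disjunction and under passing to a logically weaker formula. Consider
$$
\Sigma(x)\;=\;p(x)\;\cup\;\{\,\neg\psi(x;d)\;:\;d\in Mbc,\ \psi(x;d)\text{ Kim-forks over }M\,\}.
$$
The first step is to check that $\Sigma(x)$ is consistent. If not, then by compactness there are a formula $\chi(x)\in p(x)$ and finitely many formulas $\psi_0(x;d_0),\dots,\psi_{n-1}(x;d_{n-1})$ over $Mbc$, each Kim-forking over $M$, with $\chi(x)\vdash\bigvee_{i<n}\psi_i(x;d_i)$. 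Since the right-hand side Kim-forks over $M$, so does $\chi(x)$; as $p(x)\vdash\chi(x)$, this makes $p(x)=\text{tp}(a/Mb)$ Kim-fork over $M$, contradicting $a\ind^{K}_{M}b$.

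The second step is to fix any $a'\models\Sigma(x)$ — such $a'$ exists in $\mathbb{M}$ since $Mbc$ is small. Then $a'\models p(x)$, so $a'\equiv_{Mb}a$. To see $a'\ind^{K}_{M}bc$, suppose toward a contradiction that $\text{tp}(a'/Mbc)$ Kim-forks over $M$; then $\text{tp}(a'/Mbc)\vdash\bigvee_{i<k}\theta_i(x;c^i)$ with each $\theta_i(x;c^i)$ Kim-dividing over $M$, and by compactness some $\chi(x)\in\text{tp}(a'/Mbc)$ satisfies $\chi(x)\vdash\bigvee_{i<k}\theta_i(x;c^i)$, so that $\chi(x)$ Kim-forks over $M$. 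But $\chi(x)$ is a formula over $Mbc$, hence $\neg\chi(x)\in\Sigma(x)$ and $a'\models\neg\chi(x)$ — contradicting $\chi(x)\in\text{tp}(a'/Mbc)$. Therefore $a'\ind^{K}_{M}bc$, as required.

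I do not anticipate a genuine obstacle here: this is a purely formal argument, using only the definition of Kim-forking together with compactness and the saturation of $\mathbb{M}$, and in particular it uses neither that $T$ is NSOP$_1$ nor Proposition \ref{kforkingequalskdividing} (though one could equally phrase it in terms of Kim-dividing and invoke the latter). The two points worth a moment's attention are that a single formula of $\text{tp}(a/Mb)$ implying a disjunction of Kim-forking formulas is exactly what it means for $\text{tp}(a/Mb)$ to Kim-fork over $M$, so the contradiction with $a\ind^{K}_{M}b$ is genuine; and that all parameters appearing in $\Sigma(x)$ lie in the small set $Mbc$, so that $\Sigma(x)$ is realized inside $\mathbb{M}$ once it has been shown consistent.
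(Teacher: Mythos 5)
Your proof is correct and takes essentially the same approach as the paper: form the natural partial type $p(x)\cup\{\neg\psi(x;d)\}$ over $Mbc$, show it is consistent by compactness (else $\text{tp}(a/Mb)$ Kim-forks over $M$), and take any realization. The only cosmetic difference is that you exclude all Kim-forking formulas while the paper excludes only Kim-dividing ones; since a formula Kim-forks iff it implies a disjunction of Kim-dividing formulas, the two sets define the same partial type and the argument is identical.
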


\begin{proof}
This is exactly as in the usual proof that forking satisfies extension.  Let $p(x;b) = \text{tp}(a/Mb)$.  We claim that the following set of formulas is consistent:
$$
p(x;b) \cup \{ \neg \psi(x;b,c) : \psi(x;b,c) \in L(Mbc) \text{ and } \psi(x;b,c) \text{ Kim-divides over } M\}.
$$
If this set of formulas is not consistent, then by compactness, 
$$
p(x;b) \vdash \bigvee_{i < k} \psi_{i}(x;b,c_{i}),
$$
where each $\psi_{i}(x;b,c_{i})$ Kim-divides over $M$.  It follows that $\text{tp}(a/Mb)$ Kim-forks over $M$, a contradiction.  So this set is consistent and we may choose a realization $a'$.  Then $a' \ind^{K}_{M} bc$ and $a' \equiv_{Mb} a$.  
\end{proof}

\begin{prop}\label{chaincondition}{(Chain Condition for Invariant Morley Sequences)}
Suppose $T$ is NSOP$_{1}$ and $M \models T$.  If $a \ind^{K}_{M} b$ and $q \supseteq \text{tp}(b/M)$ is a global $M$-invariant type, then for any $I = \langle b_{i} : i < \omega \rangle \models q^{\otimes \omega}|_{M}$ with $b = b_{0}$, there is $a' \equiv_{Mb} a$ so that $a' \ind_{M} I$ and $I$ is $Ma'$-indiscernible.  
\end{prop}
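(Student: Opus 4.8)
The plan is to prove that the witness $a'$ handed to us by the Basic Characterization already satisfies all three requirements, with Kim-independence from the whole sequence coming for free from Kim's lemma. First, since $T$ is NSOP$_{1}$, Proposition~\ref{kforkingequalskdividing} lets me rephrase $a \ind^{K}_{M} b$ as: $\text{tp}(a/Mb)$ does not Kim-divide over $M$. Applying Lemma~\ref{basiccharacterization}(2) with the given global $M$-invariant type $q$ and the given $I = \langle b_{i} : i < \omega \rangle \models q^{\otimes\omega}|_{M}$ (recall $b = b_{0}$), I obtain $a' \equiv_{Mb} a$ such that $I$ is $Ma'$-indiscernible. So two of the three desired conclusions hold by construction; the entire content is to verify that this same $a'$ satisfies $a' \ind^{K}_{M} I$.

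For that it suffices, again by Proposition~\ref{kforkingequalskdividing}, to show that no formula in $\text{tp}(a'/MI)$ Kim-divides over $M$. Suppose toward a contradiction that one does; absorbing parameters from $M$, write it as $\varphi(x; b_{i_{1}}, \ldots, b_{i_{n}})$ with $\varphi$ over $M$, $i_{1} < \cdots < i_{n} < \omega$, $n \geq 1$. Since $I$ is $Ma'$-indiscernible, every increasing $n$-tuple from $I$ has the same type over $Ma'$ as $(b_{i_{1}}, \ldots, b_{i_{n}})$, so for the consecutive non-overlapping blocks $\overline{d}_{m} := (b_{mn}, b_{mn+1}, \ldots, b_{mn+n-1})$, $m < \omega$, we get $a' \models \{\varphi(x; \overline{d}_{m}) : m < \omega\}$; in particular this set is consistent. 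On the other hand, unwinding the recursive definition of $q^{\otimes\omega}$ shows that $\langle \overline{d}_{m} : m < \omega \rangle$ is a Morley sequence over $M$ in the global $M$-invariant type $q^{\otimes n}$, and $q^{\otimes n} \supseteq \text{tp}(\overline{d}_{0}/M) = \text{tp}((b_{i_{1}}, \ldots, b_{i_{n}})/M)$ (both tuples realize $q^{\otimes n}|_{M}$). Since whether a formula Kim-divides over $M$ depends only on the type of its parameters over $M$, $\varphi(x; \overline{d}_{0})$ Kim-divides over $M$, whence by Kim's lemma for Kim-dividing (Theorem~\ref{kimslemmaforindk}, using NSOP$_{1}$) $\varphi(x;y)$ in fact $q^{\otimes n}$-divides over $M$; so $\{\varphi(x; \overline{d}_{m}) : m < \omega\}$ is inconsistent --- contradiction. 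Hence $\text{tp}(a'/MI)$ does not Kim-divide over $M$, and therefore $a' \ind^{K}_{M} I$.

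The conceptual obstacle is that Lemma~\ref{basiccharacterization} controls only the type of $a'$ over $Mb_{0}$ and is silent about Kim-independence from all of $I$; and the naive fixes --- first using extension (Proposition~\ref{extension}) to arrange $a' \ind^{K}_{M} I$ and then trying to restore indiscernibility of $I$ over $Ma'$, or the reverse --- chase their own tails, since each repair threatens the gain of the other. The point of the argument above is that no modification of $a'$ is needed: the failure of $\text{tp}(a'/MI)$ to Kim-divide is extracted from indiscernibility of $I$ over $Ma'$ by running Kim's lemma on the internal $n$-blocks of the single sequence $I$, using the elementary fact that a block decomposition of a Morley sequence in $q$ is again a Morley sequence in $q^{\otimes n}$. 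The only routine verification is exactly that last fact, which falls straight out of the definition of $q^{\otimes\omega}$.
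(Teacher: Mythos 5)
Your proof is correct and follows the paper's own argument essentially verbatim: obtain $a'$ from the Basic Characterization so that $I$ is $Ma'$-indiscernible, then reduce Kim-independence from $I$ to the blocked sequence $\langle (b_{mn},\ldots,b_{mn+n-1}) : m<\omega\rangle$, which is a Morley sequence in $q^{\otimes n}$, and invoke Kim's lemma together with Kim-forking $=$ Kim-dividing. (If anything, your use of increasing rather than decreasing order within each block is the correct reading; the paper's printed block order is a slip.)
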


\begin{proof}
By the basic characterization of Kim-dividing, Lemma \ref{basiccharacterization}, given $a \ind^{K}_{M} b$, $q \supseteq \text{tp}(b/M)$ a global $M$-invariant type, and $I = \langle b_{i} : i < \omega \rangle \models q^{\otimes \omega}|_{M}$ with $b = b_{0}$, there is $a' \equiv_{Mb} a$ so that $I$ is $Ma'$-indiscernible.  To prove the proposition it suffices to show $a' \ind^{K}_{M} b_{<n}$ for all $n$.    Given $n < \omega$, let $r(x;y_{0}, \ldots, y_{n-1})  = \text{tp}(a'; b_{0}, \ldots, b_{n-1}/M)$.  Then $\langle (b_{kn + n-1}, b_{kn + n-2}, \ldots, b_{kn}) : k < \omega \rangle \models (q^{\otimes n})^{\otimes \omega}|_{M}$ and, by indiscernibility, 
$$
a' \models \bigcup_{k < \omega} r(x;b_{kn + n-1}, b_{kn + n-2}, \ldots, b_{kn}).
$$
As $T$ is NSOP$_{1}$, this shows $a' \ind^{K}_{M} b_{<n}$.  
\end{proof}

Section \ref{symmetrysection} will be dedicated to the proof that $\ind^{K}$ is symmetric in NSOP$_{1}$ theories.  The argument will require more tools, but at this stage we can already observe the converse: even a weak form of symmetry for $\ind^{K}$ will imply that a theory is NSOP$_{1}$.  

\begin{prop}\label{weaksymmetrylemma}
The following are equivalent for a complete theory $T$:
\begin{enumerate}
\item $T$ is NSOP$_{1}$.
\item Weak symmetry:  if $M \models T$, then  $b \ind^{i}_{M} a \implies a \ind^{K}_{M} b$.   
\end{enumerate}
\end{prop}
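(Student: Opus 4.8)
The plan is to prove both implications, with $(1)\Rightarrow(2)$ a short consequence of the results already established and $(2)\Rightarrow(1)$ requiring an explicit counterexample to weak symmetry built from an SOP$_1$ configuration.

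For $(1)\Rightarrow(2)$, assume $T$ is NSOP$_1$, $M\models T$, and $b\ind^{i}_{M}a$, witnessed by a global $M$-invariant type $p\supseteq\text{tp}(b/Ma)$. An $M$-invariant type is also $Ma$-invariant, so a Morley sequence $I=\langle b_{i}:i<\omega\rangle$ in $p$ over $Ma$ with $b_{0}=b$ is both a realization of $p^{\otimes\omega}|_{M}$ and $Ma$-indiscernible. By the Basic Characterization of Kim-dividing (Lemma \ref{basiccharacterization}), in the form valid in NSOP$_1$ theories where one only needs a single invariant Morley sequence (the remark after Lemma \ref{basiccharacterization}, applicable since $\text{tp}(b/M)$ extends to a global $M$-invariant type), the sequence $I$ together with $a'=a$ witnesses that $\text{tp}(a/Mb)$ does not Kim-divide over $M$. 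By Kim-forking $=$ Kim-dividing (Proposition \ref{kforkingequalskdividing}), $\text{tp}(a/Mb)$ does not Kim-fork over $M$; that is, $a\ind^{K}_{M}b$.

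For $(2)\Rightarrow(1)$ I argue by contraposition: assuming $T$ has SOP$_1$, I construct $M\models T$ and tuples $a,b$ with $b\ind^{i}_{M}a$ but $a\nind^{K}_{M}b$. Exactly as in the proof of Proposition \ref{higher} (applying Proposition \ref{arrayequivalent} in the Skolemized form of the remark following it), fix a formula $\varphi(x;y)$ and an $L^{Sk}$-indiscernible array $(c_{i,j})_{i<\omega+1, j<2}$ with $c_{\omega,0}\equiv^{L^{Sk}}_{\overline{c}_{<\omega}}c_{\omega,1}$, with $\{\varphi(x;c_{i,0}):i<\omega+1\}$ consistent, and with $\{\varphi(x;c_{i,1}):i<\omega+1\}$ $2$-inconsistent; set $M=\text{Sk}(\overline{c}_{<\omega})$, and let $\mathcal{D}_{0},\mathcal{D}_{1}$ be non-principal ultrafilters on $M$ concentrating on $\langle c_{i,0}:i<\omega\rangle$ and $\langle c_{i,1}:i<\omega\rangle$ respectively, with $q_{\ell}=\text{Av}(\mathcal{D}_{\ell},\mathbb{M})$. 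Then $q_{0},q_{1}$ are global $M$-invariant and finitely satisfiable in $M$, $q_{0}|_{M}=q_{1}|_{M}=\text{tp}(c_{\omega,1}/M)$, and $\varphi(x;y)$ $q_{1}$-divides over $M$, just as in Proposition \ref{higher}. Now choose $a\models\{\varphi(x;c_{i,0}):i<\omega\}$, which is consistent. Since $\models\varphi(a;c_{i,0})$ for every $i<\omega$, the set of elements $d$ of the support of $\mathcal{D}_{0}$ with $\models\varphi(a;d)$ is all of it, hence belongs to $\mathcal{D}_{0}$; thus $\varphi(a;y)\in q_{0}$. Let $b\models q_{0}|_{Ma}$. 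Then $\models\varphi(a;b)$, and $\text{tp}(b/Ma)=q_{0}|_{Ma}$ is the restriction of the global $M$-invariant type $q_{0}$, so $b\ind^{i}_{M}a$. On the other hand $\text{tp}(b/M)=q_{0}|_{M}=q_{1}|_{M}$, so $q_{1}$ is a global $M$-invariant type extending $\text{tp}(b/M)$ along which $\varphi(x;y)$ $q_{1}$-divides; hence $\varphi(x;b)$ Kim-divides over $M$, and since $\varphi(x;b)\in\text{tp}(a/Mb)$, the type $\text{tp}(a/Mb)$ Kim-divides, hence Kim-forks, over $M$. So $a\nind^{K}_{M}b$, contradicting weak symmetry.

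The routine direction is $(1)\Rightarrow(2)$. The main obstacle is the construction in $(2)\Rightarrow(1)$: one must find a single $b$ that is $\ind^{i}$-independent from $a$ over $M$ while simultaneously satisfying $\models\varphi(a;b)$ and having $\text{tp}(b/M)$ equal to a type over which some invariant type makes $\varphi$ divide — and the independence of $b$ from $a$ pushes against keeping $\models\varphi(a;b)$. This is precisely where the explicit SOP$_1$ array is used rather than merely an abstract invariant type witnessing non-$q$-dividing: taking $a$ to realize $\{\varphi(x;c_{i,0}):i<\omega\}$ forces $\varphi(a;y)$ into the finitely-satisfiable-in-$M$ type $q_{0}$, so that $b\models q_{0}|_{Ma}$ discharges the first two requirements at once, with $q_{0}|_{M}=q_{1}|_{M}$ taking care of the third.
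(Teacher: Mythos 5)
Your proof is correct. The $(1)\Rightarrow(2)$ direction coincides with the paper's argument: both take a Morley sequence in a global $M$-invariant extension of $\text{tp}(b/Ma)$, observe it is $Ma$-indiscernible, and invoke Kim's lemma for Kim-dividing (your appeal to the remark following Lemma \ref{basiccharacterization}, plus Proposition \ref{kforkingequalskdividing}, is exactly the content of that step). For $(2)\Rightarrow(1)$ you take a genuinely different route. The paper cites \cite[Theorem 5.1]{ArtemNick} as a black box: it furnishes $a_{0}b_{0}\equiv_{M}a_{1}b_{1}$ with $b_{i}\ind^{i}_{M}a_{i}$, $b_{1}\ind^{i}_{M}b_{0}$, and $p(x;b_{0})\cup p(x;b_{1})$ inconsistent, and then derives $a_{0}\nind^{K}_{M}b_{0}$ by extending $(b_{0},b_{1})$ to an invariant Morley sequence. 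You instead rebuild the counterexample directly from the Skolemized SOP$_{1}$ array of Proposition \ref{higher}: the crucial new ingredient is the observation that choosing $a$ to realize $\{\varphi(x;c_{i,0}):i<\omega\}$ forces $\varphi(a;y)$ into $q_{0}=\text{Av}(\mathcal{D}_{0},\mathbb{M})$ by upward closure of the ultrafilter, so that $b\models q_{0}|_{Ma}$ simultaneously gives $\models\varphi(a;b)$ and $b\ind^{i}_{M}a$, while $q_{0}|_{M}=q_{1}|_{M}$ and $q_{1}$-dividing of $\varphi$ give $a\nind^{K}_{M}b$. What this buys is self-containment: your argument uses only Propositions \ref{arrayequivalent} and \ref{higher} from the present paper rather than importing the independence-theorem characterization from \cite{ArtemNick}; the cost is that it is somewhat longer and reprove­s a known configuration rather than quoting it.
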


\begin{proof}
(1)$\implies$(2).  Suppose $T$ is NSOP$_{1}$.  As $b \ind^{i}_{M} a$, there is a global $M$-invariant type $r \supseteq \text{tp}(b/Ma)$.  We can find a Morley sequence $I = \langle b_{i} : i < \omega \rangle$ in $q|_{Ma}$ with $b_{0} = b$.  Then $I$ is $Ma$-indiscernible, so no formula in $\text{tp}(a/Mb)$ divides with respect to the sequence $I$.  But by Kim's lemma for Kim-dividing, this implies $a \ind^{K}_{M} b$.  

(2)$\implies$(1).  Suppose $T$ has SOP$_{1}$.  Then by \cite[Theorem 5.1]{ArtemNick}, there is a model $M \models T$, $a_{0}b_{0} \equiv_{M} a_{1}b_{1}$ with $b_{i} \ind^{i}_{M} a_{i}$ and $b_{1} \ind^{i}_{M} b_{0}$, but, setting $p(x;b_{0}) = \text{tp}(a_{0}/Mb_{0})$, we have $p(x;b_{0}) \cup p(x;b_{1})$ is inconsistent.  As $b_{0} \equiv_{M} b_{1}$ and $b_{1} \ind^{i}_{M} b_{0}$, $(b_{0},b_{1})$ starts a Morley sequence in some $M$-invariant type, $\langle b_{i} : i < \omega \rangle$.  As $\bigcup_{i < \omega} p(x;b_{i})$ is inconsistent, we have $a_{0} \nind^{K}_{M} b_{0}$.  Since $b_{0} \ind^{i}_{M} a_{0}$, weak symmetry fails.  
\end{proof}

\section{Local character} \label{localcharsection}

In this section, we prove local character for $\ind^{K}$ in NSOP$_{1}$ theories:  for every NSOP$_{1}$ theory, there is a cardinal $\kappa$ so that, given a model $M \models T$ and a type $p \in S(M)$, there is an elementary submodel $M' \preceq M$ of size $<\kappa$ such that $p$ does not Kim-fork over $M'$.  We give a simple and soft argument showing first that $\kappa$ can be taken to be the first measurable cardinal above $|T|$.  Then, in a more difficult argument, we show that $\kappa$ can be taken to be $(2^{|T|})^{+}$.  The argument involving large cardinals is, of course, implied by the stronger result, but we thought that the conceptual simplicity of the first argument might be helpful in understanding the second.  Lastly, we show that for any regular $\kappa$, we can construct a model which satisfies local character\textemdash this clarifies the situation for cardinals between $|T|$ and $2^{|T|}$.    

In order to prove our first theorem, we will use the following facts about measurable cardinals:

\begin{fact}
\label{fact:indiscernibles exist} \cite[Theorem 7.17]{MR1994835}
Suppose that $\mu>\left|T\right|$ is a measurable cardinal and that
$\mathcal{U}$ is a normal (non-principal) ultrafilter on $\mu$. Suppose
that $(a_{i})_{i<\mu}$ is a sequence of finite tuples
in $\mathbb{M}$, then for some set $X\in \mathcal{U}$, $(a_{i})_{i\in X}$
is an indiscernible sequence.
\end{fact}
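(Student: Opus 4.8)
The plan is to derive this from the classical Rowbottom-style partition property of normal ultrafilters, which I would record first as a preliminary lemma. A normal (non-principal) ultrafilter $\mathcal{U}$ on a measurable cardinal $\mu$ is in particular $\mu$-complete, so since there are only $\omega < \mu$ possible lengths, exactly one set of the form $\{i < \mu : l(a_{i}) = m\}$ belongs to $\mathcal{U}$; replacing the index set by this one (it has order type $\mu$, so nothing is lost) I may assume that all of the $a_{i}$ have a fixed finite length $m$.

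The preliminary lemma is: for every $n < \omega$ and every colouring $c : [\mu]^{n} \to 2$ there is some $H \in \mathcal{U}$ homogeneous for $c$. I would prove this by induction on $n$. For $n = 1$, the two colour classes partition $\mu$ and $\mathcal{U}$ is an ultrafilter, so one of them lies in $\mathcal{U}$. For the step from $n$ to $n+1$: given $c : [\mu]^{n+1} \to 2$, for each $\alpha < \mu$ apply the inductive hypothesis to the colouring $s \mapsto c(\{\alpha\} \cup s)$ of $[\mu \setminus (\alpha+1)]^{n}$ to get $H_{\alpha} \in \mathcal{U}$, $H_{\alpha} \subseteq \mu \setminus (\alpha+1)$, on which this colouring is constantly $d(\alpha) \in 2$; applying the case $n=1$ to $d : \mu \to 2$ produces $H_{0} \in \mathcal{U}$ on which $d$ is constantly $d^{*}$. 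Put $H = H_{0} \cap \triangle_{\alpha < \mu} H_{\alpha}$, which lies in $\mathcal{U}$ by normality. If $\alpha < i_{1} < \cdots < i_{n}$ all belong to $H$, then $i_{1}, \ldots, i_{n} \in H_{\alpha}$ (as they exceed $\alpha$), so $c(\{\alpha, i_{1}, \ldots, i_{n}\}) = d(\alpha) = d^{*}$; hence $H$ is homogeneous.

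To conclude, for each $n < \omega$ and each $L$-formula $\varphi(\overline{x}_{0}, \ldots, \overline{x}_{n-1})$ with $|\overline{x}_{j}| = m$, colour the increasing tuple $\{i_{1} < \cdots < i_{n}\} \in [\mu]^{n}$ by the truth value of $\varphi(a_{i_{1}}, \ldots, a_{i_{n}})$ in $\mathbb{M}$, and let $H_{\varphi} \in \mathcal{U}$ be homogeneous by the lemma. There are only $|T|$ many such formulas and $\mu > |T|$, so $X = \bigcap_{\varphi} H_{\varphi}$ (intersected with the length-normalization set from the first step) lies in $\mathcal{U}$ by $\mu$-completeness. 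By construction any two increasing $n$-tuples from $X$ satisfy exactly the same formulas, so $(a_{i})_{i \in X}$ is indiscernible. The only substantive point is the partition lemma, and within it the inductive step, where normality is used precisely once, to collapse the family $(H_{\alpha})_{\alpha < \mu}$ of "local" homogeneous sets into a single homogeneous set via the diagonal intersection; everything else is bookkeeping.
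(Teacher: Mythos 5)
The paper states this as a Fact and cites \cite[Theorem 7.17]{MR1994835} (Rowbottom's theorem) without reproducing a proof, so there is no in-paper argument to compare against. Your proof is correct and is the standard derivation: you first reduce to tuples of a fixed length by $\mu$-completeness, then establish the partition relation $\mu\to(\mathcal{U})^{n}_{2}$ by induction on $n$, using the diagonal intersection (hence normality) exactly once at the successor step, and finally intersect the $|T|<\mu$ many homogeneous sets coming from the formulas of $L$, again by $\mu$-completeness. This is precisely the argument behind the cited theorem, specialized to two colors and $|T|$ many colorings, so the proposal is a faithful, self-contained proof of the fact.
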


\begin{fact}
\label{fac:full-indiscernibles}\cite[Fact 2.9]{18-Kaplan2014a} If
$A=\bigcup_{i<\mu}A_{i}\subseteq \mathbb{M}$ is a continuous increasing union
of sets where $\left|A_{i}\right|<\mu$, $B\subseteq \mathbb{M}$ is some set
of cardinality $<\mu$, and $(a_{i})_{i<\mu}$, $\mathcal{U}$
are as in Fact \ref{fact:indiscernibles exist} with $a_{i}$
tuples from $A$, then for some set $X\in \mathcal{U}$, $(a_{i})_{i\in X}$
is \textup{fully indiscernible over $B$} (with respect to $A$ and
$(A_{i})_{i<\mu}$), which means that for every $i\in X$
and $j<i$ in $X$, we have $a_{j}\subseteq A_{i}$ , and $(a_{j})_{i\leq j\in X}$
is indiscernible over $A_{i}\cup B$. 
\end{fact}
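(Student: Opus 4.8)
The plan is to build the set $X$ as a diagonal intersection, using Fact~\ref{fact:indiscernibles exist} together with the standard closure properties of a normal ultrafilter: $\mu$-completeness, closure under diagonal intersections, and containment of the closed unbounded filter on $\mu$. First I would dispose of the inclusion condition. Since the chain $(A_{\gamma})_{\gamma<\mu}$ is increasing and continuous, each finite tuple $a_{j}$ lies in some $A_{\gamma}$, and a routine argument --- monotonicity of the chain for closedness, regularity of $\mu$ for unboundedness --- shows that $C = \{i < \mu : a_{j}\subseteq A_{i}\text{ for all }j<i\}$ is a club, as is the set $\mathrm{Lim}$ of limit ordinals below $\mu$. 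In particular $a_{i}\subseteq A_{j}$ for any $i<j$ in $C$.

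For the indiscernibility I would work one level at a time. Fix $\alpha<\mu$. For each finite $d\subseteq A_{\alpha}\cup B$, applying Fact~\ref{fact:indiscernibles exist} to the sequence $(a_{i}d)_{i<\mu}$ (with a fixed finite enumeration of $d$ appended to each term) yields $Y_{\alpha,d}\in\mathcal{U}$ with $(a_{i}d)_{i\in Y_{\alpha,d}}$ indiscernible, hence $(a_{i})_{i\in Y_{\alpha,d}}$ indiscernible over $d$. As $|A_{\alpha}\cup B|<\mu$ there are fewer than $\mu$ such $d$, so $X_{\alpha}:=\bigcap_{d}Y_{\alpha,d}$ lies in $\mathcal{U}$ by $\mu$-completeness, and $(a_{i})_{i\in X_{\alpha}}$ is indiscernible over all of $A_{\alpha}\cup B$ --- indiscernibility over a set being indiscernibility over each of its finite subsets.

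Finally, set $X = (\triangle_{\alpha<\mu}X_{\alpha})\cap C\cap\mathrm{Lim}$, which is in $\mathcal{U}$ by normality. The inclusion clause follows from $X\subseteq C$. For the indiscernibility clause, fix $i\in X$ and a finite $d\subseteq A_{i}\cup B$; since $i$ is a limit and the chain is continuous, $d\subseteq A_{\gamma}\cup B$ for some $\gamma<i$. The crucial point is that every $j\in X$ with $j\ge i$ lies in $X_{\gamma}$: for $j>i$ this is the defining property of the diagonal intersection, since $\gamma<i<j$, and for $j=i$ it holds because $i\in\triangle_{\alpha<\mu}X_{\alpha}$ and $\gamma<i$. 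Hence $(a_{j})_{j\in X,\,j\ge i}$ is a subsequence of the $(A_{\gamma}\cup B)$-indiscernible sequence $(a_{j})_{j\in X_{\gamma}}$ and so is indiscernible over $d$; as $d$ was an arbitrary finite subset of $A_{i}\cup B$, the sequence $(a_{j})_{i\le j\in X}$ is indiscernible over $A_{i}\cup B$. The one genuinely delicate point, I expect, is exactly this last observation --- the diagonal intersection has to be arranged so that the initial term $a_{i}$ of the sequence $(a_{j})_{i\le j\in X}$ itself belongs to the level-$\gamma$ indiscernible sequence, which is what lets $a_{i}$ sit at the front without spoiling indiscernibility over $A_{i}\cup B$, even though in general one only has $a_{i}\subseteq A_{j}$ for $j>i$ rather than $a_{i}\subseteq A_{i}$.
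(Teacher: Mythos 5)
Your argument is correct: the club of levels where all earlier tuples have appeared, the $\mu$-complete intersection over finite parameter sets at each level, and the diagonal intersection (restricted to limit points so that continuity puts any finite $d\subseteq A_i\cup B$ inside some $A_\gamma\cup B$ with $\gamma<i$, whence $i\in X_\gamma$ and the initial term $a_i$ is not lost) all check out, and this is the standard proof of the statement. Note that the paper itself gives no proof of Fact \ref{fac:full-indiscernibles} — it is quoted from \cite[Fact 2.9]{18-Kaplan2014a} — so there is no internal argument to compare against; your proof is essentially the one in that reference.
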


\begin{thm}
Suppose that $T$ is NSOP$_1$ and that $\left|T\right|<\mu$ is measurable.
Suppose that $M\models T$.  Then for every $p\in S\left(M\right)$
there is a model $N\prec M$ with $\left|N\right|<\mu$ such that
$p$ does not Kim-fork over $N$. 
\end{thm}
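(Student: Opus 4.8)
The plan is to argue by contradiction. We may assume $|M|\geq\mu$ (otherwise $N=M$ works). Suppose toward a contradiction that $p$ Kim-forks over every $N\prec M$ with $|N|<\mu$. First a reduction: by compactness and Proposition~\ref{kforkingequalskdividing}, for a model $N\prec M$ the statement ``$p$ Kim-forks over $N$'' is equivalent to ``some $\varphi(x;b)\in p$ with $b\in M$ Kim-divides over $N$'', and such a $b$ must lie outside $N$ --- for $b\in N$ the unique global $N$-invariant extension of $\text{tp}(b/N)$ is the constant type $x=b$, whose Morley sequence over $N$ is constant, so no consistent formula (in particular none in $p$) Kim-divides over $N$ with parameter $b$. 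After absorbing parameters we may moreover take the dividing formula to be without parameters. I would then recursively build a continuous increasing chain $(M_i)_{i<\mu}$ of elementary submodels of $M$ with $|M_i|<\mu$, together with witnesses $\varphi_i(x;b_i)\in p$, $b_i\in M_{i+1}\setminus M_i$, such that $\varphi_i$ $q_i$-divides over $M_i$ for some global $M_i$-invariant $q_i\supseteq\text{tp}(b_i/M_i)$, with instances of $\varphi_i$ along Morley sequences of $q_i$ being $k_i$-inconsistent. Since $\mu>|T|$ is regular, after passing to a cofinal set and re-indexing we may assume $\varphi_i=\varphi$ and $k_i=k$ for all $i<\mu$; in particular the $b_i$ are pairwise distinct. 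Fix $a\models p$, so that $a\models\varphi(x;b_i)$ for all $i<\mu$.

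Next I would invoke the measurable cardinal. Fix a normal ultrafilter $\mathcal U$ on $\mu$ and apply Fact~\ref{fac:full-indiscernibles} to $(b_i)_{i<\mu}$, with $A=\bigcup_{i<\mu}M_i$, its filtration $A_i=M_i$, and $B=\emptyset$: this yields $X\in\mathcal U$ with $(b_i)_{i\in X}$ fully indiscernible over $\emptyset$ with respect to $A$ and $(A_i)$. Put $i^*=\min X$ and $N:=M_{i^*}$; then $N\prec M$ and $|N|<\mu$. By full indiscernibility, $I:=(b_i)_{i\in X}$ is $N$-indiscernible with all its members of the same type over $N$, so $\varphi(x;b_i)$ Kim-divides over $N$ for every $i\in X$; moreover $I$ increases up the tower $(M_i)_{i\in X}$ --- for $j<i$ in $X$ we have $b_j\in M_i$, and the tail of $I$ past $i$ is $M_i$-indiscernible. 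And $\{\varphi(x;b_i):i\in X\}$ is consistent, witnessed by $a$.

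It remains to derive a contradiction with NSOP$_1$. By Kim's lemma for Kim-dividing (Theorem~\ref{kimslemmaforindk}), $\varphi$ $q$-divides over $N$ for \emph{every} global $N$-invariant $q\supseteq\text{tp}(b_{i^*}/N)$, so every invariant Morley sequence over $N$ in that type witnesses $k$-inconsistency of $\varphi$; against this stands the $N$-indiscernible, tower-increasing sequence $I$, along which $\varphi$ is consistent. The plan is to combine these into a configuration forbidden in NSOP$_1$ theories via Proposition~\ref{arrayequivalent}: from the Kim-dividing witnesses $q_i$ (which supply, level by level, a $k$-inconsistent ``right column''), the realization $a$, and the indiscernible tower (which supplies a consistent ``left column'' and, crucially, the global invariant types over the $M_i$ along which the required type-equalities can be arranged), one assembles an array $(c_{n,j})_{n<\omega,\,j<2}$ with $c_{n,0}\equiv_{\overline{c}_{<n}}c_{n,1}$, with $\{\varphi'(x;c_{n,0}):n<\omega\}$ consistent and $\{\varphi'(x;c_{n,1}):n<\omega\}$ $k$-inconsistent for a suitable $\varphi'$ --- contradicting that $T$ is NSOP$_1$. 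The construction proceeds in the spirit of the proofs of Propositions~\ref{higher} and \ref{qdiv}: one builds the two columns simultaneously, using the left extension property of finite satisfiability over the $M_i$ to realize, at each stage, the type of the chosen ``inconsistent'' element over everything chosen so far by a corresponding ``consistent'' element.

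I expect the main difficulty to be exactly this last construction --- arranging the equalities $c_{n,0}\equiv_{\overline{c}_{<n}}c_{n,1}$ while keeping the left column consistent and the right column $k$-inconsistent. The tower $(M_i)$ produced by Fact~\ref{fac:full-indiscernibles} is indispensable here, not merely the bare $N$-indiscernibility of $I$: an arbitrary $N$-indiscernible sequence --- even with distinct members --- need not be Ehrenfeucht--Mostowski equivalent over $N$ to a Morley sequence in a global $N$-invariant type, so one cannot simply apply Kim's lemma against $I$ directly. It is the interplay of the tower, the witnesses $q_i$, and the realization $a$ that produces the forbidden array.
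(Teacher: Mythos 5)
The objection you raise against ``apply Kim's lemma against $I$ directly'' --- that an $N$-indiscernible sequence need not be EM-equivalent over $N$ to a Morley sequence in a global $N$-invariant type --- is exactly where your proposal stalls, and it is exactly the point the paper's measurable-cardinal argument resolves with the one observation you are missing. The fully indiscernible sequence coming from Fact~\ref{fac:full-indiscernibles}, taken together with the filtration $(M_i)_{i<\mu}$, \emph{is} a Morley sequence in a global finitely-satisfiable (hence $M$-invariant) type over a suitable base in the tower, so Kim's lemma (Theorem~\ref{kimslemmaforindk}) finishes the argument in a single line. No array, no Proposition~\ref{arrayequivalent}, no Lemma~\ref{karyversion}.

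Concretely: after stabilizing the internal parameter $b'_i$ to a constant $b'$ by Fodor for normal ultrafilters and passing to a fully indiscernible $X_1\in\mathcal{U}$ of limit ordinals, one shrinks once more to $X_2\in\mathcal{U}$ on which, for every $\alpha_0\in X_2$ and every $\gamma<\alpha_0$, there is $\beta\in X_1$ with $\gamma<\beta<\alpha_0$; this is a club condition, hence holds on a set in $\mathcal{U}$. For such $\alpha_0$ one checks that $b_i\ind^u_{M_{\alpha_0}} b_{>i}$ for all $\alpha_0\le i\in X_2$: any $m\in M_{\alpha_0}$ lies in $M_\gamma$ for some $\gamma<\alpha_0$ since $\alpha_0$ is limit, one picks $\beta\in X_1$ with $\gamma<\beta<\alpha_0$, and the indiscernibility of $(b_\varepsilon)_{\beta\le\varepsilon\in X_1}$ over $M_\beta\supseteq M_\gamma$ transports the witness to $b_\beta\in M_{\alpha_0}$. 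This backward coheir property together with $M_{\alpha_0}$-indiscernibility exhibits $(b_i)_{\alpha_0\le i\in X_2}$, read in decreasing order, as a Morley sequence in a global $M_{\alpha_0}$-finitely satisfiable type extending $\text{tp}(b_{\alpha_0}/M_{\alpha_0})$. Since $\varphi(x;b_{\alpha_0},b')$ Kim-divides over $M_{\alpha_0}$, Kim's lemma says $\{\varphi(x;b_i,b'):\alpha_0\le i\in X_2\}$ is inconsistent; but it lies in $p$, contradiction.

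Two points in your setup actively block this. First, taking $N=M_{\min X}$ cannot work: there are no elements of $X$ (or $X_1$) below $\min X$, so the witness-transport used above has nowhere to land, and the tail of the tower is not seen to be finitely satisfiable in $M_{\min X}$. The correct base is $M_{\alpha_0}$ for $\alpha_0$ drawn from the density-refined set, not the bottom of $X$. Second, ``absorbing parameters so that $\varphi_i$ is without parameters'' is stated too loosely. Absorbing $b'_i\in M_i$ into the formula yields a formula over $M_i$ and defeats the $|T|<\mu$ pigeonhole; absorbing it into the moving tuple $b_i$ is harmless, but only because any global $M_i$-invariant extension of a type that pins a coordinate to $b'_i\in M_i$ must fix that coordinate, so the invariant Morley sequences do not actually move $b'_i$ --- this should be said, and then the stabilization of $b'$ should be done explicitly (the paper uses Fodor).

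Finally, the array construction you sketch at the end via Proposition~\ref{arrayequivalent} and the witnesses $q_i$ is in fact the skeleton of the \emph{harder} ZFC argument giving the $(2^{|T|})^+$ bound in Theorem~\ref{localcharacter}. It can be made to work, but you explicitly acknowledge that you have not carried it out, and the entire point of the large-cardinal hypothesis in this easier theorem is to sidestep it with the coheir observation above.
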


\begin{proof}
Suppose not.  Construct by induction on $i<\mu$ a sequence $(\left(M_{i},b_{i},b_{i}',\varphi_{i}\right))_{i<\mu}$
such that:
\begin{itemize}
\item $(M_{i})_{i<\mu}$ is an increasing continuous sequence of
models.
\item For $i<\mu$, $\varphi_{i}\left(x,y_{i},y_{i}'\right)$ is a formula
in $L$. 
\item $b_{i}'\in M_{i}$ and $b_{i}\in M\setminus M_{i}$. 
\item $\varphi\left(x,b_{i},b_{i}'\right)\in p$ witnesses Kim-dividing
of $p$ over $M_{i}$.
\item For $i<\mu$, $M_{i}$ is some model containing $\{b_{j}: j<i\}$
of size $\left|T\right|+\left|i\right|$. 
\end{itemize}
We can construct such a sequence by our assumption. 

Note that all clubs $E\subseteq\mu$ are in $\mathcal{U}$ (see the proof
of Fact \ref{fac:full-indiscernibles} in \cite[Fact 2.9]{18-Kaplan2014a}). 

By Fodor's lemma for normal ultrafilters \cite[Exercise 5.10]{MR1994835},
applied to the function $F:\lim\left(\mu\right)\to\mu$ such that
$F\left(\delta\right)=\min\{i<\delta : b_{\delta}'\in M_{i}\}$,
there is some $X_{0}\in \mathcal{U}$ (consisting of limit ordinals) such
that for all $i\in X_{0}$, $b_{i}'=b'$ is constant. For convenience
assume that $b'\in M_{0}$.

By Fact \ref{fac:full-indiscernibles}, there is some $X_{1}\subseteq X_{0}$
in $\mathcal{U}$ such that $(b_{i})_{i\in X_{1}}$ is fully indiscernible
with respect to $(M_{i})_{i<\mu}$. 

Let $X_{2}\subseteq X_{1}$ in $\mathcal{U}$ be such that for all $i\in X_{2}$
if $j<i$ then there is $j<\alpha<i$ such that $\alpha\in X_{1}$
(as $X_{1}$ is unbounded, the set of all $i<\mu$ such that for all
$j<i$ there is such an $\alpha$ is a club, so in $\mathcal{U}$). 

Let $\alpha_{0}\in X_{2}$. Then $(b_{i})_{i\geq\alpha_{0},i\in X_{2}}$
is an indiscernible sequence such that $b_{i}\ind_{M_{\alpha_{0}}}^{u}b_{>i}$
for all $\alpha_{0}\leq i\in X_{2}$. To see this, suppose that $\psi\left(b_{i},b_{>i},m\right)$
holds where $m\in M_{\alpha_{0}}$. Then $m\in M_{\beta}$ for some
$\beta<\alpha_{0}$ (as $\alpha_{0}$ is limit). So by definition
of $X_{2}$, there is some $\beta<j<\alpha_{0}$ in $X_{1}$. But
then $(b_{\varepsilon})_{\varepsilon\geq j}$ is indiscernible
over $M_{\beta}$ by choice of $X_{1}$, so $\psi\left(b_{j},b_{>i},m\right)$
holds as well. But $b_{j}\in M_{\alpha_{0}}$ by construction. 

We get a contradiction, since $\{\varphi\left(x,b_{i},b'\right) : i\in X_{2},i\geq\alpha_{0}\}$
is in $p$ but also inconsistent since $\varphi\left(x,b_{\alpha_{0}},b'\right)$
Kim-divides over $M_{\alpha_{0}}$. 
\end{proof}

\begin{lem} \label{lem:countable subset for dividing}
Suppose that $N$ is some model and that $p\in S\left(\mathbb{M}\right)$ is a global type finitely satisfiable in $N$ which extends $\text{tp}\left(c/N\right)$. Given any set $A \subseteq N$, there is some $B\prec N$ of size $\leq\left|T\right|+\left|A\right|$ such that $A\subseteq B$ and $p^{\otimes \omega}|_{B}$ is a type of a Morley sequence generated by some global type finitely satisfiable in $B$. 

In particular, if $\varphi\left(x,c\right)$ Kim-divides over $N$ then $\varphi\left(x,c\right)$ Kim-divides over $B$.
\end{lem}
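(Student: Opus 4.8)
The plan is a L\"owenheim--Skolem argument: build $B$ as the union of an increasing $\omega$-chain of elementary submodels of $N$, each of size $\leq |T|+|A|$, starting with one containing $A$, and at each stage absorb enough witnesses that the limit model reflects the finite satisfiability of $p$ all the way up its Morley powers. First, $p^{\otimes\omega}$ is itself finitely satisfiable in $N$: inductively, if $\psi(x_{0},\dots,x_{n};\bar m)\in p^{\otimes(n+1)}$ with $\bar m\in N$, pick $\bar b\models p^{\otimes n}$ with $\psi(\bar b,x_{n};\bar m)\in p$; finite satisfiability of $p$ gives $c\in N$ with $\models\psi(\bar b,c;\bar m)$, so $N\models\exists\bar x\,\psi(\bar x,c;\bar m)$ and a witness from $N$ completes a realization. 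Hence for \emph{any} $B\prec N$ the restriction $p^{\otimes\omega}|_{B}$ is finitely satisfiable in $B$ (a finite fragment is a consistent $L(B)$-formula, so realized in $B$ by elementarity); the real content of the lemma is that $B$ can be chosen so that $p^{\otimes\omega}|_{B}$ is moreover the type of a Morley sequence in a \emph{single} global type $q$ finitely satisfiable in $B$.

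Fix an ultrafilter $\mathcal{D}$ on $N^{l(x)}$ with $p=\text{Av}(\mathcal{D},\mathbb{M})$ (Fact \ref{average}), and write $\mathcal{D}^{(n)}$ for the associated iterated product ultrafilter on $N^{n\,l(x)}$. Unwinding the recursion for $\otimes$, for $\bar d\in N$ the question whether $\psi(x_{0},\dots,x_{n-1};\bar d)\in p^{\otimes n}$ is equivalent to membership in $\mathcal{D}^{(n-1)}$ of a certain ``average set'' $F^{n}_{\psi,\bar d}\subseteq N^{(n-1)l(x)}$ defined from $p$ by iterated ``for $\mathcal{D}$-almost-all $\bar c$, $\dots$''; since $p$ is a complete type these sets are closed under finite Boolean combination as $\psi,\bar d$ vary (e.g.\ $F^{n}_{\psi_{1}\wedge\psi_{2},\bar d_{1}\bar d_{2}}=F^{n}_{\psi_{1},\bar d_{1}}\cap F^{n}_{\psi_{2},\bar d_{2}}$). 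The same computation for $q=\text{Av}(\mathcal{E},\mathbb{M})$, where $\mathcal{E}$ is an ultrafilter on $B^{l(x)}$ with $\text{Av}(\mathcal{E},B)=p|_{B}$, shows that $\psi(\bar x;\bar d)\in q^{\otimes n}$ (for $\bar d\in B$) is governed the same way by $F^{n}_{\psi,\bar d}\cap B^{(n-1)l(x)}$ and the products of $\mathcal{E}$. So it suffices to arrange that every average set lying in $\mathcal{D}^{(n-1)}$ already meets $B$, and then, exploiting Boolean closure (so that the relevant traces have the finite intersection property), to pick $\mathcal{E}$ compatibly with all of its induced products.

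Accordingly I build $B_{0}\prec\dots\prec B_{k}\prec\dots\prec N$ with $A\subseteq B_{0}$, each $|B_{k}|\leq|T|+|A|$, so that $B_{k+1}$ contains, for every $n<\omega$, every $\psi\in L(\emptyset)$, and every parameter tuple $\bar d$ from $B_{k}$, a point of $F^{n}_{\psi,\bar d}$ whenever $F^{n}_{\psi,\bar d}\in\mathcal{D}^{(n-1)}$; there are only $\leq|T|+|A|$ such demands at each stage, so the cardinality stays bounded. Put $B=\bigcup_{k}B_{k}$. One then verifies by induction on $n$ that for a suitable $\mathcal{E}$ on $B^{l(x)}$ extending the trace of $\mathcal{D}$, the global type $q:=\text{Av}(\mathcal{E},\mathbb{M})$ satisfies $q^{\otimes n}|_{B}=p^{\otimes n}|_{B}$ for every $n$, so that $q^{\otimes\omega}|_{B}=p^{\otimes\omega}|_{B}$ is the type of a Morley sequence in $q$, which is finitely satisfiable in $B$ by construction. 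I expect this inductive verification to be the main obstacle --- showing that the merely level-by-level, $\leq|T|+|A|$-many closure demands really do force the matching $q^{\otimes n}|_{B}=p^{\otimes n}|_{B}$ at all levels simultaneously --- along with the routine but fussy bookkeeping needed to keep $|B|\leq|T|+|A|$.

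For the last clause, apply the construction with $p$ chosen to witness the Kim-dividing, i.e.\ with $\varphi(x;y)$ $p$-dividing over $N$; then $\{\varphi(x;c_{i}):i<\omega\}$ is $k$-inconsistent for some fixed $k$ whenever $(c_{i})_{i<\omega}\models p^{\otimes\omega}|_{N}$. As ``$\{\varphi(x;x_{i}):i<\omega\}$ is $k$-inconsistent'' is a property of $\text{tp}((c_{i})_{i<\omega}/\emptyset)$, it is implied by $p^{\otimes\omega}|_{\emptyset}\subseteq p^{\otimes\omega}|_{B}=q^{\otimes\omega}|_{B}$, so $\varphi(x;y)$ $q$-divides over $B$; and $q\supseteq\text{tp}(c/B)$ since $q|_{B}=p|_{B}\supseteq\text{tp}(c/B)$. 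Hence $\varphi(x,c)$ Kim-divides over $B$.
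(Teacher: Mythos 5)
Your plan overshoots in a way you yourself flag: you replace the concrete Morley sequence with an apparatus of iterated product ultrafilters $\mathcal{D}^{(n)}$ and ``average sets'' $F^{n}_{\psi,\bar d}$, and then need to pick a single ultrafilter $\mathcal{E}$ on $B^{l(x)}$ whose Fubini powers $\mathcal{E}^{(n-1)}$ simultaneously contain all traces $F^{n}_{\psi,\bar d}\cap B^{(n-1)l(x)}$. That is the entire content of the lemma, and you leave it as ``the main obstacle.'' The step is a genuine gap, not bookkeeping: ``$F\in\mathcal{E}^{(n-1)}$'' is a condition on the $(n-1)$-fold Fubini product, not on $\mathcal{E}$ itself, so the finite-intersection-property heuristic for the traces at each fixed level does not assemble into a choice of a single $\mathcal{E}$ --- the constraints from different $n$ translate back into constraints on $\mathcal{E}$ in a nonlocal way and it is not at all clear, nor is it argued, that they are jointly realizable. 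In addition, ``arrange that every average set in $\mathcal{D}^{(n-1)}$ already meets $B$'' would only ensure nonemptiness of the traces, which is much weaker than what is needed; you note this but do not bridge it.

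The paper's argument sidesteps all of this. It fixes a realization $\bar c\models p^{\otimes\omega}|_{N}$ once and for all, and builds the $\omega$-chain $B_{0}\prec B_{1}\prec\cdots$ so that at each stage, for every formula $\psi(y,c_{<n})$ over the previous $B_{k}$ satisfied by $c_{n}$, a witness of $\psi(y,c_{<n})$ is placed into $B_{k+1}$. Setting $B=\bigcup_{k}B_{k}$, one gets that $p|_{B\bar c}$ (equivalently, each $\text{tp}(c_{n}/Bc_{<n})$) is finitely satisfiable in $B$; any global extension of $p|_{B\bar c}$ that is finitely satisfiable in $B$ then has $\bar c$ as a Morley sequence over $B$, since it must agree with $p$ on $B\bar c$. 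No compatibility of iterated products has to be engineered because a single concrete witness $\bar c$ carries all the relevant data. I would recommend abandoning the ultrafilter machinery and arguing in this direct way: the cardinality bookkeeping is the same, but the conclusion is immediate. Your treatment of the ``in particular'' clause is fine once the main lemma is in place (with the understood convention that $A$ absorbs the parameters of $\varphi$).
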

\begin{proof}
Let $p\in S\left(\mathbb{M}\right)$ be a global type extending $\text{tp}\left(c/N\right)$, finitely satisfiable in $N$. 

Let $B_{0}$ be any model containing $A$ of size $\leq\left|A\right|+\left|T\right|$, and let $\bar{c}\models p^{\otimes \omega}|_{N}$. Let $N\supseteq B_{1}'\supseteq B_{0}$
be such that for every $n<\omega$ and every formula $\psi\left(y,c_{<n}\right)$
over $B_{0}$, if $\mathbb{M} \models\psi\left(c_{n},c_{<n}\right)$ then $\psi\left(y,c_{<n}\right)$
is satisfiable in $B_{1}'$ and let $B_{1}$ be any model containing
$B_{1}'$ of size $\left|T\right|+\left|A\right|$. Continue like this, and finally, let $B=\bigcup_{i<\omega}B_{i}$. Then $\bar{c}$ is still a Morley sequence sequence over $B$ in a $B$-finitely satisfiable type (note that it is indiscernible). 
\end{proof}
\begin{thm} \label{localcharacter}
Suppose $T$ is NSOP$_{1}$.  Then for any $M \models T$ and $p \in S(M)$, there is $M' \preceq M$ so that $p$ does not Kim-fork over $M'$ and $|M'| \leq 2^{|T|}$.
\end{thm}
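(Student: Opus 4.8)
The plan is to argue by contradiction, running a chain construction parallel to the one in the previous theorem but with Fodor's lemma on $(2^{|T|})^{+}$ and the reflection Lemma~\ref{lem:countable subset for dividing} in place of the measurable cardinal. Suppose $p$ Kim-forks over every $M'\preceq M$ with $|M'|\le 2^{|T|}$; then by Proposition~\ref{kforkingequalskdividing}, over every such $M'$ the type $p$ implies a single formula that Kim-divides over $M'$. Using this, build an increasing continuous elementary chain $(M_i)_{i<(2^{|T|})^{+}}$ of submodels of $M$ with $|M_i|\le 2^{|T|}$ and $\{b_j:j<i\}\subseteq M_i$, together with formulas $\varphi_i(x;b_i,b_i')\in p$ Kim-dividing over $M_i$, where $b_i'\in M_i$ and $b_i\in M\setminus M_i$ (as in the measurable case $b_i$ is nonempty, since a consistent formula with parameters in $M_i$ cannot Kim-divide over $M_i$: the only global $M_i$-invariant extension of its parameter's type is realized, so its Morley sequence over $M_i$ is constant).

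Next, stabilize the data along the chain. The map $i\mapsto\min\{j:b_i'\in M_j\}$ is regressive on the limit ordinals below $(2^{|T|})^{+}$, so Fodor's lemma gives a stationary set on which it is constantly some $j_*$; since $|M_{j_*}|^{<\omega}\le 2^{|T|}<(2^{|T|})^{+}$, pigeonhole shrinks this to a stationary $S$ on which $b_i'=b'$ is constant, and since $|T|<(2^{|T|})^{+}$ we may further assume $\varphi_i=\varphi$ on $S$. Re-indexing the chain from stage $j_*$, we may assume $b'\in M_0$. Thus for every $i\in S$ we have $\varphi(x;b_i,b')\in p$ Kim-dividing over $M_i$, with $b_{<i}\subseteq M_i$, and in particular $\{\varphi(x;b_i,b'):i\in S\}\subseteq p$ is consistent.

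The main step is to extract from this an SOP$_1$ configuration, and this is where the work lies. In the measurable argument one could thin $S$ to make $(b_i)_{i\in S}$ indiscernible over a fixed $M_{\alpha_0}$ — this used strong inaccessibility to pigeonhole the types $\text{tp}(b_i/M_{\alpha_0})$ — and then the $b_ib'$ formed an invariant Morley sequence realizing a subset of $p$, which Kim's lemma forced to be inconsistent. Here there may be more than $(2^{|T|})^{+}$ types over $M_{\alpha_0}$, so no such sequence is available and one must build the configuration directly. I would construct, by recursion on $n<\omega$, an increasing $\sigma\colon\omega\to S$ and a sequence $(\bar c_n)_{n<\omega}$ satisfying the hypotheses of Lemma~\ref{karyversion}, worked out over the parameter set $M_0b'$ as in Lemma~\ref{fancy}, in which the \emph{consistent side} consists of the tuples $b_{\sigma(n)}\in M$ — so it is consistent because $\{\varphi(x;b_{\sigma(n)},b')\}_n\subseteq p$ — while the \emph{inconsistent side} is produced level by level from the fact that $\varphi(x;b_{\sigma(n)},b')$ Kim-divides over $M_{\sigma(n)}$: using Theorem~\ref{kimslemmaforindk} to replace its witness by a Morley sequence in a type finitely satisfiable in a model, and Lemma~\ref{lem:countable subset for dividing} to keep that model of size $\le 2^{|T|}$ while absorbing the data already chosen, one extracts the new element from that sequence so as to respect the conjugacy condition of Lemma~\ref{fancy} and continue the $k$-inconsistency pattern. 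Lemma~\ref{karyversion} then gives SOP$_1$, the desired contradiction. The technical heart — and the reason the bound is $2^{|T|}$ rather than $|T|$ — is exactly this recursion: reconciling the requirement that the consistent side lie in $M$ (so that it is a subset of $p$) with the genericity needed to keep the inconsistent side jointly $k$-inconsistent, while only ever passing to models of size $\le 2^{|T|}$; here Lemma~\ref{lem:countable subset for dividing} plays the role that strong inaccessibility played in the measurable-cardinal argument, and getting the bookkeeping of the recursion to close is the bulk of the proof.
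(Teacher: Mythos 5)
Your outline has the same shape as the paper's argument (a chain of length $\kappa=(2^{|T|})^+$, Fodor plus pigeonhole to fix $(b',\varphi)$, reflection via Lemma~\ref{lem:countable subset for dividing}, and an $\omega$-recursion aimed at Lemma~\ref{karyversion}), so it is not a different route; but the sketch stops exactly where the content lies, and what you defer to ``bookkeeping'' contains an idea that is genuinely absent from your outline. The paper does not merely reflect and absorb at each step: at stage $n$ it runs Fodor and \emph{two} pigeonholes across a still-stationary $S_n\subseteq S_{n-1}\subseteq\kappa$, so as to (i) fix a single reflecting submodel $M^*_n\prec M$ of size $|T|$ (not $2^{|T|}$) containing $M^*_{n-1}e_{<n}$ that serves every $\delta\in S_n$, and, crucially, (ii) fix the restricted Morley-sequence type $q_\delta^{\otimes\omega}|_{M^*_n}$ across all $\delta\in S_n$, using that there are at most $2^{|T|}<\kappa$ such types.

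The second uniformization is what closes the $k$-inconsistency: once $r_j^{*\otimes\omega}|_{M^*_i}=r_i^{*\otimes\omega}|_{M^*_i}$ for $i\le j$, the tuple $(f_{n-1},\ldots,f_0)$ with $f_j\models r^*_{i_j}|_{M^*_{i_j}\overline{e}f_{>j}}$ realizes $r_{i_0}^{*\otimes n}|_{M^*_{i_0}}$ and is therefore $k$-inconsistent. Without that agreement of Morley-sequence types, each $f_j$ is generic for a \emph{different} coheir over a \emph{different} base, and nothing forces them to be jointly $k$-inconsistent; merely absorbing the already-chosen $e$'s into the next reflecting model does not accomplish this, which is exactly why the inaccessibility you used in the measurable-cardinal version cannot simply be dropped without a substitute. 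A second step you gloss over is the conjugacy $e_{i_j}\equiv_{e_{i_{<j}}f_{<j}}f_j$ where $f_{<j}$ lies outside $M$: the paper takes $f_j$ to realize $r^*_{i_j}$ over a parameter set that already contains the \emph{future} $f_{>j}$, and then transfers the conjugacy across $f_0$ using $f_0\ind^u_{M^*_{i_0}}\overline{e}f_{>0}$. Both of these are ideas the recursion needs in order to close, not bookkeeping, and neither is present in your sketch.
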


\begin{proof}
Let $\kappa = (2^{|T|})^{+}$\textemdash  $\kappa$ is a regular cardinal, greater than $2^{|T|}$, and $\mu < \kappa$ implies $\mu^{|T|} < \kappa$ (these are the only properties of $\kappa$ we will use). 

Suppose not.  Then there is some $p\in S\left(M\right)$ witnessing
this.  Clearly $|M| \geq \kappa$.  For every $i<\kappa$ we can find $c_{i}$, $d_{i}$, $N_{i}$,
and $\varphi_{i}\left(x,y_{i},z_{i}\right)$ such that:
\begin{itemize}
\item $c_{i}\in N_{i+1} \setminus N_{i}$, $d_{i}\in N_{i}$, $\langle N_{i}: i<\kappa \rangle$ is
increasing continuous, $\left|N_{i}\right|\leq\left|T\right|+\left|i\right|<\kappa$,
$\varphi_{i}\left(x,y_{i},z_{i}\right)$ is a formula such that $\varphi_{i}\left(x,c_{i},d_{i}\right)$
Kim-divides over $N_{i}$ and is in $p$.
\end{itemize}
Let $S$ be $\{\delta<\kappa : \text{cof}\left(\delta\right)=\left|T\right|^{+}\}$.
Then $S$ is a stationary set. 

For every $\delta\in S$, fix some global coheir $q_{\delta}\in S\left(\mathbb{M}\right)$ over $N_{\delta}$ extending $\text{tp}\left(c_{\delta}/N_{\delta}\right)$.  Given a partition of a stationary subset of $\kappa$ into $<\kappa$ parts, one of these has to be a stationary set. Hence, we may assume that for every $\delta\in S$, $\varphi_{\delta}=\varphi$ and $\varphi\left(x,c_{i},d_{i}\right)$
is $k$-Kim-dividing for some fixed $k$, witnessed by any Morley sequence in $q_{\delta}$. Define the regressive function $f:S\to\kappa$ by $f(\delta) = \min\{i < \delta : d_{\delta} \in N_{i}\}$ (this set is non-empty by continuity of the sequence).  By Fodor's lemma, we may assume that $f$ is constant on $S$, and further restricting it, we may even assume that $d_{\delta}=d$
is fixed for every $\delta\in S$. This allows us to assume for simplicity that $d=\emptyset$.  

By Lemma \ref{lem:countable subset for dividing}, for every $\delta\in S$
there is some $M_{\delta}\prec N_{\delta}$ of size $\left|T\right|$
such that $\varphi\left(x,c_{\delta}\right)$ Kim-divides over $M_{\delta}$,
and moreover, such that $q_{\delta}^{\otimes \omega}|_{M_{\delta}}$
is a type of a Morley sequence of some global coheir $r_{\delta}$
over $M_{\delta}$. 

As $\text{cof}\left(\delta\right)=\left|T\right|^{+}$ for every $\delta\in S$,
for each such $\delta$ there is some $i<\delta$ such that $M_{\delta}\prec N_{i}$.
Hence by Fodor's lemma, there is some $i<\kappa$ and a stationary
$S'\subseteq S$ such that for every $\delta\in S'$, $M_{\delta}\prec N_{i}$.
Then we can find some model $M_{0}^{*}$,
a global coheir $r_{0}^{*}$ over $M_{0}^{*}$ and a stationary $S_{0}\subseteq S'$
such that for every $\delta\in S_{0}$, $M_{\delta}=M_{0}^{*}$ (note
that the number of possible $M_{\delta}$'s is $\leq\left|N_{i}\right|^{\left|T\right|}<\kappa$)
and $q_{\delta}^{\otimes \omega}|_{M_{0}^{*}}=r_{\delta}^{\otimes \omega}|_{M_{0}^{*}}=r_{0}^{*\otimes \omega }|_{M_{0}^{*}}$
(the number of $\omega$-types over $M_{0}^{*}$ is $\leq2^{\left|M_{0}^{*}\right|}<\kappa$
as $\left|M_{0}^{*}\right|=\left|T\right|$).

Let $\delta_{0}=\min S_{0}$ and $e_{0}=c_{\delta_{0}}$. 

By Lemma \ref{lem:countable subset for dividing}, for every $\delta\in S_{0}\backslash\left\{ \delta_{0}\right\} $
there is some $M_{0}^{*}c_{\delta_{0}}\subseteq M_{\delta}\prec N_{\delta}$
of size $\left|T\right|$ such that $\varphi\left(x,c_{\delta}\right)$
Kim-divides over $M_{\delta}$, and moreover, such that $q_{\delta}^{\otimes \omega}|_{M_{\delta}}$
is a type of a Morley sequence of some global coheir over $M_{\delta}$.
Thus, as above, we can find some stationary $S_{1}\subseteq S_{0}\backslash\left\{ \delta_{0}\right\} $,
$M_{1}^{*}$ and $r_{1}^{*}$ such that for every $\delta\in S_{1}$,
$M_{\delta}=M_{1}^{*}$ and $q_{\delta}^{\otimes \omega}|_{M_{1}^{*}}=r_{1}^{*\otimes \omega}|_{M_{1}^{*}}$.
Let $\delta_{1}=\min S_{1}$ and $e_{1}=c_{\delta_{1}}$. 

Continuing like this we find and increasing sequence $\langle \delta_{i}: i<\omega \rangle$
of ordinals in $\kappa$, an increasing sequence of models $\langle M_{i}^{*} :i<\omega \rangle$,
$e_{i}\in M$ for $i<\omega$ and global coheirs (over $M_{i}^{*}$)
$r_{i}^{*}$ such that:
\begin{itemize}
\item $M_{i}^{*}$ contains $e_{<i}$, $\varphi\left(x,e_{j}\right)$ is
$k$-Kim-dividing over $M_{i}^{*}$ for every $i<j$, $r_{i}^{*}$
is a global coheir over $M_{i}^{*}$ such that for all $i\leq j$,
$r_{i}^{*}$ extends $\text{tp}\left(e_{j}/M_{i}^{*}\right)$ (in particular,
$e_{j}\equiv_{M_{i}^{*}}e_{i}$ for all $j\geq i$) and $r_{j}^{*\otimes \omega}|_{M_{i}^{*}}=r_{i}^{*\otimes \omega}|_{M_{i}^{*}}$. 
\end{itemize}
Denote $\overline{e} = \langle e_{i} : i < \omega \rangle$.  Note that $\{\varphi(x;e_{i}) : i < \omega\}$ is a subset of $p$, hence consistent.

\textbf{Claim}:  Suppose $i_{0} < \ldots < i_{n-1} < \omega$ and for each $j < n$, $f_{j} \models r^{*}_{i_{j}}|_{M^{*}_{i_{j}}\overline{e}f_{>j}}$.  Then 
\begin{enumerate}
\item $e_{i_{j}} \equiv_{e_{i_{<j}}f_{<j}} f_{j}$ for all $j < n$
\item $\{\varphi(x;f_{j}) : j < n\}$ is $k$-inconsistent.  
\end{enumerate}

\emph{Proof of claim}:  
By induction on $n$, we prove that if $i_{0} < \ldots < i_{n-1} < \omega$, then $e_{i_{j}} \equiv_{M^{*}_{i_{0}}e_{<i_{j}} f_{<j}} f_{j}$. For $n=0$ there is nothing to prove. Suppose the claim is true for $n$ and we are given $i_{0} < \ldots < i_{n}$ and $(f_{j})_{j < n+1}$ with $f_{j} \models r^{*}_{i_{j}}|_{M^{*}_{i_{j}}\overline{e}f_{>j}}$ for all $j < n+1$.  Then clearly $f_{0}\equiv_{M_{i_{0}}^{*}}e_{i_{0}}$. For $0<j<n+1$, by induction $f_{j}\equiv_{M_{i_{1}}^{*}e_{<i_{j}}f_{\in\left[1,j\right)}}e_{i_{j}}$,
hence $f_{j}\equiv_{M_{i_{0}}^{*}e_{<i_{j}}f_{\in\left[1,j\right)}}e_{i_{j}}$.
As $f_{0}\ind_{M_{i_{0}}^{*}}^{u}\overline{e}f_{>0}$, we get that $f_{j}\equiv_{M_{i_{0}}^{*}e_{<i_{j}}f_{<j}}e_{i_{j}}$.  This shows (1).  To see (2), note that $(f_{n-1},f_{n-2},\ldots, f_{0}) \models r_{i_{0}}^{\otimes n}|M_{i_{0}}^{*} = q_{\delta_{i_{0}}}^{\otimes n}|M^{*}_{0}$, hence $\{\varphi(x;f_{j}) : j < n\}$ is $k$-inconsistent, by our assumption that $\varphi(x;c_{\delta_{0}})$ $k$-Kim-divides with respect to Morley sequences in $q_{\delta_{0}}$.  \qed

By compactness, we can find an array $(c_{i,0}, c_{i,1})_{i < \omega}$ so that $\{\varphi(x;c_{i,0}) : i < \omega\}$ is consistent, $\{\varphi(x;c_{i,1}) : i < \omega\}$ is $k$-inconsistent, and $c_{i,0} \equiv_{\overline{c}_{<i}} c_{i,1}$ for all $i < \omega$.  By Lemma \ref{karyversion}, we obtain SOP$_{1}$, a contradiction.   
\end{proof}

\begin{cor} \label{boundedweight}
Suppose that $T$ is a complete theory. The following are equivalent.
\begin{enumerate}
\item For some uncountable cardinal $\kappa$, there is no sequence $\langle N_{i},\varphi_{i}\left(x,y_{i}\right),c_{i} :i<\kappa \rangle$
such that $\langle N_{i} :i<\kappa \rangle$ is an increasing continuous
sequence of models of $T$ of size $<\kappa$, $\varphi_{i}(x,y_{i})$ is
a formula over $N_{i}$, $c_{i}\in N_{i+1}$, such that $\varphi_{i}(x,c_{i})$
Kim-forks over $N_{i}$ and $\{\varphi(x,c_{i}):i<\kappa\}$
is consistent. 
\item $T$ is NSOP$_{1}$. 
\end{enumerate}
\end{cor}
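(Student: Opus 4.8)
The plan is to get both implications from the proof of Theorem \ref{localcharacter}: one direction is literally that proof re-read, and the other is a construction mirroring it. For \textbf{(2)$\Rightarrow$(1)} I would take $\kappa=(2^{|T|})^{+}$. Suppose $\langle N_{i},\varphi_{i}(x,y_{i}),c_{i}:i<\kappa\rangle$ is a sequence witnessing that (1) fails for this $\kappa$. Since $T$ is NSOP$_{1}$ and each $N_{i}$ is a model, Proposition \ref{kforkingequalskdividing} upgrades ``$\varphi_{i}(x,c_{i})$ Kim-forks over $N_{i}$'' to ``$\varphi_{i}(x,c_{i})$ Kim-divides over $N_{i}$''. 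But the proof of Theorem \ref{localcharacter} used its hypothesis only to produce a datum of exactly this shape --- an increasing continuous chain of models $N_{i}$ of size $<\kappa$ with $c_{i}\in N_{i+1}$, formulas $\varphi_{i}(x,c_{i})$ (there carrying a harmless extra parameter $d_{i}\in N_{i}$, which we may take empty) Kim-dividing over $N_{i}$, plus the consistency of $\{\varphi_{i}(x,c_{i}):i<\kappa\}$ (there inherited from membership in a fixed $p\in S(M)$) --- and from it manufactured an array to which Lemma \ref{karyversion} applies, yielding SOP$_{1}$. Feeding our sequence into that argument gives SOP$_{1}$, contradicting (2), so (1) holds. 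One cannot shortcut this by citing Theorem \ref{localcharacter} as a black box, since Kim-forking is not base monotone and (1) speaks of arbitrary intermediate models, not a single distinguished one; this is exactly why the proof has to be re-run from the weaker input.

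For \textbf{(1)$\Rightarrow$(2)} I would prove the contrapositive: assuming $T$ has SOP$_{1}$, build for every cardinal $\kappa>|T|$ a sequence of the forbidden kind, so that (1) fails. By Proposition \ref{arrayequivalent} and the remark following it, fix a formula $\varphi(x;y)$, a Skolemization $L^{Sk}$ of $T$, and an $L^{Sk}$-indiscernible sequence $(\overline{c}_{i})_{i<\omega}$ with $\overline{c}_{i}=(c_{i,0},c_{i,1})$ such that $c_{i,0}\equiv^{L^{Sk}}_{\overline{c}_{<i}}c_{i,1}$ for all $i$, $\{\varphi(x;c_{i,0}):i<\omega\}$ is consistent, and $\{\varphi(x;c_{i,1}):i<\omega\}$ is $2$-inconsistent. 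Stretching by compactness to an $L^{Sk}$-indiscernible array $(\overline{c}_{i})_{i<\kappa}$ with the same Ehrenfeucht--Mostowski type preserves all three properties: consistency and $2$-inconsistency are witnessed by finitely many rows, and ``$c_{i,0}\equiv^{L^{Sk}}_{\overline{c}_{<i}}c_{i,1}$'' is recorded in the EM-type. Then, choosing an increasing continuous sequence $(\lambda_{\xi})_{\xi<\kappa}$ of limit ordinals below $\kappa$, I would set $N_{\xi}:=\text{Sk}(\overline{c}_{<\lambda_{\xi}})$, $c_{\xi}:=c_{\lambda_{\xi},0}$, and $\varphi_{\xi}:=\varphi$. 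Since $\text{Sk}$ commutes with directed unions, $(N_{\xi})_{\xi<\kappa}$ is an increasing continuous elementary chain of models with $|N_{\xi}|\le|T|+|\lambda_{\xi}|<\kappa$; clearly $c_{\xi}\in N_{\xi+1}$; and $\{\varphi(x;c_{\xi}):\xi<\kappa\}$ is consistent, being a subset of the consistent set $\{\varphi(x;c_{i,0}):i<\kappa\}$.

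The remaining point, which I expect to be the main obstacle, is that $\varphi(x;c_{\lambda_{\xi},0})$ Kim-forks over $N_{\xi}$ for every $\xi$. Since $N_{\xi}$ is the $L^{Sk}$-definable closure of $\overline{c}_{<\lambda_{\xi}}$, the relation $c_{\lambda_{\xi},0}\equiv^{L^{Sk}}_{\overline{c}_{<\lambda_{\xi}}}c_{\lambda_{\xi},1}$ upgrades to $c_{\lambda_{\xi},0}\equiv_{N_{\xi}}c_{\lambda_{\xi},1}$, and as Kim-dividing over $N_{\xi}$ depends only on the type of the parameters over $N_{\xi}$, it suffices to show $\varphi(x;c_{\lambda_{\xi},1})$ Kim-\emph{divides} over $N_{\xi}$. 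Using that $\lambda_{\xi}$ is a limit, I would take an ultrafilter $\mathcal{F}$ on $N_{\xi}^{l(y)}$ concentrating on the tails $\{c_{\beta,1}:\gamma<\beta<\lambda_{\xi}\}$ ($\gamma<\lambda_{\xi}$) and set $q:=\text{Av}(\mathcal{F},\mathbb{M})$, which is finitely satisfiable in $N_{\xi}$, hence $N_{\xi}$-invariant; by $L^{Sk}$-indiscernibility the truth value of $\psi(c_{\beta,1},m)$ is eventually constant in $\beta$ for each $m\in N_{\xi}$, so $q|_{N_{\xi}}=\text{Av}(\mathcal{F},N_{\xi})=\text{tp}(c_{\lambda_{\xi},1}/N_{\xi})$. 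For a Morley sequence $(b_{n})_{n<\omega}\models q^{\otimes\omega}|_{N_{\xi}}$ I would then argue that each $b_{n}$ realizes $q|_{N_{\xi}}$, so $b_{n}\equiv_{N_{\xi}}c_{\lambda_{\xi},1}$; that the \emph{global} $2$-inconsistency of $\{\varphi(x;c_{\alpha,1}):\alpha<\kappa\}$ gives $\neg\exists x\,(\varphi(x;c_{\lambda_{\xi},1})\wedge\varphi(x;c_{\beta,1}))$ for every $\beta<\lambda_{\xi}$, hence $\neg\exists x\,(\varphi(x;b_{n})\wedge\varphi(x;c_{\beta,1}))$ as well since this is a statement over $N_{\xi}$; and that, $\mathcal{F}$ concentrating on the $c_{\beta,1}$, this forces $\neg\exists x\,(\varphi(x;b_{n})\wedge\varphi(x;z))\in q$, so $\{\varphi(x;b_{n}):n<\omega\}$ is $2$-inconsistent. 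Thus $\varphi(x;y)$ $q$-divides over $N_{\xi}$, i.e.\ $\varphi(x;c_{\lambda_{\xi},1})$ Kim-divides over $N_{\xi}$, as required. The delicate part is precisely this last step: in an SOP$_{1}$ theory there is no base monotonicity and dividing need not imply Kim-forking, so the witnesses are forced into the ``consistent column'' $j=0$ while the Kim-dividing must be extracted from the ``$2$-inconsistent column'' $j=1$, the two being matched only via the Skolem-invariant equality $c_{\lambda_{\xi},0}\equiv_{N_{\xi}}c_{\lambda_{\xi},1}$; and because it is Kim-dividing and not mere dividing that is needed, one has to exhibit the coheir $q$ explicitly and verify that $2$-inconsistency propagates along a Morley sequence in it.
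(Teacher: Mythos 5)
Your proposal is correct and takes essentially the same route as the paper: (2)$\Rightarrow$(1) is precisely the paper's re-run of the proof of Theorem \ref{localcharacter} with $\kappa=(2^{|T|})^{+}$ (using Kim-forking $=$ Kim-dividing over models), and (1)$\Rightarrow$(2) is the paper's construction from the Skolemized SOP$_{1}$ array stretched to length $\kappa$, taking Skolem hulls of initial segments as the $N_{i}$ and transferring Kim-dividing at limit stages from the $2$-inconsistent column to the consistent one via equality of types over the hull. The only cosmetic difference is that you witness Kim-dividing by an explicit ultrafilter average over the column below the limit ordinal, whereas the paper uses the tail above it as a coheir sequence; both yield the same invariant-type witness.
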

\begin{proof}
(2) implies (1) by the proof of Theorem \ref{localcharacter}
(with $\kappa=(2^{\left|T\right|})^{+}$).

(1) implies (2). This is a variation on the proof of Proposition \ref{higher}.  Suppose $T$ has SOP$_{1}$ as witnessed by some formula $\varphi(x,y)$. Let $T^{sk}$ be a Skolemized expansion of $T$. Then $T^{sk}$ also has SOP$_{1}$ as witnessed by $\varphi(x,y)$. Thus by Proposition 2.4, we can find
a formula $\varphi(x,y)$ and an array $(c_{i,j})_{i<\omega,j<2}$ such that $c_{i,0}\equiv_{\overline{c}_{<i}}c_{i,1}$ for all $i<\omega$, $\{\varphi(x,c_{i,0}): i<\omega\}$ is consistent and $\{\varphi(x,c_{i,1}): i<\omega\}$ is 2-inconsistent (all in $\mathbb{M}^{sk}$).  By Ramsey and compactness we may assume that $\langle \overline{c}_{i}: i<\omega \rangle$ is indiscernible (with respect to $\mathbb{M}^{sk}$).
Extend this sequence to one of length $\kappa$. 

For $i<\kappa$, let $N_{i}=\text{dcl}(\overline{c}_{<i})$ (in $\mathbb{M}^{sk}$). Then for every limit ordinal $\delta<\kappa$, $\varphi(x,c_{\delta,1})$ Kim-divides over $N_{\delta}$ as the sequence $\langle c_{j,1} : \delta\leq j<\kappa\rangle$ is indiscernible and for all $\delta\leq j$, $\overline{c}_{j}\ind_{N_{\delta}}^{u}\overline{c}_{>j}$.  As $c_{\delta,1}\equiv_{\bar{c}_{<\delta}}c_{\delta,0}$, it follows
that $c_{\delta,1}\equiv_{N_{\delta}}c_{\delta,0}$, and hence $\varphi\left(x,c_{\delta,0}\right)$
also Kim-divides. As $\kappa$ is uncountable, $\text{otp}\left(\text{lim}\left(\kappa\right)\right)=\kappa$, so $\langle N_{\delta},\varphi(x,y),c_{\delta,0}: \delta<\kappa \rangle$ contradicts (1). 
\end{proof}

\begin{quest}
Suppose $T$ is NSOP$_{1}$.  Must it be the case that if $M \models T$, and $p \in S(M)$, there is $M' \preceq M$ so that $p$ does not Kim-fork over $M'$ and $|M'| \leq |T|$?
\end{quest}

\begin{thm}
Suppose that $T$ is NSOP$_{1}$. Then for every regular cardinal $\kappa>\left|T\right|$
there is a model $M$ of size $\kappa$ such that for all $p\in S\left(M\right)$
there is $N\prec M$ with $\left|N\right|<\kappa$ such that $p$
does not Kim-fork over $N$. 
\end{thm}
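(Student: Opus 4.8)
The plan is to produce $M$ as the union of a continuous increasing chain $\langle M_i : i < \kappa\rangle$ of elementary submodels with $|M_i| < \kappa$ (say $|M_i| = |T| + |i|$, so $|M| = \kappa$), built by a bookkeeping recursion designed so that the chain \emph{internalizes} the coherence which, in the proof of Theorem~\ref{localcharacter}, was extracted from the cardinal arithmetic $\mu^{|T|} < (2^{|T|})^{+}$. The point is that, although there may be $2^{|T|}\ge\kappa$ many $\omega$-types over a fixed submodel of size $\le|T|$, only $\le|M|=\kappa$ many types over the various $M_j$ are realized in $M$ (the relevant parameters will be finite tuples); so a bookkeeping of length $\kappa$ can schedule, for each such type $\operatorname{tp}(a/M_j)$ with $j\le i$ and $a$ already committed to $M$, a global coheir $r$ over $M_j$ extending it together with an initial segment of a Morley sequence in $r$ over $M_j$, place this data in $M_{i+1}$, and — reflecting to size-$\le|T|$ submodels via Lemma~\ref{lem:countable subset for dividing} — do so coherently, i.e.\ compatibly with the coheir data already scheduled over smaller submodels appearing in the chain.

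With $M$ in hand, suppose toward a contradiction that some $p\in S(M)$ Kim-forks over $M_i$ for every $i<\kappa$. By Proposition~\ref{kforkingequalskdividing}, for each $i$ there is $\varphi_i(x;c_i)\in p$ Kim-dividing over $M_i$, with $c_i\in M$ a finite tuple. Since $\kappa$ is regular and $>|T|$, we may pass to a subset of $\kappa$ of size $\kappa$ on which $\varphi_i=\varphi$ and the inconsistency degree $k_i=k$ are fixed, and then to a club so that $c_i\in M_{i+1}$; reindexing, we obtain a Kim-dividing chain $\langle M_i,\varphi,c_i : i<\kappa\rangle$ with $\{\varphi(x;c_i):i<\kappa\}\subseteq p$ consistent, the Kim-dividing over each $M_i$ being witnessed — using Theorem~\ref{kimslemmaforindk} — by a Morley sequence in the global coheir over $M_i$ extending $\operatorname{tp}(c_i/M_i)$, with fixed $k$-inconsistency. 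Applying Lemma~\ref{lem:countable subset for dividing} and invoking the closure property of the chain, we reflect Kim-dividing of $\varphi(x;c_i)$ over $M_i$ down to a submodel $M^{*}_i$ of size $\le|T|$ that appears in the chain, with the corresponding coheir Morley sequence data also present in the chain.

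Now I would run the pressing-down (Fodor) argument from the proof of Theorem~\ref{localcharacter}, except that the coherence condition $r_j^{*\otimes\omega}|_{M^{*}_n}=r_n^{*\otimes\omega}|_{M^{*}_n}$ is now supplied by the construction of $M$ rather than by counting $\omega$-types over $(2^{|T|})^{+}$ many indices. This yields an increasing sequence $\langle M^{*}_n,e_n,r^{*}_n : n<\omega\rangle$ of models of size $\le|T|$, elements $e_n\in M$, and global coheirs $r^{*}_n$ over $M^{*}_n$ with $e_{<n}\in M^{*}_n$, with $\varphi(x;e_j)$ being $k$-Kim-dividing over $M^{*}_n$ for all $n<j$, with $e_j\equiv_{M^{*}_n}e_n$ for all $j\ge n$, and with $r_j^{*\otimes\omega}|_{M^{*}_n}=r_n^{*\otimes\omega}|_{M^{*}_n}$, while $\{\varphi(x;e_n):n<\omega\}\subseteq p$ is consistent. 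The Claim in the proof of Theorem~\ref{localcharacter} then applies verbatim: by compactness one obtains an array $(c_{i,0},c_{i,1})_{i<\omega}$ with $c_{i,0}\equiv_{\overline c_{<i}}c_{i,1}$ for all $i$, with $\{\varphi(x;c_{i,0}):i<\omega\}$ consistent and $\{\varphi(x;c_{i,1}):i<\omega\}$ $k$-inconsistent. By Lemma~\ref{karyversion} this gives SOP$_1$, contradicting our hypothesis on $T$. Hence no such $p$ exists, and $M$ is as required.

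The main obstacle, and the step that needs real care, is the first one: setting up the closure property of the chain so that (a) it can be met while keeping every $M_i$ of size $<\kappa$ — in particular of size $\le|T|$ when $\kappa=|T|^{+}$, which rules out any appeal to saturation — and (b) it is nonetheless strong enough that the coherence of reflected coheir Morley sequences demanded in the extraction above is automatically available. The tension is precisely that the space of $\omega$-types over a size-$|T|$ model can be as large as $2^{|T|}\ge\kappa$, so the chain cannot simply absorb all such types; the bookkeeping must instead be organized around the $\le\kappa$ types actually realized in $M$ and around maintaining a coherent, nested system of coheirs over the size-$\le|T|$ submodels that arise when Lemma~\ref{lem:countable subset for dividing} is applied.
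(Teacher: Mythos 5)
Your strategy is genuinely different from the paper's and, as written, it has a real gap that you yourself flag: the bookkeeping construction of the chain $\langle M_i : i < \kappa\rangle$ with the required closure/coherence property is not carried out, and it is not clear that it \emph{can} be carried out as described. The difficulty is more than technical. In the proof of Theorem~\ref{localcharacter}, the coherence $r_j^{*\otimes\omega}|_{M^{*}_n}=r_n^{*\otimes\omega}|_{M^{*}_n}$ is obtained \emph{a posteriori} by pigeonhole from $\mu^{|T|}<(2^{|T|})^{+}$: the data $\bigl(\delta, q_\delta, M_\delta, q_\delta^{\otimes\omega}|_{M_\delta}\bigr)$ appear only after a hypothetical type $p$ together with its witnessing formulas $\varphi(x;c_\delta)$ have been produced by the adversary, and then one uses that there are fewer than $\kappa$ possible restrictions. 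Your scheme tries to prebuild that coherence into the chain, but the submodels $M_\delta$ and the $\omega$-types $q_\delta^{\otimes\omega}|_{M_\delta}$ depend on the reflection of Lemma~\ref{lem:countable subset for dividing} applied to a coheir and a base that are determined by the (as yet unknown) adversary's choices, and there can be $2^{|T|}\ge\kappa$ many $\omega$-types over a fixed size-$|T|$ submodel. You acknowledge exactly this tension in your final paragraph but do not resolve it; without a construction showing that the Fodor step at each finite stage still finds a stationary set on which these restrictions stabilize, the argument does not close.

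The paper's proof dissolves this by a completely different device: take $M=\mathrm{dcl}(I)$ for $I=(a_i)_{i<\kappa}$ a $T^{\mathrm{sk}}$-indiscernible sequence (in a Skolemization), and set $N_i=\mathrm{dcl}\bigl((a_j)_{j<i}\bigr)$. A putative $p$ that Kim-forks over every $N_i$ yields witnesses $\varphi\bigl(x, t(b_i,b_i')\bigr)\in p$ with $b_i$ from the tail $\{a_j:j\ge i\}$ and $b_i'$ from the initial segment. One passes to a club where $b_i\subseteq (a_j)_{j<\alpha}$ for $i<\alpha$, then applies Fodor once to fix $b'_\alpha=b'$, $\varphi$, $t$ on a stationary $S$. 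For limit $\alpha_0\in S$, indiscernibility of the Skolemized sequence \emph{already} gives that $\bigl(t(b_\alpha,b')\bigr)_{\alpha_0\le\alpha\in S}$ is $N_{\alpha_0}$-indiscernible and, by the hull construction, $t(b_\alpha,b')\ind^u_{N_{\alpha_0}} t(b_{>\alpha},b')$. Hence Kim-dividing of $\varphi\bigl(x,t(b_{\alpha_0},b')\bigr)$ over $N_{\alpha_0}$ immediately makes $\{\varphi(x,t(b_\alpha,b')):\alpha_0\le\alpha\in S\}$ inconsistent, contradicting its inclusion in $p$. No coheir bookkeeping, no reflection to size-$|T|$ submodels, and no invocation of Lemma~\ref{karyversion} is required: the indiscernible sequence provides both the invariance and the finite satisfiability for free. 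This is both shorter and avoids the obstacle on which your proposal is stuck.
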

\begin{proof}
Let $I=(a_{i})_{i<\kappa}$ be an indiscernible sequence with respect to $T^{sk}$ \textemdash{} a Skolemized expansion of $T$.
Let $M=\text{dcl}\left(I\right)$. Let $p\in S\left(M\right)$. For $i<\kappa$
let 
\[
N_{i}=\text{dcl}\left((a_{j})_{j<i}\right).
\]
Suppose for contradiction that for every $i<\kappa$, $p$ Kim-forks
over $N_{i}$. 

This means that for every $i<\kappa$ there is a formula $\varphi_{i}(x,t_{i}(b_{i},b_{i}'))$
witnessing Kim-dividing over $N_{i}$, where $t_{i}$ is a Skolem
term, $b_{i}\subseteq \{a_{j} :j\geq i\}$, $b_{i}'\subseteq \{a_{j} :j<i\}$,
and both are increasing tuples. 

Let $E\subseteq\kappa$ be the set of limits $\alpha\in\kappa$ such
that for all $i<\alpha$, $b_{i}\subseteq(a_{j})_{j<\alpha}$.
Then $E$ is a club of $\kappa$. Define $F:E\to\kappa$ by $F\left(\alpha\right)=\max\{ j : a_{j}\in b_{\alpha}'\}$.
By Fodor's lemma there is a stationary set $S\subseteq E$ on which
$F$ is constant $\gamma$. Reducing to an unbounded subset of $S$,
we may assume that for every $\alpha\in S$, $\varphi_{\alpha}=\varphi$,
$t_{\alpha}=t$ and $b_{\alpha}'=b'$ (all the $b_{\alpha}'$
come from $\{a_{j} : j<\gamma\}$ which has size $\left|\gamma\right|<\left|S\right|=\kappa$).
By choice of $E$, for all $\alpha<$$\beta$ from $S$, $b_{\alpha}< b_{\beta}$
(i.e., every coordinate of $b_{\beta}$ is greater than every
coordinate of $b_{\alpha}$). Hence $(t(b_{\alpha},b'))_{\alpha_{0}\leq\alpha\in S}$
is an indiscernible sequence over $N_{\alpha_{0}}$ such that $t\left(b_{\alpha},b'\right)\ind_{N_{\alpha_{0}}}^{u}t\left(b_{>\alpha},b'\right)$
for every $\alpha_{0}\in S$ by the construction of $N_{\alpha_{0}}$,
and hence, as $\varphi(x,t(b_{\alpha_{0}},b'))$
Kim-divides over $N_{\alpha_{0}}$, $\{\varphi(x,t(b_{\alpha},b')) :\alpha_{0}\leq\alpha\in S\}$
in inconsistent, but it is contained in $p$. 
\end{proof}

\begin{rem}
Note that this theorem is most interesting for the case $|T| < \kappa \leq 2^{|T|}$, as this is not covered by Theorem \ref{localcharacter}.  
\end{rem}

\section{Symmetry}\label{symmetrysection}

\subsection{Generalized indiscernibles and a class of trees}

For an ordinal $\alpha$, let the language $L_{s,\alpha}$ be $\langle \unlhd, \wedge, <_{lex}, (P_{\beta})_{\beta < \alpha} \rangle$.  We may view a tree with $\alpha$ levels as an $L_{s,\alpha}$-structure by interpreting $\unlhd$ as the tree partial order, $\wedge$ as the binary meet function, $<_{lex}$ as the lexicographic order, and $P_{\beta}$ interpreted to define level $\beta$.  For the rest of the paper, a tree will be understood to be an $L_{s,\alpha}$-structure for some appropriate $\alpha$.  We will sometimes suppress the $\alpha$ and refer instead to $L_{s}$, where the number of predicates is understood from context.  We define a class of trees $\mathcal{T}_{\alpha}$ as follows.

\begin{defn}
Suppose $\alpha$ is an ordinal.  We define $\mathcal{T}_{\alpha}$ to be the set of functions $f$ so that 
\begin{itemize}
\item $\text{dom}(f)$ is an end-segment of $\alpha$ of the form $[\beta,\alpha)$ for $\beta$ equal to $0$ or a successor ordinal.  If $\alpha$ is a successor, we allow $\beta = \alpha$, i.e. $\text{dom}(f) = \emptyset$.
\item $\text{ran}(f) \subseteq \omega$.
\item finite support:  the set $\{\gamma \in \text{dom}(f) : f(\gamma) \neq 0\}$ is finite.    
\end{itemize}
We interpret $\mathcal{T}_{\alpha}$ as an $L_{s,\alpha}$-structure by defining 
\begin{itemize}
\item $f \unlhd g$ if and only if $f \subseteq g$.  Write $f \perp g$ if $\neg(f \unlhd g)$ and $\neg(g \unlhd f)$.  
\item $f \wedge g = f|_{[\beta, \alpha)} = g|_{[\beta, \alpha)}$ where $\beta = \text{min}\{ \gamma : f|_{[\gamma, \alpha)} =g|_{[\gamma, \alpha)}\}$, if non-empty (note that $\beta$ will not be a limit, by finite support). Define $f \wedge g$ to be the empty function if this set is empty (note that this cannot occur if $\alpha$ is a limit).  
\item $f <_{lex} g$ if and only if $f \vartriangleleft g$ or, $f \perp g$ with $\text{dom}(f \wedge g) = [\gamma +1,\alpha)$ and $f(\gamma) < g(\gamma)$
\item For all $\beta \in \alpha \setminus \text{lim}(\alpha)$, $P_{\beta} = \{ f \in \mathcal{T}_{\alpha} : \text{dom}(f) = [\beta, \alpha)\}$.    
\end{itemize}
\end{defn}

\vspace{.25in}
\begin{center}
\includegraphics[scale=.18]{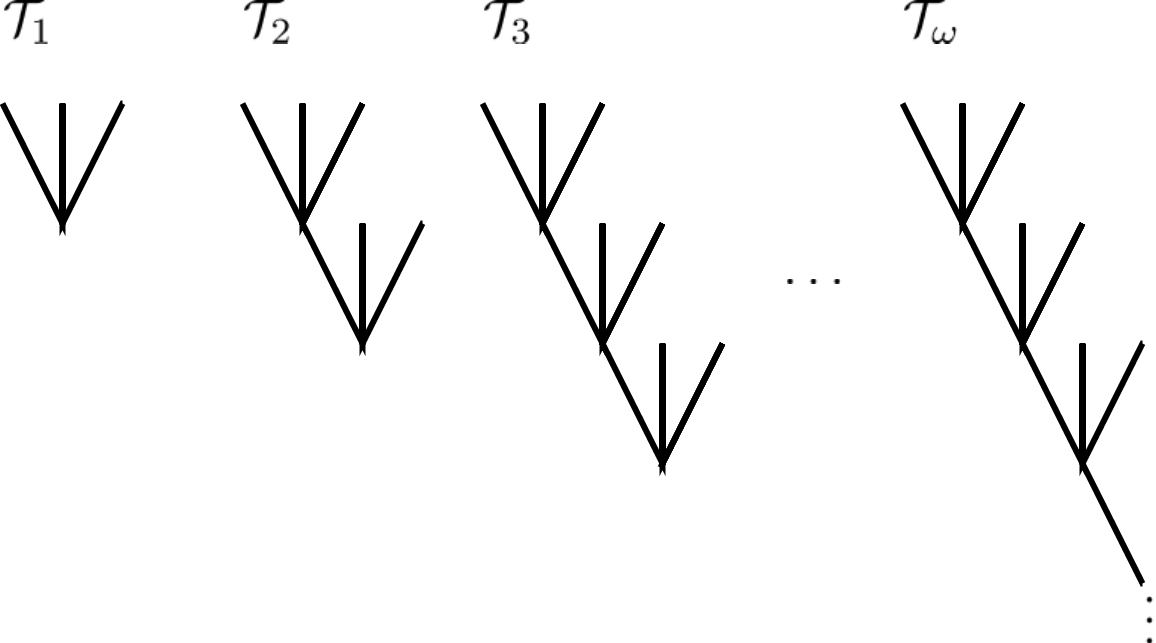} 
\captionof{figure}{Illustration of the trees $\mathcal{T}_{\alpha}$}
\end{center}

It is easy to check that for all $n < \omega$, $\mathcal{T}_{n} \cong \omega^{\leq n}$.  For $\alpha$ infinite, however, $\mathcal{T}_{\alpha}$ will be ill-founded (as a partial order).  In particular, $P_{0}$ names the level at the \emph{top} of the tree, $P_{\beta+1}$ names the level immediately \emph{below} $P_{\beta}$, and so on.  We remark that condition (1) in the definition of $\mathcal{T}_{\alpha}$ was stated incorrectly in the first version of this paper via the weaker requirement that $\text{dom}(f)$ is an end-segment, non-empty if $\alpha$ is limit.  The inductive constructions involving the $\mathcal{T}_{\alpha}$ typically assume that $\mathcal{T}_{\alpha+1}$ consists of the empty function (the root) and countably many copies of $\mathcal{T}_{\alpha}$ given by $\{\langle i \rangle \frown \eta : i< \omega,\eta \in \mathcal{T}_{\alpha}\}$.  But if $\alpha$ is a limit, this becomes false if we allow functions with domain $\{\alpha\}$ since the empty function is not an element of $\mathcal{T}_{\alpha}$ and therefore the function $\alpha \mapsto i$ is not of the form $\langle i \rangle \frown \eta$ for some $\eta \in \mathcal{T}_{\alpha}$.  This is rectified by omitting functions whose domain is an end-segment of the form $[\beta,\alpha)$ for $\beta$ limit.	

As many arguments in this paper will involve inductive constructions of trees of tuples indexed by $\mathcal{T}_{\alpha}$, it will be useful to fix notation as follows:
\begin{defn}
Suppose $\alpha$ is an ordinal.  
\begin{enumerate}
\item (Restriction) If $w \subseteq \alpha \setminus \text{lim}(\alpha)$, the \emph{restriction of} $\mathcal{T}_{\alpha}$ \emph{to the set of levels  }$w$ is given by 
$$
\mathcal{T}_{\alpha} \upharpoonright w = \{\eta \in \mathcal{T}_{\alpha} : \min (\text{dom}(\eta)) \in w \text{ and }\beta \in \text{dom}(\eta) \setminus w \implies \eta(\beta) = 0\}.
$$
\item (Concatenation)  If $\eta \in \mathcal{T}_{\alpha}$, $\text{dom}(\eta) = [\beta+1,\alpha)$, and $i < \omega$, let $\eta \frown \langle i \rangle$ denote the function $\eta \cup \{(\beta,i)\}$.  We define $\langle i \rangle \frown \eta \in \mathcal{T}_{\alpha+1}$ to be $\eta \cup \{(\alpha,i)\}$.  We write $\langle i \rangle$ for $\emptyset \frown \langle i \rangle$.
\item (Canonical inclusions) If $\alpha < \beta$, we define the map $\iota_{\alpha \beta} : \mathcal{T}_{\alpha} \to \mathcal{T}_{\beta}$ by $\iota_{\alpha \beta}(f) = f \cup \{(\gamma, 0) : \gamma \in \beta \setminus \alpha\}$.
\item (The all $0$'s path) If $\beta < \alpha$, then $\zeta_{\beta}$ denotes the function with $\text{dom}(\zeta_{\beta}) = [\beta, \alpha)$ and $\zeta_{\beta}(\gamma) = 0$ for all $\gamma \in [\beta,\alpha)$.  
\end{enumerate}
\end{defn}

The function $i_{\alpha \beta}$ includes $\mathcal{T}_{\alpha}$ into $\mathcal{T}_{\beta}$ by adding zeros to the bottom of every node in $\mathcal{T}_{\alpha}$.  Clearly if $\alpha < \beta < \gamma$, then $\iota_{\alpha \gamma} = \iota_{\beta \gamma} \circ \iota_{\alpha \beta}$.  If $\beta$ is a limit, then $\mathcal{T}_{\beta}$ is the direct limit of the $\mathcal{T}_{\alpha}$ for $\alpha < \beta$ along these maps.  Visually, to get $\mathcal{T}_{\alpha+1}$ from $\mathcal{T}_{\alpha}$, one takes countably many copies of $\mathcal{T}_{\alpha}$ and adds a single root at the bottom.  Lastly, note that the function $\zeta_{\beta}$ will only be an element of $\mathcal{T}_{\alpha}$ if $\beta \in \alpha \setminus \text{lim}(\alpha)$.  

\begin{defn}  Suppose $I$ is an $L'$-structure, where $L'$ is some language. 
\begin{enumerate}
\item  We say $(a_{i} : i \in I)$ is a set of $I$\emph{-indexed indiscernibles} if whenever 

$(s_{0}, \ldots, s_{n-1})$, $(t_{0}, \ldots, t_{n-1})$ are tuples from $I$ with 
$$
\text{qftp}_{L'}(s_{0}, \ldots, s_{n-1}) = \text{qftp}_{L'}(t_{0}, \ldots, t_{n-1}),
$$
then we have
$$
\text{tp}(a_{s_{0}},\ldots, a_{s_{n-1}}) = \text{tp}(a_{t_{0}},\ldots, a_{t_{n-1}}).
$$
\item In the case that $L' = L_{s,\alpha}$ for some $\alpha$, we say that an $I$-indexed indiscernible is $\emph{s-indiscernible}$.  As the only $L_{s,\alpha}$-structures we will consider will be trees, we will often refer to $I$-indexed indiscernibles in this case as \emph{s-indiscernible trees}.  
\item We say that $I$-indexed indiscernibles have the \emph{modeling property} if, given any $(a_{i} : i \in I)$ from $\mathbb{M}$, there is an \(I\)-indexed indiscernible \((b_{i} : i \in I)\) in $\mathbb{M}$ \emph{locally based} on \((a_{i} : i \in I)$ -- i.e., given any finite set of formulas \(\Delta\) from \(L\) and a finite tuple \((t_{0}, \ldots, t_{n-1})\) from \(I\), there is a tuple \((s_{0}, \ldots, s_{n-1})\) from \(I\) so that 
\[
\text{qftp}_{L'} (t_{0}, \ldots, t_{n-1}) =\text{qftp}_{L'}(s_{0}, \ldots , s_{n-1})
\]
and also 
\[
\text{tp}_{\Delta}(b_{t_{0}}, \ldots, b_{t_{n-1}}) = \text{tp}_{\Delta}(a_{s_{0}}, \ldots, a_{s_{n-1}}).
\]
\end{enumerate}
\end{defn}

\begin{fact}\cite[Theorem 4.3]{KimKimScow}\label{modeling}
Let denote \(I_{s}\) be the \(L_{s,\omega}\)-structure \((\omega^{<\omega}, \unlhd, <_{lex}, \wedge, (P_{\alpha})_{\alpha < \omega})\) with all symbols being given their intended interpretations and each \(P_{\alpha}\) naming the elements of the tree at level \(\alpha\).  Then \(I_{s}\)-indexed indiscernibles have the modeling property.  
\end{fact}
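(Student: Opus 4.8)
The plan is to obtain the modeling property for $I_s$-indexed indiscernibles from a Ramsey-type partition property of the class of finite subtrees of $\omega^{<\omega}$, in exact analogy with the derivation of the modeling property for linearly ordered indiscernibles from the finite Ramsey theorem. The combinatorial engine will be Milliken's tree theorem; the abstract packaging is a criterion of Scow identifying the modeling property for $I$-indexed indiscernibles (when $I$ carries a $\emptyset$-definable linear order, here $<_{lex}$) with the statement that $\mathrm{Age}(I)$ is a Ramsey class.

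First I would set up the usual compactness reduction. Given $(a_\eta : \eta \in \omega^{<\omega})$ in $\mathbb{M}$, introduce new constants $(b_\eta : \eta \in \omega^{<\omega})$ and let $\Sigma$ be the union, over all finite $\Delta \subseteq L$ and all quantifier-free $L_{s,\omega}$-types $p(\bar x)$ of $n$-tuples, of (i) the sentences asserting that any two $n$-tuples $\bar b, \bar b'$ from the new constants with $\mathrm{qftp}_{L_{s,\omega}}(\bar b) = \mathrm{qftp}_{L_{s,\omega}}(\bar b') = p$ satisfy $\mathrm{tp}_{\Delta}(\bar b) = \mathrm{tp}_{\Delta}(\bar b')$, together with (ii) the sentences asserting, for each such $\bar b$, the existence of an $n$-tuple $\bar s$ among the $a_\eta$ with $\mathrm{qftp}_{L_{s,\omega}}(\bar s) = p$ and $\mathrm{tp}_{\Delta}(\bar s) = \mathrm{tp}_{\Delta}(\bar b)$. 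A model of $\Sigma$ is exactly an $s$-indiscernible tree $(b_\eta)$ locally based on $(a_\eta)$, so by compactness it suffices to realize each finite $\Sigma_0 \subseteq \Sigma$; such a $\Sigma_0$ involves only finitely many $\Delta$, finitely many patterns $p_1,\dots,p_m$, and constants $b_\eta$ for $\eta$ in a finite set $W \subseteq \omega^{<\omega}$. Thus the task reduces to: find a finite subtree $W' \subseteq \omega^{<\omega}$ realizing the $L_{s,\omega}$-quantifier-free type of $W$ such that, inside $W'$, every sub-configuration of each pattern $p_j$ has constant $\Delta$-type in the original sequence $(a_\eta)$.

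To produce $W'$, I would iterate Milliken's tree theorem. Each quantifier-free $L_{s,\omega}$-configuration, after closing under meets and recording level predicates, is a finite tree that sits inside a strong subtree of $\omega^{<\omega}$ of some finite height, and a finite coloring of such configurations pushes forward to a finite coloring of the height-$k$ strong subtrees; Milliken's theorem then yields an infinite strong subtree $S \subseteq \omega^{<\omega}$, itself $L_{s,\omega}$-isomorphic to $\omega^{<\omega}$, on which the induced coloring is constant. Running through the patterns $p_1,\dots,p_m$ one at a time — passing to a nested chain $S_0 \supseteq S_1 \supseteq \cdots \supseteq S_m$ of infinite strong subtrees and stabilizing one pattern per step — yields a single infinite strong subtree on which $\eta \mapsto a_\eta$ already has all the required constancy. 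Choosing $W'$ inside it, transporting along the isomorphism $S_m \cong \omega^{<\omega}$, and reading off the realized $L$-types then gives a model of $\Sigma_0$.

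I expect the main obstacle to be the bookkeeping matching arbitrary quantifier-free $L_{s,\omega}$-configurations with the strong-subtree configurations to which Milliken's theorem literally applies: one must check that the meet function, the order $<_{lex}$, and all the level predicates $P_\alpha$ are respected by strong-subtree isomorphisms; that every pattern occurring in $\Sigma_0$ is generated by strong subtrees uniformly enough for a single stabilization step; and that passing to a further nested strong subtree never disturbs a pattern already stabilized. This is exactly the verification that $\mathrm{Age}(I_s)$ is a Ramsey class, which is the substance of \cite[Theorem 4.3]{KimKimScow}; once it is in place, Scow's criterion delivers the modeling property.
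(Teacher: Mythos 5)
The paper does not actually prove Fact \ref{modeling}: it is imported as a black box from Kim--Kim--Scow, so the only comparison available is with the argument in the cited literature, and your plan is essentially that argument --- a compactness reduction to finite fragments, Milliken's strong subtree theorem as the combinatorial engine, packaged through Scow's transfer between the Ramsey property of $\mathrm{Age}(I)$ and the modeling property for $I$-indexed indiscernibles. The reduction itself is fine (one small repair: your clause (ii) quantifies over tuples from the index set, which is not first order as stated; it should be recorded as the finite disjunction over the finitely many $\Delta$-types realized on pattern-$p$ tuples of the $a_\eta$, which is standard), and the iteration of Milliken over the finitely many patterns, with nested strong subtrees, is unproblematic since monochromaticity persists when passing to further strong subtrees.

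There is, however, one concrete gap, caused by the level predicates $P_\alpha$. In $L_{s,\omega}$, equality of quantifier-free types pins down the \emph{absolute} levels of the nodes, so your clause (ii) demands a witness $\bar s$ among the $a_\eta$ lying at exactly the same levels as $\bar t$. A strong subtree $S$ given by Milliken's theorem is meet-closed, and the natural isomorphism $S \cong \omega^{<\omega}$ preserves $\unlhd$, $\wedge$, $<_{lex}$ and the relative order of levels, but not the $P_\alpha$: the nodes of $S$ sit on a sparse level set. Hence if you transport and set $b_\eta := a_{\iota(\eta)}$, clause (i) holds, but the natural witness $\iota(\bar t)$ for (ii) has the wrong quantifier-free type, and nothing in the argument guarantees that the $\Delta$-type of $a_{\iota(\bar t)}$ is also realized by a configuration at the original levels. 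As written, your proof therefore yields the modeling property for the coarser language in which the $(P_\alpha)_{\alpha<\omega}$ are replaced by the length comparison $\leq_{len}$ --- the form to which strong-subtree arguments apply directly, and which is what the paper's constructions actually use --- but not literally the statement of Fact \ref{modeling}. To get the exact-level version you need an additional, genuinely level-preserving Ramsey step (the Ramsey property for the age of $(\omega^{<\omega}, \unlhd, \wedge, <_{lex}, (P_\alpha)_{\alpha<\omega})$ with embeddings preserving absolute levels); since the relevant trees have bounded height and unbounded branching this is a finitary statement provable by induction on the number of levels, but it does not follow formally from transporting along $S_m \cong \omega^{<\omega}$. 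Finally, note that closing by saying the required verification ``is the substance of [KKS, Theorem 4.3]'' makes the proposal partly circular as a proof of that very fact; the verification is exactly what a self-contained proof must supply.
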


\begin{rem}
Note that the tree $\omega^{<\omega}$ is \emph{not} the same tree as $\mathcal{T}_{\omega}$, which is ill-founded.  
\end{rem}

\begin{cor}
For any $\alpha$, $\mathcal{T}_{\alpha}$-indexed indiscernibles have the modeling property.
\end{cor}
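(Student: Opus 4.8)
The plan is to derive the modeling property for $\mathcal{T}_{\alpha}$-indexed indiscernibles from the modeling property for $I_s$-indexed indiscernibles (Fact \ref{modeling}) by an interpretation/coding argument. The key observation is that $I_s = \omega^{<\omega}$ and each finite tree $\mathcal{T}_n \cong \omega^{\leq n}$ embeds naturally into $\omega^{<\omega}$, and more importantly that any finite subset of $\mathcal{T}_\alpha$ lives inside a finite subtree that is isomorphic (as an $L_s$-structure, after reindexing the finitely many relevant levels) to a finite subtree of $\omega^{<\omega}$. The point is that quantifier-free $L_{s,\alpha}$-types of finite tuples from $\mathcal{T}_\alpha$ only see: the finitely many levels occupied by the tuple and their meets, the $\unlhd$-order, the $<_{lex}$-order, and which $P_\beta$ each node lies in. Since only finitely many levels are named, the combinatorial data of a finite configuration in $\mathcal{T}_\alpha$ is the same data as in a finite-height tree; so $\mathrm{qftp}_{L_s}$ in $\mathcal{T}_\alpha$ is controlled by a finite amount of information of the same shape available in $\omega^{<\omega}$.

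First I would fix a notion: call a map $\theta$ from a finite subset $S \subseteq \mathcal{T}_\alpha$ to $\omega^{<\omega}$ \emph{good} if it preserves $\unlhd$, $<_{lex}$, $\wedge$ (in the sense that $\theta$ extends to the meet-closure of $S$ and commutes with $\wedge$ there), and records the level-predicate information faithfully via a fixed order-preserving bijection between the finitely many named levels hit by $S$ and an initial segment of $\omega$. One checks that for any finite $S$ such a good $\theta$ exists and that $\mathrm{qftp}_{L_{s,\alpha}}$ of a tuple enumerating $S$ in $\mathcal{T}_\alpha$ is determined by, and determines, $\mathrm{qftp}_{L_{s,\omega}}$ of the corresponding tuple in $\omega^{<\omega}$ together with the level-bookkeeping. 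This is the heart of the matter and the step I expect to be the most delicate: making precise and verifying that the translation between $L_{s,\alpha}$-quantifier-free types on finite tuples from $\mathcal{T}_\alpha$ and $L_{s,\omega}$-quantifier-free types on finite tuples from $\omega^{<\omega}$ is a bijection respecting ``same type,'' especially the handling of the level predicates $P_\beta$ and the fact that $\mathcal{T}_\alpha$ omits domains with limit left-endpoint (so meets always exist at successor levels, matching the finite-height picture).

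Given this translation, the modeling-property transfer is routine: starting from an arbitrary $(a_\eta : \eta \in \mathcal{T}_\alpha)$, transport along a good map to get an array indexed by (a subtree of) $\omega^{<\omega}$, apply Fact \ref{modeling} to extract an $I_s$-indexed indiscernible $(b_\sigma)$ locally based on it, then transport back to obtain $(c_\eta : \eta \in \mathcal{T}_\alpha)$. Indiscernibility of $(c_\eta)$ over $\mathcal{T}_\alpha$ follows because two finite tuples from $\mathcal{T}_\alpha$ with the same $L_{s,\alpha}$-quantifier-free type get sent to finite tuples from $\omega^{<\omega}$ with the same $L_{s,\omega}$-quantifier-free type, where $(b_\sigma)$ is indiscernible; and local basedness of $(c_\eta)$ on $(a_\eta)$ is inherited from that of $(b_\sigma)$ on the transported array, again using that the good maps preserve quantifier-free types in both directions. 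One subtlety to flag is that $\mathcal{T}_\alpha$ may be large (ill-founded, of size up to $|\alpha|$), so the extraction should be done level-by-level or via a compactness/direct-limit argument using that $\mathcal{T}_\beta = \varinjlim_{\gamma<\beta}\mathcal{T}_\gamma$ along the inclusions $\iota_{\gamma\beta}$; since indiscernibility and local basedness are finitary conditions, this causes no real difficulty. I would remark that this corollary is already essentially in the literature (e.g., the tree-indexed indiscernible machinery of Kim--Kim--Scow and Takeuchi--Tsuboi), so a brief pointer plus the above sketch suffices.
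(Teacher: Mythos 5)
Your proposal is correct and is essentially the paper's own argument: the paper proves this corollary simply ``by Fact \ref{modeling} and compactness,'' and your sketch just makes explicit the underlying translation of finite configurations (and their quantifier-free $L_{s}$-types, with relabelled levels) between $\mathcal{T}_{\alpha}$ and $\omega^{<\omega}$, followed by the compactness assembly you flag at the end.
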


\begin{proof}
By Fact \ref{modeling} and compactness.  
\end{proof}

\begin{defn}
Suppose $(a_{\eta})_{\eta \in \mathcal{T}_{\alpha}}$ is a tree of tuples, and $C$ is a set of parameters.  
\begin{enumerate}
\item We say $(a_{\eta})_{\eta \in \mathcal{T}_{\alpha}}$ is \emph{spread out over} C if for all $\eta \in \mathcal{T}_{\alpha}$ with $\text{dom}(\eta) =[\beta+1,\alpha)$ for some $\beta < \alpha$, there is a global $C$-invariant type $q_{\eta} \supseteq \text{tp}(a_{\unrhd \eta \frown \langle 0 \rangle}/C)$ so that $(a_{\unrhd \eta \frown \langle i \rangle})_{i < \omega}$ is a Morley sequence over $C$ in $q_{\eta}$.  
\item Suppose $(a_{\eta})_{\eta \in \mathcal{T}_{\alpha}}$ is a tree which is spread out and $s$-indiscernible over $C$ and for all $w,v \in [\alpha \setminus \text{lim}(\alpha)]^{<\omega}$ with $|w| = |v|$,
$$
(a_{\eta})_{\eta \in \mathcal{T}_{\alpha} \upharpoonright w} \equiv_{C} (a_{\eta})_{\eta \in \mathcal{T}_{\alpha} \upharpoonright v}
$$
then we say $(a_{\eta})_{\eta \in \mathcal{T}_{\alpha}}$ is a \emph{Morley tree} over $C$.  
\item A \emph{tree Morley sequence} over $C$ is a $C$-indiscernible sequence of the form $(a_{\zeta_{\beta}})_{\beta in \alpha \setminus \text{lim}(\alpha)}$ for some Morley tree $(a_{\eta})_{\eta \in \mathcal{T}_{\alpha}}$ over $C$.  
\end{enumerate}
\end{defn}

\begin{rem}
With regards to condition (1), if $\beta$ is a limit ordinal, then $\eta \frown \langle i \rangle$ describes a function which is \emph{not} an element of $\mathcal{T}_{\alpha}$ (as the least element of its domain is not $0$ or a successor).  In this case, we will abuse notation, writing $a_{\unrhd \eta \frown \langle i \rangle}$ for the tuple enumerating $\{a_{\nu} : \nu \in \mathcal{T}_{\alpha}, \eta \frown \langle i \rangle \subseteq \nu\}$.  Additionally,if $(a_{\eta})_{\eta \in \mathcal{T}_{\alpha}}$ is $s$-indiscernible over $C$, then, in order to be spread out over $C$, it suffices to have global $C$-invariant types as in (1) for all $\eta$ identically zero\textemdash i.e. those nodes in the tree of the form $\zeta_{\beta}$ for some $\beta \in \alpha \setminus \text{lim}(\alpha)$.  Note that the condition in (2) forces $(a_{\zeta_{\beta}})_{\beta \in \alpha \setminus \text{lim}(\alpha)}$ to be $C$-indiscernible\textemdash in fact, (1) and (2) together can be shown to be equivalent to demanding that the tree is indiscernible with respect to the language $L = \langle \unlhd, <_{lex},\wedge, \leq_{len}\rangle$, where $\leq_{len}$ is interpreted as the pre-order which compares the lengths of nodes in the tree.  Finally, in (3) we speak of $(a_{\zeta_{\beta}})_{\beta < \alpha}$, the sequence indexed by the all-zeroes path in the tree, simply because this is a convenient choice of a path.  In an $s$-indiscernible tree over $C$, any two paths will have the same type over $C$.  Hence, (3) may be stated more succinctly as:  a tree Morley sequence over $C$ is a path in some Morley tree over $C$.  
\end{rem}

\begin{lem}\label{concatenation}
Suppose $(a_{i})_{i < \omega}$ is a tree Morley sequence over $C$.
\begin{enumerate}
\item If $a_{i} = (b_{i},c_{i})$ for all $i < \omega$, where the $b_{i}$'s are all initial subtuples of $a_{i}$ of the same length, then $(b_{i})_{i < \omega}$ is a tree Morley sequence over $C$.
\item Given $1 \leq n < \omega$, suppose $d_{i} = (a_{n\cdot i}, a_{n \cdot i + 1}, \ldots, a_{n \cdot i + n-1})$.  Then $(d_{i})_{i < \omega}$ is a tree Morley sequence over $C$.  
\end{enumerate}
\end{lem}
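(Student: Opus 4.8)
In both parts the strategy is the same: start from a Morley tree $(a_\eta)_{\eta \in \mathcal{T}_\alpha}$ over $C$ whose distinguished (all-zeros) path enumerates $(a_i)_{i<\omega}$, and manufacture a new Morley tree over $C$ whose distinguished path enumerates the sequence in question. The conclusion then follows, since in an $s$-indiscernible tree any path has the same type over $C$, and a reduct of a $C$-indiscernible sequence is $C$-indiscernible.

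For (1), write $a_\eta = (b_\eta, c_\eta)$ at every node, where $b_\eta$ is the initial subtuple of $a_\eta$ of length $k = |b_0|$ (well-defined uniformly by $s$-indiscernibility). I claim $(b_\eta)_{\eta \in \mathcal{T}_\alpha}$ is again a Morley tree over $C$. Being $s$-indiscernible over $C$ and satisfying the homogeneity condition $(b_\eta)_{\eta \in \mathcal{T}_\alpha \upharpoonright w} \equiv_C (b_\eta)_{\eta \in \mathcal{T}_\alpha \upharpoonright v}$ (for $|w| = |v|$) pass to this reduct immediately. For spread-outness, fix $\eta$ with $\text{dom}(\eta) = [\beta+1,\alpha)$ and the global $C$-invariant type $q_\eta \supseteq \text{tp}(a_{\unrhd \eta \frown \langle 0 \rangle}/C)$ witnessing that $(a_{\unrhd \eta \frown \langle i \rangle})_{i<\omega}$ is a Morley sequence over $C$; let $\pi$ be the coordinate projection carrying $a_{\unrhd \eta \frown \langle 0 \rangle}$ to $b_{\unrhd \eta \frown \langle 0 \rangle}$. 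Then $\pi_{*} q_\eta$ is a global $C$-invariant type extending $\text{tp}(b_{\unrhd \eta \frown \langle 0 \rangle}/C)$, and since a realization of $q_\eta$ over a larger set projects to a realization of $\pi_{*} q_\eta$ over the corresponding smaller set, $(b_{\unrhd \eta \frown \langle i \rangle})_{i<\omega}$ is a Morley sequence in $\pi_{*} q_\eta$ over $C$. The distinguished path of $(b_\eta)$ is $(b_i)_{i<\omega}$, which is therefore a tree Morley sequence over $C$.

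For (2), since the index set of a length-$\omega$ tree Morley sequence forces the ambient ordinal to be small, we may assume the witnessing Morley tree is indexed by $\mathcal{T}_\omega$. Restrict $(a_\eta)_{\eta \in \mathcal{T}_\omega}$ to the set of levels $w = \{kn + n - 1 : k < \omega\}$, so that $\mathcal{T}_\omega \upharpoonright w$ is (after the obvious reindexing) another copy of $\mathcal{T}_\omega$, and to each surviving node $\nu$ — lying at a level $kn + n - 1$, the top of the block of $n$ old levels $kn, \ldots, kn+n-1$ collapsed into $\nu$'s new level — associate the tuple $d_\nu = (a_{\nu \frown 0^{n-1}}, a_{\nu \frown 0^{n-2}}, \ldots, a_{\nu \frown \langle 0 \rangle}, a_\nu)$, where $\nu \frown 0^j$ denotes $\nu$ extended downward by $j$ zeros. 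A short computation gives $d_{\zeta_{kn+n-1}} = (a_{\zeta_{kn}}, \ldots, a_{\zeta_{kn+n-1}}) = d_k$, so the distinguished path of $(d_\nu)$ is exactly $(d_k)_{k<\omega}$. That $(d_\nu)$ is a Morley tree over $C$ is checked as in (1): $s$-indiscernibility and the homogeneity condition descend because the map $\nu \mapsto (\nu \frown 0^{n-1}, \ldots, \nu)$ sends $L_s$-quantifier-free types of tuples from $\mathcal{T}_\omega \upharpoonright w$ to $L_s$-quantifier-free types of tuples from $\mathcal{T}_\omega$; and for spread-outness, the children $\nu \frown 0^{n-1} \frown \langle i \rangle$ of a branching node $\nu$ are precisely the children $\mu \frown \langle i \rangle$ of the old node $\mu = \nu \frown 0^{n-1}$, so the old invariant type $q_\mu$, pushed forward along the coordinate projection $a_{\unrhd \mu \frown \langle i \rangle} \mapsto d_{\unrhd \mu \frown \langle i \rangle}$ exactly as in (1), witnesses spread-outness of $(d_\nu)$ at $\nu$. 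Hence $(d_k)_{k<\omega}$ is a tree Morley sequence over $C$.

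Part (1), together with the two pushforward/reduct observations it isolates, is essentially routine. The main obstacle is the bookkeeping in part (2): one must verify that passing to the level-restriction $\mathcal{T}_\omega \upharpoonright w$ and then to the blocking map genuinely preserves $L_s$-quantifier-free types — so that $s$-indiscernibility and the homogeneity condition transfer — and that the branching nodes of the blocked tree correspond to branching nodes of the original in such a way that the spread-out data attaches in the right places.
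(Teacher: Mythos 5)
Your proof is correct and follows essentially the same route as the paper: part (1) by noting that $s$-indiscernibility, the homogeneity condition, and spread-outness (via pushing the invariant types forward along the coordinate projection) all pass to subtuples, and part (2) by an $n$-fold blocking of the Morley tree along a restriction to every $n$-th level. The only differences are cosmetic: your choice of level set $\{kn+n-1\}$ with blocks enumerated upward avoids the index shift and tuple reversal in the paper's version (which uses levels $n(k+1)$), and you verify the quantifier-free-type transfer by hand where the paper cites the elongation lemma of Chernikov--Ramsey.
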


\begin{proof}
(1) is immediate from the definition:  $s$-indiscernibility, spread-outness, and being a Morley tree over $C$ are all preserved under taking subtuples.  

(2)  Suppose $(a_{\eta})_{\eta \in \mathcal{T}_{\omega}}$ is a Morley tree over $C$ with $a_{\zeta_{i}} = a_{i}$.  Define a function $j: \mathcal{T}_{\omega} \to \mathcal{T}_{\omega}$ so that if $\eta \in \mathcal{T}_{\omega}$ with $\text{dom}(\eta) = [k,\omega)$, then $\text{dom}(j(\eta)) = [n(k+1),\omega)$ and 
$$
j(\eta)(l) = \left\{ \begin{matrix}
\eta\left( \frac{l}{n} - 1 \right) & \text{ if } n | l \\
0 & \text{ otherwise}
\end{matrix} \right.
$$
for all $l \in [n(k+1),\omega)$.  Define $(b_{\eta})_{\eta \in \mathcal{T}_{\omega}}$ by
$$
b_{\eta} = (a_{j(\eta)}, a_{j(\eta) \frown \langle 0 \rangle}, \ldots, a_{j(\eta) \frown 0^{n-1}}).
$$
It is easy to check that this is also an $s$-indiscernible tree over $M$ (more formally, this construction corresponds to the $n$-fold elongation of the tree $(a_{\eta})_{\eta \in \mathcal{T}_{\omega}}$ as defined in \cite{ArtemNick} so $(b_{\eta})_{\eta \in \mathcal{T}_{\omega}}$ is $s$-indiscernible over $M$ by \cite[Proposition 2.1(1)]{ArtemNick} there).  It is also easy to check that $(b_{\eta})_{\eta \in \mathcal{T}_{\omega}}$ is spread out over $M$.  Finally, the tree $(b_{\eta})_{\eta \in \mathcal{T}_{\omega}}$ is also a Morley tree over $M$:  given $w \in [\omega]^{<\omega}$, let $w' = \{n(k+1) - l : k \in w, l < n\}$.  Then if $w,v \in [\omega]^{<\omega}$ and $|w| = |v|$, then $|w'| = |v'|$ so $(a_{\eta})_{\eta \in \mathcal{T}_{\omega} \upharpoonright w'} \equiv_{C} (a_{\eta})_{\eta \in \mathcal{T}_{\omega} \upharpoonright v'}$ so $(b_{\eta})_{\eta \in \mathcal{T}_{\omega} \upharpoonright w} \equiv_{C} (b_{\eta})_{\eta \in \mathcal{T}_{\omega}\upharpoonright v}$.  It follows that $(b_{\zeta_{i}})_{i < \omega}$ is a tree Morley sequence over $C$.  We have
\begin{eqnarray*}
b_{\zeta_{i}} &=& (a_{\zeta_{n(i+1)}}, a_{\zeta_{n(i+1)}\frown \rangle 0 \rangle}, \ldots, a_{\zeta_{n(i+1)}\frown 0^{n-1}}) \\
&=& (a_{n(i+1)}, a_{n(i+1)-1}, \ldots, a_{n(i+1) - (n-1)}),
\end{eqnarray*}
so by reversing the order of the tuple, we deduce that $(d_{i})_{i < \omega}$ is a tree Morley sequence over $M$.  
\end{proof}

From the existence of a sufficiently large tree which is spread out and $s$-indiscernible over $M$, one can obtain a Morley tree which is based on it.  The proof is via a standard Erd\H{o}s-Rado argument.  We follow the argument of \cite[Theorem 1.13]{grossberg2002primer}.  

\begin{lem}\label{morleyextraction}
Suppose $(a_{\eta})_{\eta \in \mathcal{T}_{\kappa}}$ is a tree of tuples, spread out and $s$-indiscernible over $M$.  If $\kappa$ is sufficiently large, then there is a Morley tree $(b_{\eta})_{\eta \in \mathcal{T}_{\omega}}$ so that for all $w \in [\omega]^{<\omega}$, there is $v \in [\kappa \setminus \text{lim}(\kappa)]^{<\omega}$ so that 
$$
(a_{\eta})_{\eta \in \mathcal{T}_{\kappa} \upharpoonright v} \equiv_{M} (b_{\eta})_{\eta \in \mathcal{T}_{\omega} \upharpoonright w}.  
$$
\end{lem}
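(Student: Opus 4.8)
The plan is to extract the Morley tree directly from the given tree by an Erd\H{o}s--Rado argument applied to the colouring that records the $M$-type of a finite level-restriction, exactly in the spirit of the extraction of indiscernibles in \cite[Theorem 1.13]{grossberg2002primer}. Fix, for each $n < \omega$, a uniform way of enumerating the finite tree $\mathcal{T}_{\kappa} \upharpoonright u$ for $u \in [\kappa \setminus \text{lim}(\kappa)]^{n}$ depending only on $n$ (via the identification $\mathcal{T}_{\kappa} \upharpoonright u \cong \mathcal{T}_{n}$), and let $\overline{a}_{[u]}$ denote the resulting enumeration of $(a_{\eta})_{\eta \in \mathcal{T}_{\kappa} \upharpoonright u}$. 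Define a colouring $c$ on $[\kappa \setminus \text{lim}(\kappa)]^{<\omega}$ by $c(u) = \text{tp}(\overline{a}_{[u]}/M)$. The number of colours is bounded by a cardinal $\lambda$ depending only on $|T|$, $|M|$, and the length of the tuples $a_{\eta}$; so, taking $\kappa$ sufficiently large, the iterated Erd\H{o}s--Rado theorem (this is the content of \cite[Theorem 1.13]{grossberg2002primer}) yields a set $X = \{\ell_{m} : m < \omega\} \subseteq \kappa \setminus \text{lim}(\kappa)$, with $\ell_{0} < \ell_{1} < \cdots$, which is homogeneous for $c$ at every finite arity: $c(u) = c(u')$ whenever $u, u' \in [X]^{<\omega}$ with $|u| = |u'|$.

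Given such an $X$, I would let $\iota \colon \mathcal{T}_{\omega} \to \mathcal{T}_{\kappa}$ be the canonical embedding with image $\mathcal{T}_{\kappa} \upharpoonright X$ that sends level $m$ of $\mathcal{T}_{\omega}$ into level $\ell_{m}$ of $\mathcal{T}_{\kappa}$ (inserting zeros on the levels outside $X$), and define $b_{\eta} := a_{\iota(\eta)}$ for $\eta \in \mathcal{T}_{\omega}$. For a finite $w \subseteq \omega$, $\iota$ carries $\mathcal{T}_{\omega} \upharpoonright w$ onto $\mathcal{T}_{\kappa} \upharpoonright v$ where $v = \{\ell_{m} : m \in w\}$, compatibly with the fixed enumerations, so $(b_{\eta})_{\eta \in \mathcal{T}_{\omega} \upharpoonright w} \equiv_{M} (a_{\eta})_{\eta \in \mathcal{T}_{\kappa} \upharpoonright v}$; this is the ``locally based''-type conclusion of the lemma, and it remains only to check that $(b_{\eta})_{\eta \in \mathcal{T}_{\omega}}$ is a Morley tree over $M$. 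Its $s$-indiscernibility over $M$ and spread-outness over $M$ are inherited from $(a_{\eta})_{\eta \in \mathcal{T}_{\kappa}}$: the first because $\iota$ respects enough of the $L_{s}$-structure that every quantifier-free $L_{s}$-type realized by a tuple of $\mathcal{T}_{\omega}$ is, via $\iota$, realized by its image in $\mathcal{T}_{\kappa}$; the second by restricting, for each relevant node $\eta$, the global $M$-invariant type witnessing spread-outness of $(a_{\eta})$ at $\iota(\eta)$ to the variables of the corresponding sub-tree of $(b_{\eta})$, a reduct of an $M$-invariant type being again $M$-invariant. Finally, the extra condition in the definition of a Morley tree, that $(b_{\eta})_{\eta \in \mathcal{T}_{\omega} \upharpoonright w} \equiv_{M} (b_{\eta})_{\eta \in \mathcal{T}_{\omega} \upharpoonright w'}$ whenever $|w| = |w'|$, is precisely the homogeneity of $X$: both $\{\ell_{m} : m \in w\}$ and $\{\ell_{m} : m \in w'\}$ lie in $[X]^{|w|}$, so $c$ assigns them the same value.

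The step I expect to be the main obstacle is the one hidden in the phrase ``sufficiently large'': the Morley-tree condition quantifies over level-restrictions of all finite sizes simultaneously, so one needs a single infinite $X$ homogeneous at every finite arity at once, not just one arity at a time. This is obtained in the standard way by iterating the Erd\H{o}s--Rado theorem along $\omega$ with $\kappa$ taken correspondingly large (this is what forces, and explains, the hypothesis that $\kappa$ be sufficiently large), and it is exactly what is packaged in \cite[Theorem 1.13]{grossberg2002primer}. A secondary, purely bookkeeping point is the verification that passing to a level-restriction preserves $s$-indiscernibility and spread-outness: one must work with the intrinsic $L_{s}$-structure on $\mathcal{T}_{\kappa} \upharpoonright X$ (isomorphic to $\mathcal{T}_{\omega}$) rather than the ambient one, since the meet of two nodes computed in $\mathcal{T}_{\kappa}$ need not lie in $\mathcal{T}_{\kappa} \upharpoonright X$, and one should choose $X$ so that the embedding $\iota$ is well behaved with respect to successors and limit levels; but all of this is routine given the tree machinery already developed in \cite{ArtemNick} and \cite{KimKimScow}.
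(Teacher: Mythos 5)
The central step of your proposal does not go through in ZFC. You claim that, for $\kappa$ sufficiently large, iterated Erd\H{o}s--Rado produces a \emph{single} infinite set $X = \{\ell_{m} : m < \omega\} \subseteq \kappa \setminus \text{lim}(\kappa)$ which is homogeneous for the colouring $c$ at every finite arity simultaneously, and you then build the Morley tree by restricting the given tree to the levels in $X$.  That is the polarized partition relation $\kappa \to (\omega)^{<\omega}_{\lambda}$, and the least $\kappa$ for which it holds (with $\lambda\geq 2$) is the Erd\H{o}s cardinal $\kappa(\omega)$, a strongly inaccessible large cardinal; it certainly fails at $\kappa = \beth_{\lambda^{+}}(\lambda)$, the kind of cardinal Erd\H{o}s--Rado actually supplies.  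What \cite[Theorem 1.13]{grossberg2002primer} gives is not a homogeneous subset of the original index set: running Erd\H{o}s--Rado iteratively over all finite arities forces the homogeneous sets to shrink, and there is no infinite set left in the end.  Since the rest of your construction (defining $b_{\eta} := a_{\iota(\eta)}$ via a level-embedding with image $\mathcal{T}_{\kappa}\upharpoonright X$) depends on having this $X$ \emph{literally}, the argument collapses at precisely the point you yourself flag as the main obstacle.

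The paper's proof avoids this by never extracting a homogeneous subsequence.  For each $n$ it maintains $\lambda^{+}$-many pairwise nested large sets $X_{\xi,n} \subseteq \kappa\setminus\text{lim}(\kappa)$, each homogeneous at arity $n$ with a common type $p_{n} \in \Gamma_{n}$; Erd\H{o}s--Rado is applied to shrink each one and a pigeonhole step recovers $\lambda^{+}$-many with a common colour at the next arity.  Crucially, the $X_{\xi,n}$ are allowed to disappear as $n$ grows.  One then shows that the union $\Delta(x_{\eta} : \eta\in\mathcal{T}_{\omega}) = \bigcup_{n}\bigcup_{w\in[\omega]^{n}}p_{n}(x_{\eta}:\eta\in\mathcal{T}_{\omega}\upharpoonright w)$ is finitely satisfiable (any finite fragment mentions only finitely many arities, and a single sufficiently late $X_{\xi,n}$ handles them all since $X_{\xi,n}\subseteq X_{\xi,m}$ for $m\leq n$), and a realization of $\Delta$ is the desired Morley tree: it is a \emph{new} tuple supplied by compactness, not a sub-tuple of $(a_{\eta})_{\eta\in\mathcal{T}_{\kappa}}$.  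As a side remark, the issue you note about the meet function --- that $\wedge$ computed in $\mathcal{T}_{\kappa}$ does not land in $\mathcal{T}_{\kappa}\upharpoonright X$ unless $X$ consists of consecutive ordinals --- is a genuine defect of the extraction approach but simply evaporates in the compactness approach, since the restricted substructures $\mathcal{T}_{\kappa}\upharpoonright v$ and $\mathcal{T}_{\omega}\upharpoonright w$ are compared via their intrinsic $L_{s}$-structure ($\cong\mathcal{T}_{n}$), not the ambient one.
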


\begin{proof}
Let $\lambda = 2^{|M|+|T|}$ and set $\kappa = \beth_{\lambda^{+}}(\lambda)$.  Given a tree $(a_{\eta})_{\eta \in \mathcal{T}_{\kappa}}$ $s$-indiscernible and spread out over $M$, let 
$$
\Gamma_{n} = \{\text{tp}((a_{\eta})_{\eta \in \mathcal{T}_{\omega} \upharpoonright w}/M) : w \in [\kappa \setminus \text{lim}(\kappa)]^{n}\}.
$$
By induction on $n$, we will find a sequence of types $p_{n} \in \Gamma_{n}$ so that 
$$
\Delta(x_{\eta} :\eta \in \mathcal{T}_{\omega}) = \bigcup_{n < \omega} \bigcup_{w \in [\omega]^{n}} p_{n}(x_{\eta} : \eta \in \mathcal{T}_{\omega} \upharpoonright w)
$$
is consistent.  Construct by induction on $n$ cofinal subsets $F_{n} \subseteq \lambda^{+}$ and subsets $X_{\xi,n} \subseteq \kappa \setminus \text{lim}(\kappa)$ so that 
\begin{enumerate}
\item $F_{n+1} \subseteq F_{n}$.
\item $|X_{\xi,n}| > \beth_{\alpha}(\lambda)$ when $\xi$ is the $\alpha$th element of $F_{n}$.
\item If $w \in [X_{\xi,n}]^{n}$, then $(a_{\eta})_{\eta \in \mathcal{T}_{\kappa} \upharpoonright w} \models p_{n}$.
\item $|F_{n}| = \lambda^{+}$.  
\end{enumerate}
For $n = 0$, we let $F_{0} = \lambda^{+}$ and $X_{\xi,0} = \kappa \setminus \text{lim}(\kappa)$ for all $\xi < \lambda^{+}$.  Suppose $F_{n}$ and $(X_{\xi,n})_{\xi \in F_{n}}$ have been constructed.  Write $F_{n} = \{\xi_{\alpha} : \alpha < \lambda^{+}\}$ where the $\xi_{\alpha}$ enumerate $F_{n}$ in increasing order.  Then for all $\alpha < \lambda^{+}$, 
$$
|X_{\xi_{\alpha+n+1},n}| > \beth_{\alpha + n+1}(\lambda).
$$
For a moment, fix $\xi = \xi_{\alpha+n+1}$.  Define a coloring on $[X_{\xi,n}]^{n+1}$ by
$$
w \mapsto \text{tp}((a_{\eta})_{\eta \in \mathcal{T}_{\kappa} \upharpoonright w}/M).
$$
This is a coloring with at most $\lambda$ many colors so by Erd\H{o}s-Rado there is a homogeneous subset $X_{\xi,n+1} \subseteq X_{\xi,n}$ with $|X_{\xi,n+1}| > \beth_{\alpha}(\lambda)$.  Let $p_{n+1,\alpha+n+1}$ denote its constant value.  By the pigeonhole principle, as the set of possible values is $\lambda$ and $\{\alpha + n+1 : \alpha < \lambda^{+}\}$ has size $\lambda^{+}$, there must be some subset $Y \subseteq \{\alpha + n+1 : \alpha < \lambda^{+}\}$ of cardinality $\lambda^{+}$ so that $\beta,\beta' \in Y$ implies $p_{n+1,\beta} = p_{n+1,\beta'}$.  Let $p_{n+1} = p_{n+1,\beta}$ for some/all $\beta \in Y$.  Put $F_{n+1} = \{\xi_{\beta} : \beta \in Y\}$.  Then $p_{n+1}$, $F_{n+1}$, and $(X_{\xi,n+1})_{\xi \in F_{n+1}}$ clearly satisfy the requirements.  

By compactness, this shows that $\Delta(x_{\eta} : \eta \in \mathcal{T}_{\omega})$ is consistent.  Let $(b_{\eta})_{\eta \in \mathcal{T}_{\omega}}$ be a realization\textemdash now to show $(b_{\eta})_{\eta \in \mathcal{T}_{\omega}}$ is a Morley tree over $M$, we must show that $(b_{\eta})_{\eta \in \mathcal{T}_{\omega}}$ is $s$-indiscernible and spread out over $M$.  To see that it is spread out over $M$, fix any $\eta \in \mathcal{T}_{\omega}$ with $\text{dom}(\eta) = [n+1,\omega)$.  Setting $w = \{0, \ldots, n\}$, there is $v \in [\kappa \setminus \text{lim}(\kappa)]^{<\omega}$, $v = \{\alpha_{0} < \ldots < \alpha_{n}\}$ so that $(b_{\nu})_{\nu \in \mathcal{T}_{\omega} \upharpoonright w} \equiv_{M} (a_{\nu})_{\nu \in \mathcal{T}_{\kappa} \upharpoonright v}$.  If $\nu_{i} \in \mathcal{T}_{\kappa}$ has domain $[\alpha_{0}, \kappa)$, $\nu_{i}(\alpha_{0}) = i$ and $\nu_{i}$ is identically zero elsewhere, $\langle a_{\unrhd \nu_{i}} : i < \omega \rangle$ is a Morley sequence over $M$ in an $M$-invariant type.  It follows that $\langle b_{\unrhd \eta \frown \langle i \rangle} : i < \omega \rangle$ is also a Morley sequence in an $M$-invariant type, which establishes spread-outness of the tree.  Checking that the tree is $s$-indiscernible over $M$ is entirely similar.  
\end{proof}

\subsection{The symmetry characterization of NSOP$_{1}$}

In this subsection, we prove a version of Kim's lemma for tree Morley sequences and use it to prove that Kim-independence is symmetric over models in an NSOP$_{1}$ theory.  Lemma \ref{treeexistence} is the key step, showing that tree Morley sequences exist under certain assumptions.  The method of proof is an inductive construction of a spread out $s$-indiscernible tree, from which a Morley tree (and hence a tree Morley sequence) can then be extracted.  This basic proof-strategy will be repeated several times throughout the paper.  

\begin{lem}\label{treeexistence}
Suppose $T$ is NSOP$_{1}$, $M \models T$, and $a \ind^{K}_{M} b$.  For any ordinal $\alpha \geq 1$, there is a spread out $s$-indiscernible tree $(c_{\eta})_{\eta \in \mathcal{T}_{\alpha}}$ over $M$, so that if $\eta \vartriangleleft \nu$ and $\text{dom}(\nu) = \alpha$, then $c_{\eta}c_{\nu} \equiv_{M} ab$.  
\end{lem}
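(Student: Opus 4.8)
The plan is to construct the tree $(c_\eta)_{\eta \in \mathcal{T}_\alpha}$ by induction on $\alpha$, building in the spread-out and $s$-indiscernibility conditions at each stage, and using the basic characterization of Kim-dividing (Lemma \ref{basiccharacterization}) together with the chain condition (Proposition \ref{chaincondition}) to manufacture the invariant Morley sequences that witness spread-outness.

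First, for the successor step: given a spread out $s$-indiscernible tree $(c_\eta)_{\eta \in \mathcal{T}_\alpha}$ over $M$ with the property that any node $\nu$ at the top level, together with any $\eta \vartriangleleft \nu$, satisfies $c_\eta c_\nu \equiv_M ab$, I would pass to $\mathcal{T}_{\alpha+1}$ as follows. Since $a \ind^K_M b$, using Fact \ref{average} I can choose a global $M$-invariant (indeed finitely satisfiable) type $q \supseteq \text{tp}(b/M)$; more to the point, I want an $M$-invariant type extending the type of the whole tree $(c_\eta)_{\eta \in \mathcal{T}_\alpha}$. Let $(c^i_\eta)_{\eta \in \mathcal{T}_\alpha}$ for $i < \omega$ be a Morley sequence over $M$ in such a type, with the $i=0$ copy being the original tree. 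Now glue these $\omega$ copies under a common root: declare $c_{\langle i \rangle \frown \eta} = c^i_\eta$ and use Proposition \ref{chaincondition} (the chain condition for invariant Morley sequences) applied to $a \ind^K_M b$ — wait, more precisely, I use Lemma \ref{basiccharacterization} to find a realization of the root type that is indiscernible over the branch, so that the new root node $c_\emptyset$ can be chosen with $c_\emptyset c_{\langle 0 \rangle \frown \eta} \equiv_M ab$ for the right $\eta$. Concretely: the branch $(c^i_{\zeta_\alpha})_{i<\omega}$ through the all-zeroes path is an invariant Morley sequence in $\text{tp}(b/M)$-extension, starting with $c^0_{\zeta_\alpha}$; by the chain condition there is $a' \equiv_{M b} a$ with the sequence $M a'$-indiscernible, and then $c_\emptyset := a'$ works. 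Finally I extract a genuinely $s$-indiscernible tree from this using the modeling property (Corollary for $\mathcal{T}_\alpha$-indexed indiscernibles), basing it on the tree just built; one must check that the modeling property preserves spread-outness and the $\equiv_M ab$ condition along branches, which it does because these are type-properties of finite subconfigurations (spread-outness is witnessed by the existence of invariant types over Morley sequences, which transfers by local basing, using Proposition \ref{qdiv}/NSOP$_1$ to see that the extracted sequences are still invariant Morley sequences).

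For limit $\alpha$: here I take the direct limit of the $\mathcal{T}_\beta$ along the canonical inclusions $\iota_{\beta\alpha}$. Given coherently constructed trees $(c^\beta_\eta)_{\eta \in \mathcal{T}_\beta}$ for $\beta < \alpha$, I want a single tree on $\mathcal{T}_\alpha$ restricting to each. Since each $\mathcal{T}_\alpha$ is the direct limit of the $\mathcal{T}_\beta$, by compactness it suffices to have the types cohere, which I arrange by making the construction at successor stages respect the inclusion maps (i.e., building $(c^{\beta+1}_\eta)$ as an honest extension of $(c^\beta_\eta)$ via $\iota_{\beta,\beta+1}$), and then re-extract an $s$-indiscernible spread-out tree on $\mathcal{T}_\alpha$ using the modeling property one final time. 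The $\equiv_M ab$ condition for $\eta \vartriangleleft \nu$ with $\text{dom}(\nu) = \alpha$ persists because any such pair lies in some finite subtree already controlled at an earlier stage — actually care is needed since $\text{dom}(\nu) = \alpha$ is a fresh condition at a limit, but a node $\nu$ with $\text{dom}(\nu) = \alpha$ arises as a limit of nodes $\nu_\beta = \nu|_{[\beta,\alpha)}$ and the relevant two-element type $c_\eta c_\nu$ is determined by its restrictions, so the condition transfers.

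The main obstacle I expect is the bookkeeping in the successor step: ensuring simultaneously that (i) the new tree is spread out — which requires producing $M$-invariant types over the Morley sequences indexed by the $\zeta_\beta$, not just over single nodes — and (ii) the branch-type condition $c_\eta c_\nu \equiv_M ab$ survives both the gluing and the subsequent modeling-property extraction. Point (ii) is delicate because the modeling property only preserves finitary type-data along quantifier-free-equivalent tuples of indices, so I must verify that "$c_\eta c_\nu \equiv_M ab$ for all $\eta \vartriangleleft \nu$ with $\nu$ at the top" is expressible as such finitary data — it is, since it concerns pairs of nodes, one a predecessor of the other. Point (i) leans essentially on NSOP$_1$: after extraction, the sequences $(c_{\unrhd \zeta_\beta \frown \langle i\rangle})_{i<\omega}$ need to again be invariant Morley sequences, and here I would invoke that in an NSOP$_1$ theory (via Theorem \ref{kimslemmaforindk} and Lemma \ref{basiccharacterization}) the relevant indiscernibility is enough to recover an invariant type, or alternatively carry the invariant types through the construction as extra data indexed into the tree. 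I would set up the induction hypothesis to include "spread out as witnessed by coheir types" so that Fact \ref{average} keeps applying.
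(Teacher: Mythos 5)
Your overall skeleton (add a new root below an invariant Morley sequence of copies of the $\mathcal{T}_{\alpha}$-tree, then extract an $s$-indiscernible tree locally based on the result) is the same as the paper's, but the crucial step — producing the new root — is not correct as written. The root of the new tree must satisfy $c_{\emptyset}c_{\nu} \equiv_{M} ab$ for \emph{every} top-level node $\nu$ in \emph{every} copy, i.e.\ it plays the role of $a$ against nodes far above it. Your recipe applies the chain condition to the sequence of copy-roots $(c^{i}_{\zeta_{\alpha}})_{i<\omega}$; but these realize $\mathrm{tp}(a/M)$ (roots are $a$-like, top-level nodes are $b$-like), so the configuration does not match $a\ind^{K}_{M}b$ as you use it, and in any case making that sequence indiscernible over some $a'$ gives no control whatsoever over $\mathrm{tp}(a'c_{\nu}/M)$ for top-level $\nu$. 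The missing idea is an extension step: first observe (from spread-outness, $s$-indiscernibility and Kim's lemma) that the old root is Kim-independent from the rest of the tree, then use Proposition \ref{extension} to find $c'$ with the same type as the old root over the union of the subtrees above its children but with $c'\ind^{K}_{M}$ the \emph{whole} old tree, and only then apply the chain condition (Proposition \ref{chaincondition}) to the Morley sequence of \emph{whole tree-copies} to get $c''$ over which that sequence is indiscernible. It is exactly this combination that makes the branch condition hold against every copy, and without it the condition is not uniform across copies, so it does not survive the locally-based $s$-indiscernible extraction (local basing only transfers type data that holds for all index-tuples of the given quantifier-free type).

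Second, your induction does not handle the successor-of-limit case, which in the paper requires a genuinely different argument. When $\alpha$ is a limit, the $\mathcal{T}_{\alpha}$-tree obtained as a union along the $\iota_{\beta\alpha}$ has no root, so there is no "old root type" to extend; one must instead find, by a compactness/finite-character argument, a single element $c_{*}$ realizing $p(x;c_{\nu})$ for all top-level $\nu$ simultaneously with $c_{*}\ind^{K}_{M}(c_{\eta})_{\eta\in\mathcal{T}_{\alpha}}$, using that (by finite support) every node of $\mathcal{T}_{\alpha}$ lies above $\zeta_{\beta}$ for all sufficiently large non-limit $\beta<\alpha$ and that $c_{\zeta_{\beta}}$ witnesses each finite fragment. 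Your limit discussion only covers forming the union tree at limit stages, not continuing past it. (Your concern about spread-outness surviving extraction is legitimate but unproblematic: an $M$-indiscernible sequence locally based on a Morley sequence in a global $M$-invariant type is again such a Morley sequence, by compactness of the space of $M$-invariant types, as used in Lemma \ref{morleyextraction}.)
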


\begin{proof}
We will argue by induction on $\alpha$.  For the case $\alpha=1$, fix $q \supseteq \text{tp}(b/M)$, a global $M$-invariant type.  Let $\langle b_{i} : i < \omega \rangle \models q^{\otimes \omega}|_{M}$.  As $a \ind^{K}_{M} b$, we may assume this sequence is $Ma$-indiscernible.  Put $c^{1}_{\emptyset} = a$ and $c^{1}_{\langle i \rangle} = b_{i}$.  It is now easy to check that $(c^{1}_{\eta})_{\eta \in \mathcal{T}_{1}}$ is a spread out $s$-indiscernible tree satisfying the requirements.  

Suppose for $\alpha$ we have constructed $(c^{\beta}_{\eta})_{\eta \in \mathcal{T}_{\beta}}$ for $1 \leq \beta \leq \alpha$ such that, if $\gamma < \beta \leq \alpha$ and $\eta \in \mathcal{T}_{\gamma}$ then $c^{\gamma}_{\eta} = c^{\beta}_{\iota_{\gamma \beta}(\eta)}$.  First assume $\alpha$ is a successor.  By spread-outness, we know that $\langle c^{\alpha}_{\unrhd \langle i \rangle} : i < \omega \rangle$ is an $M$-invariant Morley sequence which is, by $s$-indiscernibility over $M$, $Mc^{\alpha}_{\emptyset}$-indiscernible.  Therefore, $c^{\alpha}_{\emptyset} \ind^{K}_{M} (c^{\alpha}_{\unrhd \langle i \rangle})_{i < \omega}$.  By extension (Proposition \ref{extension}), we may find some $c' \equiv_{M(c^{\alpha}_{\unrhd \langle i \rangle})_{i < \omega}} c^{\alpha}_{\emptyset}$ so that 
$$
c' \ind^{K}_{M} (c^{\alpha}_{\eta})_{\eta \in \mathcal{T}_{\alpha}}.  
$$
Choose a global $M$-invariant type $q \supseteq \text{tp}((c^{\alpha}_{\eta})_{\eta \in \mathcal{T}_{\alpha}}/M)$.  Let $\langle (c^{\alpha}_{\eta,i})_{\eta \in \mathcal{T}_{\alpha}} : i < \omega \rangle \models q^{\otimes \omega}|_{M}$ with $c^{\alpha}_{\eta,0} = c^{\alpha}_{\eta}$ for all $\eta \in \mathcal{T}_{\alpha}$.  By the chain condition (Lemma \ref{chaincondition}), we can find $c'' \equiv_{M(c^{\alpha}_{\eta})_{\eta \in \mathcal{T}_{\alpha}}} c'$ so that $c'' \ind^{K}_{M} (c^{\alpha}_{\eta,i})_{\eta \in \mathcal{T}_{\alpha}, i < \omega}$ and $\langle (c^{\alpha}_{\eta,i})_{\eta \in \mathcal{T}_{\alpha}} : i < \omega \rangle$ is $Mc''$-indiscernible.  Define a new tree $(d_{\eta})_{\eta \in \mathcal{T}_{\alpha+1}}$ by setting $d_{\emptyset} = c''$ and $d_{\iota_{\alpha \alpha+1}(\eta)} = c^{\alpha}_{\eta}$ for all $\eta \in \mathcal{T}_{\alpha}$.  Then let $(c^{\alpha+1}_{\eta})_{\eta \in \mathcal{T}_{\alpha+1}}$ be a tree $s$-indiscernible over $M$ locally based on $(d_{\eta})_{\eta \in \mathcal{T}_{\alpha}}$.  By an automorphism, we may assume $c^{\alpha+1}_{\iota_{\alpha\alpha+1}(\eta)} = c^{\alpha}_{\eta}$ for all $\eta \in \mathcal{T}_{\alpha}$.  This satisfies our requirements.  

Next, assume $\alpha$ is a limit.  Let $p(x;b) = \text{tp}(a/Mb)$.  By induction, if $\beta \in \alpha \setminus \text{lim}(\alpha)$, then $c^{\alpha}_{\zeta_{\beta}} \ind^{K}_{M} c^{\alpha}_{\vartriangleright \zeta_{\beta}}$ and $c^{\alpha}_{\zeta_{\beta}}$ realizes $p(x;c_{\nu})$ for all $\nu \in \mathcal{T}_{\alpha}$ with $\zeta_{\beta} \vartriangleleft \nu$ and $\text{dom}(\nu) = \kappa$.  As $\alpha$ is limit, every element $\nu \in \mathcal{T}_{\alpha}$ satisfies $\zeta_{\beta} \vartriangleleft \nu$ for sufficiently large $\beta$ so, by compactness, we can find $c_{*} \ind^{K}_{M} (c^{\alpha}_{\eta})_{\eta \in \mathcal{T}_{\alpha}}$ such that $c_{*}$ realizes $p(x;c_{\nu})$ for all $\nu \in \mathcal{T}_{\alpha}$.  Let $\langle (c^{\alpha}_{\eta,i})_{\eta \in \mathcal{T}_{\alpha}} : i < \omega \rangle$ be an $M$-invariant Morley sequence over $M$ with $c^{\alpha}_{\eta,0} = c^{\alpha}_{\eta}$ for all $\eta \in \mathcal{T}_{\alpha}$.  Once more applying the chain condition (Lemma \ref{chaincondition}), we may assume $\langle (c^{\alpha}_{\eta,i})_{\eta \in \mathcal{T}_{\alpha}} : i < \omega \rangle$ is $Mc_{*}$-indiscernible.  As before, we define a tree $(d_{\eta})_{\eta \in \mathcal{T}_{\alpha+1}}$ by setting $d_{\emptyset} = c_{*}$ and $d_{\langle i \rangle \frown \eta} = c^{\alpha}_{\eta,i}$ for all $\eta \in \mathcal{T}_{\alpha}$ and $i < \omega$.  Then we define $(c^{\alpha+1}_{\eta})_{\eta \in \mathcal{T}_{\alpha+1}}$ to be a tree $s$-indiscernible over $M$ locally based on $(d_{\eta})_{\eta \in \mathcal{T}_{\alpha}}$, and by an automorphism, we may assume $c^{\alpha+1}_{\iota_{\alpha\alpha+1}(\eta)} = c^{\alpha}_{\eta}$ for all $\eta \in \mathcal{T}_{\alpha}$. 

Finally, suppose for $\delta$ limit we have constructed $(c^{\beta}_{\eta})_{\eta \in \mathcal{T}_{\beta}}$ for $1 \leq \beta < \delta$ such that, if $\gamma < \beta <\delta $ and $\eta \in \mathcal{T}_{\gamma}$ then $c^{\gamma}_{\eta} = c^{\beta}_{\iota_{\gamma \beta}(\eta)}$.  If $\eta \in \mathcal{T}_{\delta}$, then for some $\beta < \delta$, there is $\nu \in \mathcal{T}_{\beta}$ so that $\iota_{\beta \delta}(\nu) = \eta$.  Then put $c^{\delta}_{\eta} = c^{\beta}_{\nu}$.  This defines for all $\beta \leq \delta$ an $s$-indiscernible tree $(c^{\beta}_{\eta})_{\eta \in \mathcal{T}_{\eta}}$ satisfying our requirements.  
\end{proof}

\vspace{.15in}
\begin{center}
\includegraphics[scale=.12]{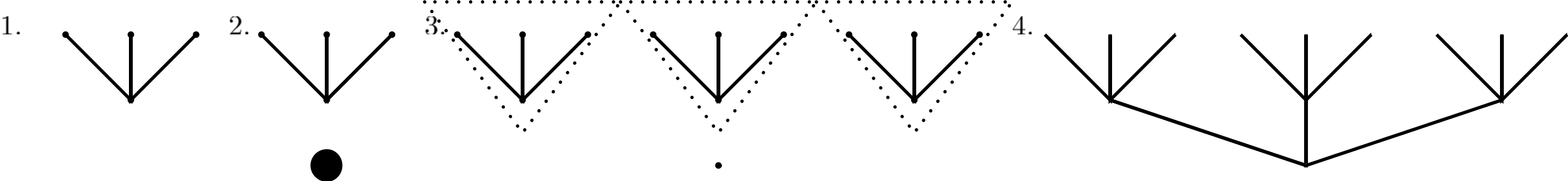} 
\captionof{figure}{The figure displays the construction of the tree indexed by $\mathcal{T}_{2}$ in stages:  1.  The tree indexed by $\mathcal{T}_{1}$. 2.  Using extension to obtain a new base point. 3.  Taking a Morley sequence in the given tree, indiscernible over the new base point. 4.  Extracting an s-indiscernible tree to obtain a spread out, s-indiscernible tree indexed by $\mathcal{T}_{2}$.}
\end{center}

\begin{lem}\label{movingtms}
Suppose $T$ is NSOP$_{1}$, $M \models T$, and $a \ind^{K}_{M} b$.  Then there is a tree Morley sequence $(a_{i})_{i < \omega}$ which is $Mb$-indiscernible with $a_{0} = a$.  
\end{lem}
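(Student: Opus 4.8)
The plan is to assemble the sequence from the tree technology of this subsection. First apply Lemma~\ref{treeexistence} to $a \ind^{K}_{M} b$ with $\alpha = \kappa$ chosen large enough for Lemma~\ref{morleyextraction} to apply, obtaining a spread out, $s$-indiscernible tree $(c_{\eta})_{\eta \in \mathcal{T}_{\kappa}}$ over $M$ with $c_{\eta}c_{\nu} \equiv_{M} ab$ whenever $\eta \vartriangleleft \nu$ and $\text{dom}(\nu) = \kappa$. Then extract from it a Morley tree $(d_{\eta})_{\eta \in \mathcal{T}_{\omega}}$ over $M$; running the Erd\H{o}s--Rado argument of Lemma~\ref{morleyextraction} while always keeping the minimal level of $\mathcal{T}_{\kappa}$ fixed as the bottom level, we may arrange in addition that $d_{\eta}d_{\nu} \equiv_{M} ab$ whenever $\eta \vartriangleleft \nu$ and $\text{dom}(\nu) = \omega$: every finite such configuration then sits inside a restriction $\mathcal{T}_{\kappa} \upharpoonright v$ with $\min v = 0$, in which $\nu$ corresponds to a genuine leaf of $\mathcal{T}_{\kappa}$.

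Now restrict to the all-zeroes path. The restriction $(d_{\eta})_{\eta \in \mathcal{T}_{\omega} \upharpoonright [1,\omega)}$ is again (a copy of) a Morley tree over $M$, since spread-outness, $s$-indiscernibility, and the level-homogeneity clause all pass to restrictions; hence its all-zeroes path $(d_{\zeta_{n}})_{1 \le n < \omega}$, after reindexing by $i = n-1$, is a tree Morley sequence over $M$. For each $n \ge 1$ we have $\zeta_{n} \vartriangleleft \zeta_{0}$ with $\text{dom}(\zeta_{0}) = \omega$, so $d_{\zeta_{n}}d_{\zeta_{0}} \equiv_{M} ab$; in particular $d_{\zeta_{0}} \equiv_{M} b$. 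The crucial point is that $(d_{\zeta_{n}})_{n \ge 1}$ is indiscernible over $M d_{\zeta_{0}}$: a Morley tree over $M$ is indiscernible with respect to $\langle \unlhd, <_{lex}, \wedge, \leq_{len} \rangle$ (as noted after the definition of a Morley tree), and for any $1 \le \beta_{1} < \dots < \beta_{k}$ the chain $(\zeta_{0}, \zeta_{\beta_{1}}, \dots, \zeta_{\beta_{k}})$ has a quantifier-free type in this language depending only on $k$ (it is pinned down by the $\unlhd$- and $\leq_{len}$-orders, which agree on a chain); so the type of $(d_{\zeta_{\beta_{1}}}, \dots, d_{\zeta_{\beta_{k}}})$ over $M d_{\zeta_{0}}$ depends only on $k$.

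It remains to move this into position with automorphisms. Choose an automorphism $\sigma$ of $\mathbb{M}$ fixing $M$ with $\sigma(d_{\zeta_{0}}) = b$; then $(\sigma d_{\zeta_{n}})_{n \ge 1}$ is a tree Morley sequence over $M$ which is $Mb$-indiscernible, and its first term $a'' := \sigma d_{\zeta_{1}}$ satisfies $a'' b \equiv_{M} ab$, i.e. $a'' \equiv_{Mb} a$. Picking an automorphism $\tau$ fixing $Mb$ with $\tau(a'') = a$ and setting $a_{i} := \tau\sigma(d_{\zeta_{i+1}})$ yields a tree Morley sequence $(a_{i})_{i<\omega}$ over $M$ that is $Mb$-indiscernible with $a_{0} = a$, as required.

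The point requiring the most care is the extraction step: one must keep the bottom level of $\mathcal{T}_{\kappa}$ under control throughout, so that the ``ancestor--leaf pair realizes $ab$'' property survives into $(d_{\eta})_{\eta \in \mathcal{T}_{\omega}}$. This is exactly what pays off afterwards, since it is precisely that property, combined with the equivalence of the Morley-tree condition with $\langle \unlhd, <_{lex}, \wedge, \leq_{len} \rangle$-indiscernibility, that forces the surviving path $(d_{\zeta_{n}})_{n \ge 1}$ to be indiscernible over the deleted leaf $d_{\zeta_{0}}$, and hence over $b$.
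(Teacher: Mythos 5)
There is a genuine gap at the extraction step, and it is the crux of the argument. You claim that by always keeping the leaf level in the Erd\H{o}s--Rado colouring one can extract a Morley tree $(d_{\eta})_{\eta \in \mathcal{T}_{\omega}}$ over $M$ which in addition satisfies $d_{\eta}d_{\nu} \equiv_{M} ab$ whenever $\eta \vartriangleleft \nu$ and $\text{dom}(\nu) = \omega$. No such tree can exist unless $a \equiv_{M} b$: your extra property gives $d_{\zeta_{1}}d_{\zeta_{0}} \equiv_{M} ab$, so $d_{\zeta_{0}} \equiv_{M} b$ and $d_{\zeta_{1}} \equiv_{M} a$, while the level-homogeneity requirement in the definition of a Morley tree applied to the singleton level sets $w = \{0\}$ and $v = \{1\}$ (equivalently, the $\langle \unlhd, <_{lex}, \wedge, \leq_{len} \rangle$-indiscernibility you invoke, applied to the one-element chains $(\zeta_{0})$ and $(\zeta_{1})$) forces $d_{\zeta_{0}} \equiv_{M} d_{\zeta_{1}}$. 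In the tree produced by Lemma \ref{treeexistence} the top level realizes $\text{tp}(b/M)$ while every other level realizes $\text{tp}(a/M)$, and these types differ in general (the tuples need not even have the same length), so the homogeneity across levels that makes the extracted object a Morley tree is incompatible with keeping the ``ancestor--leaf pair realizes $ab$'' property inside it. Concretely, colouring only level sets containing $0$ homogenizes those sets, but the Morley-tree condition also compares level sets containing $0$ with level sets omitting $0$, and that comparison is exactly what fails; the Erd\H{o}s--Rado argument cannot ``arrange in addition'' something that contradicts its intended conclusion.

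The paper's proof avoids this by deleting the top level \emph{before} extracting: $\{\eta \in \mathcal{T}_{\kappa} : \text{dom}(\eta) \subseteq [1,\kappa)\}$ is isomorphic to $\mathcal{T}_{\kappa}$, the Morley tree $(d'_{\eta})_{\eta \in \mathcal{T}_{\omega}}$ is extracted from this leafless tree only, and the role of $b$ is recovered \emph{after} extraction rather than preserved through it. By the local-basedness clause of Lemma \ref{morleyextraction}, each finite initial segment of the path $(d'_{\zeta_{i}})_{i < \omega}$ has the same type over $M$ as $(c_{\zeta_{1+\alpha_{i}}})_{i < n}$ for suitable $\alpha_{i}$, and the single leaf $c_{\zeta_{0}}$ of the original tree realizes $p(x;c_{\zeta_{1+\alpha_{i}}})$ for all $i < n$ simultaneously, where $p(x;a) = \text{tp}(b/Ma)$; hence $\bigcup_{i<\omega} p(x;d'_{\zeta_{i}})$ is consistent by compactness. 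One then takes a realization $b'$, extracts an $Mb'$-indiscernible sequence from $(d'_{\zeta_{i}})_{i<\omega}$ (still a tree Morley sequence, having the same type over $M$), and finishes with an automorphism sending $b'$ to $b$ and the first term to $a$. Your closing steps (restricting a Morley tree to levels $\geq 1$, and the automorphism bookkeeping) are fine, but they rest on the impossible intermediate object, so the argument as written does not go through; any repair must keep $b$ outside the homogeneous tree, for instance by recovering it via local basedness and compactness as above, rather than as a level of the extracted tree itself.
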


\begin{proof}
By Lemma \ref{treeexistence}, for arbitrarily large cardinals $\kappa$, there is a tree $(c_{\eta})_{\eta \in \mathcal{T}_{\kappa}}$ which is spread out and $s$-indiscernible over $M$ so that if $\eta \vartriangleleft \nu$ and $\text{dom}(\nu) = \kappa$ then $c_{\eta}c_{\nu} \equiv_{M} ab$.  Note that $\mathcal{T}' = \mathcal{T}_{\kappa} \setminus \{\nu  \in \mathcal{T}_{\kappa} : \text{dom}(\nu) = \kappa\} = \{\eta \in \mathcal{T}_{\kappa} : \text{dom}(\eta) \subseteq [1,\kappa)\}$ is isomorphic to $\mathcal{T}_{\kappa}$.  So we may enumerate $(c_{\eta})_{\eta \in \mathcal{T}'}$ as $(d_{\eta})_{\eta \in \mathcal{T}_{\kappa}}$.  Note that for all $\eta \in \mathcal{T}_{\kappa}$, $d_{\eta} \equiv_{M} a$ and $d_{\zeta_{\alpha}} = c_{\zeta_{1+\alpha}}$ for all $\alpha < \kappa$.  By Lemma \ref{morleyextraction}, there is a Morley tree over $M$ $(d'_{\eta})_{\eta \in \mathcal{T}_{\omega}}$ so that for all $w \in [\omega]^{<\omega}$ there is $v \in [\kappa \setminus \text{lim}(\kappa)]^{<\omega}$ so that $(d_{\eta})_{\eta \in \mathcal{T}_{\kappa} \upharpoonright v} \equiv_{M} (d'_{\eta})_{\eta \in \mathcal{T}_{\omega} \upharpoonright w}$.  

Let $p(x;a) = \text{tp}(b/Ma)$.  We claim $\bigcup_{i < \omega} p(x;d'_{\zeta_{i}})$ is consistent.  Given $n$, let $w = \{0, \ldots, n-1\}$.  Find $v \in [\kappa \setminus \text{lim}(\kappa)]^{<\omega}$ so that $(d_{\eta})_{\eta \in \mathcal{T}_{\kappa} \upharpoonright v} \equiv_{M} (d'_{\eta})_{\eta \in \mathcal{T}_{\omega} \upharpoonright w}$.  If $v = \{\alpha_{0}, \ldots , \alpha_{n-1}\}$, then for $i<n$ we have $d_{\zeta_{\alpha_{i}}} = c_{1+\zeta_{\alpha_{i}}}$.   Then because $c_{\zeta_{1+\alpha_{i}}} c_{\zeta_{0}} \equiv_{M} ab$ for all $i<n$, we have $c_{\zeta_{0}} \models \bigcup_{i < n} p(x;d_{\zeta_{\alpha_{i}}})$.  This shows $\bigcup_{i < n} p(x;d_{\zeta_{\alpha_{i}}})$ is consistent and hence $\bigcup_{i < n} p(x;d'_{\zeta_{i}})$ is consistent.  The claim follows by compactness.  

Let $b' \models \bigcup_{i < \omega} p(x;d'_{\zeta_{i}})$.  Extract from $(d'_{\zeta_{i}})_{i < \omega}$ an $Mb'$-indiscernible sequence $(a_{i})_{i < \omega}$.  As $(a_{i})_{i < \omega} \equiv_{M} (d'_{\zeta_{i}})_{i < \omega}$, we know $(a_{i})_{i < \omega}$ is a tree Morley sequence.  By an automorphism, we may assume $b' = b$ and $a_{0} = a$.  
\end{proof}

\begin{prop}\label{kimslemmafortmsprequel}
Suppose $T$ is NSOP$_{1}$ and $M \models T$.  Suppose $(a_{i})_{i <\omega}$ is a tree Morley sequence over $M$.  Then $\{\varphi(x;a_{i}) : i < \omega\}$ is inconsistent if and only if $\varphi(x;a_{0})$ Kim-divides over $M$.  
\end{prop}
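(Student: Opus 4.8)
\emph{Preliminaries.} A tree Morley sequence is by definition $M$-indiscernible, so consistency of $\{\varphi(x;a_{i}):i<\omega\}$ is independent of the enumeration, and if this set is inconsistent it is $k$-inconsistent for some finite $k$. Realize $(a_{i})_{i<\omega}$ as the all-zeroes path of a Morley tree $(c_{\eta})_{\eta\in\mathcal{T}_{\alpha}}$ over $M$, with $a_{i}=c_{\zeta_{i}}$; then the clause for $w,v$ of size $1$ in the definition of a Morley tree gives $a_{i}\equiv_{M}a_{j}$ for all $i,j$, so ``$\varphi(x;a_{i})$ Kim-divides over $M$'' does not depend on $i$. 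In particular $a_{0}=c_{\zeta_{0}}$ is the bottom leaf, its siblings $\langle c_{\zeta_{1}\frown\langle j\rangle}:j<\omega\rangle$ form, by spread-outness, a Morley sequence over $M$ in a global $M$-invariant type $q\supseteq\text{tp}(a_{0}/M)$ with $c_{\zeta_{1}\frown\langle 0\rangle}=a_{0}$, and, since each such leaf lies $\unlhd$-below all of $c_{\zeta_{1}},c_{\zeta_{2}},\dots$, $s$-indiscernibility of the tree yields $(c_{\zeta_{1}\frown\langle j\rangle},c_{\zeta_{1}},c_{\zeta_{2}},\dots)\equiv_{M}(a_{0},a_{1},a_{2},\dots)$ for every $j<\omega$.

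\emph{Kim-dividing $\Rightarrow$ inconsistency.} Suppose $\varphi(x;a_{0})$ Kim-divides over $M$. By Kim's Lemma for Kim-dividing (Theorem \ref{kimslemmaforindk}), $\varphi$ $q$-divides, so $\{\varphi(x;c_{\zeta_{1}\frown\langle j\rangle}):j<\omega\}$ is $k$-inconsistent for some $k$. Now assume, toward a contradiction, that $\{\varphi(x;a_{i}):i<\omega\}$ is consistent; the plan is to derive SOP$_{1}$ via Lemma \ref{karyversion}. The ``$k$-inconsistent column'' of the array will come from the invariant Morley sequence $\langle c_{\zeta_{1}\frown\langle j\rangle}\rangle_{j}$ (after passing to the conjunction $\bigwedge_{l<k}\varphi(x;y_{l})$ along a $k$-fold reindexing, which still Kim-divides over $M$ by Proposition \ref{kforkingequalskdividing}); the ``consistent column'' will be built by transporting a realisation of $\{\varphi(x;a_{i}):i<\omega\}$ through the leaf-swapping automorphisms of the Preliminaries, while the required type-equality $c_{i,0}\equiv_{\overline{c}_{<i}}c_{i,1}$ is produced from the Morley-sequence property of the sibling sequence together with the $M$-invariance of $q$, all organised inside a single spread-out $s$-indiscernible tree using the extraction machinery of Lemma \ref{treeexistence} and Lemma \ref{morleyextraction}. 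I expect this matching of the two columns to be the main difficulty: the hypotheses ``$\varphi(x;a_{0})$ Kim-divides'' and ``$\{\varphi(x;a_{i}):i<\omega\}$ consistent'' are not literally an SOP$_{1}$-configuration, so the argument has to exploit the full Morley-tree structure (the clauses on $w,v$ of all sizes, i.e.\ indiscernibility in the language $\langle\unlhd,<_{lex},\wedge,\leq_{len}\rangle$, not merely spread-outness) and proceed by an inductive tree construction in the style of Section \ref{symmetrysection}.

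\emph{Inconsistency $\Rightarrow$ Kim-dividing.} For the converse, argue the contrapositive: if $\varphi(x;a_{0})$ does not Kim-divide over $M$ then, by Kim's Lemma for Kim-dividing again, $\varphi$ does not $q$-divide, so $\{\varphi(x;c_{\zeta_{1}\frown\langle j\rangle}):j<\omega\}$ is consistent; running the same leaf-swap and tree argument in the opposite direction then produces a realisation of $\{\varphi(x;a_{i}):i<\omega\}$, so that set is consistent. Throughout, Proposition \ref{kforkingequalskdividing} (Kim-forking equals Kim-dividing over a model) is what allows us to move between $\varphi(x;a_{0})$ and the conjunctions $\bigwedge_{i<n}\varphi(x;a_{i})$ that arise when the tree Morley sequence is reindexed by blocks via Lemma \ref{concatenation}.
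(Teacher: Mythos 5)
Your plan correctly identifies the two key tools — Kim's Lemma for Kim-dividing (Theorem \ref{kimslemmaforindk}) and the array characterization of SOP$_1$ (Proposition \ref{arrayequivalent}) — and correctly observes that spread-outness produces invariant Morley sequences of siblings inside the Morley tree. But the argument has a genuine gap precisely where you flag it: you never actually place the tree Morley sequence and an invariant Morley sequence into a single array of the required form. Your candidate $k$-inconsistent column $\langle c_{\zeta_1\frown\langle j\rangle}\rangle_j$ sits horizontally at the top level, while your consistent column $\langle a_i\rangle_i = \langle c_{\zeta_i}\rangle_i$ runs vertically down the spine; these do not come paired with the type-equality $c_{i,0}\equiv_{\overline c_{<i}}c_{i,1}$ that Lemma \ref{karyversion} needs, and the ``leaf-swap and extraction machinery'' you gesture at is not a construction. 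You are also wrong that Proposition \ref{kforkingequalskdividing} is what handles the block-reindexing; it is not used at all.

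The missing idea is to diagonalize rather than stay in one horizontal sibling row. Let $\eta_i\in\mathcal{T}_\omega$ be the node with $\text{dom}(\eta_i)=[i,\omega)$, $\eta_i(i)=1$, and $\eta_i(j)=0$ for all $j>i$: at each level take the first sibling of the spine and then follow the all-zeroes path. Spread-outness gives $a_{\eta_i}\ind^{i}_{M}a_{\eta_{<i}}$ level by level, and the Morley-tree restriction clause makes $(a_{\eta_i})_{i<\omega}$ $M$-indiscernible, so by compactness of the space of $M$-invariant types it is a Morley sequence in a global $M$-invariant type extending $\text{tp}(a_0/M)$. Simultaneously, the pair sequence $I=\langle(a_{\eta_i},a_{\zeta_i})\rangle_{i<\omega}$ is $M$-indiscernible (Morley tree again), and $s$-indiscernibility gives $a_{\eta_0}\equiv_{MI_{>0}}a_{\zeta_0}$, hence $a_{\eta_i}\equiv_{MI_{>i}}a_{\zeta_i}$ for every $i$; this is exactly an array as in Proposition \ref{arrayequivalent}(2)--(3). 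Thus if exactly one of $\{\varphi(x;a_{\eta_i}):i<\omega\}$ and $\{\varphi(x;a_{\zeta_i}):i<\omega\}$ were consistent (the other being $k$-inconsistent for some $k$ by $M$-indiscernibility), $T$ would have SOP$_1$; so they are consistent together, and since $(a_{\eta_i})_{i<\omega}$ is an invariant Morley sequence, $\{\varphi(x;a_{\eta_i}):i<\omega\}$ is inconsistent iff $\varphi(x;a_0)$ Kim-divides over $M$. Both directions of the proposition follow at once, with no appeal to Lemma \ref{treeexistence} or any further extraction via Lemma \ref{morleyextraction}.
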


\begin{proof}
Suppose $(a_{i})_{i < \omega}$ is a tree Morley sequence over $M$.  Let $(a_{\eta})_{\eta \in \mathcal{T}_{\omega}}$ be a Morley tree over $M$ with $a_{\zeta_{i}} = a_{i}$.  Let $\eta_{i} \in \mathcal{T}_{\omega}$ be the function with $\text{dom}(\eta_{i}) = [i,\omega)$ and
$$
\eta_{i}(j) = \left\{ \begin{matrix}
1 & \text{ if } i = j \\
0 & \text{ otherwise}.  
\end{matrix} \right.
$$
Consider the sequence $I = (a_{\eta_{i}},a_{\zeta_{i}})_{i < \omega}$.  Because $(a_{\eta})_{\eta \in \mathcal{T}_{\omega}}$ is a Morley tree over $M$, $I$ is an $M$-indiscernible sequence.  Moreover, by $s$-indiscernibility, $a_{\eta_{0}} \equiv_{MI_{>0}} a_{\zeta_{0}}$.  By indiscernibility, for all $i$, we have $a_{\eta_{i}} \equiv_{MI_{>i}} a_{\zeta_{i}}$.  By NSOP$_{1}$, it follows that $\{\varphi(x;a_{\eta_{i}}) : i < \omega\}$ is consistent if and only if $\{\varphi(x;a_{\zeta_{i}}) : i < \omega\}$ is consistent:  if exactly one of them is consistent, then we have SOP$_{1}$ by Proposition \ref{arrayequivalent}.  

Because $(a_{\eta})_{\eta \in \mathcal{T}_{\omega}}$ is a spread out tree over $M$, $a_{\eta_{i}} \ind^{i}_{M} a_{\eta_{<i}}$ for all $i$.  Using the fact that $(a_{\eta_{i}})_{i < \omega}$ is an $M$-indiscernible sequence and the compactness of the space of $M$-invariant types, we have $(a_{\eta_{i}})_{i < \omega}$ is a Morley sequence in some global $M$-invariant type extending $\text{tp}(a/M)$, so $\varphi(x;a)$ Kim-divides over $M$ if and only if $\{\varphi(x;a_{\eta_{i}}) : i < \omega\}$ is inconsistent.  
\end{proof}

\begin{cor}{(Kim's lemma for tree Morley sequences)}\label{kimslemmafortms}
Suppose $T$ is NSOP$_{1}$ and $M \models T$.  The following are equivalent:
\begin{enumerate}
\item $\varphi(x;a)$ Kim-divides over $M$.
\item For some tree Morley sequence $(a_{i})_{i < \omega}$ over $M$ with $a_{0} = a$, $\{\varphi(x;a_{i}) : i < \omega\}$ is inconsistent.
\item For every tree Morley sequence $(a_{i})_{i < \omega}$ over $M$ with $a_{0} = a$, $\{\varphi(x;a_{i}) : i < \omega\}$ is inconsistent.  
\end{enumerate}
\end{cor}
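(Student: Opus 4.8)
The plan is to extract the whole statement from Proposition~\ref{kimslemmafortmsprequel}, which already carries all the content (and, through it, the use of NSOP$_1$ via Proposition~\ref{arrayequivalent}): recall it says that for a tree Morley sequence $(a_i)_{i<\omega}$ over $M$, the set $\{\varphi(x;a_i):i<\omega\}$ is inconsistent if and only if $\varphi(x;a_0)$ Kim-divides over $M$. So the corollary is essentially a bookkeeping consequence, once one also knows that tree Morley sequences with a prescribed first term exist.

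First I would record that auxiliary existence fact: for any tuple $a$ there is at least one tree Morley sequence $(a_i)_{i<\omega}$ over $M$ with $a_0=a$. This follows from Lemma~\ref{movingtms} applied with the ``$b$'' there taken to be a point of $M$ (equivalently the empty tuple): then $a\ind^{K}_{M}b$ holds trivially, since no formula over $M$ Kim-divides over $M$ (a realized formula over $M$ is witnessed consistent by a constant invariant Morley sequence), and Lemma~\ref{movingtms} returns a tree Morley sequence over $M$ starting with $a$.

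Then the three implications go as follows. For (1)$\Rightarrow$(3): given an arbitrary tree Morley sequence $(a_i)_{i<\omega}$ over $M$ with $a_0=a$, Proposition~\ref{kimslemmafortmsprequel} says $\{\varphi(x;a_i):i<\omega\}$ is inconsistent precisely because $\varphi(x;a_0)=\varphi(x;a)$ Kim-divides over $M$, which is the hypothesis. For (3)$\Rightarrow$(2): invoke the auxiliary existence fact to produce some tree Morley sequence with $a_0=a$, and then (3) applies to it verbatim. For (2)$\Rightarrow$(1): apply Proposition~\ref{kimslemmafortmsprequel} in the other direction to the given witnessing sequence, concluding that $\varphi(x;a)$ Kim-divides over $M$.

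The main point to be careful about is that there is no genuinely new obstacle here: the work was already done in Proposition~\ref{kimslemmafortmsprequel} (which rests on Lemma~\ref{treeexistence}, Lemma~\ref{morleyextraction}, and Lemma~\ref{movingtms}). The only subtlety is that (3) quantifies over possibly no sequences unless one knows a tree Morley sequence beginning at $a$ exists, which is exactly why Lemma~\ref{movingtms} is invoked for the step (3)$\Rightarrow$(2); everything else is immediate from the proposition.
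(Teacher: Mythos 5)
Your proof is correct and follows exactly the route the paper intends: the corollary is stated as an immediate consequence of Proposition~\ref{kimslemmafortmsprequel}, and your spelling out of (1)$\Rightarrow$(3)$\Rightarrow$(2)$\Rightarrow$(1) matches the intended bookkeeping. Your identification of the one non-trivial point\textemdash that (3)$\Rightarrow$(2) needs a tree Morley sequence beginning at $a$ to exist, and that this is secured by Lemma~\ref{movingtms} with $b\in M$ (using that no consistent formula over $M$ Kim-divides over $M$, together with Kim-forking $=$ Kim-dividing so that $a\ind^{K}_{M}M$ indeed holds)\textemdash is exactly the right observation and is the only small gap the paper leaves to the reader.
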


\begin{cor}{(Chain condition for tree Morley sequences)} \label{tmschaincondition}
Suppose $T$ is NSOP$_{1}$ and $M \models T$.  If $a \ind^{K}_{M} b$ and $I =(b_{i})_{i < \omega}$ is a tree Morley sequence over $M$ with $b_{0} = b$, then there is $a' \equiv_{Mb} a$ so that $a' \ind^{K}_{M} I$ and $I$ is $Ma'$-indiscernible.  
\end{cor}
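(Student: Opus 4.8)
The plan is to run the argument of Proposition~\ref{chaincondition}, with Kim's lemma for tree Morley sequences (Corollary~\ref{kimslemmafortms}) playing the role that Kim's lemma for Kim-dividing played there. The first step is to establish the tree-Morley analogue of the basic characterization of Kim-dividing: given $a \ind^{K}_{M} b$ and the tree Morley sequence $I = (b_{i})_{i < \omega}$ over $M$ with $b_{0} = b$, one can find $a' \equiv_{Mb} a$ such that $I$ is $Ma'$-indiscernible. To do this, write $p(x;b) = \text{tp}(a/Mb)$ and first check that $\bigcup_{i < \omega} p(x;b_{i})$ is consistent. By compactness it suffices to handle finitely many formulas from $p$ at a time; since a finite conjunction $\varphi(x;b)$ of formulas from $p$ does not Kim-fork over $M$, it does not Kim-divide over $M$, so by Corollary~\ref{kimslemmafortms} applied to the tree Morley sequence $I$, the set $\{\varphi(x;b_{i}) : i < \omega\}$ is consistent. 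Realize $\bigcup_{i < \omega}p(x;b_{i})$ by some $a_{*}$, extract from $I$ an $Ma_{*}$-indiscernible sequence $(b_{i}')_{i < \omega}$; since $I$ is $M$-indiscernible this extracted sequence satisfies $(b_{i}')_{i < \omega} \equiv_{M} I$, and by local basedness it still has $a_{*}b_{0}' \equiv_{M} ab$. Applying an automorphism over $M$ taking $(b_{i}')_{i<\omega}$ back to $I$ and $a_{*}$ to some $a'$, we obtain $a' \equiv_{Mb} a$ with $I$ being $Ma'$-indiscernible.

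Next, fixing this $a'$, I would show $a' \ind^{K}_{M} b_{<n}$ for every $n < \omega$. Given $n$, Lemma~\ref{concatenation}(2) shows that the blocked sequence $d_{k} = (b_{nk}, b_{nk+1}, \ldots, b_{nk + n-1})$, $k < \omega$, is again a tree Morley sequence over $M$, and it is $Ma'$-indiscernible because $I$ is. Hence for every formula $\varphi(x; b_{<n}) \in \text{tp}(a'/Mb_{<n})$ the set $\{\varphi(x; d_{k}) : k < \omega\}$ is consistent, being witnessed by $a'$, so by Corollary~\ref{kimslemmafortms} the formula $\varphi(x; b_{<n})$ does not Kim-divide over $M$. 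Thus $\text{tp}(a'/Mb_{<n})$ contains no Kim-dividing formula, and since Kim-forking equals Kim-dividing over a model (Proposition~\ref{kforkingequalskdividing}), it contains no Kim-forking formula either, i.e. $a' \ind^{K}_{M} b_{<n}$. Finally, any formula over $MI$ has parameters contained in $Mb_{<n}$ for some $n$, so finite character of Kim-forking gives $a' \ind^{K}_{M} I$, which together with the $Ma'$-indiscernibility of $I$ is what is wanted.

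The step I expect to be the crux is the first one: producing an $a'$ over which the \emph{given} tree Morley sequence $I$ becomes indiscernible, as opposed to a Morley sequence in an invariant type, which is what the basic characterization of Kim-dividing (Lemma~\ref{basiccharacterization}) handles directly. The key input is that Corollary~\ref{kimslemmafortms} supplies exactly the consistency of $\bigcup_{i} p(x;b_{i})$ needed here; once that is in hand, the extract-and-transport-by-automorphism maneuver closes the gap and the remainder is the routine chain-condition argument, now driven by tree Morley sequences and the fact that blocking a tree Morley sequence again yields one.
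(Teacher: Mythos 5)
Your proof is correct and follows essentially the same route as the paper, which simply says the argument of Proposition \ref{chaincondition} goes through verbatim once Lemma \ref{concatenation} guarantees that the blocked sequence is again a tree Morley sequence; your use of Corollary \ref{kimslemmafortms} (plus Kim-forking $=$ Kim-dividing and the extract-and-transport step) is exactly the intended substitute for the basic characterization and Kim's lemma used there.
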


\begin{proof}
The proof is identical to Proposition \ref{chaincondition} above, since, by Lemma \ref{concatenation}, $\langle (b_{kn + n-1}, b_{kn + n-2}, \ldots, b_{kn}) : k < \omega \rangle$ is a tree Morley sequence over $M$.
\end{proof}

\begin{thm}{(Symmetry)} \label{symmetrycharthm}
Suppose $T$ is a complete theory.  The following are equivalent:
\begin{enumerate}
\item $T$ is NSOP$_{1}$.
\item $\ind^{K}$ is symmetric over models:  for any $M \models T$ and tuples $a,b$ from $\mathbb{M}$, $a\ind^{K}_{M} b \iff b \ind^{K}_{M} a$.
\item $\ind^{K}$ enjoys the following weak symmetry property:  for any $M \models T$ and tuples $a,b$ from $\mathbb{M}$, $a\ind^{i}_{M} b$ implies $b \ind^{K}_{M} a$.
\end{enumerate}
\end{thm}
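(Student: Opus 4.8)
The plan is to run the cycle (1)$\Rightarrow$(2)$\Rightarrow$(3)$\Rightarrow$(1); only the first implication needs the tree machinery of this section, and all of its real content is already packaged in Lemma \ref{movingtms} and Corollary \ref{kimslemmafortms}.

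For (1)$\Rightarrow$(2), since the biconditional is symmetric in $a$ and $b$ it suffices to prove $a \ind^{K}_{M} b \Rightarrow b \ind^{K}_{M} a$ for all $M\models T$ and all $a,b$. So fix $M$, assume $a \ind^{K}_{M} b$, and apply Lemma \ref{movingtms} to get a tree Morley sequence $(a_{i})_{i<\omega}$ over $M$ which is $Mb$-indiscernible with $a_{0}=a$. Suppose toward a contradiction that $b \nind^{K}_{M} a$, i.e.\ $\text{tp}(b/Ma)$ Kim-forks over $M$. Then $\text{tp}(b/Ma)$ implies a disjunction of formulas over $Ma$ each of which Kim-divides over $M$, so by compactness some $\varphi(y;a) \in \text{tp}(b/Ma)$ (with the parameters from $M$ absorbed) implies that disjunction; hence $\varphi(y;a)$ Kim-forks over $M$, and by Proposition \ref{kforkingequalskdividing} (Kim-forking $=$ Kim-dividing over a model) $\varphi(y;a)$ Kim-divides over $M$. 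Now Corollary \ref{kimslemmafortms}, applied to the tree Morley sequence $(a_{i})_{i<\omega}$ with $a_{0}=a$, gives that $\{\varphi(y;a_{i}) : i<\omega\}$ is inconsistent. But $b \models \varphi(y;a_{0})$, and $(a_{i})_{i<\omega}$ being $Mb$-indiscernible forces $a_{i}\equiv_{Mb}a_{0}$, hence $b \models \varphi(y;a_{i})$ for every $i$, so $\{\varphi(y;a_{i}) : i<\omega\}$ is consistent --- a contradiction. Therefore $b \ind^{K}_{M} a$.

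For (2)$\Rightarrow$(3) I would first record that in any theory, if $M\models T$ and $a\ind^{i}_{M}b$ then $a\ind^{K}_{M}b$: let $q\supseteq\text{tp}(a/Mb)$ be global $M$-invariant, suppose $\varphi(x;b)\in\text{tp}(a/Mb)$ Kim-divides over $M$, pick a global $M$-invariant $q'\supseteq\text{tp}(b/M)$ and $(b_{i})_{i<\omega}\models q'^{\otimes\omega}|_{M}$ with $b_{0}=b$; this sequence is $M$-indiscernible so $b_{i}\equiv_{M}b$, whence $M$-invariance of $q$ gives $\varphi(x;b_{i})\in q$ for all $i$, contradicting the inconsistency of $\{\varphi(x;b_{i}):i<\omega\}$. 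Given this, (2) immediately yields (3): if $a\ind^{i}_{M}b$ then $a\ind^{K}_{M}b$, hence $b\ind^{K}_{M}a$. Finally, (3)$\Rightarrow$(1) is nothing but Proposition \ref{weaksymmetrylemma}, since condition (3) above is, after renaming $a\leftrightarrow b$, exactly the weak symmetry statement "$b\ind^{i}_{M}a\Rightarrow a\ind^{K}_{M}b$" proved there to imply NSOP$_{1}$.

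The only implication carrying weight is (1)$\Rightarrow$(2), and there is in fact no remaining obstacle once one has Lemma \ref{movingtms} and Corollary \ref{kimslemmafortms} in hand: symmetry of $\ind^{K}$ reduces to the combination of the facts that Kim-independence lets us move a tree Morley sequence in $\text{tp}(a/M)$ to be indiscernible over $Mb$, and that tree Morley sequences are a \emph{universal} witness to Kim-dividing. The genuine difficulty --- already overcome earlier in the section via Lemma \ref{treeexistence} and the Erd\H{o}s--Rado extraction of Lemma \ref{morleyextraction} --- was constructing such tree Morley sequences at all; modulo that, the proof above is short and formal.
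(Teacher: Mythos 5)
Your proof is correct and takes essentially the same approach as the paper: (1)$\Rightarrow$(2) via Lemma \ref{movingtms} and Corollary \ref{kimslemmafortms}, (2)$\Rightarrow$(3) from the observation that $\ind^{i}$ implies $\ind^{K}$ over models, and (3)$\Rightarrow$(1) via Proposition \ref{weaksymmetrylemma}. The only cosmetic difference is that you organize the implications as a cycle and make explicit the appeal to Proposition \ref{kforkingequalskdividing} (Kim-forking $=$ Kim-dividing) where the paper leaves it implicit.
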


\begin{proof}
(1)$\iff$(3) is Proposition \ref{weaksymmetrylemma} and (2)$\implies$(3) is immediate from the fact that $a \ind^{i}_{M} b$ implies $a \ind^{K}_{M} b$.  

(1)$\implies$(2).  Suppose $T$ is NSOP$_{1}$.  Assume towards contradiction that $a \ind^{K}_{M} b$ and $b \nind^{K}_{M} a$.  By Lemma \ref{movingtms}, there is a tree Morley sequence over $M$ with $a_{0} = a$ which is $Mb$-indiscernible.  Since $b \nind^{K}_{M} a$, there is some $\varphi(x;a) \in \text{tp}(b/Ma)$ which Kim-divides over $M$.  By Corollary \ref{kimslemmafortms}, $\{\varphi(x;a_{i}) : i < \omega\}$ is inconsistent.  But $\models \varphi(b;a_{i})$ for all $i < \omega$ by indiscernibility, a contradiction.  
\end{proof}

\begin{cor}
Assume the complete theory $T$ is NSOP$_{1}$ and $M \models T$.  Then 
$$
a \ind^{K}_{M} b \iff \text{acl}(a) \ind^{K}_{M} b \iff a \ind^{K}_{M} \text{acl}(b).
$$
\end{cor}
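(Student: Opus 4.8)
The plan is to reduce everything to one new ingredient and then finish formally with symmetry (Theorem~\ref{symmetrycharthm}). First I would record two general facts that do not use NSOP$_1$. \emph{Monotonicity}: if $\bar d\,\bar d'\ind^{K}_{M}\bar e$ then $\bar d\ind^{K}_{M}\bar e$ — indeed, if $\text{tp}(\bar d/M\bar e)$ Kim-forks then it implies a disjunction of Kim-dividing formulas, and viewing those formulas in the larger variable tuple (with $\bar d'$-coordinates as dummies) shows $\text{tp}(\bar d\,\bar d'/M\bar e)$ Kim-forks. \emph{Finite character}: a complete type that implies a disjunction of formulas implies one of them, so $\text{tp}(\bar d/M\bar e)$ Kim-forks over $M$ iff it contains a single formula that Kim-divides over $M$; consequently $\text{acl}(a)\ind^{K}_{M}b$ is equivalent to $\bar c\ind^{K}_{M}b$ for every finite $\bar c\subseteq\text{acl}(a)$. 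Monotonicity already gives $\text{acl}(a)\ind^{K}_{M}b\Rightarrow a\ind^{K}_{M}b$ and $a\ind^{K}_{M}\text{acl}(b)\Rightarrow a\ind^{K}_{M}b$, so only the forward directions remain.

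The heart of the argument is: \emph{if $a\ind^{K}_{M}b$ and $\bar c$ is a finite tuple in $\text{acl}(Ma)$, then $a\bar c\ind^{K}_{M}b$.} I would prove this by contraposition. Assume $\text{tp}(a\bar c/Mb)$ Kim-forks over $M$; by finite character and Kim-forking $=$ Kim-dividing (Proposition~\ref{kforkingequalskdividing}) there is a formula $\theta(x_{a},x_{\bar c};b)\in\text{tp}(a\bar c/Mb)$ that Kim-divides over $M$. Fix $\chi(x_{\bar c},x_{a})$ over $M$ with $\models\chi(\bar c,a)$ and $\models\exists^{\leq k}x_{\bar c}\,\chi(x_{\bar c},a)$ for some $k<\omega$; strengthening $\theta$ by the conjuncts $\chi(x_{\bar c},x_{a})$ and $\exists^{\leq k}x_{\bar c}\,\chi(x_{\bar c},x_{a})$ keeps it in $\text{tp}(a\bar c/Mb)$ and keeps it Kim-dividing over $M$. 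Let $q\supseteq\text{tp}(b/M)$ be a global $M$-invariant type with $\{\theta(x_{a},x_{\bar c};b_{i}):i<\omega\}$ inconsistent for $(b_{i})_{i<\omega}\models q^{\otimes\omega}|_{M}$, and set $\theta'(x_{a};y):=\exists x_{\bar c}\,\theta(x_{a},x_{\bar c};y)$, so $\theta'(x_{a};b)\in\text{tp}(a/Mb)$.

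The key step is that $\{\theta'(x_{a};b_{i}):i<\omega\}$ is also inconsistent. If $a^{*}$ realized it, choose $\bar c_{i}\models\theta(a^{*},\bar c_{i};b_{i})$ for each $i$; then $\models\chi(\bar c_{i},a^{*})$, and since $\models\exists^{\leq k}x_{\bar c}\,\chi(x_{\bar c},a^{*})$, a pigeonhole over $i<\omega$ into $\leq k$ classes produces an infinite $X$ and a \emph{single} tuple $\bar c^{*}$ with $\bar c_{i}=\bar c^{*}$ for $i\in X$. Because $q$ is $M$-invariant, $(b_{i})_{i<\omega}$ is $M$-indiscernible, so some $\sigma\in\text{Aut}(\mathbb{M}/M)$ carries $(b_{i})_{i\in X}$ onto $(b_{i})_{i<\omega}$; applying $\sigma$ to $(a^{*},\bar c^{*})$ yields a realization of $\{\theta(x_{a},x_{\bar c};b_{i}):i<\omega\}$, a contradiction. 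Hence $\theta'(x_{a};b)\in\text{tp}(a/Mb)$ Kim-divides over $M$, so $a\nind^{K}_{M}b$, proving the displayed implication. Combined with monotonicity and finite character this gives $a\ind^{K}_{M}b\Rightarrow\text{acl}(Ma)\ind^{K}_{M}b$, in particular $a\ind^{K}_{M}b\Rightarrow\text{acl}(a)\ind^{K}_{M}b$. For the second equivalence: from $a\ind^{K}_{M}b$, symmetry gives $b\ind^{K}_{M}a$, the displayed implication (with $a,b$ swapped) gives $\text{acl}(Mb)\ind^{K}_{M}a$, symmetry gives $a\ind^{K}_{M}\text{acl}(Mb)$, and monotonicity gives $a\ind^{K}_{M}\text{acl}(b)$; the converse is monotonicity. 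I expect the genuinely nontrivial point to be exactly the inconsistency of $\{\theta'(x_{a};b_{i})\}$: one cannot simply transport an indiscernible witness for non-Kim-dividing along $\text{acl}$, since a Morley sequence indiscernible over $Ma$ need not be indiscernible over $M\,\text{acl}(Ma)$; the pigeonhole trick, using the finiteness of the fibres of $\chi$ together with indiscernibility of invariant Morley sequences, is what substitutes for that.
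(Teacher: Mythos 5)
Your proof is correct, but it takes a more roundabout path than the paper's, and your closing remark contains a false claim that is worth correcting. The paper's argument is short: from $a\ind^K_M b$, Lemma~\ref{basiccharacterization} produces an $M$-invariant Morley sequence $(b_i)$ with $b_0=b$ which is $Ma$-indiscernible; such a sequence is \emph{automatically} $M\,\mathrm{acl}(a)$-indiscernible; then Kim's lemma (Theorem~\ref{kimslemmaforindk}) says this one sequence already witnesses $\mathrm{acl}(a)\ind^K_M b$, and symmetry (Theorem~\ref{symmetrycharthm}) gives the $\mathrm{acl}(b)$ part exactly as you do.

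Your statement that ``a Morley sequence indiscernible over $Ma$ need not be indiscernible over $M\,\mathrm{acl}(Ma)$'' is false: in \emph{any} theory an $A$-indiscernible sequence is $\mathrm{acl}(A)$-indiscernible. Sketch: suppose $c\in\mathrm{acl}(A)$ with algebraic $\chi(y)\in L(A)$, $|\chi(\mathbb{M})|=n$, $\models\chi(c)$, and $\varphi(\bar x,y)\in L(A)$ with $\models\varphi(b_{\bar s},c)\wedge\neg\varphi(b_{\bar t},c)$ for some increasing tuples $\bar s<\bar t$. The $A$-formula $\psi(\bar x,\bar z):=\exists y\bigl(\chi(y)\wedge\varphi(\bar x,y)\wedge\neg\varphi(\bar z,y)\bigr)$ then holds of $(b_{\bar u},b_{\bar v})$ for every increasing $\bar u<\bar v$ by $A$-indiscernibility; taking $2^n+1$ pairwise-disjoint increasing tuples and mapping each $\bar u_i$ to $\{c'\in\chi(\mathbb{M}):\models\varphi(b_{\bar u_i},c')\}$, two images collide, contradicting $\psi$, and the remaining order-types reduce to this one by comparing both $\bar s,\bar t$ to a third tuple beyond them. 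So the paper's ``automatically'' is legitimate. Your argument --- strengthening $\theta$ by $\chi$ and $\exists^{\le k}$, projecting to $\theta'$, pigeonholing the $\bar c_i$ into $\le k$ values, and using an automorphism carrying $(b_i)_{i\in X}$ onto $(b_i)_{i<\omega}$ --- is a sound re-derivation of the same finiteness-of-fibres principle, applied to $\bar c$ over $a^*$ rather than to $c$ over $A$. It is a valid alternative, but it re-proves by hand a general fact the paper uses for free, so it is longer without gaining generality.
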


\begin{proof}
By symmetry, it is enough to prove $\text{acl}(a) \ind^{K}_{M} b$, assuming $a \ind^{K}_{M} b$.  If $a \ind^{K}_{M} b$, there is a Morley sequence in an $M$-invariant type $\langle b_{i} : i < \omega \rangle$ with $b_{0} = b$ which is $Ma$-indiscernible.  Then it is automatically $M\text{acl}(a)$-indiscernible so $\text{acl}(a) \ind^{K}_{M} b$.  
\end{proof}

\section{The Independence Theorem}\label{itsection}

The full independence theorem will be deduced from a weak independence theorem, which has an easy proof:  

\begin{prop}\label{reduction}
Assume $T$ is NSOP$_{1}$.  Then $\ind^{K}$ satisfies the following weak independence theorem over models:  if $M \models T$, $a \equiv_{M} a'$, $a \ind^{K}_{M} b$, $a' \ind^{K}_{M} c$ and $b \ind^{u}_{M} c$, then there is $a''$ with $a'' \equiv_{Mb} a$, $a'' \equiv_{Mc} a'$ and $a'' \ind^{K}_{M} bc$.  
\end{prop}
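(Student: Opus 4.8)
The plan is to run the usual template for an amalgamation theorem: exhibit $a''$ as a realization of a common extension of $\text{tp}(a/Mb)$ and $\text{tp}(a'/Mc)$ that moreover does not Kim-fork over $M$, arranged so that $a''$ sits indiscernibly over a generic sequence based at $bc$. Concretely, I would try to realize a type of the form $\Sigma(x) = \text{tp}(a/Mb) \cup \text{tp}(a'/Mc) \cup \{\neg\psi(x;b,c) : \psi(x;b,c) \text{ Kim-divides over } M\}$; any realization $a''$ of $\Sigma$ satisfies $a'' \equiv_{Mb} a$ and $a'' \equiv_{Mc} a'$, and since $\text{tp}(a''/Mbc)$ does not Kim-divide over $M$ it does not Kim-fork over $M$ by Kim-forking $=$ Kim-dividing (Proposition \ref{kforkingequalskdividing}), so $a'' \ind^{K}_{M} bc$. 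Thus the entire content of the proposition is the consistency of $\Sigma$.

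For consistency I would manufacture a generic sequence based at $bc$ along which both halves of $\Sigma$ propagate. Using $b \ind^{u}_{M} c$, fix a global type $q(y)$ finitely satisfiable in $M$ with $q \supseteq \text{tp}(b/Mc)$, and let $I = \langle b_{i} : i<\omega\rangle \models q^{\otimes\omega}|_{Mc}$ with $b_{0} = b$; then $I$ is $Mc$-indiscernible, is a Morley sequence over $M$ in the $M$-invariant type $q$, and $b_{i} \equiv_{Mc} b$ for all $i$. Since $a \ind^{K}_{M} b_{0}$, the chain condition for invariant Morley sequences (Proposition \ref{chaincondition}) yields $a_{1} \equiv_{Mb} a$ with $a_{1} \ind^{K}_{M} I$ and $I$ being $Ma_{1}$-indiscernible; this leaves $I$ and $c$ fixed and does not change $\text{tp}(a/Mb)$, so after renaming we may assume $I$ is simultaneously $Ma$- and $Mc$-indiscernible and $ab_{i} \equiv_{M} ab$ for all $i$. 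On the other side, from $a' \ind^{K}_{M} c$ and $a \equiv_{M} a'$ I would obtain, by symmetry (Theorem \ref{symmetrycharthm}) and Lemma \ref{movingtms}, a tree Morley sequence $\langle c_{j} : j<\omega\rangle$ over $M$ with $c_{0}=c$ that is $Ma'$-indiscernible, and then coordinate the two sequences into a single generic sequence $\langle b_{j} c_{j} : j<\omega\rangle$ over $M$ with $b_{0} c_{0} = bc$ — using $a \equiv_{M} a'$, the fact that all $b_{i}$ share a type over $Mc$, the double indiscernibility of $I$, and the full left and right extension of finite satisfiability over models. Consistency of the resulting limit type (equivalently, of $\Sigma$) is then precisely the weak independence theorem for invariant types characterizing NSOP$_{1}$ from \cite{ArtemNick}; alternatively, a failure of consistency feeds directly into the array criterion of Lemma \ref{karyversion} to produce SOP$_{1}$.

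The main obstacle is exactly the coordination of the two sequences into one generic sequence based at $bc$: $b$ and $c$ carry no relation over $M$ a priori, so one cannot conjugate one configuration onto the other and the amalgam has to be assembled by hand. This is where the hypothesis $b \ind^{u}_{M} c$ — finite satisfiability rather than mere Kim-independence — does its work, through the strong left and right extension it enjoys over models; replacing it by $b \ind^{K}_{M} c$ is the genuine independence theorem of Section \ref{itsection}, which requires substantially more. Everything else — transporting types along the generic sequence, extracting Morley trees, and reading off $a'' \ind^{K}_{M} bc$ via Kim's lemma for Kim-dividing (Theorem \ref{kimslemmaforindk}) or for tree Morley sequences (Corollary \ref{kimslemmafortms}) — is routine given the machinery already developed.
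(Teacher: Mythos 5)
Your reduction of the proposition to the consistency of $\Sigma(x)$ is correct, and the preparatory moves are all legitimate: the Morley sequence $I$ in a global coheir of $\text{tp}(b/Mc)$, made $Ma$-indiscernible by the chain condition, and the tree Morley sequence at $c$ indiscernible over $Ma'$. The gap is exactly at the step you yourself flag as the main obstacle and then never carry out: the ``coordination'' of the two sequences into a single generic sequence $\langle b_jc_j : j<\omega\rangle$ with $b_0c_0=bc$ along which both $\text{tp}(a/Mb)$ and $\text{tp}(a'/Mc)$ propagate. Nothing in your construction ties the $c_j$'s to $b$ or to $a$ (they are related only to $a'$), nor the $b_i$'s to $a'$, and $a\equiv_M a'$ only over $M$; the ingredients you list (left/right extension of $\ind^{u}$, the shared type of the $b_i$ over $Mc$, the two indiscernibilities of $I$) do not by themselves produce a sequence along which one element simultaneously realizes conjugates of both types. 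Worse, the ``limit type'' whose consistency you need already contains $\text{tp}(a/Mb)\cup\text{tp}(a'/Mc)$ at its first coordinate, so its consistency is not a weaker statement you can quote; without an independent mechanism the appeal is circular.

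The specific appeals you make do not supply that mechanism. The weak independence theorem of \cite{ArtemNick} (Theorem 5.1, as used in Proposition \ref{weaksymmetrylemma}) amalgamates two $M$-conjugate types $p(x;b_0)$, $p(x;b_1)$ under the hypotheses $b_0\ind^{i}_{M}a_0$, $b_1\ind^{i}_{M}a_1$ and $b_1\ind^{i}_{M}b_0$, i.e.\ it needs global $M$-invariant extensions on the parameter side; here $\text{tp}(a/Mb)$ and $\text{tp}(a'/Mc)$ are not conjugate and you only have $a\ind^{K}_{M}b$ and $a'\ind^{K}_{M}c$, which by symmetry give Kim-independence, not invariance. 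Bridging precisely this gap is the content of the proposition: the paper does it by first proving the claim that there is $c'$ with $ac'\equiv_{M}a'c$ and $a\ind^{K}_{M}bc'$ --- an argument using symmetry, Kim's lemma for Kim-dividing, and Kim-forking $=$ Kim-dividing, carrying $b$ along as a dummy parameter --- and only afterwards running the $\ind^{u}$ left/right-extension, chain-condition and automorphism argument to trade $c'$ for $c$. Your outline contains no analogue of that claim. Likewise, ``a failure of consistency feeds directly into Lemma \ref{karyversion}'' is not direct: extracting an array with the required consistency/inconsistency pattern from a putative failure requires a zigzag-type construction (compare Lemma \ref{zigzag}) that you have not given.
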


\begin{proof}
Suppose $T$ is NSOP$_{1}$ and fix $M \models T$ and tuples $a,a',b,c$ so that $a \equiv_{M} a'$, $a \ind^{K}_{M} b$, $a' \ind^{K}_{M} c$ and $b \ind^{u}_{M} c$.  

\textbf{Claim}:  There is $c'$ so that $ac' \equiv_{M} a'c$ and $a \ind^{K}_{M} bc'$.  

\emph{Proof of claim}:  By symmetry, it suffices to find $c'$ with $ac' \equiv_{M} a'c$ and $bc' \ind^{K}_{M} a$.  Let $p(x;a') = \text{tp}(c/Ma')$.  By invariance, we know $p(x;a)$ does not Kim-fork over $M$.  We have to show
$$
p(x;a) \cup \{\neg \varphi(x,b;a) : \varphi(x,y;a) \in L(Ma) \text{ Kim-divides over }M\}
$$
is consistent.  If not, then by compactness and Kim-forking = Kim-dividing, we must have 
$$
p(x;a) \vdash \varphi(x,b;a),
$$
for some $\varphi$ where $\varphi(x,y;a)$ Kim-divides over $M$.  By symmetry, $b \ind^{K}_{M} a$, so there is some $M$-invariant Morley sequence $(a_{i})_{i < \omega}$ with $a_{0} = a$ which is moreover $Mb$-indiscernible.  Then we have 
$$
\bigcup_{i < \omega} p(x;a_{i}) \vdash \{\varphi(x,b;a_{i}) : i < \omega\}.
$$
As $p(x;a)$ does not Kim-fork over $M$, we know $\bigcup_{i < \omega} p(x;a_{i})$ is consistent.  But, by Kim's lemma for Kim-dividing, we know $\{\varphi(x,y;a_{i}) : i < \omega\}$ is inconsistent and \emph{a fortiori} $\{\varphi(x,b;a_{i}) : i < \omega\}$ is inconsistent, a contradiction.  So the given partial type is consistent. Let $c'$ realize it.  Then $ac' \equiv_{M} a'c$ and $c'b \ind^{K}_{M} a$, which proves the claim. \qed

As $b \ind^{u}_{M} c$, by left extension, there is $c'' \equiv_{Mb} c$ with $bc' \ind^{u}_{M} c''$.  Then by right extension and automorphism, we can choose some $b''$ so that $bc' \equiv_{M} b''c''$ and $bc' \ind^{u}_{M} b''c''$.  As $bc' \ind^{u}_{M} b'' c''$ and $bc' \equiv_{M} b''c''$, it follows that $(b''c'', bc')$ starts a Morley sequence $I$ in some global $M$-finitely satisfiable (hence $M$-invariant) type.  As $a \ind^{K}_{M} bc'$, we may, by the chain condition (Proposition \ref{chaincondition}) find some $a_{*} \equiv_{Mbc'} a$ so that $I$ is $Ma_{*}$-indiscernible and $a_{*} \ind^{K}_{M} I$.  Then, we obtain $a_{*} \equiv_{Mb} a$, $a_{*}c'' \equiv_{M} a'c$, and $a_{*} \ind^{K}_{M} bc''$.  By construction, $c'' \equiv_{Mb} c$ so there is $\sigma \in \text{Aut}(\mathbb{M}/Mb)$ with $\sigma(c'') = c$.  Then $\sigma(a_{*}) \ind^{K}_{M} bc$, $\sigma(a_{*}) \equiv_{Mb} a$, and $\sigma(a_{*}) \equiv_{Mc} a'$, which shows that the weak independence theorem over models holds for $T$.  
\end{proof}

\begin{lem}\label{consistenttree}
Suppose $T$ is NSOP$_{1}$, $M \models T$, and $a \ind^{K}_{M} b$.  Fix an ordinal $\alpha$ and any $q \supseteq \text{tp}(b/M)$, a global $M$-invariant type.  If $(b_{\eta})_{\eta \in \mathcal{T}_{\alpha}}$ is a tree, spread out over $M$, so that, for all $\nu \in \mathcal{T}_{\alpha}$, $b_{\nu} \models q|_{M b_{\vartriangleright \nu}}$, then, writing $p(x;b)$ for $\text{tp}(a/Mb)$, we have 
$$
\bigcup_{\eta \in \mathcal{T}_{\alpha}} p(x;b_{\eta})
$$
is consistent and non-Kim-forking over $M$.  
\end{lem}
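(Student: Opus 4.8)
The plan is to prove the statement by induction on the structure of $\mathcal{T}_{\alpha}$ — concretely, by induction on $\alpha$ — using the weak independence theorem (Proposition \ref{reduction}) at the successor step to merge the contributions of the countably many children of a node, and using Kim's lemma for Kim-dividing (Theorem \ref{kimslemmaforindk}) together with the chain condition (Proposition \ref{chaincondition}) to keep control of non-Kim-forking. Since $\bigcup_{\eta \in \mathcal{T}_{\alpha}} p(x;b_\eta)$ is a union of small pieces and consistency/non-Kim-forking is witnessed by finite subtrees, a compactness argument will let us reduce to finite $\alpha$; the real content is the inductive step, so I would phrase the induction directly on the ordinal $\alpha$.

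First, the base case $\alpha = 1$: here $(b_\eta)_{\eta \in \mathcal{T}_1}$ consists of a root (with domain $\emptyset$) together with a Morley sequence $(b_{\langle i\rangle})_{i<\omega}$ in $q$ over $M$, and the hypothesis $b_\nu \models q|_{Mb_{\vartriangleright\nu}}$ makes $(b_{\langle i \rangle})_{i<\omega}$ a genuine Morley sequence in $q$. Since $a \ind^{K}_M b$, by the basic characterization (Lemma \ref{basiccharacterization}) we may assume this Morley sequence is $Ma$-indiscernible, so $a \models \bigcup_i p(x;b_{\langle i\rangle})$; for the root node $b_\emptyset = b$ we have $a \models p(x;b)$ trivially, and $a \ind^{K}_M (b_\eta)_{\eta \in \mathcal{T}_1}$ by the chain condition. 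For the successor step $\alpha \to \alpha+1$: the tree $\mathcal{T}_{\alpha+1}$ is a root $\emptyset$ together with countably many copies $\langle i \rangle \frown \mathcal{T}_\alpha$, and spread-outness says the sequence of these copies $\big((b_{\unrhd \langle i\rangle \frown \eta})_{\eta \in \mathcal{T}_\alpha}\big)_{i<\omega}$ is a Morley sequence over $M$ in some global $M$-invariant type; by induction each copy carries a realization of $\bigcup_{\eta\in\mathcal{T}_\alpha} p(x;\cdot)$ non-Kim-forking over $M$. One then uses the weak independence theorem to amalgamate: I would argue that it suffices to amalgamate finitely many of the copies at a time (compactness again), and at that point Proposition \ref{reduction} — possibly iterated, with $b \ind^{u}_M c$ supplied by the fact that the copies form an invariant, indeed coheir, Morley sequence — produces a single realization $a^*$ restricting correctly to each copy with $a^* \ind^{K}_M$ (the union of the copies). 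Finally the root node $b_\emptyset$: since $(b_\eta)_{\eta \text{ a copy}}$ is itself a Morley sequence in an $M$-invariant type, and $a^* \ind^{K}_M$ all the copies, one applies the chain condition once more (or Kim's lemma, via $a \ind^K_M b_\emptyset$ and an $Mb_\emptyset$-indiscernible rearrangement) to conclude $a^* \models p(x;b_\emptyset)$ and $a^* \ind^{K}_M (b_\eta)_{\eta \in \mathcal{T}_{\alpha+1}}$. The limit step is handled directly by compactness, taking the union over $\alpha' < \alpha$ via the canonical inclusions $\iota_{\alpha'\alpha}$, since any finite subtree of $\mathcal{T}_\alpha$ lies inside some $\iota_{\alpha'\alpha}(\mathcal{T}_{\alpha'})$.

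The main obstacle I anticipate is making the amalgamation at the successor step genuinely work with the weak independence theorem, which as stated handles \emph{two} pieces $b,c$ with $b \ind^{u}_M c$, whereas here we need to glue $\omega$-many copies along a common realization $a$. The clean way around this is: fix $n$, and show $\bigcup_{\eta \text{ in first } n \text{ copies}} p(x;b_\eta) \cup \{x\text{ realizes }p\text{ over the root}\}$ is consistent and non-Kim-forking, then compactness. For the induction on $n$: the first $n-1$ copies together form a set $C$ with a realization $a'$ of the relevant type, $a' \ind^{K}_M C$; the $n$th copy is a set $B$ with realization $a''$, $a'' \ind^{K}_M B$; and crucially $B \ind^{u}_M C$ because the copies form a coheir (hence finitely satisfiable) Morley sequence — one should be careful to arrange, perhaps by choosing the spreading-out types to be coheirs or by an initial reduction, that the relevant $\ind^u$ holds, or alternatively reprove Proposition \ref{reduction}'s conclusion is enough with just $\ind^i$ on one side. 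Given $a' \equiv_M a''$ (both realize $\text{tp}(a/M)$, after adjusting $a$ to the common type), Proposition \ref{reduction} then yields $a^*$ with $a^* \equiv_{MC} a'$, $a^* \equiv_{MB} a''$, $a^* \ind^{K}_M BC$, completing the induction on $n$. Keeping track of which indiscernibility/invariance hypotheses are actually available from "spread out" (it gives $\ind^i$, not necessarily $\ind^u$) is the delicate bookkeeping point, and I expect the proof to either strengthen the spreading-out data at the start or to invoke that a Morley sequence in an invariant type can itself be stretched into one generated by a coheir via an extraction argument.
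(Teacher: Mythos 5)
Your overall skeleton — induction on $\alpha$, a trivial limit step via the inclusions $\iota_{\beta\alpha}$, and a successor step that must glue the $\omega$ copies $(b_{\unrhd\langle i\rangle})_{i<\omega}$ and then the root — is the same as the paper's, but you have assigned the two key tools to the wrong jobs, and in your arrangement both amalgamation steps have real gaps. For the copies: you propose to iterate Proposition \ref{reduction} over finitely many copies at a time, which needs $\ind^{u}_{M}$ between a copy and the union of the earlier ones; spread-outness only says the copies form a Morley sequence in \emph{some} global $M$-invariant type, so you have $\ind^{i}_{M}$, not $\ind^{u}_{M}$, and your suggested repairs (build the trees with coheirs, or redo Proposition \ref{reduction} with $\ind^{i}$ on one side) either prove a different statement or are left unproved — nor can you fall back on the full independence theorem, since Theorem \ref{itthmchar} is deduced from this lemma (see Remark \ref{strongconsistenttree}), so that would be circular. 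The paper avoids the issue entirely: by the induction hypothesis there is a realization of $\bigcup_{\nu\unrhd\langle 0\rangle}p(x;b_{\nu})$ which does not Kim-fork over $M$, and then a \emph{single} application of the chain condition (Proposition \ref{chaincondition}) to the invariant Morley sequence of copies produces one realization of the union over \emph{all} copies, still non-Kim-forking; no coheir hypothesis and no iteration are needed.

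The root is where your argument genuinely fails: "apply the chain condition once more (or Kim's lemma via an $Mb_{\emptyset}$-indiscernible rearrangement) to conclude $a^{*}\models p(x;b_{\emptyset})$" is not a valid use of either result — neither amalgamates two partial types, namely $p(x;b_{\emptyset})$ (realized by a conjugate of $a$) and the union over the copies (realized by $a^{*}$), into a common realization. This is precisely where the hypothesis $b_{\emptyset}\models q|_{Mb_{\vartriangleright\emptyset}}$, which your proof never uses, has to enter, and where the paper applies Proposition \ref{reduction}: the root is generic over the union of the copies, each side carries a non-Kim-forking realization of its type, and the weak independence theorem merges them. (Your instinct that the $\ind^{u}$ versus $\ind^{i}$ distinction is the delicate point is correct, but it bites at the root step, where the type $q$ supplies the needed independence, not among the copies.) The same defect already appears in your base case $\alpha=1$, where you assert $b_{\emptyset}=b$ (not given) and that the realization produced for the children also realizes $p(x;b_{\emptyset})$ "trivially"; the paper takes the trivial base $\alpha=0$ and handles the root uniformly at every successor stage. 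A minor further point: the children are a Morley sequence in some invariant type $q_{\eta}$ coming from spread-outness, not necessarily in $q$ itself — harmless on the chain-condition route, but it should not be asserted.
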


\begin{proof}
The proof is by induction on $\alpha$.  For $\alpha = 0$, there is nothing to show.  For $\alpha$ limit, it follows by induction, using that $\mathcal{T}_{\alpha}$ is the direct limit of the $\mathcal{T}_{\beta}$ for $\beta < \alpha$ along the maps $\iota_{\beta \alpha}$.  Now suppose given $(b_{\eta})_{\eta \in \mathcal{T}_{\alpha+1}}$ as in the statement.  We know that $b_{\unrhd \zeta_{\alpha}} = (b_{\iota_{\alpha \alpha+1}(\eta)})_{\eta \in \mathcal{T}_{\alpha}}$ is a tree spread out over $M$ so that, for all $\nu \in \mathcal{T}_{\alpha}$, $b_{\iota_{\alpha \alpha+1}(\nu)} \models q|_{M (b_{\iota_{\alpha \alpha+1}(\eta)})_{\eta \unrhd \nu}}$.  Note that $\emptyset \frown \langle 0 \rangle = \zeta_{\alpha}$.  By induction, then, 
$$
\bigcup_{\nu \unrhd \emptyset \frown \langle0\rangle } p(x;b_{\nu})
$$
is consistent and non-Kim-forking over $M$.  By spread outness over $M$, $\langle b_{\unrhd \emptyset \frown \langle i \rangle} : i < \omega \rangle$ is a Morley sequence in some global $M$-invariant type.  By the chain condition, 
$$
\bigcup_{i < \omega} \bigcup_{\nu \unrhd \emptyset \frown \langle i \rangle} p(x;b_{\nu})
$$
is consistent and non-Kim-forking over $M$.  As $b_{\emptyset} \models q|_{M(b_{\nu \vartriangleright \emptyset})}$, it follows by Proposition \ref{reduction} that 
$$
 p(x;b_{\emptyset}) \cup \bigcup_{i < \omega} \bigcup_{\nu \unrhd \emptyset \frown \langle i \rangle} p(x;b_{\nu}) 
$$
is consistent and non-Kim-forking over $M$.  Unwinding definitions, this says
$$
\bigcup_{\eta \in \mathcal{T}_{\alpha+1}} p(x;b_{\eta})
$$
is consistent and non-Kim-forking over $M$, completing the proof.  \end{proof}

\begin{rem}\label{strongconsistenttree}
In the above proof, the hypothesis that $b_{\nu} \models q|_{Mb_{\vartriangleright \nu}}$ is used to apply the weak independence theorem (Proposition \ref{reduction}).  Once one has proved the full independence theorem (Theorem \ref{itthmchar}), the same proof gives $\bigcup_{\eta \in \mathcal{T}_{\alpha}} p(x;b_{\eta})$ is consistent and non-Kim-forking over $M$, just under the hypothesis that $(b_{\eta})_{\eta \in \mathcal{T}_{\alpha}}$ is $s-$indiscernible and spread out over $M$, since $b_{\nu} \ind^{K}_{M} b_{\vartriangleright \nu}$ in any tree $s$-indiscernible and spread out over $M$.  
\end{rem}

\begin{lem}{(Zig-zag Lemma)}\label{zigzag}
Suppose the complete theory $T$ is NSOP$_{1}$, $M \models T$ and $b \ind^{K}_{M} b'$.  Then for any global $M$-invariant type $q \supseteq \text{tp}(b/M)$, there is a tree Morley sequence over $M$ $(b_{i}, b'_{i})_{i < \omega}$ starting with $(b,b')$ so that 
\begin{enumerate}
\item If $i \leq j$, then $b_{i}b'_{j} \equiv_{M} bb'$.
\item If $i > j$, then $b_{i} \models q|_{Mb'_{j}}$.
\end{enumerate}
\end{lem}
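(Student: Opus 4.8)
The plan is to produce, for arbitrarily large $\kappa$, a tree $(c_{\eta})_{\eta \in \mathcal{T}_{\kappa}}$ of pairs $c_{\eta} = (b_{\eta}, b'_{\eta})$ which is spread out and $s$-indiscernible over $M$ and such that, writing $c_{i} := c_{\zeta_{i}}$ along the all-zeroes path, one has: $(b_{i})_{i < \omega}$ is a Morley sequence in $q$ over $M$ (an auxiliary invariant that makes the induction run), and the two zigzag requirements (1) and (2) from the statement hold. Given such a tree, the plan is then to apply Lemma \ref{morleyextraction} to extract a Morley tree $(d_{\eta})_{\eta \in \mathcal{T}_{\omega}}$ over $M$; its path $(d_{\zeta_{i}})_{i < \omega}$ is then a tree Morley sequence over $M$ and still satisfies (1) and (2), since each is a statement about the type over $M$ of a pair of path nodes, and for (2) one uses that ``$b_{i} \models q|_{Mb'_{j}}$'' is preserved under $M$-type equivalence of the pair because $q$ is $M$-invariant. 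Finally, since condition (1) with $i = j = 0$ gives $(d_{b,0}, d_{b',0}) \equiv_{M} (b, b')$, an automorphism over $M$ moves the sequence to one starting at $(b, b')$.

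The tree will be built by induction on $\kappa$, with the $c_{\eta}$ compatible across stages along the inclusions $\iota_{\alpha\beta}$, exactly as in Lemma \ref{treeexistence}; we may start from $\mathcal{T}_{0}$ with $c_{\emptyset} := (b, b')$ (where the path conditions are vacuous), and the limit stages are direct limits requiring nothing new. The heart of the matter is the successor stage, where we adjoin a new root $c_{*} = (b_{*}, b'_{*})$, which becomes the deepest node on the path. First I would choose $b_{*} \models q|_{M (c_{\eta})_{\eta \in \mathcal{T}_{\alpha}}}$; since the $b$-path of the previous stage is, by induction, a Morley sequence in $q$, the $b$-path together with $b_{*}$ is again a Morley sequence in $q$, and $b_{*}$ realizes $q$ over the previous $b'$-path, which is condition (2) for the pairs involving the new node (the other pairs need nothing new). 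Next, using symmetry (Theorem \ref{symmetrycharthm}), pick $b^{\dagger} \equiv_{M} b'$ with $b_{\zeta_{0}} b^{\dagger} \equiv_{M} b b'$, so that $b_{\zeta_{0}} \ind^{K}_{M} b^{\dagger}$, and apply the chain condition for invariant Morley sequences (Proposition \ref{chaincondition}) to the Morley sequence $(b_{\zeta_{0}}, \ldots, b_{*})$ to get $b'_{*}$ with $b'_{*} \equiv_{M b_{\zeta_{0}}} b^{\dagger}$, with $(b_{\zeta_{0}}, \ldots, b_{*})$ indiscernible over $M b'_{*}$, and with $b'_{*} \ind^{K}_{M} (b_{\zeta_{0}}, \ldots, b_{*})$. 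Then $b_{\zeta_{i}} b'_{*} \equiv_{M} b b'$ for every earlier path node and $b_{*} b'_{*} \equiv_{M} b b'$, which is condition (1) for the pairs involving the new node. Having pinned down $c_{*}$, one fans out $\omega$ spread-out copies of the old tree as subtrees below $c_{*}$ and extracts an $s$-indiscernible tree, precisely as in the successor step of Lemma \ref{treeexistence}, and checks that the path conditions survive, using that $s$-indiscernible extraction preserves the types over $M$ of finite tuples from the path.

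The hard part will be the last move of the successor stage: to run the ``spread-out fan'' step one needs $c_{*} \ind^{K}_{M} (c_{\eta})_{\eta \in \mathcal{T}_{\alpha}}$, whereas the chain condition above only controls $b'_{*}$ relative to the $b$-path, not the whole tree. Reconciling these — either by replacing the path-level chain condition with an application of the weak independence theorem (Proposition \ref{reduction}), or of Lemma \ref{consistenttree} to the previous-stage tree itself, when choosing $b'_{*}$, or by a further use of extension (Proposition \ref{extension}) and symmetry to enlarge the independence of $c_{*}$ over $M$ — is the delicate point. It is also the reason the auxiliary invariant ``$(b_{i})_{i < \omega}$ is a Morley sequence in $q$'' must be threaded through the whole induction: it is exactly what guarantees that the previous $b$-path is a genuine invariant Morley sequence, so that the chain condition (or the weak independence theorem) can be applied to it at all, and it is maintained automatically because the new first coordinate $b_{*}$ is chosen to realize $q$ over everything constructed so far.
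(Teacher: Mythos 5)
Your overall skeleton is the right one — the paper also builds towers of spread-out $s$-indiscernible trees and finishes with Lemma \ref{morleyextraction} — and you have correctly located where the pressure is. But the gap is slightly different from, and more fundamental than, the one you name, and the chain-condition route for $b'_{*}$ does not close it.

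The problem is not, at bottom, that you need $c_{*}\ind^{K}_{M}(c_{\eta})_{\eta\in\mathcal{T}_{\alpha}}$. The paper's successor step never asserts such a Kim-independence of the new root from the whole fan: after taking an $M$-invariant Morley sequence $\langle(c^{\alpha}_{\eta,i},d^{\alpha}_{\eta,i})_{\eta}\,:\,i<\omega\rangle$ of the previous tree and picking $c_{*}\models q|_{M(c^{\alpha}_{\eta,i},d^{\alpha}_{\eta,i})_{\eta,i}}$, the paper simply chooses $d_{*}$ to realize $\bigcup_{\eta,i}p(x;c^{\alpha}_{\eta,i})\cup p(x;c_{*})$, appeals to Lemma \ref{consistenttree} for consistency, fans out, and extracts; indiscernibility of the copies over the new root is achieved only by the final extraction, not by a chain-condition move. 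What you actually need, and what the chain condition cannot give you, is that $b'_{*}$ realizes $p(x;b_{\eta})$ for \emph{every} node $\eta$ of the fan, not just the path nodes. Your chain condition is applied to the sequence $(b_{\zeta_{0}},\ldots,b_{*})$ and therefore yields $b_{\zeta_{i}}b'_{*}\equiv_{M}bb'$ only for path nodes $\zeta_{i}$; the off-path pairs $(b_{\eta},b'_{*})$ are uncontrolled. This matters because an $s$-indiscernible tree over $M$ necessarily assigns the same $M$-type to \emph{all} comparable pairs at a given pair of levels; when you extract an $s$-indiscernible tree locally based on the fan, the types of path pairs in the extract are averages over all comparable pairs in the fan, so an off-path failure of $(b_{\eta},b'_{*})\equiv_{M}(b,b')$ can destroy condition (1) on the path of the extracted tree. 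In other words, tracking the invariant only along the all-zeroes path is not stable under the extraction step.

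This is why the paper strengthens the inductive invariant to hold at every node: it records, for all $\eta\in\mathcal{T}_{\alpha}$, both $c^{\alpha}_{\eta}\models q|_{Mc^{\alpha}_{\vartriangleright\eta}d^{\alpha}_{\vartriangleright\eta}}$ (whose restriction to the path gives your condition (2)) and $d^{\alpha}_{\eta}\models\bigcup_{\nu\unrhd\eta}p(x;c^{\alpha}_{\nu})$ (whose restriction to the path gives your condition (1)). Both are statements about the $M$-types of finite comparable tuples, hence are preserved under $s$-indiscernible extraction over $M$. The consistency of $\bigcup_{\eta,i}p(x;c^{\alpha}_{\eta,i})\cup p(x;c_{*})$, which is what lets you choose $d_{*}$ with condition (2') at the new root, is precisely Lemma \ref{consistenttree}, which in turn is proved by the weak independence theorem (Proposition \ref{reduction}). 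So you are correct that \ref{consistenttree} and \ref{reduction} are the missing ingredients; but they must replace your chain-condition step for choosing $b'_{*}$, not merely shore it up, and this requires reformulating the inductive invariant at the level of all tree nodes rather than only the path. Once that is done, the auxiliary invariant ``the $b$-path is a $q$-Morley sequence'' becomes a consequence of the tree-level condition (1') rather than a separately maintained hypothesis, which is another sign that the path-level bookkeeping is the wrong granularity here.
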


\begin{proof}
Fix $q \supseteq \text{tp}(b/M)$ and let $p(x;b) = \text{tp}(b'/Mb)$.  By recursion on $\alpha$, we will construct trees $(c_{\eta}^{\alpha},d^{\alpha}_{\eta})_{\eta \in \mathcal{T}_{\alpha}}$ so that, for all $\alpha$
\begin{enumerate}
\item If $\eta \in \mathcal{T}_{\alpha}$, then 
$$
c^{\alpha}_{\eta} \models q|_{Mc^{\alpha}_{\vartriangleright \eta}d^{\alpha}_{\vartriangleright \eta}}
$$
\item If $\eta \in \mathcal{T}_{\alpha}$, then 
$$
d^{\alpha}_{\eta} \models \bigcup_{\nu \unrhd \eta} p(x;c^{\alpha}_{\nu})
$$
\item $(c_{\eta}^{\alpha}, d_{\eta}^{\alpha})_{\eta \in \mathcal{T}_{\alpha}}$ is spread out and $s$-indiscernible over $M$
\item If $\beta < \alpha$ then $(c^{\alpha}_{\iota_{\beta \alpha}(\eta)}, d^{\alpha}_{\iota_{\beta \alpha}(\eta)}) = (c^{\beta}_{\eta}, d^{\beta}_{\eta})$ for all $\eta \in \mathcal{T}_{\beta}$.
\end{enumerate}
To start, define $(c^{0}_{\emptyset},d^{0}_{\emptyset}) = (b,b')$.  This defines $(c^{0}_{\eta},d^{0}_{\eta})_{\eta \in \mathcal{T}_{0}}$.  

Now suppose given $(c^{\alpha}_{\eta},d^{\alpha}_{\eta})_{\eta \in \mathcal{T}_{\alpha}}$.  Let $\langle (c^{\alpha}_{\eta,i},d^{\alpha}_{\eta,i}) : i < \omega \rangle$ be an $M$-invariant Morley sequence with $(c^{\alpha}_{\eta,0},d^{\alpha}_{\eta,0})_{\eta \in \mathcal{T}_{\alpha}} = (c^{\alpha}_{\eta},d^{\alpha}_{\eta})_{\eta \in \mathcal{T}_{\alpha}}$.  Pick $c_{*}$ so that 
$$
c_{*} \models q|_{M(c^{\alpha}_{\eta,i},d^{\alpha}_{\eta,i})_{\eta \in \mathcal{T}_{\alpha}, i < \omega}}.
$$
Then, by Lemma \ref{consistenttree}, we may choose $d_{*}$ so that 
$$
d_{*} \models \bigcup_{\substack{\eta \in \mathcal{T}_{\alpha} \\ i < \omega}} p(x;c^{\alpha}_{\eta, i }) \cup p(x;c_{*}).
$$
Define a tree $(e_{\eta}, f_{\eta})_{\eta \in \mathcal{T}_{\alpha+1}}$ by 
\begin{eqnarray*}
(e_{\emptyset},f_{\emptyset}) &=& (c_{*},d_{*}) \\
(e_{\langle i \rangle \frown \eta},f_{\langle i \rangle \frown \eta}) &=& (c^{\alpha}_{\eta, i},d^{\alpha}_{\eta, i}).
\end{eqnarray*}
Finally, let $(c^{\alpha+1}_{\eta}, d^{\alpha+1}_{\eta})_{\eta \in \mathcal{T}_{\alpha+1}}$ be a tree $s$-indiscernible over $M$ locally based on this tree.  By an automorphism, we may assume that $c^{\alpha+1}_{\iota_{\alpha\alpha+1}(\eta)} = c^{\alpha}_{\eta}$ for all $\eta \in \mathcal{T}_{\alpha}$.  This satisfies the requirements.  

Finally, arriving to stage $\delta$ for $\delta$ limit, we simply define $(c^{\delta}_{\eta}, d^{\delta}_{\eta})_{\eta \in \mathcal{T}_{\delta}}$ by stipulating $(c^{\delta}_{\iota_{\beta \delta}(\eta)},d^{\delta}_{\iota_{\beta \delta}(\eta)}) = (c^{\beta}_{\eta}, d^{\beta}_{\eta})$ for all $\beta < \delta$.  By the coherence condition (4), this is well-defined, and satisfies the requirements.  We conclude by extracting a Morley tree, by Lemma \ref{morleyextraction}.  
\end{proof}

\begin{thm} \label{itthmchar}
Suppose $T$ is a complete theory.  The following are equivalent:
\begin{enumerate}
\item $T$ is NSOP$_{1}$.
\item $\ind^{K}$ satisfies the independence theorem over models:  if $M \models T$, $a \equiv_{M} a'$, $a \ind^{K}_{M} b$, $a' \ind^{K}_{M} c$, and $b \ind^{K}_{M} c$, then there is $a''$ with $a''\equiv_{Mb} a$, $a'' \equiv_{Mc} a'$ and $a'' \ind^{K}_{M} bc$.  
\end{enumerate}
\end{thm}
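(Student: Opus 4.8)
The proof splits into the two implications, and $(2)\implies(1)$ is the easy one. The full independence theorem contains, as the special case $b\ind^u_M c$ (recall that finite satisfiability in $M$ implies $\ind^{i}_{M}$, which implies $\ind^{K}_{M}$), the weak independence theorem for invariant types which Chernikov and the second author showed characterizes NSOP$_{1}$ in \cite[Theorem 5.1]{ArtemNick}; hence $(2)$ implies $(1)$. Alternatively, one feeds the SOP$_{1}$ configuration extracted from \cite[Theorem 5.1]{ArtemNick} into $(2)$ to obtain a contradiction, exactly as in the proof of Proposition \ref{weaksymmetrylemma}.

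For $(1)\implies(2)$: we already have the weak independence theorem, Proposition \ref{reduction}, which is precisely statement $(2)$ under the additional hypothesis $b\ind^u_M c$, so the task is to relax $b\ind^u_M c$ to $b\ind^K_M c$. The plan is to produce, from the single instance $b\ind^K_M c$, a tree Morley sequence along which Proposition \ref{reduction} can be applied repeatedly. By symmetry $c\ind^K_M b$. Fix a global \emph{coheir} $q$ over $M$ extending $\text{tp}(c/M)$, which exists because $M$ is a model (Fact \ref{average}); the point of insisting on a coheir rather than an arbitrary $M$-invariant type is that it converts the genericity clause of the Zig-zag Lemma into a statement about finite satisfiability. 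Applying the Zig-zag Lemma (Lemma \ref{zigzag}) yields a tree Morley sequence $(c_i,b_i)_{i<\omega}$ over $M$ with $(c_0,b_0)=(c,b)$ such that $c_ib_j\equiv_M cb$ for $i\le j$ and $c_i\ind^u_M b_j$ for $i>j$. Each of $(b_i)_{i<\omega}$ and $(c_i)_{i<\omega}$ is a tree Morley sequence over $M$ (Lemma \ref{concatenation}), so by the chain condition for tree Morley sequences (Corollary \ref{tmschaincondition}) I would replace $a$ by a conjugate over $Mb$ so that $(b_i)_{i<\omega}$ is $Ma$-indiscernible with $a\ind^K_M(b_i)_{i<\omega}$, and likewise replace $a'$ by a conjugate over $Mc$ so that $(c_i)_{i<\omega}$ is $Ma'$-indiscernible with $a'\ind^K_M(c_i)_{i<\omega}$; writing $p(x;b)=\text{tp}(a/Mb)$ and $p'(x;c)=\text{tp}(a'/Mc)$ (which agree on $M$ since $a\equiv_M a'$), the element $a$ then realizes $\bigcup_i p(x;b_i)$ and $a'$ realizes $\bigcup_i p'(x;c_i)$.

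It now suffices to show that $\bigcup_{i<\omega}p(x;b_i)\cup\bigcup_{i<\omega}p'(x;c_i)$ is consistent and does not Kim-fork over $M$: any realization $a''$ realizes $p(x;b)=\text{tp}(a/Mb)$ and $p'(x;c)=\text{tp}(a'/Mc)$ and, its type over $M$ together with all the $b_i,c_i$ not Kim-forking over $M$, satisfies $a''\ind^K_M bc$, so it is the required amalgam; by compactness this reduces to the same statement for finite subunions. These one would prove by an induction that processes the parameters in the ``staircase'' order dictated by the Zig-zag Lemma, maintaining a coherent family of partial amalgams: each time a fresh $c_i$ is adjoined it is $\ind^u_M$ from all parameters already present (all of index $<i$), so Proposition \ref{reduction} applies directly; the adjunctions of the $b_i$'s are handled using that $(c_j,b_i)$ ``looks like'' $(c,b)$ for $j\le i$ together with Lemma \ref{consistenttree} and the fact that $(b_i)_{i<\omega}$ is itself a tree Morley sequence.

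I expect the staircase induction of the last paragraph to be the main obstacle. One must set it up so that, at every stage, the hypotheses of Proposition \ref{reduction} are literally met — in particular the $\ind^u$ on the \emph{correct side}, the asymmetry of $\ind^u$ making the orientation of each application delicate, and the compatibility of the freshly adjoined type with the non-Kim-forking partial amalgam already built — while keeping in mind that the base pair $(c_0,b_0)=(c,b)$ is only $\ind^K$-related and so cannot be amalgamated by Proposition \ref{reduction} until it has been ``shifted along'' the staircase. (Once the full theorem is in hand, Lemma \ref{consistenttree} can be strengthened as in Remark \ref{strongconsistenttree} and such arguments simplified, but that is unavailable while proving it.)
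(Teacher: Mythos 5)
Your direction $(2)\Rightarrow(1)$ is exactly the paper's (via \cite[Theorem 5.1]{ArtemNick} and $\ind^{u}\Rightarrow\ind^{i}\Rightarrow\ind^{K}$), and for $(1)\Rightarrow(2)$ you have assembled the right tools (Lemma \ref{zigzag}, Lemma \ref{concatenation}, Proposition \ref{reduction}), but the heart of the argument is missing. You reduce the theorem to the claim that $\bigcup_{i<\omega}p(x;b_i)\cup\bigcup_{i<\omega}p'(x;c_i)$ is consistent and non-Kim-forking over $M$, where $(c_i,b_i)_{i<\omega}$ is your zig-zag; but that union contains, at every index, a copy of the very configuration the theorem is about: $(c_i,b_j)\equiv_M(c,b)$ whenever $i\le j$, in particular $(c_0,b_0)=(c,b)$, and for these pairs we only know $\ind^{K}$, never $\ind^{u}$ in either direction, so Proposition \ref{reduction} can never be applied to them. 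Consequently the staircase induction cannot process all the parameters: any ordering in which each newly adjoined parameter is $\ind^{u}_{M}$-related to what came before must skip every ``diagonal'' instance (for example the order $b_0,c_1,b_1,c_2,\dots$ never confronts $p'(x;c_0)$ with $p(x;b_0)$, which is precisely the statement to be proved). Moreover Lemma \ref{zigzag} only provides the pairwise relations $c_i\ind^{u}_{M}b_j$ for $i>j$, not finite satisfiability of $\text{tp}(c_i/Mb_0c_1b_1\cdots)$ over the whole accumulated parameter set, which iterated use of Proposition \ref{reduction} would require; and Lemma \ref{consistenttree} amalgamates many copies of a single type along a tree generated by one invariant type, so it cannot supply the missing $p$-versus-$p'$ amalgamation either. (The claim you reduce to is true, but as a consequence of the theorem together with Corollary \ref{tmschaincondition}; your proposed route to it presupposes the theorem at the diagonal.) You flag this yourself as ``the main obstacle,'' so as written $(1)\Rightarrow(2)$ is a plan whose key step is unexecuted, and the plan as stated is circular.

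The paper closes exactly this gap without ever constructing a realization of the full union. Run the zig-zag over $\mathbb{Z}$ (in the paper's orientation: $q\supseteq\text{tp}(b/M)$ a coheir, $(b_i,c_i)_{i\in\mathbb{Z}}$ with $b_ic_j\equiv_M bc$ for $i\le j$ and $b_i\ind^{u}_{M}c_j$ for $i>j$). By Lemma \ref{concatenation}, both re-pairings $(b_{2i},c_{2i+1})_{i\in\mathbb{Z}}$ and $(b_{2i},c_{2i-1})_{i\in\mathbb{Z}}$ are tree Morley sequences over $M$, and the two unions $\bigcup_{i}\bigl(p(x;b_{2i})\cup p'(x;c_{2i+1})\bigr)$ and $\bigcup_{i}\bigl(p(x;b_{2i})\cup p'(x;c_{2i-1})\bigr)$ are literally the same set of formulas. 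If $p(x;b)\cup p'(x;c)$ Kim-forked over $M$, then (using Kim-forking $=$ Kim-dividing and Corollary \ref{kimslemmafortms} for the first re-pairing, each of whose pairs is $\equiv_M bc$) this set would be inconsistent, while Proposition \ref{reduction} applied to the single $\ind^{u}$-pair $(b_0,c_{-1})$, together with Corollary \ref{kimslemmafortms} for the second re-pairing, makes it consistent --- a contradiction. So the non-Kim-forking of the diagonal pair is deduced by reading one $\mathbb{Z}$-indexed zig-zag in two ways, not built up by induction; some such shift-and-identify device is what your write-up still needs.
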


\begin{proof}
(2)$\implies$(1) follows from \cite[Theorem 5.1]{ArtemNick}, using that $\ind^{i}$ implies $\ind^{K}$.  

(1)$\implies$(2):  Assume $T$ is NSOP$_{1}$.  Suppose $M \models T$, $a \equiv_{M} a'$, and $a \ind^{K}_{M} b$, $a' \ind^{K}_{M} c$ and $b \ind^{K}_{M} c$.  We must show there is $a''$ with $a'' \equiv_{Mb} a$, $a'' \equiv_{Mc} a'$ and $a'' \ind^{K}_{M} bc$.  Let $p_{0}(x;b) = \text{tp}(a/Mb)$ and $p_{1}(x;c) = \text{tp}(a'/Mc)$.  Suppose towards contradiction that $p_{0}(x;b) \cup p_{1}(x;c)$ Kim-forks over $M$.  Let $q \supseteq \text{tp}(b/M)$ be a global type finitely satisfiable in $M$.  In particular, $q$ is $M$-invariant so, by Lemma \ref{zigzag}, there is a tree Morley sequence over $M$, $(b_{i}, c_{i})_{i \in \mathbb{Z}}$ so that 
\begin{enumerate}[(a)]
\item If $i \leq j$, then $b_{i}c_{j} \equiv_{M} bc$.
\item If $i > j$, then $b_{i} \models q|_{Mc_{j}}$.
\end{enumerate}
Then both $(b_{2i}, c_{2i+1})_{i \in \mathbb{Z}}$ and $(b_{2i},c_{2i-1})_{i \in \mathbb{Z}}$ are tree Morley sequences over $M$ by Lemma \ref{concatenation}.  By (a), we know $p_{0}(x;b_{0}) \cup p_{1}(x;c_{1})$ Kim-forks over $M$ so 
$$
\bigcup_{i \in \mathbb{Z}} p_{0}(x;b_{2i}) \cup p_{1}(x;c_{2i+1})
$$
is inconsistent.  However, because $b_{0} \ind^{u}_{M} c_{-1}$ by (2), Proposition \ref{reduction}  gives that $p_{0}(x;b_{0}) \cup p_{1}(x;c_{-1})$ does not Kim-fork over $M$.  Therefore
$$
\bigcup_{i \in \mathbb{Z}} p_{0}(x;b_{2i}) \cup p_{1}(x;c_{2i-1})
$$
is consistent.  And this is a contradiction, as these two partial types are the same.  This completes the proof.  
\end{proof}

\begin{cor}
Suppose $T$ is NSOP$_{1}$, $M \models T$, $b \equiv_{M} b'$ and $b \ind^{K}_{M} b'$.  Then there is a tree Morley sequence $(b_{i})_{i < \omega}$ over $M$, with $b_{0} = b$ and $b_{1} = b'$.    
\end{cor}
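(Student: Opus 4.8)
The plan is to reduce the claim to producing \emph{some} tree Morley sequence $(c_i)_{i<\omega}$ over $M$ with $c_0 c_1 \equiv_M bb'$. Indeed, if we have such a sequence, pick $\sigma \in \mathrm{Aut}(\mathbb M / M)$ carrying $(c_0,c_1)$ to $(b,b')$; then $(\sigma(c_i))_{i<\omega}$ is again a tree Morley sequence over $M$ (being a tree Morley sequence over $M$ is an $\mathrm{Aut}(\mathbb M/M)$-invariant notion, as it is defined through types over $M$) and has first two terms $b$ and $b'$.

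To build such a sequence I would run the inductive construction underlying the Zig-zag Lemma (Lemma~\ref{zigzag}) and Lemma~\ref{treeexistence} — producing, for large $\alpha$, spread out $s$-indiscernible trees over $M$ level by level, using the extension property (Proposition~\ref{extension}), the chain condition (Proposition~\ref{chaincondition}, Corollary~\ref{tmschaincondition}) and the consistent-tree lemma (Lemma~\ref{consistenttree}) exactly as there — but with the bookkeeping arranged so that along the extracted path \emph{consecutive} terms are linked by $\mathrm{tp}(bb'/M)$, rather than the parameters $b,b'$ sitting in \emph{parallel} coordinates. Concretely, I would arrange a ``telescoping'' variant of the Zig-zag sequence: a tree Morley sequence of pairs $(v_i,w_i)_{i<\omega}$ over $M$ with $(v_0,w_0) = (b,b')$, with $(v_i,w_i) \equiv_M bb'$ for all $i$, and with $w_i = v_{i+1}$; equivalently, a Morley tree $(u_\eta)_{\eta \in \mathcal T_\omega}$ over $M$ in which $u_\eta \equiv_M b$ for every node and $u_\eta u_{\eta^-} \equiv_M bb'$ for every non‑root $\eta$ with parent $\eta^-$. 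One obtains such a Morley tree by first building a spread out $s$-indiscernible tree $(u_\eta)_{\eta \in \mathcal T_\alpha}$ over $M$ (large $\alpha$) whose ancestor/descendant pairs realize $\mathrm{tp}(bb'/M)$ in the appropriate order, and then extracting a Morley tree from it via Lemma~\ref{morleyextraction}. Granting this, the all‑zeros path $(u_{\zeta_i})_{i<\omega}$ of the Morley tree is a tree Morley sequence over $M$ whose first two terms, being a node and its parent, satisfy $u_{\zeta_0} u_{\zeta_1} \equiv_M bb'$; alternatively, with the telescoping pair sequence in hand, $(v_i)_{i<\omega} = (b,b',w_1,w_2,\dots)$ is a tree Morley sequence over $M$ by Lemma~\ref{concatenation}(1) with $(v_0,v_1) = (b,b')$, as required.

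The main obstacle is exactly this linking. One cannot simply invoke the Zig-zag Lemma off the shelf and interleave its output $(b_i,b'_i)_{i<\omega}$ as $(b_0,b'_0,b_1,b'_1,\dots)$: the ``crossing'' pair $(b'_i,b_{i+1})$ only satisfies $b_{i+1} \models q|_{Mb'_i}$, and there is no reason for $q|_{Mb}$ to equal $\mathrm{tp}(b'/Mb)$ — it would be if $\mathrm{tp}(b/Mb')$ extended to a global $M$-invariant type, but $b \ind^K_M b'$ does not supply one — so the interleaved sequence need not even be $M$-indiscernible. Thus the parent/child link has to be designed into the construction from the outset, and the real work is to check that, with this strengthened bookkeeping, each stage of the inductive construction still yields a tree that is spread out and $s$-indiscernible over $M$ (so that a Morley tree can be extracted), that the link is in the correct order, and that it survives the Morley-tree extraction of Lemma~\ref{morleyextraction}.
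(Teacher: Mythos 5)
Your overall strategy --- build, for large $\alpha$, a spread out $s$-indiscernible tree $(u_{\eta})_{\eta \in \mathcal{T}_{\alpha}}$ over $M$ with $u_{\nu} u_{\eta} \equiv_{M} b'b$ whenever $\nu \vartriangleleft \eta$, extract a Morley tree via Lemma~\ref{morleyextraction}, and take the all-zeros path --- is exactly the paper's, and your diagnosis of why naive interleaving of the Zig-zag output fails is right. The gap is in the step you leave as ``the real work,'' and the toolkit you cite does not close it. At each successor stage one needs a new root $b''$ that realizes $\bigcup_{\eta\in\mathcal{T}_{\alpha}} p(x;u^{\alpha}_{\eta})$ (with $p(x;b)=\mathrm{tp}(b'/Mb)$) \emph{and} is Kim-independent from the entire current tree, so that the chain condition can be applied to an $M$-invariant Morley sequence of copies. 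You cite Lemma~\ref{consistenttree} for this, but that lemma requires the tree to satisfy $u_{\nu}\models q|_{Mu_{\vartriangleright\nu}}$ for a single fixed global $M$-invariant $q$ --- the linear structure that the Zig-zag Lemma's $c$-coordinate has by construction but your tree does not: already at stage $1$, the new base $b'$ is only Kim-independent from the Morley sequence above it, not a realization of an $M$-invariant type over it. What is actually needed is Remark~\ref{strongconsistenttree}, the strengthening of Lemma~\ref{consistenttree} to arbitrary spread out, $s$-indiscernible trees, available only once the full independence theorem is in hand. Similarly, Lemma~\ref{treeexistence} gets by with extension alone because it links each node only to the top-level leaves; the stronger condition here, linking every pair of $\vartriangleleft$-comparable nodes, is exactly what forces the upgrade to Remark~\ref{strongconsistenttree}. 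Without identifying this as the missing ingredient, the inductive construction you sketch is not substantiated.
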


\begin{proof}
Let $p(x;b) = \text{tp}(b'/Mb)$.  By induction on ordinals $\alpha \geq 1$, we will build trees $(b^{\alpha}_{\eta})_{\eta \in \mathcal{T}_{\alpha}}$ spread out and $s$-indiscernible over $M$ so that
\begin{enumerate}
\item $\nu \vartriangleleft \eta$ then $b^{\alpha}_{\nu}b^{\alpha}_{\eta} \equiv_{M} b'b$.   
\item If $1 \leq \beta < \alpha$, then $b^{\alpha}_{\iota_{\beta \alpha}(\eta)} = b^{\beta}_{\eta}$.  
\end{enumerate} 
To start, let $\overline{b} = (b_{i})_{i < \omega}$ be an $M$-invariant Morley sequence\textemdash as $b \ind^{K}_{M} b'$, we may assume this sequence is $Mb'$-indiscernible.  Define $(b^{1}_{\eta})_{\eta \in \mathcal{T}_{1}}$ by $b^{1}_{\emptyset} = b'$ and $b^{1}_{\langle i \rangle} = b_{i}$.  Then $(b^{1}_{\eta})_{\eta \in \mathcal{T}_{1}}$ is spread out and $s$-indiscernible over $M$ and clearly satisfies (1).  

Now suppose given $(b_{\eta}^{\alpha})_{\eta \in \mathcal{T}_{\alpha}}$.  Let $\langle (b^{\alpha}_{\eta,i})_{\eta \in \mathcal{T}_{\alpha}} : i < \omega \rangle$ be an $M$-invariant Morley sequence with $(b^{\alpha}_{\eta,0})_{\eta \in \mathcal{T}_{\alpha}} = (b^{\alpha}_{\eta})_{\eta \in \mathcal{T}_{\alpha}}$.  Choose $b'' \ind^{K}_{M} (b^{\alpha}_{\eta})_{\eta \in \mathcal{T}_{\alpha}}$ with 
$$
b'' \models \bigcup_{\eta \in \mathcal{T}_{\alpha}} p(x;b^{\alpha}_{\eta}),
$$
(this is possible by Remark \ref{strongconsistenttree}).  By the chain condition, we may assume the sequence $\langle (b^{\alpha}_{\eta,i})_{\eta \in \mathcal{T}_{\alpha}} : i < \omega \rangle$ is $Mb''$-indiscernible and that $b'' \ind^{K}_{M} (b^{\alpha}_{\eta,i})_{\eta \in \mathcal{T}_{\alpha}, i < \omega}$.  Define a tree $(c_{\eta})_{\eta \in \mathcal{T}_{\alpha+1}}$ by $c_{\emptyset} = b''$ and $c_{\langle i \rangle \frown \eta} = b^{\alpha}_{\eta, i}$.  Then let $(b^{\alpha+1}_{\eta})_{\eta \in \mathcal{T}_{\alpha+1}}$ be a tree which is $s$-indiscernible over $M$ and locally based on $(c_{\eta})_{\eta \in \mathcal{T}_{\alpha+1}}$.  By an automorphism, we may assume that $b^{\alpha+1}_{\iota_{\alpha \alpha+1}(\eta)} = b^{\alpha}_{\eta}$ for all $\eta \in \mathcal{T}_{\alpha}$.  This satisfies the requirements.

Finally, if $\delta$ is a limit and we are given $(b^{\alpha}_{\eta})_{\eta \in \mathcal{T}_{\alpha}}$ for all $\alpha < \delta$, define $(b_{\eta}^{\delta})_{\eta \in \mathcal{T}_{\delta}}$ as follows:  if $\eta \in \mathcal{T}_{\delta}$, choose any $\alpha < \delta$ and $\nu \in \mathcal{T}_{\alpha}$ so that $\eta = \iota_{\alpha \delta}(\nu)$.  Then define $b^{\delta}_{\eta} = b^{\alpha}_{\nu}$.  By the coherence condition, this is well-defined and clearly satisfies the requirements.  

To conclude, let $\kappa$ be big enough for Erd\H{o}s-Rado and consider $(b_{\eta}^{\kappa})_{\eta \in \mathcal{T}_{\kappa}}$ given by the above construction.  Apply Lemma \ref{morleyextraction} to find $(c_{\eta})_{\eta \in \mathcal{T}_{\omega}}$, a Morley tree over $M$, based on this tree.  By an automorphism, we may assume $c_{\zeta_{0}} = b$ and $c_{\zeta_{1}} = b'$.  The sequence $(c_{\zeta_{i}})_{i < \omega}$ is the desired tree Morley sequence.  
\end{proof}

\section{Forking and Witnesses} \label{forkingsection}

\subsection{Basic properties of forking}

\begin{defn} \label{forkingandidivdingdef}
\begin{enumerate}
\item The formula $\varphi(x;b)$ \emph{divides} over $A$ if there is an $A$-indiscernible sequence $\langle b_{i} : i < \omega \rangle$ with $b_{0} = b$ so that $\{\varphi(x;b_{i}) : i < \omega\}$ is inconsistent.  A type $p(x)$ divides over $A$ if it implies some formula that divides over $A$.  Write $a \ind^{d}_{A} B$ to mean that $\text{tp}(a/AB)$ does not divide over $A$.
\item The formula $\varphi(x;b)$ \emph{forks} over $A$ if $\varphi(x;b)$ implies a finite disjunction $\bigvee_{i} \psi_{i}(x;c_{i})$ where each $\psi_{i}(x;c_{i})$ divides over $A$.  A type $p(x)$ forks over $A$ if it implies a formula which forks over $A$.  We write $a \ind^{f}_{A} B$ to mean that $\text{tp}(a/AB)$ does not fork over $A$.  
\end{enumerate}
\end{defn}

The following facts about forking and dividing are easy and well-known -- see, e.g., \cite{grossberg2002primer} \cite{adler2005explanation}.  

\begin{fact} \label{forkingfacts}
The following are true with respect to an arbitrary theory:
\begin{enumerate}
\item $a \ind^{d}_{A} b$ if and only if, given any $A$-indiscernible sequence $I = \langle b_{i} : i < \omega \rangle$ with $b = b_{0}$, there is $a' \equiv_{Ab} a$ so that $I$ is $Aa'$-indiscernible.
\item $\ind^{f}$ is an invariant ternary relation on small subsets satisfying:
\begin{enumerate}
\item (Extension)  If $a \ind^{f}_{A} b$, then, for all $c$, there is $a' \equiv_{Ab} a$ so that $a' \ind^{f}_{A} bc$.  
\item (Base Monotonicity)  If $a \ind^{f}_{A} bc$ then $a \ind^{f}_{Ab} c$.  
\item (Left Transitivity)  If $a \ind^{f}_{Ab} c$ and $b \ind^{f}_{A} c$ then $ab \ind^{f}_{A} c$.  
\end{enumerate}
\item For any model $M$, 
$$
a \ind^{i}_{M} b \implies a \ind^{f}_{M} b \implies a \ind^{K}_{M} b.
$$
\end{enumerate}
\end{fact}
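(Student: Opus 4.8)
The plan is to dispatch the three items by the standard arguments, which I will only sketch since each is routine and handled in full in \cite{grossberg2002primer} and \cite{adler2005explanation}.

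For (1), the plan is to prove the nontrivial direction: assume $a \ind^{d}_{A} b$ and fix an $A$-indiscernible sequence $I = \langle b_{i} : i < \omega \rangle$ with $b_{0} = b$. For each finite $\Delta(x) \subseteq \text{tp}(a/Ab)$, writing $\Delta(x) \vdash \varphi(x;b)$, the hypothesis that $\varphi(x;b)$ does not divide over $A$ together with the usual application of Ramsey's theorem gives that $\{\varphi(x;b_{i}) : i < \omega\}$ is consistent; a compactness argument over all such $\Delta$ then yields $a' \models \text{tp}(a/Ab)$ with $I$ indiscernible over $Aa'$. The reverse implication is immediate: if $\varphi(x;b) \in \text{tp}(a/Ab)$ divides over $A$ as witnessed by $I$, then $\{\varphi(x;b_{i}) : i < \omega\}$ is inconsistent, while $\models \varphi(a';b_{i})$ would hold for every $i$ for any $a' \equiv_{Ab} a$ with $I$ indiscernible over $Aa'$.

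For (2), invariance is immediate from the definition. For extension I would repeat the proof of Proposition \ref{extension} verbatim, replacing ``Kim-divides'' by ``divides'' and ``Kim-forks'' by ``forks''. Base monotonicity and left transitivity are the standard facts about forking, and I would simply cite \cite{grossberg2002primer} and \cite{adler2005explanation}; the point is that a formula with parameters in $Abc$ dividing over $Ab$ can, after absorbing a finite part of $b$ into the formula, be seen to fork over $A$, and that left transitivity follows formally from extension together with the definition of forking.

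For (3), for the first implication suppose $q \supseteq \text{tp}(a/Mb)$ is a global $M$-invariant type; I claim no formula lying in $q$ divides over $M$. Indeed, if $\psi(x;c) \in q$ and $\langle c_{i} : i < \omega \rangle$ is any $M$-indiscernible sequence with $c_{0} = c$, then $c_{i} \equiv_{M} c$ for all $i$, so $M$-invariance of $q$ gives $\psi(x;c_{i}) \in q$ for every $i$, whence $\{\psi(x;c_{i}) : i < \omega\}$ is consistent. Now if $\text{tp}(a/Mb)$ forked over $M$ it would imply a disjunction $\bigvee_{i<k}\psi_{i}(x;c_{i})$ of formulas dividing over $M$; this disjunction lies in the complete type $q$, so some $\psi_{i}(x;c_{i}) \in q$, contradicting the claim, and hence $a \ind^{f}_{M} b$. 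For the second implication, a Morley sequence in a global $A$-invariant type is in particular $A$-indiscernible, so every formula that Kim-divides over $M$ divides over $M$, and hence every formula that Kim-forks over $M$ forks over $M$; this says precisely that $a \ind^{f}_{M} b \implies a \ind^{K}_{M} b$. There is no real obstacle in any of this — the only mildly delicate points are the Ramsey-plus-compactness extraction in (1) and the parameter bookkeeping behind base monotonicity in (2), both carried out in the cited references.
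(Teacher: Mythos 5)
Your sketch is correct and matches the paper's treatment: the paper states this as a Fact without proof and simply cites \cite{grossberg2002primer} and \cite{adler2005explanation}, and your outline reproduces the standard arguments from those sources (including the right observations for item (3), which the cited sources don't phrase in terms of $\ind^i$ or $\ind^K$ but which follow immediately as you indicate). One small misplacement: in your argument for (1), Ramsey's theorem is not needed to get consistency of $\{\varphi(x;b_i) : i<\omega\}$ (that is the definition of non-dividing), but rather to extract, from a realization $a''$ of $\bigcup_i p(x;b_i)$, an $Aa''$-indiscernible copy of $I$ and then transport back by an automorphism to obtain $a' \equiv_{Ab} a$ with $I$ indiscernible over $Aa'$.
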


\begin{rem}
$\ind^{d}$ may fail to satisfy (2)(a) in an arbitrary theory, but always satisfies (2)(b) and (2)(c).  
\end{rem}

As a warm-up to the theorem in the next subsection, we note that these properties easily give a weak form of transitivity for $\ind^{K}$:

\begin{lem}
Suppose $a \ind^{d}_{M} bc$ and $b \ind^{K}_{M} c$.  Then $ab \ind^{K}_{M} c$.  
\end{lem}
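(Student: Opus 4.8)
The plan is to reduce, via the basic characterization of Kim-dividing (Lemma \ref{basiccharacterization}) together with Kim-forking $=$ Kim-dividing over models (Proposition \ref{kforkingequalskdividing}), to producing a single invariant Morley sequence in $c$ over which a copy of $ab$ is indiscernible. Working in an NSOP$_1$ theory with $M \models T$, it suffices to exhibit one global $M$-invariant type $q \supseteq \text{tp}(c/M)$, a Morley sequence $I = \langle c_i : i<\omega\rangle \models q^{\otimes\omega}|_M$ with $c_0 = c$, and some $a'b' \equiv_{Mc} ab$ with $I$ being $Ma'b'$-indiscernible; this will show $\text{tp}(ab/Mc)$ does not Kim-divide over $M$, hence (by Proposition \ref{kforkingequalskdividing}) does not Kim-fork, i.e.\ $ab \ind^K_M c$.

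First I would fix any global $M$-invariant $q \supseteq \text{tp}(c/M)$ and a Morley sequence $I$ as above. Since $b \ind^K_M c$, the type $\text{tp}(b/Mc)$ does not Kim-divide over $M$, so by the $(1)\Rightarrow(3)$ direction of Lemma \ref{basiccharacterization} there is $I' \equiv_{Mc} I$ which is $Mb$-indiscernible. As $I' \equiv_{Mc} I$, this $I'$ still realizes $q^{\otimes\omega}|_M$ and still begins with $c$, so after renaming I may assume $I$ itself is $Mb$-indiscernible while remaining a Morley sequence over $M$ in $q$ with $c_0 = c$.

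Next, from $a \ind^d_M bc$ and base monotonicity of $\ind^d$ (the Remark following Fact \ref{forkingfacts}) I obtain $a \ind^d_{Mb} c$. Since $I$ is now an $Mb$-indiscernible sequence with $c_0 = c$, Fact \ref{forkingfacts}(1) supplies $a' \equiv_{Mbc} a$ such that $I$ is $Mba'$-indiscernible; then $a'b \equiv_{Mc} ab$ and $I$ is $M(a'b)$-indiscernible, which is exactly the required data. Invoking the NSOP$_1$ form of Lemma \ref{basiccharacterization} (the remark allowing ``some'' $q$ in place of ``any'') and Proposition \ref{kforkingequalskdividing} then completes the argument.

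I do not anticipate a genuine obstacle: this is in essence a left-transitivity argument in which the weak hypothesis $a \ind^d_M bc$ enters only through base monotonicity and the (local-character-free) characterization of dividing, while the strong hypothesis $b \ind^K_M c$ enters only to arrange that the relevant invariant Morley sequence is $Mb$-indiscernible. The only point needing care is the bookkeeping when passing from $I$ to its $Mb$-indiscernible $Mc$-conjugate — one must check that the conjugate still starts with $c$ and is still a Morley sequence in a global $M$-invariant type so that Lemma \ref{basiccharacterization} can be applied again at the end — and this is immediate because it is an $Mc$-conjugate of $I$.
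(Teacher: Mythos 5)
Your proof is correct and follows essentially the same route as the paper's: use $b \ind^{K}_{M} c$ with the basic characterization of Kim-dividing to make an invariant Morley sequence in $\operatorname{tp}(c/M)$ indiscernible over $Mb$, then use base monotonicity of $\ind^{d}$ and the non-dividing characterization to conjugate $a$ over $Mbc$, and conclude via Kim-forking $=$ Kim-dividing. The only (harmless) cosmetic difference is that the paper conjugates the sequence and keeps it arbitrary, whereas you conjugate $a$ and invoke the NSOP$_{1}$ ``some sequence suffices'' remark.
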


\begin{proof}
Assume $a \ind^{d}_{M} bc$ and $b \ind^{K}_{M} c$.  As $b\ind^{K}_{M} c$, for any $M$-invariant Morley sequence $I = (c_{i})_{i < \omega}$ with $c_{0} = c$, there is $I' = (c'_{i})_{i < \omega}$ with $I' \equiv_{Mc_{0}} I$ which is, moreover, $Mb$-indiscernible.  By base monotonicity of $\ind^{d}$, $a \ind^{d}_{Mb} c$ so there is an $Mab$-indiscernible sequence $I'' = (c''_{i})_{i < \omega}$ with $I'' \equiv_{Mbc} I'$.  Thus $I''$ is an $M$-invariant Morley sequence with $c''_{0} = c$ which is $Mab$-indiscernible.  By an automorphism, we obtain $a'b' \equiv_{Mc} ab$ so that $I$ is $Ma'b'$-indiscernible.  As $I$ was an arbitrary $M$-invariant Morley sequence over $M$, it follows that $ab \ind^{K}_{M} c$.  
\end{proof}

\subsection{Morley Sequences}

\begin{defn}
Suppose $M \models T$.  An $\ind^{K}$-Morley sequence over $M$ is an $M$-indiscernible sequence $\langle b_{i} : i < \omega \rangle$ satisfying $b_{i} \ind^{K}_{M} b_{<i}$.  Likewise, an $\ind^{f}$-Morley sequence over $M$ is an $M$-indiscernible sequence $\langle b_{i} : i < \omega \rangle$ satisfying $b_{i} \ind^{f}_{M} b_{<i}$.  
\end{defn}

\begin{lem}\label{consistentindk}
Suppose the complete theory $T$ is NSOP$_{1}$, $M \models T$, and $\varphi(x;b)$ does not Kim-divide over $M$.  Then for any $\ind^{K}$-Morley sequence $\langle b_{i} : i < \omega\rangle$ over $M$ with $b_{0} = b$, $\{\varphi(x;b_{i}) : i < \omega \}$ is non-Kim-forking over $M$.  In particular, this set of formulas is consistent.
\end{lem}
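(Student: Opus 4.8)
The plan is to establish the finite approximations first and then pass to the limit by compactness: it suffices to show that for every $n < \omega$ the partial type $\{\varphi(x;b_{i}) : i < n\}$ does not Kim-fork over $M$, since a type Kim-forks over $M$ exactly when it implies a single formula that does, and such an implication is already witnessed by finitely many of the $\varphi(x;b_{i})$ (in particular the infinite type is then automatically consistent). I would prove the finite statement by induction on $n \geq 1$. The case $n = 1$ is just the hypothesis combined with the fact that Kim-forking equals Kim-dividing over models (Proposition \ref{kforkingequalskdividing}): $\varphi(x;b_{0}) = \varphi(x;b)$ does not Kim-divide over $M$, hence does not Kim-fork over $M$.

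For the inductive step, suppose $\{\varphi(x;b_{i}) : i < n\}$ does not Kim-fork over $M$. Exactly as in the proof of extension (Proposition \ref{extension}), this partial type is realized by some $a_{0}$ with $a_{0} \ind^{K}_{M} b_{<n}$; since $n \geq 1$ we have $a_{0} \models \varphi(x;b_{0})$, and by monotonicity on the right $a_{0} \ind^{K}_{M} b_{0}$. Because $\langle b_{i} : i < \omega \rangle$ is $M$-indiscernible, $b_{0} \equiv_{M} b_{n}$, so I would fix $\sigma \in \text{Aut}(\mathbb{M}/M)$ with $\sigma(b_{0}) = b_{n}$ and set $a_{1} = \sigma(a_{0})$; then $a_{1} \equiv_{M} a_{0}$, $a_{1} \models \varphi(x;b_{n})$, and $a_{1} \ind^{K}_{M} b_{n}$. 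Now apply the independence theorem (Theorem \ref{itthmchar}) to $a_{0}, a_{1}$ over the parameter sets $b_{<n}$ and $b_{n}$: the hypotheses hold since $a_{0} \equiv_{M} a_{1}$, $a_{0} \ind^{K}_{M} b_{<n}$, $a_{1} \ind^{K}_{M} b_{n}$, and $b_{<n} \ind^{K}_{M} b_{n}$ — the last because $b_{n} \ind^{K}_{M} b_{<n}$ (as $\langle b_{i} \rangle$ is an $\ind^{K}$-Morley sequence) together with symmetry of $\ind^{K}$ over models (Theorem \ref{symmetrycharthm}). This yields $a''$ with $a'' \equiv_{Mb_{<n}} a_{0}$, $a'' \equiv_{Mb_{n}} a_{1}$, and $a'' \ind^{K}_{M} b_{\leq n}$. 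Then $a'' \models \{\varphi(x;b_{i}) : i \leq n\}$, and since $\text{tp}(a''/Mb_{\leq n})$ does not Kim-fork over $M$, neither does its sub-type $\{\varphi(x;b_{i}) : i \leq n\}$, completing the induction.

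The entire content sits in the inductive step, and the only delicate point is feeding the independence theorem: the two amalgamands must agree over $M$, which is why one does not take an arbitrary realization of $\varphi(x;b_{n})$ independent from $b_{n}$, but rather transports $a_{0}$ by an automorphism sending $b_{0}$ to $b_{n}$ — this simultaneously fixes the type over $M$, preserves the formula $\varphi$, and preserves Kim-independence. I expect this small piece of bookkeeping to be the main (and essentially the only) obstacle; the reduction to finite $n$, right monotonicity of $\ind^{K}$, and realizing a non-Kim-forking type by an independent tuple are all routine.
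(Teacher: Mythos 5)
Your proof is correct and follows essentially the same line as the paper's: an induction on $n$, transporting a realization $a$ of the first $n$ formulas to $\sigma(a) \models \varphi(x;b_n)$ via an $M$-automorphism sending $b_0$ to $b_n$, and then amalgamating with the independence theorem (using symmetry to turn $b_n \ind^K_M b_{<n}$ into $b_{<n} \ind^K_M b_n$), followed by compactness. The only added explicitness is your invocation of right monotonicity to see $a \ind^K_M b_0$, which the paper leaves implicit.
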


\begin{proof}
By induction on $n$, we will show that $\{\varphi(x;b_{i}) : i \leq n\}$ is non-Kim-forking over $M$.  The case of $n = 0$ follows by hypothesis.  Now suppose $\{\varphi(x;b_{i}) : i \leq n\}$ is non-Kim-forking over $M$.  Fix $\sigma \in \text{Aut}(\mathbb{M}/M)$ with $\sigma(b_{0}) = b_{n+1}$.  Let $a \models \{\varphi(x;b_{i}) : i \leq n\}$ with $a \ind^{K}_{M} b_{\leq n}$.  Then $\sigma(a) \equiv_{M} a$ and $\models \varphi(\sigma(a);b_{n+1})$.  We know $b_{n+1} \ind^{K}_{M} b_{\leq n}$ so by the independence theorem, there is $a'$ with $a' \equiv_{Mb_{\leq n}} a$ and $a' \equiv_{Mb_{n+1}} \sigma(a)$ so that $a' \ind^{K}_{M} b_{\leq n+1}$.  As $a' \models \{\varphi(x;b_{i}) : i \leq n+1\}$, this completes the induction.  The lemma, then, follows by compactness.  
\end{proof}

\begin{thm}\label{kimslemmaforforking}
Suppose the complete theory $T$ is NSOP$_{1}$ and $M \models T$.  The following are equivalent:
\begin{enumerate}
\item $\varphi(x;b)$ Kim-divides over $M$.
\item For some $\ind^{f}$-Morley sequence $(b_{i})_{i  < \omega}$ over $M$ with $b_{0} = b$, $\{\varphi(x;b_{i}) : i < \omega\}$ is inconsistent.
\item For every $\ind^{f}$-Morley sequence $(b_{i})_{i < \omega}$ over $M$ with $b_{0} = b$, $\{\varphi(x;b_{i}) : i < \omega\}$ is inconsistent. 
\end{enumerate} 
\end{thm}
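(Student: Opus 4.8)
The plan is to prove the cycle (1)$\Rightarrow$(3)$\Rightarrow$(2)$\Rightarrow$(1), where the only substantive implication is (2)$\Rightarrow$(1). First, (1)$\Rightarrow$(3): every $\ind^{f}$-Morley sequence over $M$ is in particular an $M$-indiscernible sequence with $b_{i}\ind^{f}_{M}b_{<i}$, hence $b_{i}\ind^{K}_{M}b_{<i}$ by Fact \ref{forkingfacts}(3), so it is an $\ind^{K}$-Morley sequence. If $\varphi(x;b)$ does not Kim-divide over $M$, then by Lemma \ref{consistentindk} the set $\{\varphi(x;b_{i}):i<\omega\}$ is consistent for every $\ind^{K}$-Morley sequence, and a fortiori for every $\ind^{f}$-Morley sequence; contrapositively, (1)$\Rightarrow$(3). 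Then (3)$\Rightarrow$(2) is immediate once we know $\ind^{f}$-Morley sequences exist over a model with $b_{0}=b$ an arbitrary prescribed first element — and they do: take a global $M$-invariant (e.g.\ coheir) extension $q$ of $\text{tp}(b/M)$, let $I=\langle b_{i}:i<\omega\rangle\models q^{\otimes\omega}|_{M}$ with $b_{0}=b$; this is $M$-indiscernible and satisfies $b_{i}\ind^{i}_{M}b_{<i}$, hence $b_{i}\ind^{f}_{M}b_{<i}$ by Fact \ref{forkingfacts}(3), so it is an $\ind^{f}$-Morley sequence.

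The heart of the matter is (2)$\Rightarrow$(1). Suppose $\varphi(x;b)$ does \emph{not} Kim-divide over $M$; I want to show that for \emph{every} $\ind^{f}$-Morley sequence $(b_{i})_{i<\omega}$ over $M$ with $b_{0}=b$, the set $\{\varphi(x;b_{i}):i<\omega\}$ is consistent — this is exactly the contrapositive of (2)$\Rightarrow$(1), combined with (1)$\Leftrightarrow$ its negation. Let $a\models\varphi(x;b)$ with $a\ind^{K}_{M}b$ (possible since $\varphi(x;b)$ does not Kim-fork = Kim-divide over $M$, by Proposition \ref{kforkingequalskdividing} and an extension/realization argument as in Proposition \ref{extension}). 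The strategy is to show that any $\ind^{f}$-Morley sequence can be "straightened" into a tree Morley sequence over $M$, or at least compared to one, so that Kim's lemma for tree Morley sequences (Corollary \ref{kimslemmafortms}) applies. More precisely: given the $\ind^{f}$-Morley sequence $I=(b_{i})_{i<\omega}$, I would build a tree Morley sequence $(b'_{i})_{i<\omega}$ over $M$ with $b'_{0}=b_{0}=b$, using Lemma \ref{movingtms} to get one that is $Mb$-indiscernible; then since $a\ind^{K}_{M}b$ we get (Corollary \ref{kimslemmafortms}, using that $\varphi(x;b)$ does not Kim-divide) that $\{\varphi(x;b'_{i}):i<\omega\}$ is consistent. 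The remaining task is to transfer consistency from the tree Morley sequence back to the arbitrary $\ind^{f}$-Morley sequence. For this I would use the independence-theorem machinery exactly as in Lemma \ref{consistentindk}: induct on $n$, showing $\{\varphi(x;b_{i}):i\le n\}$ is non-Kim-forking over $M$, where at each step one uses $b_{n+1}\ind^{f}_{M}b_{\le n}$, hence $b_{n+1}\ind^{K}_{M}b_{\le n}$, together with an automorphism sending $b_{0}$ to $b_{n+1}$ and the independence theorem (Theorem \ref{itthmchar}) to amalgamate a realization of $\{\varphi(x;b_{i}):i\le n\}$ with a realization of $\varphi(x;b_{n+1})$. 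In fact this shows directly, without reference to tree Morley sequences, that $\{\varphi(x;b_{i}):i<\omega\}$ is consistent for any $\ind^{K}$-Morley sequence — which is just a restatement of Lemma \ref{consistentindk} — and since $\ind^{f}$-Morley sequences are $\ind^{K}$-Morley sequences, (2)$\Rightarrow$(1) follows, \emph{provided} one has established that $\ind^{f}$-Morley sequences witness non-Kim-dividing, i.e.\ that Lemma \ref{consistentindk} is being applied to the right class of sequences.

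So the cleanest route is: the implications (1)$\Rightarrow$(3)$\Rightarrow$(2) are routine (indiscernibility plus existence of $\ind^{f}$-Morley sequences via invariant types plus Fact \ref{forkingfacts}(3)), and for (2)$\Rightarrow$(1) I argue the contrapositive: if $\varphi(x;b)$ does not Kim-divide over $M$, pick $a\models\varphi(x;b)$ with $a\ind^{K}_{M}b$, and given any $\ind^{f}$-Morley sequence $(b_{i})_{i<\omega}$ with $b_{0}=b$, observe $b_{i}\ind^{f}_{M}b_{<i}\Rightarrow b_{i}\ind^{K}_{M}b_{<i}$, so it is an $\ind^{K}$-Morley sequence, and apply Lemma \ref{consistentindk} to conclude $\{\varphi(x;b_{i}):i<\omega\}$ is consistent. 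The main obstacle, which is already resolved by the lemmas available, is that the argument genuinely requires the full independence theorem (Theorem \ref{itthmchar}) — this is what powers Lemma \ref{consistentindk} — rather than just Kim's lemma; the subtlety is that an $\ind^{f}$-Morley sequence need not be an invariant Morley sequence and need not obviously be a tree Morley sequence, so one cannot appeal to Theorem \ref{kimslemmaforindk} or Corollary \ref{kimslemmafortms} directly without the amalgamation step. One should also double-check the edge case that an $\ind^{f}$-Morley sequence with prescribed $b_{0}$ exists and that the relevant transitivity/indiscernibility bookkeeping in the induction goes through, but these are routine.
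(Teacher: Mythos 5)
There is a genuine gap, and it is a logical one: your claimed proof of (1)$\Rightarrow$(3) takes the contrapositive in the wrong direction. What Lemma \ref{consistentindk} gives you is ``if $\varphi(x;b)$ does \emph{not} Kim-divide over $M$, then $\{\varphi(x;b_{i}) : i<\omega\}$ is consistent along every $\ind^{K}$-Morley sequence''; the contrapositive of that statement is ``if the set is inconsistent along \emph{some} $\ind^{f}$-Morley sequence (which is an $\ind^{K}$-Morley sequence), then $\varphi(x;b)$ Kim-divides'' — i.e.\ (2)$\Rightarrow$(1), which you correctly derive again later. It is \emph{not} (1)$\Rightarrow$(3), whose contrapositive would be ``if the set is consistent along some $\ind^{f}$-Morley sequence, then $\varphi(x;b)$ does not Kim-divide.'' That implication is precisely the substantive content of the theorem (the assertion that every $\ind^{f}$-Morley sequence is a \emph{witness} to Kim-dividing), and nothing in your proposal addresses it: your assertion that ``the only substantive implication is (2)$\Rightarrow$(1)'' has it backwards, since (2)$\Rightarrow$(1) is an immediate consequence of Lemma \ref{consistentindk}, while (1)$\Rightarrow$(3) cannot be obtained from it, nor from Kim's lemma for invariant or tree Morley sequences directly — as you yourself note, an $\ind^{f}$-Morley sequence need not be generated by an invariant type and is not yet known to be a tree Morley sequence (that fact, Corollary \ref{forkingimpliestree}, is deduced \emph{from} this theorem in the paper, so invoking anything like it here would be circular).

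The paper's proof of (1)$\Rightarrow$(3) is a genuinely new argument, not a rearrangement of the amalgamation induction from Lemma \ref{consistentindk}. Assuming $\varphi(x;b)$ Kim-divides over $M$ but some $\ind^{f}$-Morley sequence $(b_{i})$ has $\{\varphi(x;b_{i})\}$ consistent, one builds a copy $(b'_{i})$ of the sequence together with an increasing elementary chain $(N_{i})$ with $b'_{n}\ind^{f}_{M}N_{n}$ and $b'_{n}\in N_{n+1}$; using base monotonicity and left transitivity of $\ind^{f}$ one gets $(b'_{i})_{i\geq n}\ind^{f}_{M}N_{n}$, hence a Morley sequence $(c_{i})$ over $N=\bigcup N_{i}$ in an ultrafilter average type with $(c_{i})_{i<\omega}\ind^{f}_{M}N$; one then uses an indiscernible $M$-invariant type, Lemma \ref{pathtype}, symmetry, and extraction of a mutually indiscernible array over $N$ to produce a sequence $(d_{i,0})$ that is an $\ind^{K}$-Morley sequence over the \emph{larger} model $N$, along which $\varphi$ does not Kim-divide over $N$ yet $\{\varphi(x;d_{i,0})\}$ is inconsistent, contradicting Lemma \ref{consistentindk} applied over $N$. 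Some argument of this kind (transferring the problem to a larger base where the sequence becomes invariant-like) is what your proposal is missing; without it the equivalence reduces to the triangle (2)$\Rightarrow$(1) and (3)$\Rightarrow$(2), which does not close.
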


\begin{proof}
(3)$\implies$(2) is immediate, as a Morley sequence in a global $M$-invariant type is, in particular, an $\ind^{f}$-Morley sequence and such sequences always exist.  

(2)$\implies$(1) follows from Lemma \ref{consistentindk}, as an $\ind^{f}$-Morley sequence is an $\ind^{K}$-Morley sequence.

Now we show (1)$\implies$(3).  Suppose not\textemdash assume that $\varphi(x;b)$ is a formula which Kim-divides over $M$, but there is some $\ind^{f}$-Morley sequence over $M$ with $b_{0} = b$ so that $\{\varphi(x;b_{i}) : i < \omega\}$ is consistent.  By induction on $n$, we will construct a sequence $(b'_{i})_{i \leq n}$ and an elementary chain $(N_{i})_{i \leq n}$ so that 
\begin{enumerate}
\item For all $n < \omega$, $b_{0}\ldots b_{n} \equiv_{M} b'_{0} \ldots b'_{n}$.
\item For all $n < \omega$, $M \prec N_{n} \prec N_{n+1} \prec \mathbb{M}$.
\item For all $n < \omega$, $b'_{n} \ind^{f}_{M} N_{n}$.
\item For all $n < \omega$, $b'_{n} \in N_{n+1}$.  
\end{enumerate}
For the $n = 0$ case, set $b'_{0} = b_{0}$ and $N_{0} = M$.  Now suppose we are given $(N_{i})_{i \leq n}$ and $(b'_{i})_{i \leq n}$.  Let $N_{n+1}$ be an arbitrary (small) elementary extension of $N_{n}$ which contains $b'_{n}$.  By invariance and extension of $\ind^{f}$, we may choose some $b'_{n+1}$ so that $b'_{0}\ldots b'_{n+1} \equiv_{M} b_{0}\ldots b_{n+1}$ and $b'_{n+1} \ind^{f}_{M} N_{n+1}$.  This completes the recursion.  

Set $N = \bigcup_{i < \omega} N_{i}$.  

\textbf{Claim 1}:  For all $n < \omega$, $(b'_{i})_{i \geq n} \ind^{f}_{M} N_{n}$.  

\emph{Proof of claim}:  Fix $n$.  We will argue by induction on $k$ that $b'_{n}\ldots b'_{n+k} \ind^{f}_{M} N_{n}$.  For $k = 0$, this is by construction.  Assume it has been proven for $k$.  Note that $b'_{n+k+1} \ind^{f}_{M} N_{n+k+1}$.  Now $N_{n}$ and $(b'_{i})_{i \leq n+k}$ are contained in $N_{n+k+1}$ so, in particular, we have $b'_{n+k+1} \ind^{f}_{M} N_{n}b'_{0}\ldots b'_{n+k}$.  By base monotonicity, we have 
$$
b'_{n+k+1} \ind^{f}_{Mb'_{0}\ldots b'_{n+k}} N_{n}.
$$
This, together with the induction hypothesis, implies 
$$
b'_{0}\ldots b'_{n+k+1} \ind^{f}_{M} N_{n}
$$
by left-transitivity.  The claim follows by finite character.  \qed

Let $\mathcal{D}$ be any non-principal ultrafilter on $\{b'_{i} : i < \omega\}$ and $(c_{i})_{i < \omega}$ be a sequence chosen so that $c_{i} \models \text{Av}(\mathcal{D},Nc_{<i})$, i.e. a Morley sequence over $N$ in the global $(b'_{i})_{i < \omega}$-invariant type $\text{Av}(\mathcal{D},\mathbb{M})$.

\textbf{Claim 2}:  $(c_{i})_{i < \omega} \ind^{f}_{M} N$.  

\emph{Proof of claim}:  Suppose not.  Then by finite character, there is $l$ so that $(c_{i})_{i < l} \nind^{f}_{M} N$ so we choose some $\varphi(x_{0}, \ldots, x_{l-1};d) \in \text{tp}(c_{0},\ldots, c_{l-1}/N)$ which forks over $M$.  Choose $n$ so that $d \in N_{n}$.  By definition of average type, we may find $i_{0} > \ldots > i_{l-1} > n$ so that $\mathbb{M} \models \varphi(b'_{i_{0}}, \ldots, b'_{i_{l-1}};d)$.  Then $(b'_{i})_{i \geq n} \nind^{f}_{M} N_{n}$, contradicting Claim 1.  \qed

Let $q \supseteq \text{tp}((c_{i})_{i < \omega}/M)$ be a global $M$-invariant and indiscernible type, as in Definition \ref{indiscernibletype}.  Let $\langle (c_{k,i})_{i < \omega } : k < \omega \rangle$ be a Morley sequence over $M$ in $q$ with $c_{0,i} = c_{i}$ for all $i < \omega$.  By Lemma \ref{pathtype}, $(\overline{c}_{k})_{k < \omega}$ is a mutually-indiscernible array over $M$.  By Claim 2, we know $\overline{c}_{0} \ind^{f}_{M} N$ hence $\overline{c}_{0} \ind^{K}_{M} N$, so we may assume the sequence $(\overline{c}_{k})_{k < \omega}$ is $N$-indiscernible by symmetry.  We know that $\{\varphi(x;b_{i}) : i < \omega\}$ is consistent so $\{\varphi(x;b'_{i}) : i < \omega\}$ is consistent, and therefore $\{\varphi(x;c_{0,i}) : i < \omega\}$ is consistent.  The sequence $(c_{0,i})_{i < \omega}$ is also an $N$-invariant Morley sequence so $\varphi(x;c_{0,0})$ does not Kim-divide over $N$.  But as $c_{0,0} \equiv_{M} b$, $(c_{i,0})_{i < \omega}$ is an $M$-invariant Morley sequence over $M$, and $\varphi(x;b)$ Kim-divides over $M$, we know that $\{\varphi(x;c_{i,0}) : i < \omega\}$ is inconsistent.  

Let $(\overline{d}_{i})_{i < \omega}$ be a mutually indiscernible array over $N$, locally based on $(\overline{c}_{i})_{i < \omega}$ (exists by \cite[Lemma 1.2]{ChernikovNTP2}), with $(\overline{d}_{i})_{i < \omega}$ an $N$-indiscernible sequence.  By Lemma \ref{pathtype}, we have $(d_{i,0})_{i < \omega} \equiv_{M} (c_{i,0})_{i < \omega}$.  Also, because $(\overline{c}_{i})_{i < \omega}$ was taken to be $N$-indiscernible and $\overline{c}_{0}$ was an $N$-invariant Morley sequence, we know each $\overline{c}_{i}$ is an $N$-invariant Morley sequence, and therefore each $\overline{d}_{i}$ is an $N$-invariant Morley sequence.  By choice of the array, $\{\varphi(x;d_{i,j}) : j < \omega\}$ is consistent for all $i$, so $\varphi(x;d_{i,0})$ does not Kim-divide over $N$.  Also, we have $\{\varphi(x;d_{i,0}) : i < \omega\}$ is inconsistent.  Thus, to derive a contradiction, it suffices by Lemma \ref{consistentindk} to establish the following:

\textbf{Claim 3}:  $(d_{i,0})_{i < \omega}$ is an $\ind^{K}$-Morley sequence over $N$.  

\emph{Proof of claim}:  As the $(d_{i,j})_{i,j < \omega}$ forms a mutually indiscernible array over $N$, we know that for each $i < \omega$, $\overline{d}_{i}$ is an $N\overline{d}_{<i}$-indiscernible sequence.  But it is also an $N$-invariant Morley sequence so $\overline{d}_{<i} \ind^{K}_{N} d_{i,0}$.  By symmetry, this yields in particular that $d_{i,0} \ind^{K}_{N} d_{0,0} \ldots d_{i-1,0}$.  This proves the claim and completes the proof.  \end{proof}

\subsection{Witnesses}  

\begin{defn}
Suppose $M$ is a model and $(a_{i})_{i < \omega}$ is an $M$-indiscernible sequence.
\begin{enumerate}
\item Say $(a_{i})_{i < \omega}$ is a \emph{witness} for Kim-dividing over $M$ if, whenever $\varphi(x;a_{0})$ Kim-divides over $M$, $\{\varphi(x;a_{i}) : i <\omega\}$ is inconsistent.
\item Say $(a_{i})_{i < \omega}$ is a \emph{strong witness} to Kim-dividing over $M$ if, for all $n$, the sequence $\langle (a_{n \cdot i}, a_{n \cdot i + 1}, \ldots, a_{n \cdot i + n-1}) : i < \omega \rangle$ is a witness to Kim-dividing over $M$.  
\end{enumerate}
\end{defn}

Corollary \ref{kimslemmafortms} and Lemma \ref{concatenation} show that tree Morley sequences are strong witnesses for Kim-dividing.  The following proposition shows the converse, giving a characterization of strong witnesses as exactly the tree Morley sequences.  

\begin{prop} \label{witnesschar}
Suppose $T$ is NSOP$_{1}$ and $M \models T$.  Then $(a_{i})_{i < \omega}$ is a strong witness for Kim-dividing over $M$ if and only if $(a_{i})_{i < \omega}$ is a tree Morley sequence over $M$.  
\end{prop}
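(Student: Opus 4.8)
Only the implication ``strong witness $\Rightarrow$ tree Morley sequence'' needs proof: the converse is immediate from Corollary~\ref{kimslemmafortms} together with Lemma~\ref{concatenation}, since if $(a_i)_{i<\omega}$ is a tree Morley sequence over $M$ then so is each block sequence $\langle(a_{n\cdot i},\ldots,a_{n\cdot i+n-1}):i<\omega\rangle$, and Kim's lemma for tree Morley sequences shows each such block sequence witnesses Kim-dividing over $M$. So let $(a_i)_{i<\omega}$ be a strong witness for Kim-dividing over $M$; the task is to exhibit it as a path in a Morley tree over $M$.

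The plan is a recursive construction in the style of Lemma~\ref{treeexistence}, the Zig-zag Lemma~\ref{zigzag}, and the corollary to Theorem~\ref{itthmchar}. I build by recursion on ordinals $\alpha\ge 1$ a coherent system of trees $(c^{\alpha}_{\eta})_{\eta\in\mathcal{T}_{\alpha}}$, spread out and $s$-indiscernible over $M$, with $c^{\beta}_{\eta}=c^{\alpha}_{\iota_{\beta\alpha}(\eta)}$ for $\beta<\alpha$, maintaining throughout the extra requirement that the all-zeroes path realizes over $M$ the type of the corresponding initial segment of $(a_i)_{i<\omega}$ (in the order dictated by the direct-limit structure of the $\mathcal{T}_{\alpha}$, where passing from $\mathcal{T}_{\alpha}$ to $\mathcal{T}_{\alpha+1}$ pushes the old root into the path). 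At a successor stage one takes an $M$-invariant Morley sequence $\langle(c^{\alpha}_{\eta,j})_{\eta}:j<\omega\rangle$ of copies of the current tree, then picks a new root $c_{\ast}$ which is Kim-independent over $M$ from the whole array of copies \emph{and} which completes the all-zeroes path to the type over $M$ of the next initial segment of $(a_i)$; one then uses the chain condition (Proposition~\ref{chaincondition}) to make the array $Mc_{\ast}$-indiscernible, extracts an $s$-indiscernible tree locally based on the resulting configuration, and realigns by an automorphism, exactly as in Lemma~\ref{treeexistence}. Limit stages are taken by direct limits, using the coherence condition. Finally, for $\kappa$ sufficiently large, Lemma~\ref{morleyextraction} yields a Morley tree $(b_{\eta})_{\eta\in\mathcal{T}_{\omega}}$ over $M$, based on $(c^{\kappa}_{\eta})_{\eta\in\mathcal{T}_{\kappa}}$, whose all-zeroes path has the same type over $M$ as $(a_i)_{i<\omega}$; an automorphism identifies them, so $(a_i)_{i<\omega}$ is a tree Morley sequence over $M$.

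The main obstacle is exactly the successor stage: producing a root $c_{\ast}$ that is simultaneously Kim-independent over $M$ from the array of copies \emph{and} completes the path to the right $M$-type, which amounts to showing that the relevant partial type does not Kim-fork over $M$. This is the only place where it matters that $(a_i)_{i<\omega}$ is a \emph{strong} witness and not merely an $M$-indiscernible sequence or a witness: the needed consistency and non-Kim-forking-over-$M$ should be derived from the hypothesis that every block sequence $\langle(a_{n\cdot i},\ldots,a_{n\cdot i+n-1}):i<\omega\rangle$ witnesses Kim-dividing over $M$, combined with Kim-forking $=$ Kim-dividing (Proposition~\ref{kforkingequalskdividing}), the independence theorem (Theorem~\ref{itthmchar}), and the consistency lemma for $\ind^{K}$-Morley sequences (Lemma~\ref{consistentindk}); as a byproduct this re-establishes that a strong witness over $M$ is in particular a $\ind^{K}$-Morley sequence over $M$. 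I would isolate this consistency statement as a preliminary lemma --- a version of Lemma~\ref{consistenttree} in which the tree indices carry a copy of an initial segment of a strong witness rather than a fixed tuple --- and then feed it into the recursion above.
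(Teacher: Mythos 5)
You handle the easy direction exactly as the paper does, and your overall architecture (recursive construction of spread-out $s$-indiscernible trees, then extraction of a Morley tree by Lemma \ref{morleyextraction}) is the right one; but there is a genuine gap at precisely the successor step that you flag and then defer. Your plan reduces that step to finding a root $c_{*}$ which realizes, over the all-zeroes path of copy $0$ (identified with $a_{\leq n}$ via your invariant), the type of $a_{n+1}$, and which satisfies $c_{*} \ind^{K}_{M} (\text{array})$. Modulo routine uses of extension and the chain condition, the existence of such a $c_{*}$ is equivalent, by Kim-forking $=$ Kim-dividing, to the statement $a_{n+1} \ind^{K}_{M} a_{\leq n}$, i.e.\ to the claim that a strong witness is an $\ind^{K}$-Morley sequence. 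None of the tools you cite delivers this. The strong-witness hypothesis only produces \emph{inconsistency} statements: if $\varphi(x;a_{\leq n}) \in \text{tp}(a_{n+1}/Ma_{\leq n})$ Kim-divides over $M$, then the blocks of length $n+1$ give that $\{\varphi(x;a_{(n+1)i},\ldots,a_{(n+1)i+n}) : i < \omega\}$ is inconsistent --- which is no contradiction, since each instance has its own realization $a_{(n+1)(i+1)}$ and there is no companion consistency statement to play this off against. Lemma \ref{consistentindk} and Theorem \ref{itthmchar} only apply once you already know the sequence is $\ind^{K}$-Morley, which is exactly what is at stake; in the paper this fact is available only as a \emph{consequence} of the proposition you are proving.

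The paper's proof uses the strong-witness hypothesis through a different mechanism, which also repairs a second defect in your sketch. It stretches the sequence to length $\kappa$ and carries the actual tail $I_{>\alpha}$ through the recursion, maintaining that the tree built so far has the genuine elements $a_{i}$ on its all-zeroes path, is $s$-indiscernible over $MI_{>\alpha}$, and that $I_{>\alpha}$ is indiscernible over the tree. The hypothesis enters only via the observation that, since $I_{>\alpha}$ is a strong witness \emph{and} is indiscernible over the tree, any Kim-dividing formula over a block of $I_{>\alpha}$ realized by the tree would be realized along all blocks, so $(a^{\alpha}_{\eta})_{\eta} \ind^{K}_{M} I_{>\alpha}$; the new root is then not produced by an amalgamation argument at all --- it is (a conjugate of) the next element $a_{\alpha+1}$ of the sequence, obtained by replacing the tail with a copy $I'_{>\alpha}$ indiscernible over the Morley array of copies of the tree, extracting $s$-indiscernibility over $MI'_{>\alpha}$, and realigning. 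Your version of the realignment is not justified as stated: controlling $c_{*}$ only over the all-zeroes path of copy $0$ is not enough, because local basedness matches only quantifier-free tree-types, so the extracted tree's path may inherit the type of some other chain through some other copy, and while the chain condition makes the copies alike over $Mc_{*}$, the various chains \emph{within} a copy need not have the same type over $Mc_{*}$. In Lemma \ref{treeexistence} the analogous step works because the invariant there concerns only pairs (node, top node) and the new root copies the old root's type over all the subtrees; your invariant concerns whole initial segments terminating in the new root, and it is exactly the paper's device of keeping the tail indiscernible over the array and extracting over $MI'_{>\alpha}$ that makes the realignment legitimate. So the skeleton of your construction is salvageable, but both the use of the strong-witness hypothesis and the preservation of the path type through the extraction steps --- the heart of the proof --- are missing.
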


\begin{proof}
If $(a_{i})_{i < \omega}$ is a tree Morley sequence, then $(a_{n \cdot i}, a_{n \cdot i + 1}, \ldots, a_{n \cdot i + (n-1)})_{i < \omega}$ is also a tree Morley sequence over $M$ by Lemma \ref{concatenation}.  It follows that $(a_{i})_{i < \omega}$ is a strong witness to Kim-dividing by Corollary \ref{kimslemmafortms}.

For the other direction, suppose $(a_{i})_{i < \omega}$ is a strong witness to Kim-dividing over $M$.  Given an arbitrary cardinal $\kappa$, we may, by compactness, stretch the sequence to $I = (a_{i})_{i \in \kappa \setminus \text{lim}(\kappa)}$ which is still a strong witness to Kim-dividing over $M$.  By recursion on $\alpha < \kappa$, we will construct trees $(a^{\alpha}_{\eta})_{\eta \in \mathcal{T}_{\alpha}}$ so that 
\begin{enumerate}
\item For all $i \in \alpha \setminus \text{lim}(\alpha)$, $a^{\alpha}_{\zeta_{i}} = a_{i}$ and also $a^{\alpha}_{\emptyset} = a_{\alpha}$ for $\alpha$ successor.
\item $I_{>\alpha}$ is $M(a^{\alpha}_{\eta})_{\eta \in \mathcal{T}_{\alpha}}$-indiscernible.
\item $(a^{\alpha}_{\eta})_{\eta \in \mathcal{T}_{\alpha}}$ is spread out over $M$ and $s$-indiscernible over $MI_{>\alpha}$.
\item If $\alpha < \beta$, then $a^{\alpha}_{\eta} = a^{\beta}_{\iota_{\alpha \beta}(\eta)}$ for all $\eta \in \mathcal{T}_{\alpha}$.  
\end{enumerate}
For the case $\alpha = 0$, put $a^{0}_{\emptyset} = a_{0}$.  This satisfies the demands.  Suppose $(a^{\beta}_{\eta})_{\eta \in \mathcal{T}_{\beta}}$ has been defined for all $\beta \leq \alpha$.  As $I_{> \alpha} = (a_{i})_{i > \alpha}$ is also a strong witness to Kim-dividing over $M$ and is $M(a^{\alpha}_{\eta})_{\eta \in \mathcal{T}_{\alpha}}$-indiscernible, we have 
$$
(a^{\alpha}_{\eta})_{\eta \in \mathcal{T}_{\alpha}} \ind^{K}_{M} I_{> \alpha}.
$$
Let $J = \langle (a^{\alpha}_{\eta,i})_{\eta \in \mathcal{T}_{\alpha}} : i < \omega \rangle$ be a Morley sequence in an $M$-invariant type with $a^{\alpha}_{\eta,0} = a^{\alpha}_{\eta}$ for all $\eta \in \mathcal{T}_{\alpha}$.  By symmetry, $I_{> \alpha} \ind^{K}_{M} (a^{\alpha}_{\eta})_{\eta \in \mathcal{T}_{\alpha}}$ so we may assume $J$ is $MI_{>\alpha}$-indiscernible.  Let $I'_{>\alpha} = \langle a'_{i} : i \in \kappa \setminus (\lim(\kappa) \cup \alpha) \rangle$ be an $MJ$-indiscernible sequence locally based on $I_{>\alpha}$.  Note that $J$ is $MI'_{>\alpha}$-indiscernible as well and $I'_{>\alpha}(a^{\alpha}_{\eta,i})_{\eta \in \mathcal{T}_{\alpha+1}} \equiv_{M} I_{>\alpha}(a^{\alpha}_{\eta})_{\eta \in \mathcal{T}_{\alpha}}$ for all $i < \omega$.  

Define the tree $(c_{\eta})_{\eta \in \mathcal{T}_{\alpha+1}}$ by $c_{\emptyset} = a'_{\alpha+1}$ and $c_{\langle i \rangle \frown \eta} = a^{\alpha}_{\eta,i}$ for all $i < \omega$ and $\eta \in \mathcal{T}_{\alpha}$.  Let $(c'_{\eta})_{\eta \in \mathcal{T}_{\alpha+1}}$ be an a tree $s$-indiscernible over $MI'_{>\alpha}$ locally based on $(c_{\eta})_{\eta \in \mathcal{T}_{\alpha}}$.  The sequence $I'_{>\alpha}$ is $M(c'_{\unrhd \langle i \rangle})_{i < \omega}$-indiscernible and, by the construction of $(c_{\eta})_{\eta \in \mathcal{T}_{\alpha}}$, we have also $I'_{>\alpha}(c'_{\langle 0 \rangle \frown \eta})_{\eta \in \mathcal{T}_{\alpha}} \equiv_{M} I_{>\alpha}(a^{\alpha}_{\eta})_{\eta \in \mathcal{T}_{\alpha}}$.  Let $\sigma \in \text{Aut}(\mathbb{M}/M)$ be an automorphism with $\sigma(I'_{>\alpha}(c'_{\langle 0 \rangle \frown \eta})_{\eta \in \mathcal{T}_{\alpha}})= I_{>\alpha}(a^{\alpha}_{\eta})_{\eta \in \mathcal{T}_{\alpha}}$ and define the tree $(a^{\alpha+1}_{\eta})_{\eta \in \mathcal{T}_{\alpha+1}}$ by setting $a^{\alpha+1}_{\eta} = \sigma(c'_{\eta})$ for all $\eta \in \mathcal{T}_{\alpha+1}$.  Note in particular, this definition gives $a^{\alpha+1}_{\iota_{\alpha \alpha+1}(\eta)} = a^{\alpha+1}_{0 \frown \eta} = a^{\alpha}_{\eta}$ for all $\eta \in \mathcal{T}_{\alpha}$.  The tree we just constructed satisfies the demands, completing the successor step.  %There's an issue with the successor vs limit enumeration here

Now suppose given $(a^{\beta}_{\eta})_{\eta \in \mathcal{T}_{\beta}}$ for all $\beta < \delta$, where $\delta$ is a limit.  Define $(a^{\delta}_{\eta})_{\eta \in \mathcal{T}_{\delta}}$ by setting $a^{\delta}_{\iota_{\alpha \delta}(\eta)} = a^{\alpha}_{\eta}$ for all $\alpha < \delta$ and $\eta \in \mathcal{T}_{\alpha}$.  Condition (3) guarantees that this is well-defined.  

Taking $\kappa$ to be sufficiently large, we may extract a Morley tree from the tree we just constructed by Lemma \ref{morleyextraction} -- in particular, we may obtain a Morley tree $(b_{\eta})_{\eta \in \mathcal{T}_{\omega}}$ so that $(b_{\zeta_{i}})_{i < \omega} \equiv_{M} (a_{i})_{i < \omega}$.  This shows that $(a_{i})_{i < \omega}$ is a tree Morley sequence over $M$.  
\end{proof}

\begin{cor} \label{forkingimpliestree}
Suppose $T$ is NSOP$_{1}$ and $M \models T$.  An $\ind^{f}$-Morley sequence over $M$ is a tree Morley sequence.  
\end{cor}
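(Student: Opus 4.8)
The plan is to deduce this from two results already in hand: the characterization of strong witnesses as exactly the tree Morley sequences (Proposition~\ref{witnesschar}) and Kim's lemma for $\ind^{f}$-Morley sequences (Theorem~\ref{kimslemmaforforking}). So, given an $\ind^{f}$-Morley sequence $\langle b_{i} : i < \omega \rangle$ over $M$, it suffices to show that it is a \emph{strong} witness for Kim-dividing over $M$; then Proposition~\ref{witnesschar} immediately identifies it as a tree Morley sequence over $M$.

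To verify the strong witness property, I would fix $n \geq 1$, set $d_{i} = (b_{ni}, b_{ni+1}, \ldots, b_{ni+n-1})$ for $i < \omega$, and check that $\langle d_{i} : i < \omega \rangle$ is itself an $\ind^{f}$-Morley sequence over $M$. It is $M$-indiscernible since $\langle b_{i} : i < \omega \rangle$ is, so the only point is that $d_{i} \ind^{f}_{M} d_{<i}$, and in fact I would prove the stronger statement $d_{i} \ind^{f}_{M} b_{<ni}$ by a short induction on the block-length using base monotonicity and left transitivity of $\ind^{f}$ (Fact~\ref{forkingfacts}(2)). Concretely, assuming $b_{ni} \cdots b_{ni+k-1} \ind^{f}_{M} b_{<ni}$, one rewrites $b_{ni+k} \ind^{f}_{M} b_{<(ni+k)}$ as $b_{ni+k} \ind^{f}_{M} (b_{ni}\cdots b_{ni+k-1})(b_{<ni})$, applies base monotonicity to get $b_{ni+k} \ind^{f}_{M b_{ni}\cdots b_{ni+k-1}} b_{<ni}$, and then left transitivity to conclude $b_{ni}\cdots b_{ni+k} \ind^{f}_{M} b_{<ni}$; iterating up to $k = n-1$ and discarding the part of $b_{<ni}$ not in $d_{<i}$ by monotonicity in the right argument gives $d_{i} \ind^{f}_{M} d_{<i}$.

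Finally, if $\varphi(x;d_{0})$ Kim-divides over $M$, then since $\langle d_{i} : i < \omega \rangle$ is an $\ind^{f}$-Morley sequence over $M$ beginning with $d_{0}$, Theorem~\ref{kimslemmaforforking} forces $\{\varphi(x;d_{i}) : i < \omega\}$ to be inconsistent; thus $\langle d_{i} : i < \omega \rangle$ witnesses Kim-dividing over $M$. As $n$ was arbitrary, $\langle b_{i} : i < \omega \rangle$ is a strong witness to Kim-dividing over $M$, and Proposition~\ref{witnesschar} concludes the proof. I do not anticipate a real obstacle here; the only mild care is in the bookkeeping of the induction showing that a block of an $\ind^{f}$-Morley sequence is again an $\ind^{f}$-Morley sequence, which is routine given the properties of non-forking listed in Fact~\ref{forkingfacts}.
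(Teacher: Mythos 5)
Your proposal is correct and follows essentially the same route as the paper: the paper likewise shows (via the base-monotonicity/left-transitivity induction of Claim 1 in the proof of Theorem \ref{kimslemmaforforking}) that the blocks of an $\ind^{f}$-Morley sequence again form an $\ind^{f}$-Morley sequence, applies Theorem \ref{kimslemmaforforking} to see each block sequence witnesses Kim-dividing, and then invokes Proposition \ref{witnesschar}. The only cosmetic difference is that the paper phrases the independence as $a_{>n}\ind^{f}_{M}a_{\leq n}$ rather than your block-by-block version, which amounts to the same computation.
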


\begin{proof}
Suppose $(a_{i})_{i < \omega}$ is an $\ind^{f}$-Morley sequence over $M$.  Arguing as in Claim 1 of the proof of Theorem \ref{kimslemmaforforking}, for all $n < \omega$, $a_{>n} \ind^{f}_{M} a_{\leq n}$.  Therefore, 

\noindent $\langle (a_{n \cdot i}, a_{n \cdot i + 1}, \ldots, a_{n \cdot i + n-1}) : i < \omega \rangle$ is an $\ind^{f}$-Morley sequence over $M$, hence a witness to Kim-dividing over $M$ by Theorem \ref{kimslemmaforforking}.  This shows $(a_{i})_{i < \omega}$ is a strong witness to Kim-dividing over $M$.  By Proposition \ref{witnesschar}, $(a_{i})_{i < \omega}$ is a tree Morley sequence over $M$.  
\end{proof}

In any theory, if $(a_{i})_{i < \omega}$ is an $\ind^{f}$-Morley sequence over $A$, then, as the proof of Corollary \ref{forkingimpliestree} shows, that $a_{>n} \ind^{f}_{A} a_{\leq n}$ for all $n < \omega$.  As base monotonicity and left-transitivity do not necessarily hold for $\ind^{K}$, we give a Morley sequence with this stronger behavior a name:  

\begin{defn}
Say the $M$-indiscernible sequence $(a_{i})_{i < \omega}$ is a \emph{total} $\ind^{K}$\emph{-Morley sequence} if $a_{> n} \ind^{K}_{M} a_{\leq n}$ for all $n < \omega$.  
\end{defn}

\begin{quest}
Suppose $T$ is NSOP$_{1}$, $M \models T$, and $I = (a_{i})_{i < \omega}$ is a total $\ind^{K}$-Morley sequence over $M$.  Is $I$ a tree Morley sequence over $M$?
\end{quest}

\section{Characterizing NSOP$_{1}$ and Simple Theories}\label{mtsection}

\subsection{The Main Theorem}

Before continuing with the rest of the paper, we pause to take stock of what has been shown:

\begin{thm}\label{mainthm}
The following are equivalent for the complete theory $T$:
\begin{enumerate}
\item $T$ is NSOP$_{1}$
\item Ultrafilter independence of higher formulas:  for every model $M \models T$, and ultrafilters $\mathcal{D}$ and $\mathcal{E}$ on $M$ with $\text{Av}(\mathcal{D},M) = \text{Av}(\mathcal{E},M)$, $(\varphi,M, \mathcal{D})$ is higher if and only if $(\varphi, M,\mathcal{E})$ is higher 
\item Kim's lemma for Kim-dividing:  For every model $M \models T$ and $\varphi(x;b)$, if $\varphi(x;y)$ $q$-divides for some global $M$-invariant $q \supseteq \text{tp}(b/M)$, then $\varphi(x;y)$ $q$-divides for \emph{every} global $M$-invariant $q \supseteq \text{tp}(b/M)$.  
\item Local character:  for some infinite cardinal $\kappa$, there cannot be a sequence $\langle N_{i},\varphi_{i}\left(x,y_{i}\right),c_{i} :i<\kappa \rangle$
such that $\langle N_{i} :i<\kappa \rangle$ is an increasing continuous
sequence of models of $T$, $\varphi_{i}(x,y_{i})$ is
a formula over $N_{i}$, $c_{i}\in N_{i+1}$, such that $\varphi_{i}(x,c_{i})$
Kim-forks over $N_{i}$ and $\{\varphi(x,c_{i}):i<\kappa\}$
is consistent. 
\item Symmetry over models:  for every $M \models T$, then $a \ind^{K}_{M} b$ if and only if $b \ind^{K}_{M} a$.
\item Independence theorem over models:  if $M \models T$, $a \equiv_{M} a'$, $a \ind^{K}_{M} b$, $a' \ind^{K}_{M} c$, and $b \ind^{K}_{M} c$, then there is $a''$ with $a''\equiv_{Mb} a$, $a'' \equiv_{Mc} a'$ and $a'' \ind^{K}_{M} bc$. 
\end{enumerate}
\end{thm}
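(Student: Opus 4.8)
The plan is to prove Theorem \ref{mainthm} purely by assembling the characterizations established in the preceding sections; no genuinely new argument is required, so the ``proof'' is essentially a cycle of citations routed through clause (1). First I would observe that (1) $\iff$ (2) $\iff$ (3) is exactly Theorem \ref{kimslemmaforindk}: the implication (1) $\Rightarrow$ (3) is the contrapositive of Proposition \ref{qdiv} (a formula $q$-dividing but not $r$-dividing for $M$-invariant $q,r$ extending the same type produces, via $(q\otimes r)^{\otimes\mathbb{Z}}$, an array of the shape in Proposition \ref{arrayequivalent} and hence SOP$_1$); (2) $\Rightarrow$ (1) is the contrapositive of Proposition \ref{higher} (SOP$_1$, using Proposition \ref{arrayequivalent} and a Skolemization, yields two non-principal ultrafilters on a model with the same average type, one a higher-formula witness and one not); and (3) $\Rightarrow$ (2) is immediate since every type finitely satisfiable in $M$ is $M$-invariant.

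Next I would record (1) $\iff$ (4): this is precisely Corollary \ref{boundedweight}, whose forward direction comes from the argument of Theorem \ref{localcharacter} run with $\kappa=(2^{|T|})^{+}$, and whose reverse direction is the Skolemized-tree construction in that corollary's proof — extend an indiscernible sequence witnessing the array form of SOP$_1$ to length $\kappa$, set $N_i=\mathrm{dcl}(\overline{c}_{<i})$, and note $\varphi(x,c_{\delta,0})$ Kim-divides over $N_\delta$ at every limit $\delta$ while $\{\varphi(x,c_{i}):i<\kappa\}$ stays consistent. The one thing worth a sentence of care is checking that the local-character clause as phrased in Theorem \ref{mainthm} is literally the contrapositive of Corollary \ref{boundedweight}(1); it is, so nothing further is owed. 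Finally, (1) $\iff$ (5) is Theorem \ref{symmetrycharthm} and (1) $\iff$ (6) is Theorem \ref{itthmchar}; chaining all of these biconditionals through (1) closes the loop and proves the theorem.

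There is no real obstacle here — the genuine content has already been carried out in Sections 3--5 — so the only ``hard part'' is bookkeeping: confirming that each bulleted clause is stated verbatim as, or is an obvious reformulation of, the statement of the cited result (in particular the higher-formula/$q$-dividing clauses match Theorem \ref{kimslemmaforindk} and the local-character clause matches Corollary \ref{boundedweight}), after which the proof reduces to the single line ``(1) $\Leftrightarrow$ (2) $\Leftrightarrow$ (3) by Theorem \ref{kimslemmaforindk}, (1) $\Leftrightarrow$ (4) by Corollary \ref{boundedweight}, (1) $\Leftrightarrow$ (5) by Theorem \ref{symmetrycharthm}, and (1) $\Leftrightarrow$ (6) by Theorem \ref{itthmchar}.''
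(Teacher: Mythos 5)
Your proposal is correct and is essentially identical to the paper's own proof, which consists of exactly the same four citations: Theorem \ref{kimslemmaforindk} for (1) $\iff$ (2) $\iff$ (3), Corollary \ref{boundedweight} for (1) $\iff$ (4), Theorem \ref{symmetrycharthm} for (1) $\iff$ (5), and Theorem \ref{itthmchar} for (1) $\iff$ (6). The small bookkeeping point you raise about clause (4) versus Corollary \ref{boundedweight}(1) (whose statement adds ``uncountable'' and a size-$<\kappa$ restriction on the models) is likewise passed over silently in the paper, so your treatment matches it.
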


\begin{proof}
(1)$\iff$(2)$\iff$(3) is Theorem \ref{kimslemmaforindk}.  

(1)$\iff$(4) is Corollary \ref{boundedweight}.

(1)$\iff$(5) is Theorem \ref{symmetrycharthm}.  

(1)$\iff$(6) is Theorem \ref{itthmchar}.  
\end{proof}

\subsection{Simplicity within the class of NSOP$_{1}$ theories}

\begin{defn} \cite[Section 6]{ChernikovNTP2} \label{simplecosimpledef} 
Suppose $p(x)$ is a partial type over the set $A$.
\begin{enumerate}
\item We say $p$ is a \emph{simple type} if there is no $\varphi(x;y)$, $(a_{\eta})_{\eta \in \omega^{<\omega}}$ and $k < \omega$ so that $\{\varphi(x;a_{\eta \frown \langle i \rangle}) : i < \omega\}$ is $k$-inconsistent for all $\eta \in \omega^{<\omega}$ and $p(x) \cup \{\varphi(x;a_{\eta | i}) : i < \omega\}$ is consistent for all $\eta \in \omega^{\omega}$.  Equivalently, $p(x)$ is simple if, whenever $B \supseteq A$, $q \in S(B)$, and $p \subseteq q$, then $q$ does not divide over $AB'$ for some $B' \subseteq B$, $|B'| \leq |T|$ (for the definition of dividing, see Definition \ref{forkingandidivdingdef} above).  
\item We say $p(x)$ is a \emph{co-simple type} if there is no formula $\varphi(x;y) \in L(A)$ for which there exists $(a_{\eta})_{\eta \in \omega^{<\omega}}$ and $k < \omega$ so that $\{\varphi(x;a_{\eta \frown \langle i \rangle}) : i < \omega\}$ is $k$-inconsistent for all $\eta \in \omega^{<\omega}$ and $\{\varphi(x;a_{\eta | i}) : i < \omega\}$ is consistent for all $\eta \in \omega^{\omega}$ and moreover $a_{\eta} \models p$ for all $\eta \in \omega^{<\omega}$.  
\end{enumerate}
\end{defn}

\begin{prop}\label{simpletype}
Assume $T$ is NSOP$_{1}$ and let $\pi(x)$ be a partial type over $A$.
\begin{enumerate}
\item  Assume that for any $\varphi(x;a)$ and any model $M \supseteq A$, $\pi(x) \cup \{\varphi(x;a)\}$ divides over $M$ if and only if $\pi \cup \{\varphi(x;a)\}$ Kim-divides over $M$.  Then $\pi(x)$ is a simple type.  
\item Assume that if $M \supseteq A$, then for any $a$ and for any $b \models \pi(x)$, $a \ind^{f}_{M} b$ if and only if $a \ind^{K}_{M} b$.  Then $\pi$ is a co-simple type.
\end{enumerate}
\end{prop}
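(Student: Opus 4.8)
The plan is to prove both parts by contraposition, using the combinatorial characterization of simple/co-simple types together with Kim's lemma for Kim-dividing (Theorem \ref{kimslemmaforindk}) and its consequences. For part (1), suppose $\pi(x)$ is not simple, so there is a formula $\varphi(x;y)$, a tree $(a_\eta)_{\eta\in\omega^{<\omega}}$, and $k<\omega$ witnessing the tree-property-like configuration: $\{\varphi(x;a_{\eta\frown\langle i\rangle}):i<\omega\}$ is $k$-inconsistent for every $\eta$, while $\pi(x)\cup\{\varphi(x;a_{\eta|i}):i<\omega\}$ is consistent for every branch $\eta\in\omega^\omega$. First I would pass to an $s$-indiscernible tree (indexed by $\omega^{<\omega}$, equivalently by $\mathcal T_\omega$) locally based on $(a_\eta)_{\eta\in\omega^{<\omega}}$, using the modeling property (Fact \ref{modeling}); this preserves both the $k$-inconsistency along the immediate-successor sequences and the consistency of $\pi$ together with each branch. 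Then I would build a model $M\supseteq A$ with the tree $M$-indiscernible in a suitable sense; the point is that along the all-zeros branch $\zeta_0=0^\omega$ the sequence $(a_{0^n})_{n<\omega}$ is (the reverse of) a Morley sequence in a finitely satisfiable, hence $M$-invariant, type — here I use that an $s$-indiscernible tree is in particular column-indiscernible, so each $a_{0^n}\ind^u_{M}a_{0^{>n}}$ after naming the right parameters. This makes $\varphi(x;a_{0^0})$ Kim-divide over $M$ (the immediate successors of $\zeta_0$ in the tree give an invariant Morley sequence with $k$-inconsistent $\varphi$-instances), while $\pi(x)\cup\{\varphi(x;a_{0^n}):n<\omega\}$ is consistent because the all-zeros branch witnesses consistency of $\pi$ with a full branch; hence $\pi(x)\cup\{\varphi(x;a_{0^0})\}$ does not divide over $M$. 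This contradicts the hypothesis of (1).

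For part (2), the argument is parallel but uses the co-simple configuration, where additionally every node of the tree realizes $\pi$. Suppose $\pi$ is not co-simple, witnessed by $\varphi(x;y)\in L(A)$, $(a_\eta)_{\eta\in\omega^{<\omega}}$ with $a_\eta\models\pi$ for all $\eta$, and $k<\omega$. As before, extract an $s$-indiscernible tree locally based on this one — crucially, the modeling property preserves the types of the individual nodes, so we may keep $a_\eta\models\pi$ throughout — and build a model $M\supseteq A$. Set $b=a_{0^0}$ (so $b\models\pi$) and let $a$ realize $\pi(x)\cup\{\varphi(x;a_{0^n}):n<\omega\}$ along the all-zeros branch. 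On one hand, $(a_{0^n})_{n<\omega}$ forms (the reverse of) an $M$-invariant Morley sequence in $\mathrm{tp}(b/M)$, and $\varphi(x;b)$ Kim-divides over $M$ via the immediate successors of $\zeta_0$; since $\models\varphi(a;b)$, this shows $a\nind^K_M b$. On the other hand, I would arrange — using base monotonicity of $\ind^f$ and the fact that the all-zeros branch is an $\ind^f$-Morley-type sequence over $M$ (each node $\ind^u_M$ the higher nodes, so in particular $\ind^f_M$) — that $\mathrm{tp}(a/Mb)$ does not fork over $M$, i.e. $a\ind^f_M b$; this is where one exploits that $a$ lies on a single branch rather than in a "forking" spread. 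Together $a\ind^f_M b$ but $a\nind^K_M b$ contradicts the hypothesis of (2).

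The main obstacle, I expect, is the bookkeeping in the middle step: arranging a model $M\supseteq A$ over which the $s$-indiscernible tree is simultaneously (i) such that each all-zeros-initial-segment sequence is an $M$-invariant (indeed $M$-finitely-satisfiable) Morley sequence, so that Kim-dividing is witnessed, and (ii) such that the relevant branch-consistency statements — $\pi(x)\cup\{\varphi(x;a_{0^n}):n<\omega\}$ consistent, and in part (2) the non-forking of the resulting type over $M$ — survive. The cleanest route is probably to first stretch the tree to an $s$-indiscernible tree of length $\kappa$ (for $\kappa$ large), take $M$ to be built from an appropriate column with Skolem functions so that $M$-invariance of the branch sequences is automatic, and then use Kim's lemma for Kim-dividing (Theorem \ref{kimslemmaforindk}) to convert "$\varphi$ Kim-divides with respect to this particular invariant Morley sequence" into genuine Kim-dividing over $M$. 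For part (2), replacing $\ind^f$ by the weaker observation that a single branch of a spread-out $s$-indiscernible tree satisfies $a_{0^0}\ind^u_M a_{0^{>0}}$ and hence is non-forking may require invoking Fact \ref{forkingfacts}(2) (extension, base monotonicity, left transitivity) exactly as in the proof of Theorem \ref{kimslemmaforforking}; I would model the non-forking computation on Claim 1 there.
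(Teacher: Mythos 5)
The core of your argument runs in a direction that is not merely different from the paper's but is actually impossible. In part (1) you propose to exhibit a formula that \emph{Kim-divides} over $M$ while $\pi\cup\{\varphi\}$ \emph{does not divide} over $M$; in part (2) you aim for $a\ind^f_M b$ together with $a\nind^K_M b$. Both configurations are ruled out unconditionally: a Morley sequence in a global $M$-invariant type is in particular an $M$-indiscernible sequence, so Kim-dividing implies dividing of the very same formula (and hence of $\pi\cup\{\varphi\}$); similarly, over a model $\ind^f$ always strengthens $\ind^K$ (Fact \ref{forkingfacts}(3)). So what you construct can never occur, and in particular cannot contradict the hypotheses of the proposition. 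What must be produced, and what the paper produces, is the other asymmetry: a formula that \emph{divides} over a model $M$ but does \emph{not Kim-divide}, and for part (2) a pair with $a\nind^f_M b$ but $a\ind^K_M b$. (Indeed, you also implicitly assert a contradiction internal to your own setup in part (2): the all-zeros branch being an $M$-invariant Morley sequence witnessing consistency of $\{\varphi(x;a_{0^n})\}$, combined with Kim's lemma (Theorem \ref{kimslemmaforindk}), would show $\varphi(x;b)$ does \emph{not} Kim-divide over $M$, contrary to what you go on to claim.)

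The missing structural idea is that the tree must be stretched \emph{past a limit level} before the model is chosen. After Skolemizing and passing to an $s$-indiscernible tree indexed by a tree of height $>\omega$ with the all-zeros sequence $(a_{0^\alpha})_{\alpha\le\omega}$ indiscernible over $A$ (and, after extracting, over $Ab$ where $b$ realizes $\pi$ along the branch), one puts $C=\{a_{0^n}:n<\omega\}$ and $M=\text{Sk}(AC)$. Then the siblings $(a_{0^\omega\frown\langle\beta\rangle})_{\beta<\omega}$ at level $\omega{+}1$ are $M$-indiscernible (by $s$-indiscernibility in $L^{Sk}$) with $k$-inconsistent $\varphi$-instances, so $\varphi(x;a_{0^\omega\frown\langle 0\rangle})$ \emph{divides} over $M$ and $b\nind^d_M a_{0^\omega\frown\langle 0\rangle}$. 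Simultaneously, because the all-zeros branch is $Ab$-indiscernible and $C\subseteq M$ is an infinite initial segment of it, $\text{tp}(a_{0^\omega\frown\langle 0\rangle}/Mb)$ is finitely satisfiable in $C$, so $a_{0^\omega\frown\langle 0\rangle}\ind^u_M b$ and hence $b\ind^K_M a_{0^\omega\frown\langle 0\rangle}$ by weak symmetry (Proposition \ref{weaksymmetrylemma}). This gives dividing without Kim-dividing of $\pi\cup\{\varphi(x;a_{0^\omega\frown\langle 0\rangle})\}$ over $M$, contradicting (1). A tree of depth only $\omega$ has no node at the limit level and no way to absorb the entire branch into $M$ while leaving the dividing witness outside, which is why your construction offers no place for the $\ind^u$ observation to take hold. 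Part (2) is the same argument with the roles rearranged so that the realization $a$ of the branch formulas plays the part of $b$, using that every $a_\eta$ realizes $\pi$.
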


\begin{proof}
Fix a Skolemization $T^{Sk}$ of $T$.  Throughout the proof, indiscernibility will be with respect to the language $L^{Sk}$ of the Skolemization.
(1)  Suppose $\pi$ is not simple.  Then by compactness, there is a formula $\varphi(x;y)$ over $A$ and a tree $(a_{\eta})_{\eta \in \omega^{<\omega + 1}}$ $s$-indiscernible over $A$ so that for some $k < \omega$
\begin{itemize}
\item For all $\eta \in \omega^{\omega + 1}$, $\pi (x) \cup \{\varphi(x;a_{\eta | \alpha}) : \alpha < \omega + 1\}$ is consistent 
\item For all $\eta \in \omega^{<\omega + 1}$, $\{\varphi(x;a_{\eta \frown \alpha}): \alpha < \omega\}$ is $k$-inconsistent.
\end{itemize}
Moreover we may assume $(a_{0^{\alpha}} : \alpha < \omega + 1)$ is an $A$-indiscernible sequence.  Let $b \models \pi(x) \cup \{\varphi(x;a_{0^{\alpha}}) : \alpha < \omega + 1\}$.  By Ramsey, compactness, and automorphism, we may assume $(a_{0^{\alpha}} : \alpha < \omega + 1)$ is $Ab$-indiscernible.  Let $C = \{a_{0^{\alpha}} : \alpha < \omega\}$.  Then $s$-indiscernibility implies $(a_{0^{\omega} \frown \beta} : \beta < \omega)$ is indiscernible over $A \cup C$ and $\{\varphi(x;a_{0^{\omega} \frown \beta}) : \beta < \omega\}$ is $k$-inconsistent by our assumption.  As $b \models \varphi(x;a_{0^{\omega} \frown \langle0\rangle})$, we have $b \nind^{d}_{AC} a_{0^{\omega} \frown \langle 0 \rangle}$.  But by indiscernibility, $a_{0^{\omega} \frown \langle 0 \rangle} \ind^{u}_{AC} b$ so in particular $a_{0^{\omega} \frown \langle 0 \rangle} \ind^{K}_{M} b$ and $b\ind^{K}_{M} a_{0^{\omega} \frown \langle 0 \rangle}$, where $M = Sk(AC)$, by symmetry.

(2)  We argue similarly.  Suppose $(a_{\eta})_{\eta \in \omega^{<\omega + 1}}$ is a collection of realizations of $\pi$, forming a tree $s$-indiscernible over $A$, with respect to which $\varphi(x;y)$ witnesses that $\pi$ is not co-simple.  Let $a \models \{\varphi(x;b_{0^{\alpha}}) : \alpha < \omega + 1\}$.  By Ramsey, compactness, and automorphism, we may assume $(b_{0^{\alpha}} : \alpha < \omega + 1)$ is a $Ba$-indiscernible sequence.  Setting $M = \text{Sk}(A(b_{0^{\alpha}})_{\alpha < \omega})$, we have $a \nind^{d}_{M} b_{0^{\omega} \frown \langle 0 \rangle}$ but $b_{0^{\omega} \frown \langle 0 \rangle} \ind^{u}_{M} a$ so $a \ind^{K}_{M} b_{0^{\omega} \frown \langle 0 \rangle}$.  
\end{proof}

In a similar vein, we have:

\begin{prop}\label{forkequalskimfork}
The complete theory $T$ is simple if and only if $T$ is NSOP$_{1}$ and $\ind^{f} = \ind^{K}$ over models.  
\end{prop}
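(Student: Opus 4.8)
The plan is to prove both directions. For the forward direction, suppose $T$ is simple. Then $T$ is NSOP$_1$ since simple theories are NSOP$_1$ (every simple theory is NSOP$_1$, e.g. because in a simple theory $\ind^K$ coincides with non-forking $\ind^f$ which already satisfies symmetry and the independence theorem over models — but to avoid circularity I would instead just cite that simple $\Rightarrow$ NTP$_1$ $\Rightarrow$ NSOP$_1$, or invoke that simplicity implies the independence theorem over models for $\ind^f$, hence condition (6) of Theorem \ref{mainthm} holds once we know $\ind^f = \ind^K$). For the equality $\ind^f = \ind^K$ over a model $M$: one containment, $a\ind^f_M b \Rightarrow a\ind^K_M b$, is Fact \ref{forkingfacts}(3) and holds in any theory. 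For the reverse, suppose $a\ind^K_M b$; by Theorem \ref{kimslemmaforforking}, $\varphi(x;b)$ Kim-divides over $M$ iff it divides with respect to every $\ind^f$-Morley sequence over $M$; but in a simple theory Kim's lemma for dividing holds, so dividing with respect to some $\ind^f$-Morley sequence is equivalent to dividing with respect to any Morley sequence in a global invariant type, hence to ordinary dividing. Thus Kim-dividing $=$ dividing over $M$ in a simple theory, and since forking $=$ dividing (simple) and Kim-forking $=$ Kim-dividing (Proposition \ref{kforkingequalskdividing}), we get $\ind^K = \ind^f$ over models.

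For the converse, assume $T$ is NSOP$_1$ and $\ind^f = \ind^K$ over models; I want to conclude $T$ is simple. The cleanest route is to verify one of the standard characterizations of simplicity — for instance, that $\ind^f$ is symmetric over models, or that local character holds — by transporting the corresponding property already established for $\ind^K$ in NSOP$_1$ theories. Since $\ind^f = \ind^K$ over models and, by Theorem \ref{symmetrycharthm}, $\ind^K$ is symmetric over models in an NSOP$_1$ theory, $\ind^f$ is symmetric over models. It is classical (Kim) that symmetry of non-forking independence over models characterizes simple theories, so $T$ is simple. Alternatively, one can use local character of $\ind^K$ (Theorem \ref{localcharacter}) together with the equality to get local character for $\ind^f$, again characterizing simplicity.

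The one subtlety to be careful about is that the "classical" characterizations of simplicity via symmetry of $\ind^f$ are usually phrased over arbitrary sets, whereas our hypothesis and our symmetry of $\ind^K$ are only over models; I would note that symmetry of non-forking over models already suffices — this is exactly the content of Kim's theorem, which proceeds by showing that failure of simplicity (the tree property) produces a failure of symmetry witnessed over a model. So the main obstacle, such as it is, is bookkeeping: making sure that in the forward direction one genuinely has Kim's lemma for ordinary dividing available in simple theories (this is standard) and that in the backward direction the cited characterization of simplicity is invoked in its "over models" form. No new ideas beyond the machinery already developed in the paper are needed; the proposition is essentially a corollary of Theorem \ref{kimslemmaforforking}, Theorem \ref{symmetrycharthm}, and the classical theory of forking in simple theories.
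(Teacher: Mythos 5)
Your proposal is correct and follows essentially the same route as the paper: the forward direction via Kim's lemma for simple theories (a Morley sequence in a global invariant type is a non-forking Morley sequence), and the converse by transporting symmetry of $\ind^{K}$ in NSOP$_{1}$ theories to $\ind^{f}$ and invoking the fact that symmetry of non-forking over models characterizes simplicity. The only cosmetic differences are that the paper does not route the forward direction through Theorem \ref{kimslemmaforforking}, and it handles your ``over models'' subtlety by citing a specific lemma of Chernikov rather than re-inspecting Kim's argument.
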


\begin{proof}
If $T$ is simple, then $\ind^{f} = \ind^{K}$ over models by Kim's lemma for simple theories \cite[Proposition 2.1]{kim1998forking}, as a Morley sequence in a global invariant type is, in particular, a Morley sequence in the sense of non-forking.  On the other hand, by \cite[Theorem 2.4]{kim2001simplicity} forking is symmetric if and only if $T$ is simple and, by \cite[Lemma 6.16]{ChernikovNTP2}, we even have that if forking is symmetric over models then $T$ is simple.  If $T$ is NSOP$_{1}$, then $\ind^{K}$ is symmetric so $\ind^{K} = \ind^{f}$ implies $T$ is simple.  
\end{proof}

We also can give an interesting new proof of the following well-known fact:

\begin{cor}
The complete theory $T$ is simple if and only if $T$ is NSOP$_{1}$ and NTP$_{2}$.  
\end{cor}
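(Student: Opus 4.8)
The statement to prove is: $T$ is simple if and only if $T$ is NSOP$_1$ and NTP$_2$.

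\textbf{The plan.} The forward direction is immediate: a simple theory is NSOP$_1$ (since simple $\subseteq$ NSOP$_1$, as simple theories have no tree property at all, in particular not SOP$_1$) and NTP$_2$ (simple theories have neither the tree property nor TP$_2$). So the content is the converse: assume $T$ is NSOP$_1$ and NTP$_2$, and conclude $T$ is simple. By Proposition \ref{forkequalskimfork}, it suffices to show that in an NSOP$_1$ $\cap$ NTP$_2$ theory we have $\ind^{f} = \ind^{K}$ over models. Since $\ind^{f}$ always implies $\ind^{K}$ over models (Fact \ref{forkingfacts}(3)), the task reduces to showing: if $M \models T$ and $a \ind^{K}_{M} b$, then $a \ind^{f}_{M} b$. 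Equivalently, if $\varphi(x;b)$ divides over $M$, then $\varphi(x;b)$ Kim-divides over $M$ (so that a formula which does not Kim-divide does not divide, hence does not fork by the usual forking = dividing argument in NTP$_2$ over models — indeed Chernikov–Kaplan show forking equals dividing over models in NTP$_2$).

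\textbf{Key steps.} First I would recall the relevant Kim's lemma for NTP$_2$ from \cite{chernikov2012forking}: in an NTP$_2$ theory, a formula $\varphi(x;b)$ divides over a model $M$ if and only if it divides with respect to some (equivalently every) strict Morley sequence in $b$ over $M$ — i.e., a Morley sequence generated by a global invariant type that is also strict (strict nonforking). The crucial point is that strict invariant Morley sequences are in particular $\ind^{i}$-Morley sequences generated by a global $M$-invariant type. So if $\varphi(x;b)$ divides over $M$, then by the NTP$_2$ Kim's lemma it divides along some Morley sequence $\langle b_i : i < \omega\rangle$ in a global $M$-invariant type $q \supseteq \operatorname{tp}(b/M)$; but then by definition $\varphi(x;b)$ Kim-divides over $M$. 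Second, combining this with Kim's lemma for Kim-dividing (Theorem \ref{kimslemmaforindk}, available since $T$ is NSOP$_1$) and Kim-forking = Kim-dividing (Proposition \ref{kforkingequalskdividing}), I get: $\varphi(x;b)$ forks over $M$ $\iff$ $\varphi(x;b)$ divides over $M$ (the NTP$_2$ forking=dividing over models from \cite{chernikov2012forking}) $\implies$ $\varphi(x;b)$ Kim-divides over $M$ $\iff$ Kim-forks over $M$. Third, the reverse implication Kim-forking $\implies$ forking is trivial since Kim-dividing is dividing along a specific indiscernible sequence (an invariant Morley sequence is $M$-indiscernible), so Kim-dividing implies dividing. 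Hence $\ind^{f} = \ind^{K}$ over models, and Proposition \ref{forkequalskimfork} finishes the proof.

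\textbf{Main obstacle.} The only real subtlety is invoking the correct statement of Kim's lemma for strict invariant Morley sequences in NTP$_2$ theories and checking that such sequences are genuinely invariant Morley sequences in the sense used in this paper (so that "divides along a strict invariant Morley sequence" implies "Kim-divides"). One must be a little careful that the strict Morley sequence is generated by a global $M$-invariant — not merely $M$-finitely-satisfiable — type; this holds because over a model every type extends to a global invariant type and strictness can be arranged, per \cite{chernikov2012forking}. Once that is in hand, the argument is a short chain of equivalences and requires no new combinatorial work. I would write it as: (simple $\Rightarrow$ NSOP$_1 \wedge$ NTP$_2$) is standard, and conversely, NSOP$_1 \wedge$ NTP$_2$ gives $\ind^K = \ind^f$ over models via the two Kim's lemmas, so $T$ is simple by Proposition \ref{forkequalskimfork}.
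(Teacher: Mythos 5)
Your proposal is correct and follows essentially the same route as the paper: the paper's (very terse) proof likewise invokes the Chernikov--Kaplan result that in an NTP$_{2}$ theory dividing over a model is witnessed by a Morley sequence in a global $M$-finitely satisfiable (hence $M$-invariant) type, concludes that dividing coincides with Kim-dividing over models, and then deduces simplicity via Proposition \ref{forkequalskimfork}. Your extra explicit appeal to forking $=$ dividing over models in NTP$_{2}$ and to strict invariant Morley sequences is just a slightly more detailed packaging of the same argument.
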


\begin{proof}
In an NTP$_{2}$ theory, if $\varphi(x;b)$ divides over a model $M$, there is a Morley sequence sequence over $M$ in some global $M$-finitely satisfiable type witnessing this \cite[Lemma 3.14]{chernikov2012forking}.  So $\ind^{d} = \ind^{K}$, which implies $T$ is simple.  
\end{proof}

\begin{defn} \cite[Definition 2.5]{yaacov2014independence}
We say $(a_{i})_{i \in \kappa}$ is a \emph{universal Morley sequence} in $p \in S(A)$ if 
\begin{itemize}
\item $(a_{i})_{i \in \kappa}$ is indiscernible with $a_{i} \models p$
\item If $\varphi(x;y) \in L(A)$ and $\varphi(x;a_{0})$ divides over $A$ then $\{\varphi(x;a_{i}) : i \in \kappa\}$ is inconsistent.  
\end{itemize}
\end{defn}

\begin{prop}\label{nomorley}
Suppose $T$ is NSOP$_{1}$.  Then $T$ is simple if and only if, for any $M \models T$ and $p(x) \in S(M)$, there is a universal Morley sequence in $p$.    
\end{prop}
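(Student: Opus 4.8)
The plan is to prove both directions. For the forward direction, if $T$ is simple, then in a simple theory non-forking is symmetric, satisfies local character, and Morley sequences in a global non-forking extension are genuine Morley sequences witnessing dividing; in fact, given $p \in S(M)$, take $\mathcal{D}$ a global non-forking (equivalently $M$-invariant, since in a simple theory forking = dividing and one can produce coheir Morley sequences) extension, and let $(a_i)_{i \in \kappa}$ be a very long Morley sequence in it. By Kim's lemma in simple theories, if $\varphi(x;a_0)$ divides over $M$ then already $\{\varphi(x;a_i) : i < \omega\}$ is inconsistent, so a fortiori $\{\varphi(x;a_i) : i \in \kappa\}$ is inconsistent. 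Thus $(a_i)_{i \in \kappa}$ is a universal Morley sequence in $p$. (The length $\kappa$ is free, since simplicity gives us that dividing is witnessed at scale $\omega$.)

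For the converse, suppose $T$ is NSOP$_1$ and every $p \in S(M)$ has a universal Morley sequence, and aim to show $T$ is simple. The strategy is to show $\ind^d = \ind^K$ over models, or equivalently that $\ind^d$ is symmetric over models, which by \cite[Lemma 6.16]{ChernikovNTP2} forces simplicity; alternatively one shows forking equals dividing over models and invokes Proposition \ref{forkequalskimfork}. Concretely: suppose $\varphi(x;b)$ divides over $M$; I want to show it Kim-divides over $M$. Let $p = \text{tp}(b/M)$ and let $(b_i)_{i \in \kappa}$ be a universal Morley sequence in $p$ for $\kappa$ large. Since $\varphi(x;b_0)$ divides over $M$, $\{\varphi(x;b_i) : i \in \kappa\}$ is inconsistent, so it is $k$-inconsistent for some finite $k$ by compactness (after possibly passing to an indiscernible sub-sequence, which is harmless since a sub-sequence of a universal Morley sequence is still indiscernible and still realizes $p$, and inconsistency is inherited). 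Now I extract from $(b_i)$ an $M$-indiscernible sequence that is moreover a Morley sequence in a global $M$-invariant (indeed $M$-finitely-satisfiable) type: since the universal Morley sequence is long, by Erdős–Rado one extracts an $M$-indiscernible sequence based on it, and using the standard compactness argument (as in the construction of coheir sequences) this can be arranged to be a coheir sequence over $M$. Such a sequence still witnesses the $k$-inconsistency of $\{\varphi(x;b_i)\}$, so $\varphi(x;b)$ $q$-divides for some global $M$-finitely-satisfiable $q \supseteq p$, i.e. $\varphi(x;b)$ Kim-divides over $M$. Hence every dividing formula Kim-divides, so $\ind^d = \ind^K$ over models (the reverse inclusion, $\ind^K \Rightarrow \ind^d$ up to forking, is automatic since Kim-dividing is a special case of dividing). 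By symmetry of $\ind^K$ (Theorem \ref{symmetrycharthm}) we get symmetry of $\ind^d$ over models, and then \cite[Lemma 6.16]{ChernikovNTP2} gives that $T$ is simple.

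The main obstacle I anticipate is the step extracting a \emph{coheir} Morley sequence (or more precisely a Morley sequence in a global $M$-invariant type) from the universal Morley sequence while preserving the inconsistency of the $\varphi$-instances. A universal Morley sequence is only assumed indiscernible and $\varphi$-witnessing, not invariant-generated, so one needs care: the clean route is to note that from a universal Morley sequence of length $\kappa \geq \beth_{(2^{|M|})^+}$ one can, by the usual argument, find an $M$-indiscernible sequence which is finitely satisfiable in $M$ and realizes the same $EM$-type over $M$, hence still $k$-inconsistent on $\varphi$; the subtle point is that finite satisfiability in $M$ must be arranged, which requires the universal Morley sequence to sit inside $M$ or to be approximable by elements of $M$ — but this is exactly the kind of manipulation Lemma \ref{lem:countable subset for dividing} and the coheir constructions in the paper handle, so I would cite or adapt that machinery. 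An alternative that sidesteps coheirs entirely: argue directly that the existence of universal Morley sequences in all types over all models gives that dividing has "bounded weight" in a way that, combined with local character for $\ind^K$ (Theorem \ref{localcharacter}), forces forking $=$ dividing and hence simplicity via \cite[Theorem 2.4]{kim2001simplicity}; I would present whichever of these is cleanest once the coheir-extraction detail is pinned down.
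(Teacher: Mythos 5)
Your forward direction is essentially the paper's. For the reverse direction, your top-level plan\textemdash show that universal Morley sequences force dividing to coincide with Kim-dividing over models, and conclude simplicity via Proposition \ref{forkequalskimfork}\textemdash is the right target, and you are correct to flag the extraction step as the danger. But that step is not merely delicate; it fails. One cannot in general extract from an $M$-indiscernible sequence a coheir Morley sequence (or one generated by an $M$-invariant type) with the same EM type over $M$: finite satisfiability in $M$ is a property of the actual type of the sequence over $\mathbb{M}$, not of its EM type over $M$. For a concrete obstruction, in the theory of an equivalence relation with infinitely many infinite classes take an $M$-indiscernible sequence $(b_i)$ of pairwise $E$-equivalent elements lying in a class that meets no class of $M$; then $E(x,b_0)\in\text{tp}(b_1/Mb_0)$ is not realized in $M$, so no coheir sequence over $M$ can have this EM type. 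Lemma \ref{lem:countable subset for dividing} does not help, since it presupposes that a global finitely satisfiable type is already in hand. Your fallback via ``bounded weight'' is too vague to assess.

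The paper's argument avoids extraction entirely and hinges on two ingredients you did not use. First, it shows directly that a universal Morley sequence $(b_i)_{i\in\mathbb{Z}}$ in $\text{tp}(b/M)$ is an $\ind^K$-Morley sequence over $M$: if some $\psi(\bar y;b_i) \in \text{tp}(b_{<i}/Mb_i)$ divided over $M$, then the universal Morley property would make $\{\psi(\bar y;b_j) : j\in\mathbb{Z}\}$ $k$-inconsistent for some $k$, yet $b_{<i}$ realizes every $\psi(\bar y;b_j)$ with $j$ larger than its own indices, a consistent $k$-subset\textemdash contradiction. Hence $b_{<i}\ind^d_M b_i$, so $b_{<i}\ind^K_M b_i$ (since Kim-dividing is a form of dividing and Kim-forking $=$ Kim-dividing over models), and $b_i\ind^K_M b_{<i}$ by symmetry (Theorem \ref{symmetrycharthm}). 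Second, it invokes Lemma \ref{consistentindk}\textemdash whose proof rests on the independence theorem\textemdash to conclude that $\{\varphi(x;b_i):i\in\mathbb{Z}\}$ is consistent whenever $\varphi(x;b_0)$ does not Kim-divide over $M$. Taking $\varphi(x;b)$ dividing but not Kim-dividing (Proposition \ref{forkequalskimfork}) then contradicts universality. The essential missing idea in your proposal is thus Lemma \ref{consistentindk}: rather than transporting the $k$-inconsistency onto an invariant Morley sequence (impossible in general), one shows the universal sequence is already $\ind^K$-Morley and imports consistency from the independence theorem.
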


\begin{proof}
If $T$ is simple, then in any type $p(y) \in S(M)$, there is a $\ind^{f}$-Morley sequence in $p(y)$.  By Kim's lemma for simple theories \cite[Proposition 2.1]{kim1998forking}, this is a universal Morley sequence in $p$.  

If $T$ is not simple, then there is some formula $\varphi(x;b) \in L(Mb)$ which divides over $M$ but does not Kim-divide over $M$, by Proposition \ref{forkequalskimfork}.  Suppose there is a universal Morley sequence in $\text{tp}(b/M)$\textemdash by compactness we can take it to be $(b_{i})_{i \in \mathbb{Z}}$ indexed by $\mathbb{Z}$.  Then given $i \in \mathbb{Z}$, we have $b_{<i}$ is $Mb_{i}$-indiscernible so $b_{<i} \ind^{d}_{M} b_{i}$ so $b_{i} \ind^{K}_{M} b_{<i}$ by symmetry.  So $(b_{i})_{i \in \mathbb{Z}}$ is an $\ind^{K}$-Morley sequence.  By Lemma \ref{consistentindk}, $\{\varphi(x;b_{i}) : i \in \mathbb{Z}\}$ is consistent.  But $\varphi(x;b)$ divides over $M$ and $(b_{i})_{i \in \mathbb{Z}}$ is a universal Morley sequence so $\{\varphi(x;b_{i}) : i \in \mathbb{Z}\}$ is inconsistent.  This is a contradiction.  
\end{proof}

If $a \ind^{K}_{M} bb'$, it does not always make sense to ask if $a \ind^{K}_{Mb} b'$, since it is not always the case that $\text{tp}(b'/Mb)$ extends to a global $Mb'$-invariant type.  This can occur, however, whenever $Mb'$ is a model, for instance.  Say $\ind^{K}$ satisfies base monotonicity \emph{over models} if, whenever $a \ind^{K}_{M} Nb$ where $M,N \models T$, then $a \ind^{K}_{N} b$.  

\begin{prop}
The NSOP$_{1}$ theory $T$ is simple if and only if $\ind^{K}$ satisfies base monotonicity over models.
\end{prop}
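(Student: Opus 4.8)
The plan is to reduce both directions to Proposition~\ref{forkequalskimfork}, which says that within NSOP$_{1}$, $T$ is simple exactly when $\ind^{f} = \ind^{K}$ over models. \emph{If $T$ is simple}, then $\ind^{f} = \ind^{K}$ over models, so given models $M \preceq N$ (as is implicit in the definition of base monotonicity over models) and tuples $a,b$ with $a \ind^{K}_{M} Nb$, we get $a \ind^{f}_{M} Nb$, hence $a \ind^{f}_{N} b$ by base monotonicity of $\ind^{f}$ (Fact~\ref{forkingfacts}(2)(b), using $M \subseteq N$), hence $a \ind^{K}_{N} b$ since $N$ is a model. So $\ind^{K}$ satisfies base monotonicity over models.

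\emph{For the converse} I prove the contrapositive: if $T$ is NSOP$_{1}$ but not simple, base monotonicity over models fails. By Proposition~\ref{forkequalskimfork} (cf.\ the proof of Proposition~\ref{nomorley}), non-simplicity produces a model $M$ and a formula $\varphi(x;b)$ that divides over $M$ but does not Kim-divide over $M$; fix $k$ so that the dividing is witnessed by $k$-inconsistency. Passing to a Skolemization $T^{\text{Sk}}$ and applying Ramsey and compactness, extract an $L^{\text{Sk}}$-indiscernible sequence $(b_{i})_{i \in \mathbb{Q}}$ over $M$ with $\text{tp}_{L}(b_{0}/M) = \text{tp}_{L}(b/M)$ and $\{\varphi(x;b_{i}) : i \in \mathbb{Q}\}$ still $k$-inconsistent; relabel $b := b_{0}$, noting $\varphi(x;b)$ still divides over $M$ but does not Kim-divide over $M$, as the latter depends only on $\text{tp}_{L}(b/M)$. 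Now set $N = \text{dcl}^{\text{Sk}}(M \cup \{b_{i} : i < 0\})$, a model with $M \preceq N$. The tail $(b_{i})_{i \geq 0}$ is $N$-indiscernible, and by indiscernibility over $M$ together with the density of $\mathbb{Q}$ it is a coheir sequence over $N$; hence it is a Morley sequence in a global $N$-finitely-satisfiable type, so the $k$-inconsistency of $\{\varphi(x;b_{i}) : i \geq 0\}$ shows $\varphi(x;b)$ Kim-divides over $N$. (One checks routinely that Kim-dividing, an $L$-notion, is computed correctly after passing to $L^{\text{Sk}}$, since a Morley sequence in a global $N$-invariant $L^{\text{Sk}}$-type restricts to one in its $L$-reduct.)

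It remains to find $a$ with $\models \varphi(a;b)$ and $a \ind^{K}_{M} Nb$; then $\varphi(x;b) \in \text{tp}(a/Nb)$ Kim-divides over $N$, so $a \nind^{K}_{N} b$, while $a \ind^{K}_{M} Nb$ --- contradicting base monotonicity over $M \preceq N$. To build $a$, use the standard extension argument: since $\varphi(x;b)$ does not Kim-divide over the model $M$, it does not Kim-fork over $M$ (Proposition~\ref{kforkingequalskdividing}), so the partial type
$$\{\varphi(x;b)\} \cup \{\neg \psi(x;\overline{c}) : \overline{c}\text{ a tuple from }Nb,\ \psi(x;\overline{c})\text{ Kim-divides over }M\}$$
is consistent --- an inconsistency would, by compactness, give $\varphi(x;b) \vdash \bigvee_{i<n}\psi_{i}(x;\overline{c}_{i})$ with each $\psi_{i}$ Kim-dividing over $M$, i.e.\ $\varphi(x;b)$ Kim-forks over $M$, a contradiction. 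Any realization $a$ has $\models\varphi(a;b)$ and no formula of $\text{tp}(a/Nb)$ Kim-divides over $M$; by compactness and Kim-forking $=$ Kim-dividing over the model $M$ this gives that $\text{tp}(a/Nb)$ does not Kim-fork over $M$, i.e.\ $a \ind^{K}_{M} Nb$, as needed.

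The main obstacle is the middle step: arranging that the past of the dividing sequence generates a model $N \succeq M$ over which the future becomes a genuine invariant Morley sequence witnessing that $\varphi(x;b)$ Kim-divides --- this is precisely where Skolemizing and the density of the index order are needed, and where one must take a little care that nothing is lost in passing between $L$ and $L^{\text{Sk}}$. The rest is soft: the reduction to Proposition~\ref{forkequalskimfork} and the construction of $a$ by the usual non-forking-style extension argument.
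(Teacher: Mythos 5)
Your proof is correct and follows essentially the same route as the paper's: the forward direction via Proposition \ref{forkequalskimfork} together with base monotonicity of $\ind^{f}$ (Fact \ref{forkingfacts}(2b)), and for the converse a Skolemized indiscernible dividing sequence, $N$ taken to be the Skolem hull of $M$ together with an initial segment, the tail used as an invariant Morley sequence over $N$ witnessing Kim-dividing, and the standard extension/compactness argument producing $a \models \varphi(x;b)$ with $a \ind^{K}_{M} Nb$; the paper merely organizes the converse as ``base monotonicity implies $\ind^{K}=\ind^{d}$ over models,'' which is the contrapositive of your formulation. The one imprecision is the claim that the tail $(b_{i})_{i \geq 0}$ is a coheir sequence over $N$: read in the increasing order this is not what $L^{\text{Sk}}$-indiscernibility and density give you, since a formula in $\text{tp}(b_{j}/N b_{[0,j)})$ cannot be pushed to a witness of negative index without changing the order type (a negative index lies below $b_{[0,j)}$, whereas $j$ lies above). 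What the argument actually yields is $b_{j} \ind^{u}_{N} b_{>j}$ for all $j \geq 0$, i.e.\ the tail read in \emph{reverse} order is a coheir Morley sequence over $N$ -- this is exactly the paper's parenthetical ``in the reverse order.'' The slip is harmless: by $N$-indiscernibility every $b_{j}$ with $j \geq 0$ realizes $\text{tp}(b/N)$, and the $k$-inconsistency of $\{\varphi(x;b_{j}) : j \geq 0\}$ does not depend on the order, so the reversed tail still witnesses that $\varphi(x;b)$ Kim-divides over $N$, and the remainder of your argument goes through unchanged.
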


\begin{proof}
It $T$ is simple, this follows from Proposition \ref{forkequalskimfork}, using Fact \ref{forkingfacts}(2b).  On the other hand, suppose $\ind^{K}$ satisfies base monotonicity over models.  We will show that $\ind^{K} = \ind^{d}$ over models.  It follows then that $T$ is simple, by Proposition \ref{forkequalskimfork}.  So suppose towards contradiction that $a \ind^{K}_{M} b$ but $a \nind^{d}_{M} b$, witnessed by $\varphi(x;b) \in \text{tp}(a/Mb)$ and $I =(b_{i})_{i < \omega + 1}$ an $M$-indiscernible sequence with $b_{\omega} = b$ and $\{\varphi(x;b_{i}) : i < \omega+1\}$ inconsistent.  Fix a Skolemization $T^{Sk}$ of $T$.  By Ramsey and automorphism, we may assume $(b_{i} : i < \omega+1)$ is $L^{Sk}$-indiscernible over $M$.  As $a \ind^{K}_{M} b$, we may, by extension, assume $a \ind^{K}_{M} \text{Sk}(MI)$.  Let $N = Sk(MI_{<\omega})$.  By base monotonicity over models, we have $a \ind^{K}_{N} b$.  But stretching $I$ to $(b_{i})_{i < \omega + \omega}$, we have that $(b_{\omega + i})_{i < \omega}$ is a $N$-invariant Morley sequence (in the reverse order) in $\text{tp}(b/N)$ and $\{\varphi(x;b_{\omega + i}) : i < \omega\}$ is inconsistent.  So $a \nind^{K}_{N}b$, a contradiction.  
\end{proof}

\section{Examples}\label{examplesection}

\subsection{A Kim-Pillay-style characterization of $\ind^{K}$}

We are interested in explicitly describing $\ind^{K}$ in concrete examples.  As in simple theories, this is most easily acheived by establishing the existence of an independence relation with certain properties and then deducing that, therefore, the relation coincides with $\ind^{K}$.  The following theorem explains how this works.  The content of the theorem is essentially the same as \cite[Proposition 5.8]{ArtemNick}, where a Kim-Pillay style criterion for NSOP$_{1}$ theories was observed, but we point out how this gives information about Kim-independence.  

\begin{thm} \label{criterion}
Assume there is an \(\text{Aut}(\mathbb{M})\)-invariant ternary relation \(\ind\) on small subsets of the monster \(\mathbb{M} \models T\) which satisfies the following properties, for an arbitrary \(M \models T\) and arbitrary tuples from $\mathbb{M}$.
\begin{enumerate}
\item Strong finite character:  if \(a \nind_{M} b\), then there is a formula \(\varphi(x,b,m) \in \text{tp}(a/bM)\) such that for any \(a' \models \varphi(x,b,m)\), \(a' \nind_{M} b\).
\item Existence over models:  \(M \models T\) implies \(a \ind_{M} M\) for any \(a\).
\item Monotonicity: \(aa' \ind_{M} bb'\) \(\implies\) \(a \ind_{M} b\).
\item Symmetry: \(a \ind_{M} b \iff b \ind_{M} a\).
\item The independence theorem: \(a \ind_{M} b\), \(a' \ind_{M} c\), \(b \ind_{M} c\) and $a \equiv_{M} a'$ implies there is $a''$ with $a'' \equiv_{Mb} a$, $a'' \equiv_{Mc} a'$ and $a'' \ind_{M} bc$.
\end{enumerate}
Then \(T\) is NSOP\(_{1}\) and $\ind$ \emph{strengthens} $\ind^{K}$\textemdash i.e. if $M \models T$, $a \ind_{M} b$ then $a \ind^{K}_{M} b$.  If, moreover, $\ind$ satisfies
\begin{enumerate}
\addtocounter{enumi}{5}
\item Witnessing:  if $a \nind_{M} b$ witnessed by $\varphi(x;b)$ and $(b_{i})_{i < \omega}$ is a Morley sequence over $M$ in a global $M$-invariant type extending $\text{tp}(b/M)$, then $\{\varphi(x;b_{i}) : i < \omega\}$ is inconsistent.  
\end{enumerate}
then $\ind = \ind^{K}$ over models, i.e. if $M \models T$, $a \ind_{M} b$ if and only if $a \ind^{K}_{M} b$.
\end{thm}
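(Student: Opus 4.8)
The plan is to treat the two conclusions in turn, using a single circle of ideas. For the first conclusion, the assertion that $T$ is NSOP$_1$ is, as the sentence preceding the statement indicates, the content of \cite[Proposition 5.8]{ArtemNick}, so I would simply invoke that: properties (1)--(5) already force NSOP$_1$. The genuinely new point is that $\ind$ \emph{strengthens} $\ind^K$; and once $T$ is known to be NSOP$_1$, the machinery of the earlier sections is available---in particular Kim's lemma for Kim-dividing (Theorem \ref{kimslemmaforindk}), Kim-forking $=$ Kim-dividing (Proposition \ref{kforkingequalskdividing}), and the fact that every type over a model extends to a global finitely satisfiable type (Fact \ref{average}).

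The first step I would carry out is the observation that $\ind^u$ implies $\ind$ over models: if $M \models T$ and $\text{tp}(a/Mb)$ is finitely satisfiable in $M$, then $a \ind_M b$. This uses only strong finite character, existence over models, monotonicity, and symmetry. Indeed, if $a \nind_M b$, strong finite character produces $\varphi(x,b,m) \in \text{tp}(a/Mb)$ with $m \in M$ all of whose realizations are $\ind$-dependent with $b$ over $M$; finite satisfiability yields a realization $m' \in M$, so $m' \nind_M b$; but existence over models gives $b \ind_M M$, hence $b \ind_M m'$ by monotonicity and $m' \ind_M b$ by symmetry---a contradiction. The point of this step is that it supplies Morley sequences that are $\ind$-independent from their own initial segments, which is precisely what the independence-theorem induction below requires.

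Next, to see $a \ind_M b \Rightarrow a \ind^K_M b$, I would argue by contradiction. By Proposition \ref{kforkingequalskdividing} it suffices to rule out Kim-dividing, so suppose $\varphi(x;b) \in \text{tp}(a/Mb)$ Kim-divides over $M$. Choose, via Fact \ref{average}, a global coheir $q \supseteq \text{tp}(b/M)$ and let $(b_i)_{i < \omega} \models q^{\otimes \omega}|_M$ with $b_0 = b$; this sequence is $M$-indiscernible, and by the first step $b_{n+1} \ind_M b_{\leq n}$, hence $b_{\leq n} \ind_M b_{n+1}$, for all $n$. By Kim's lemma (Theorem \ref{kimslemmaforindk}), $\varphi$ $q$-divides, so $\{\varphi(x;b_i) : i < \omega\}$ is inconsistent. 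On the other hand, a routine induction on $n$ using the independence theorem for $\ind$ produces $a_n \equiv_M a$ with $a_n \ind_M b_{\leq n}$ and $a_n \models \{\varphi(x;b_i) : i \leq n\}$: the successor step applies $\sigma \in \text{Aut}(\mathbb{M}/M)$ sending $b_0$ to $b_{n+1}$ to get $\sigma(a) \ind_M b_{n+1}$ with $\sigma(a) \models \varphi(x;b_{n+1})$ and $\sigma(a) \equiv_M a_n$, then amalgamates $a_n$ and $\sigma(a)$ over $b_{\leq n}$ and $b_{n+1}$ using $b_{\leq n} \ind_M b_{n+1}$. By compactness $\bigcup_i \{\varphi(x;b_i)\}$ is consistent, a contradiction.

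For the second conclusion, assume in addition the witnessing property; by the first conclusion it is enough to show $\ind^K$ strengthens $\ind$ over models. Suppose $a \ind^K_M b$ but $a \nind_M b$. Strong finite character supplies $\varphi(x;b) \in \text{tp}(a/Mb)$ (absorbing $M$-parameters) witnessing $a \nind_M b$; taking a Morley sequence $(b_i)_{i<\omega}$ in a global $M$-invariant type extending $\text{tp}(b/M)$ with $b_0 = b$, the witnessing property gives that $\{\varphi(x;b_i) : i < \omega\}$ is inconsistent, so $\varphi(x;b)$ Kim-divides over $M$ and thus $\text{tp}(a/Mb)$ Kim-forks over $M$, contradicting $a \ind^K_M b$. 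I expect the main obstacle to be the first step: passing from the abstract hypotheses to a usable notion of $\ind$-Morley sequence. Without the observation that $\ind^u \Rightarrow \ind$---and the attendant choice of a \emph{coheir}, rather than an arbitrary invariant, Morley sequence so that the initial-segment independence $b_{\leq n} \ind_M b_{n+1}$ is actually available---the amalgamation induction in the third paragraph cannot even be started; everything after that is standard Kim--Pillay bookkeeping.
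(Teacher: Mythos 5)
Your proposal is correct and follows essentially the same route as the paper: cite \cite[Proposition 5.8]{ArtemNick} for NSOP$_{1}$ and for $\ind^{u}\Rightarrow\ind$ over models (which you reprove rather than cite), then run the independence-theorem induction along a coheir Morley sequence to get consistency of the type, concluding via Kim's lemma and Kim-forking $=$ Kim-dividing, with the witnessing clause handled directly from the definition of Kim-dividing. The only differences are cosmetic—you phrase the strengthening as a contradiction argument and spell out steps the paper leaves implicit.
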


\begin{proof}
It was shown in \cite[Proposition 5.8]{ArtemNick} that if there is such a relation $\ind$, then $T$ is NSOP$_{1}$. The proof there shows that if $\ind$ satisfies axioms (1)-(4), then $a \ind^{u}_{M} b$ implies $a \ind_{M} b$.  Now suppose $a \ind_{M} b$.  Let $p(x;b) = \text{tp}(a/Mb)$ and let $q$ be a global coheir of $\text{tp}(b/M)$.  By the independence theorem for $\ind$, if $(b_{i})_{i < \omega}$ is a Morley sequence over $M$ in $q$ with $b_{0} = b$, then $\bigcup_{i < \omega} p(x;b_{i})$ is consistent.  But then $a \ind^{K}_{M} b$.  The ``moreover" clause follows by definition of $\ind^{K}$.  
\end{proof}

\begin{rem}
The condition (6) can be weakened to quantifying only over global \emph{coheirs} of $\text{tp}(b/M)$, or asserting the existence of \emph{one} such coheir -- this is sometimes slightly easier in practice.  
\end{rem}

\begin{rem}
Axioms (1)-(5) do not, by themselves, suffice to characterize $\ind^{K}$.  See Remark \ref{notequal} below.  
\end{rem}

\subsection{Combinatorial examples}

In this section, we study some combinatorial examples of NSOP$_{1}$ theories which are not simple.  They are structures which encode a generic family of selector functions for an equivalence relation.   The theories defined below provide a different presentation of a theory defined by D\v{z}amonja and Shelah in \cite{dvzamonja2004maximality} (where it was called $T^{*}_{feq}$ -- though this name is now typically reserved for a different theory) and later studied by Malliaris in  \cite{malliaris2012hypergraph} (where it was called $T^{s}$).  We give a family of theories $T^{*}_{n}$ as $n$ ranges over positive integers, but we will only be interested in the case of $n = 1,2$.  Among non-simple NSOP$_{1}$ theories, the theory $T^{*}_{1}$ is probably the easiest to understand, and we show that already $T^{*}_{1}$ witnesses many of the new phenomena in our context:  with respect to this theory, we give explicit examples of formulas which divide but do not Kim-divide, formulas which fork and do not divide over models, and types which contain no universal Morley sequences.  

We use $T^{*}_{1}$ to answer a question of Chernikov from \cite{ChernikovNTP2} concerning simple and co-simple types and a question of Conant from \cite{conant2014forking} concerning forking and dividing.  A type is simple if no instance of the tree property is consistent with the type and a type is co-simple if the tree property cannot witnessed using parameters which realize the type (see Definition \ref{simplecosimpledef} above for the precise definition).  For stability, no such distinction arises, but Chernikov was able to show that, in general, there are co-simple types which are not simple.  In fact, examples can be found in the triangle-free random graph.  It was asked if there can exist simple types which are not co-simple and he showed that there can be no such types in an NTP$_{2}$ theory.  In \cite{conant2014forking}, Conant gave a detailed analysis of forking and dividing in the Henson graphs and showed that forking does not equal dividing for formulas, though every complete type has a global non-forking extension.  As the Henson graphs all have the property SOP$_{3}$, Conant asked if there could be an NSOP$_{3}$ example of this behavior.  
We show the answer to both questions is yes already within the class of NSOP$_{1}$ theories.

Lastly, we use $T^{*}_{2}$ to give a counter-example to transitivity for $\ind^{K}$.  Because Kim-dividing does not behave well with respect to changing the base, the normal formulation of transitivity does not necessarily make sense.   Nonetheless, there is a natural way to formulate a version which does make sense.  Suppose $T$ is NSOP$_{1}$, $M \models T$ and both $a \ind^{K}_{M} bc$ and $b \ind^{K}_{M} c$.  Must it also be the case, then, that $ab \ind^{K}_{M} c$?  We show the answer is no.  

For the remainder of this subsection, if $A$ is a structure in some language and $X \subseteq A$, write $\langle X \rangle^{A}$ for the substructure of $A$ generated by $X$.  We write just $\langle X \rangle$ when $A $ is the monster model.    

For a natural number $n \geq 1$, let $L_{n} = \langle O,F, E, \text{eval} \rangle$ where $O,F$ are sorts, $E$ is a binary relation symbol, and $\text{eval}$ is an $n+1$-ary function.  The theory $T_{n}$ will say
\begin{itemize}
\item $O$ and $F$ are sorts\textemdash $O$ and $F$ disjoint and the universe is their union.  
\item $E \subseteq O^{2}$ is an equivalence relation on $O$.
\item $\text{eval}: F^{n} \times O \to O$ is a function so that for all $f \in F^{n}$, $\text{eval}(f,-)$ is a function from $O$ to $O$ which is a selector function for $E$ -- more formally, for all $b \in O$, we have $E(\text{eval}(f,b),b)$ and if $b,b' \in O$ and $E(b,b')$ then we have 
$$
\text{eval}(f,b) = \text{eval}(f,b').
$$
\end{itemize}
The letter $F$ is for `function' and $O$ is for `object'\textemdash we think of a tuple $f \in F^{n}$ as naming the function $\text{eval}(f,-)$.  Let $\mathbb{K}_{n}$ be the class of finite models of $T_{n}$.  

Recall that a Fra\"iss\'e class $\mathbb{K}$ is said to have the \emph{strong amalgamation property (SAP)} if, whenever $A,B,C \in \mathbb{K}$, and $e:A \to B$ and $f: A \to C$ are embeddings, then there is a structure $D \in \mathbb{K}$ and embeddings $g: B \to D$, $h: C \to D$ so that $ge = hf$ and, moreover, $(\text{im}g) \cap (\text{im}h) = (\text{im}ge)$ (and hence also $=(\text{im}hf)$).  

\begin{lem}
The class $\mathbb{K}_{n}$ is a Fra\"iss\'e class with SAP.  Moreover, it is uniformly locally finite.  
\end{lem}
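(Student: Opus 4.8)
The plan is to check the hereditary property, joint embedding, and strong amalgamation, and then treat uniform local finiteness separately. The hereditary property is immediate because every axiom of $T_n$ is universal: that $E$ is an equivalence relation, that $E(\mathrm{eval}(f,b),b)$ holds, and that $E(b,b')$ implies $\mathrm{eval}(f,b)=\mathrm{eval}(f,b')$ are all $\forall$-statements, and totality of $\mathrm{eval}$ is built into the function symbol; so a substructure of a member of $\mathbb{K}_n$ is again in $\mathbb{K}_n$. Since $L_n$ is finite there are only countably many isomorphism types of each finite size, and the empty structure (both sorts empty) lies in $\mathbb{K}_n$, so joint embedding follows from amalgamation over the empty base. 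For strong amalgamation, given $A,B,C\in\mathbb{K}_n$ with embeddings $A\hookrightarrow B$, $A\hookrightarrow C$, I would identify $A$ with a common substructure and relabel so that $B\cap C=A$ as sets, and set $D:=B\cup C$, $O^{D}:=O^{B}\cup O^{C}$, $F^{D}:=F^{B}\cup F^{C}$, with $E^{D}$ the transitive closure of $E^{B}\cup E^{C}$. A short ``shortcut'' argument — using $E^{B}\restriction O^{A}=E^{A}=E^{C}\restriction O^{A}$ to collapse any excursion of an $E^{D}$-chain out of $O^{B}$ into a single $E^{B}$-edge — shows $E^{D}\restriction O^{B}=E^{B}$ and $E^{D}\restriction O^{C}=E^{C}$; consequently each $E^{D}$-class lies entirely in $O^{B}$, lies entirely in $O^{C}$, or has the form $[a]_{E^{B}}\cup[a]_{E^{C}}$ glued along $[a]_{E^{A}}$ for some $a\in O^{A}$.

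To define $\mathrm{eval}^{D}$ I would fix for each $E^{D}$-class $\kappa$ a representative $\rho(\kappa)\in\kappa$, chosen inside $O^{A}$ whenever $\kappa\cap O^{A}\neq\emptyset$, and then set $\mathrm{eval}^{D}(f,b)$ equal to $\mathrm{eval}^{B}(f,b)$ when $b\in O^{B}$ and all coordinates of $f$ lie in $F^{B}$; to $\mathrm{eval}^{C}(f,b)$ in the symmetric case; to $\mathrm{eval}^{B}(f,\rho([b]_{E^{D}}))$ (resp. $\mathrm{eval}^{C}(f,\rho([b]_{E^{D}}))$) when all coordinates of $f$ lie in $F^{B}$ (resp. $F^{C}$) but $b$ lies only in the other object sort and $[b]_{E^{D}}$ meets $O^{A}$; and to $\rho([b]_{E^{D}})$ otherwise. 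One then verifies that each $\mathrm{eval}^{D}(f,-)$ is a selector for $E^{D}$ — it lands in $[b]_{E^{D}}$ and is constant on $E^{D}$-classes, since on a class meeting $O^{A}$ the value is always computed through the single representative $a=\rho(\kappa)\in O^{A}$, using $E$-invariance of $\mathrm{eval}^{B}(f,-)$ and $\mathrm{eval}^{C}(f,-)$ — and that $B$ and $C$ embed into $D$ as substructures with intersection exactly $A$; finiteness of $D$ then gives $D\in\mathbb{K}_n$ and the strong amalgamation property. I expect this last verification to be the main obstacle: the case split defining $\mathrm{eval}^{D}$ overlaps mildly on $F^{A}=F^{B}\cap F^{C}$ and $O^{A}=O^{B}\cap O^{C}$, so one has to check $E^{D}$-invariance across every pair of cases. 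Conceptually there is no difficulty — $\mathrm{eval}(f,-)$ is determined by its behaviour on $E$-classes, so one simply routes everything touching $A$ through a fixed $O^{A}$-representative of each class — but it requires careful bookkeeping.

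For uniform local finiteness I would use the collapsing identity $\mathrm{eval}(f,\mathrm{eval}(f',b))=\mathrm{eval}(f,b)$, valid in any model of $T_n$ since $\mathrm{eval}(f',b)\mathrel{E}b$ and $\mathrm{eval}(f,-)$ is $E$-invariant. Hence if $X$ consists of $p$ elements of $O$ and $q$ elements of $F$ with $p+q=m$, then $F\cap\langle X\rangle$ is just those $q$ elements of $F$, yielding at most $q^{n}$ tuples from $F\cap\langle X\rangle$, and the object part of $\langle X\rangle$ consists of the $p$ given objects together with the values $\mathrm{eval}(f,o)$ for $o$ among them and $f$ one of these $q^{n}$ tuples — no further objects appear, by the collapsing identity. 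This gives $|\langle X\rangle|\le q+p(1+q^{n})\le m(2+m^{n})$, a bound depending only on $m$ and uniform over all models, so $\mathbb{K}_n$ is uniformly locally finite.
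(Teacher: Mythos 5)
Your proof is correct and follows essentially the same route as the paper: HP from universality of the axioms, JEP via amalgamation over the empty structure, SAP by taking $D=B\cup C$ with $E^D$ the generated equivalence relation and then defining $\mathrm{eval}^D$ via class representatives, and uniform local finiteness from the observation that a single application of $\mathrm{eval}$ to the given $O$-elements already closes the substructure. Your explicit collapsing identity and your routing of $\mathrm{eval}^D$ through a fixed representative map $\rho$ are slightly more carefully articulated than the paper's corresponding case split (which tacitly prioritizes its clauses to avoid an overlap), but the underlying construction is identical.
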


\begin{proof}
HP is clear as the axioms of $T_{n}$ are universal.  The argument for JEP is identical to that for SAP, so we show SAP.  Suppose $A,B,C \in \mathbb{K}_{n}$ where $A \subseteq B,C$ and $B \cap C = A$.  It suffices to define a $L_{n}$-structure with domain $D = B \cup C$, extending both $B$ and $C$.  Interpret $O^{D}$ and $F^{D}$ by $O^{D} = O^{B} \cup O^{C}$ and $F^{D} = F^{B} \cup F^{C}$.  Let $E^{D}$ be the equivalence relation generated by $E^{B} \cup E^{C}$.  It follows that if $b \in B$, $c \in C$ and $(b,c) \in E^{D}$, then there is some $a \in A$ so that $(a,b) \in E^{B}$ and $(a,c) \in E^{C}$ and, moreover, $(O^{D}, E^{D})$ extends both $(O^{B}, E^{B})$ and $(O^{C}, E^{C})$ as equivalence relations.  

We are left with interpreting $\text{eval}^{D}$.  Let $\{a_{i} : i < k_{0}\}$ enumerate a collection of representatives for the $E^{A}$-classes in $A$.  Then let $\{b_{i} : i < k_{1}\}$ and $\{c_{i} : i < k_{2}\}$ enumerate representatives for the $E^{B}$- and $E^{C}$-classes of elements not represented by an element of $A$, respectively.  Then every element of $O^{D}$ is equivalent to a unique element of 
$$
X = \{a_{i} : i < k_{0}\} \cup \{b_{i} : i < k_{1}\} \cup \{c_{i} : i < k_{2}\}.  
$$
Suppose $d \in X$.  If $f \in (F^{B})^{n}$, define $\text{eval}^{D}(f,d) = \text{eval}^{B}(f,d)$ if $d \in B$ and $\text{eval}^{D}(f,d) = d$ otherwise.  Likewise, if $f \in (F^{C})^{n}$ and $d \in C$, put $\text{eval}^{D}(f,d) = \text{eval}^{C}(f,c)$ if $c \in C$ and $\text{eval}^{C}(f,c) = c$ otherwise.  If $f \in (F^{D})^{n} \setminus ((F^{B})^{n} \cup (F^{C})^{n})$, put $\text{eval}^{D}(f,d) = d$.  This defines $\text{eval}$ on $(F^{D})^{n} \times X$.  More generally, if $f \in (F^{D})^{n}$ and $e \in O^{D}$, define $\text{eval}^{D}(f,e) = \text{eval}^{D}(f,d)$ for the unique $d \in X$ equivalent to $e$.  This is well-defined as $B$ and $C$ agree on $A$ and the $D$ defined in this way is clearly in $\mathbb{K}_{n}$.  

Finally, note that a structure in $\mathbb{K}_{n}$ generated by $k$ elements is obtained by applying $\leq k^{n}$ functions of the form $\text{eval}(f,-)$ to $\leq k$ elements in $O$, so has cardinality $\leq k^{n+1}+k$.  This shows $\mathbb{K}_{n}$ is uniformly locally finite.   
\end{proof}

It follows that there is a complete $\aleph_{0}$-categorical theory $T_{n}^{*}$ extending $T_{n}$ whose models have age $\mathbb{K}_{n}$ \cite[Chapter 7]{hodges1993model}.  By the uniform local finiteness of $\mathbb{K}_{n}$, $T_{n}^{*}$ has quantifier-elimination so $T_{n}^{*}$ is the model completion of $T_{n}$.  Let $\mathbb{M}_{n} \models T_{n}^{*}$ be a monster model.  

\begin{defn}
Define a ternary relation $\ind^{*}$ on small subsets of $\mathbb{M}_{n}$ by:  $a \ind^{*}_{C} b$ if and only if
\begin{enumerate}
\item $\text{dcl}(aC)/E \cap \text{dcl}(bC) /E \subseteq \text{dcl}(C)/E$.
\item $\text{dcl}(aC) \cap \text{dcl}(bC) \subseteq \text{dcl}(C)$.  
\end{enumerate}
where $X/E = \{[x]_{E} :x \in X\}$ denotes the collection of $E$-classes represented by an element of $X$.  
\end{defn}

\begin{lem}\label{independencetheoremfor2functions}
The relation $\ind^{*}$ satisfies the independence theorem over structures:  if $M \models T_{n}$ (not necessarily $T^{*}_{n}$), $a \equiv_{M} a'$, $a \ind^{*}_{M} B$, $a' \ind^{*}_{M} C$ and $B \ind^{*}_{M} C$ then there is $a''$ with $a'' \equiv_{MB} a$, $a'' \equiv_{MC} a'$, and $a'' \ind^{*}_{M} BC$.  
\end{lem}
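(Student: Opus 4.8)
The plan is to prove the Lemma by an explicit three-fold strong amalgamation of structures, then read off $a''$ inside $\mathbb{M}_{n}$. The guiding observation is that the three hypotheses $a \ind^{*}_{M} B$, $a' \ind^{*}_{M} C$ and $B \ind^{*}_{M} C$ say precisely that, inside $\mathbb{M}_{n}$, the substructures $\langle MB\rangle$, $\langle MC\rangle$ and $\langle Ma\rangle$ pairwise meet exactly in $\langle M\rangle$ — both as sets of elements (condition (2) of $\ind^{*}$) and as sets of represented $E$-classes (condition (1)) — where $\langle Ma\rangle$ is identified with $\langle Ma'\rangle$ via an $M$-isomorphism $\sigma_{0}$ with $\sigma_{0}(a)=a'$; such a $\sigma_{0}$ exists because $a \equiv_{M} a'$ and $T_{n}^{*}$ has quantifier elimination, so that $\text{dcl}(X)=\langle X\rangle$. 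These disjointness conditions are exactly what makes the amalgamation below legitimate.

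First I would reduce the statement to a construction problem. Since $\mathbb{K}_{n}$ is a strong amalgamation class, $\mathbb{M}_{n}$ is a monster model of its model completion $T_{n}^{*}$, and $T_{n}^{*}$ has quantifier elimination, it suffices to produce an $L_{n}$-structure $D \models T_{n}$ containing $\langle MBC\rangle$ as a substructure, together with a tuple $a''$, such that, as substructures of $D$: $\langle MBa''\rangle \cong \langle MBa\rangle$ by a map fixing $MB$ with $a'' \mapsto a$; $\langle MCa''\rangle \cong \langle MCa'\rangle$ by a map fixing $MC$ with $a'' \mapsto a'$; and $a'' \ind^{*}_{M} BC$ holds in $D$. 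Then $D$ embeds into $\mathbb{M}_{n}$ over $\langle MBC\rangle$ (standard: $\mathbb{K}_{n}$ has the amalgamation property and $\mathbb{M}_{n}$ is saturated and homogeneous); embeddings of $L_{n}$-structures preserve generated substructures and the set of realized $E$-classes, so the image $a''$ retains all three properties in $\mathbb{M}_{n}$, and quantifier elimination upgrades the isomorphisms over $MB$ and $MC$ to $a'' \equiv_{MB} a$ and $a'' \equiv_{MC} a'$.

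To build $D$, take its $F$-sort to be the amalgam of the $F$-sorts of $\langle MB\rangle$, $\langle MC\rangle$ and $\langle Ma\rangle$ over that of $\langle M\rangle$, disjoint outside $F(\langle M\rangle)$ (possible by the three hypotheses together with the identification $\sigma_{0}$), and take the set of $E^{D}$-classes to be the analogous amalgam of the class-sets over that of $\langle M\rangle$, disjoint outside the classes met by $\langle M\rangle$ (using condition (1) of the hypotheses). Each class $e$ is populated by the $O$-elements the three pieces put into it, together with the values of the selectors, which I define as follows: if $\bar f$ lies entirely in one of the pieces $\langle MBa\rangle$, $\langle MCa'\rangle$, $\langle MBC\rangle$ and $e$ is met by that piece, let $\text{eval}^{D}(\bar f, e)$ be the value prescribed by that piece; for every remaining pair $(\bar f, e)$, use the trivial selector — a fixed representative $r_{e}$ of $e$, chosen inside $\langle M\rangle$ whenever $e$ is an $M$-class. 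Then one checks: this is well-defined (the only forced values come from pieces overlapping only in $\langle M\rangle$, where they already agree); $\text{eval}^{D}$ is a system of selectors for $E^{D}$; $D$ is uniformly locally finite, hence $D \models T_{n}$; and each of the three pieces embeds into $D$ as a full substructure realizing exactly its own generated substructure and represented $E$-classes (here condition (1) is used again, to see $E^{D}$ merges no two distinct $E$-classes of a single piece). Granting this, $\langle MBa''\rangle^{D}=\langle MBa\rangle$ and $\langle MCa''\rangle^{D}=\langle MCa'\rangle$ are immediate, and $a'' \ind^{*}_{M} BC$ follows since $\langle Ma''\rangle \cap \langle MBC\rangle = \langle Ma\rangle \cap \langle MBC\rangle \subseteq \langle Ma\rangle \cap (\langle MBa\rangle \cap \langle MBC\rangle) = \langle Ma\rangle \cap \langle MB\rangle = \langle M\rangle$ (using $a \ind^{*}_{M} B$), with the identical computation at the level of $E$-classes.

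The main obstacle is the bookkeeping in the definition of $\text{eval}^{D}$: checking that the three selector systems are compatible on their overlaps and that extending by the trivial selector on all ``mixed'' tuples keeps $D$ a model of $T_{n}$. This is manageable precisely because $\text{eval}$ always returns an element of the input's own $E$-class, so on any pair $(\bar f, e)$ not governed by a single piece there is complete freedom; the only genuine constraints come from the individual pieces, and those agree wherever two of them overlap, which — by the three $\ind^{*}$-hypotheses — is only inside $\langle M\rangle$. Thus the entire conceptual content of the proof is the identification of the hypotheses with the disjointness conditions needed for the $3$-amalgamation, and the remainder is routine verification.
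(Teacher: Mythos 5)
Your proof is correct and follows the same route as the paper: an explicit three-fold amalgam $D\models T_n$ of $\langle MBa\rangle$, $\langle MCa'\rangle$ (identified along a $\sigma_0\in\mathrm{Aut}(\mathbb{M}_n/M)$ with $\sigma_0(a)=a'$) and $\langle MBC\rangle$, with trivial selectors on the remaining $\mathrm{eval}$-pairs, embedded into $\mathbb{M}_n$ over $\langle MBC\rangle$ by model-completeness and saturation; the paper carries this out precisely using formal new elements $U,V$ for $\mathrm{dcl}(aB)\setminus B$ and $\mathrm{dcl}(a'C)\setminus C$. One small correction to your justification of well-definedness of $\mathrm{eval}^D$: the three big pieces overlap pairwise (after identification) in $\langle Ma\rangle$, $\langle MB\rangle$, $\langle MC\rangle$ rather than only in $\langle M\rangle$; the $\mathrm{eval}$-values still agree there, but for two different reasons that the paper checks explicitly — $\langle MBa\rangle$ and $\langle MBC\rangle$ (resp.\ $\langle MCa'\rangle$ and $\langle MBC\rangle$) agree on $\langle MB\rangle$ (resp.\ $\langle MC\rangle$) because both are substructures of $\mathbb{M}_n$, while $\langle MBa\rangle$ and the identified copy of $\langle MCa'\rangle$ agree on the copy of $\langle Ma\rangle$ because $\sigma_0$ is an isomorphism.
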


\begin{proof}
We may assume $M$ is a substructure of $\mathbb{M}_{n}$, $M \subseteq B,C$ and that $B$ and $C$ are definably closed.  Write $a = (d_{0},\ldots, d_{k-1},e_{0},\ldots, e_{l-1})$ with $d_{i} \in F$ and $e_{j} \in O$ and likewise 

\noindent $a' = (d'_{0}, \ldots, d'_{k-1}, e'_{0}, \ldots, e'_{l-1})$.  Fix an automorphism $\sigma \in \text{Aut}(\mathbb{M}_{n}/M)$ with $\sigma(a) = a'$.  Let $U = \{u_{f} : f \in \text{dcl}(aB) \setminus B\}$ and $V = \{v_{f} : f \in \text{dcl}(a'C) \setminus C\}$ denote a collection of new formal elements with $u_{h} = v_{\sigma(h)}$ for all $h \in \langle aM \rangle \setminus B$.  Let, then, $a_{*}$ be defined by 
$$
a_{*} = (u_{d_{0}}, \ldots, u_{d_{k-1}}, u_{e_{0}}, \ldots, u_{e_{l-1}}) = (v_{d'_{0}}, \ldots, v_{d'_{k-1}}, v_{e'_{0}}, \ldots, v_{e'_{l-1}}).
$$
We will construct by hand an $L$-structure $D$ extending $\langle BC \rangle$ with domain $UV\langle BC \rangle$ in which $a^{*} \equiv_{B} a$, $a^{*} \equiv_{C} a'$ and $a^{*} \ind^{*}_{M} BC$. 

There is a bijection $\iota_{0}: \text{dcl}(aB) \to BU$ given by $\iota_{0}(b) = b$ for all $b \in B$ and $\iota_{0}(f) = u_{f}$ for all $f \in \text{dcl}(aB) \setminus B$.  Likewise, we have a bijection $\iota_{1}: \text{dcl}(a'C) \to CV$ given by $\iota_{1}(c) = c$ for all $c \in C$ and $\iota_{1}(f) = v_{f}$ for all $f \in \text{dcl}(a'C) \setminus C$.  The union of the images of these functions is the domain of the structure $D$ to be constructed and their intersection is $\iota_{0}(\langle aM \rangle) = \iota_{1}(\langle a'M \rangle)$.  Consider $BU$ and $CV$ as $L_{n}$-structures by pushing forward the structure on $\text{dcl}(aB)$ and $\text{dcl}(a'C)$ along $\iota_{0}$ and $\iota_{1}$, respectively.  Note that $\iota_{0}|_{\langle aM \rangle} = (\iota_{1} \circ \sigma)|_{\langle aM \rangle}$.    

We are left to show that we can define an $L_{n}$-structure on $UV\langle BC \rangle$ extending that of $BU$, $CV$, and $\langle BC \rangle$ in such a way as to obtain a model of $T^{*}_{n}$.  To begin, interpret the predicates by $O^{D} = O^{BU} \cup O^{CV} \cup O^{\langle BC \rangle}$ and $F^{D} = F^{BU} \cup F^{CV} \cup F^{\langle BC \rangle}$.  Let $E^{D}$ be defined to be the equivalence relation generated by $E^{BU}$, $E^{CV}$, and $E^{\langle BC \rangle}$.  The interpretation of the predicates is well-defined since if $f$ is an element of $\iota_{0}(\langle aM \rangle) = \iota_{1}(\langle a'M \rangle)$ then $\iota_{0}^{-1}(f)$ is in the predicate $O$ if and only if $\iota_{1}^{-1}(f)$ is as well, and, moreover, it is easy to check that our assumptions on $a,a',B,C$ entail that no pair of inequivalent elements in $BU$, $CV$, or $\langle BC \rangle$ become equivalent in $D$.  
%We claim that no inequivalent elements in $E^{BU}$ become equivalent in $E^{D}$.  Note that every equivalence class in $E^{BU}$ is represented by an element of $B$ or an element of $a_{*}$.  As $a \ind^{*}_{M} B$ and $a' \ind^{*}_{M} C$, if an element $u_{e_{j}}$ of $a_{*}$ is equivalent to an element of $B$ in $E^{BU}$ or to an element of $C$ in $E^{CV}$, then there must be an element $m \in M$ so that $\models E(e_{j},m)$ and $\models E(e'_{j},m)$ since $a \equiv_{M} a'$.  It follows if an element is equivalent to an element of $a_{*}$ in $E^{D}$, then it was already equivalent to it in either $E^{BU}$ or $E^{CV}$.  Finally, if an element $b \in B$ is equivalent in $E^{D}$ to another element $b'$ but not equivalent in $E^{BU}$, then, by definition of transitive closure, $b$ must be equivalent in $E^{CV}$ to an element of $CV$.  By the remarks above, this can only happen if $b$ is equivalent to an element of $C$, which in turn can only happen if $b$ is equivalent to an element of $M$, a contradiction.  Repeating the argument in the other cases show that no two elements which are inequivalent in $E^{BU}$, $E^{CV}$, or $E^{\langle BC \rangle^{\mathbb{M}}}$ become equivalent in $E^{D}$.      

All that is left is to define the function $\text{eval}^{D}$ extending $\text{eval}^{BU} \cup \text{eval}^{CV} \cup \text{eval}^{\langle BC \rangle}$. We first claim that $\text{eval}^{BU} \cup \text{eval}^{CV} \cup \text{eval}^{\langle BC \rangle}$ is a function.  The intersection of the domains of the first two functions is (in a Cartesian power of)  $\iota_{0}(\langle aM \rangle) = \iota_{1}(\langle aM \rangle)$.  If $b,b'$ are in this intersection, we must show
$$
\text{eval}^{BU}(b,b') = c \iff \text{eval}^{CV}(b,b') = c.
$$
Choose $b_{0},b'_{0}, c_{0} \in \langle a M \rangle$ and $b_{1},b'_{1},c_{1} \in \langle a'M \rangle$ with $\iota_{i}(b_{i},b'_{i},c_{i}) = (b,b',c)$ for $i = 0,1$.  Then since $\iota_{0} = \iota_{1} \circ \sigma$ on $\langle aM \rangle$, we have 
\begin{eqnarray*}
\mathbb{M}_{n} \models \text{eval}(b_{0},b'_{0}) = c_{0} &\iff& \mathbb{M}_{n} \models \text{eval}(\sigma(b_{0}),\sigma(b'_{0})) = \sigma(c_{0}) \\
&\iff& \mathbb{M}_{n} \models \text{eval}(b_{1},b'_{1}) = c_{1}.
\end{eqnarray*}
Since $\text{eval}^{BU}$ and $\text{eval}^{CV}$ are defined by pushing forward the structure on $\langle aB\rangle$ and $\langle a'C \rangle$ along $\iota_{0}$ and $\iota_{1}$, respectively, this shows that $\text{eval}^{BU} \cup \text{eval}^{CV}$ defines a function.  Now the intersection of $\langle BC \rangle$ with $BU \cup CV$ is $BC$ and, by construction, all 3 functions agree on this set.  So the union defines a function.  

Choose a complete set of $E^{D}$-class representatives $\{d_{i}: i < \alpha\}$ so that if $d_{i}$ represents an $E^{D}$-class that meets $M$ then $d_{i} \in M$.  If $e \in O^{D}$ is $E^{D}$-equivalent to some $e'$ and $(f,e')$ is in the domain of $\text{eval}^{BU} \cup \text{eval}^{CV} \cup \text{eval}^{\langle BC \rangle}$, define $\text{eval}^{D}(f,e)$ to be the value that this function takes on $(f,e')$.  On the other hand, if $f \in (F^{D})^{n} \setminus ((F^{BU})^{n} \cup (F^{CV})^{n} \cup (F^{\langle BC \rangle})^{n})$ or $e$ is not $E^{D}$-equivalent to any element on which $\text{eval}^{D}(f,-)$ has already been defined, put $\text{eval}^{D}(f,e) = d_{i}$ for the unique $d_{i}$ which is $E^{D}$-equivalent to $e$.  This now defines $\text{eval}^{D}$ on all of $(F^{D})^{n} \times O^{D}$ and, by construction, $\text{eval}^{D}(f,-)$ is a selector function for $E^{D}$ for all $f \in (F^{D})^{n}$.  This completes the construction of $D$ and we have shown $D$ is a model of $T_{n}$.  By model-completeness and saturation, $D$ embeds into $\mathbb{M}_{n}$ over $BC$.  If we can show $a_{*} \ind^{*}_{M} BC$ in $D$, then this will be true for the image of $a_{*}$ in $D$.

We have already argued that $BU$ and $CV$ are substructures of $D$\textemdash it follows that every $E^{D}$-class represented by an element of $a_{*}$ can only be equivalent to an element of $B$ or $C$ if it is equivalent to an element of $M$.  Moreover, our construction has guaranteed that $\langle a_{*}M \rangle^{D} \cap \langle BC \rangle \subseteq BU \cap \langle BC \rangle^{D} \subseteq B$ and, by similar reasoning, $\langle a_{*}M \rangle \subseteq C$.  This implies $\langle a_{*} M \rangle^{D} \cap \langle BC \rangle B \cap C \subseteq M$, so $a_{*} \ind^{*}_{M} BC$.  
\end{proof}

\begin{prop}\label{kimindependencefor2functions}
The theory $T_{n}^{*}$ is NSOP$_{1}$ and, moreover, if $M \models T_{n}^{*}$, then $a \ind^{*}_{M} b$ if and only if $a \ind^{K}_{M} b$.  
\end{prop}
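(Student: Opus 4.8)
The plan is to apply the Kim--Pillay-style criterion, Theorem \ref{criterion}, to the relation $\ind^{*}$: once all six of its axioms are verified for $\ind^{*}$ over models of $T^{*}_{n}$, the statement that $T^{*}_{n}$ is NSOP$_{1}$ and that $\ind^{*}$ coincides with $\ind^{K}$ over models follows at once. The governing fact throughout is that, since $T^{*}_{n}$ has quantifier elimination and $\mathbb{K}_{n}$ has the strong amalgamation property, $\text{dcl}(X) = \text{acl}(X) = \langle X \rangle$ for every $X$, so both clauses defining $\ind^{*}$ only concern which elements and which $E$-classes are generated by terms from the two sides. One also records the explicit shape of $\langle aM \rangle$ over a model $M$: beyond $M$ it contains finitely many new $F$-elements, finitely many new $E$-classes (exactly those of the objects occurring in $a$), and new objects of the form $\text{eval}(\bar f, d)$, which are $E$-equivalent to $d$ and hence introduce no further new $E$-classes.

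Axioms (2), (3), (4) — existence over models, monotonicity, symmetry — are immediate from the definition: $\langle MC \rangle = \langle M \rangle$ for a model $M$; $\langle aC \rangle \subseteq \langle aa'C \rangle$ and likewise for $E$-classes; and the conditions are symmetric in $a,b$. Axiom (5), the independence theorem, is exactly Lemma \ref{independencetheoremfor2functions} applied with the base a model of $T^{*}_{n}$ (in particular a model of $T_{n}$). For axiom (1), strong finite character: if $a \nind^{*}_{M} b$, then one of the two clauses fails, so there are terms $t,s$ and finite tuples $\bar m, \bar m'$ from $M$ with either $t(a,\bar m) = s(b,\bar m') \notin M$, or $t(a,\bar m), s(b,\bar m')$ objects with $E(t(a,\bar m), s(b,\bar m'))$ and $[t(a,\bar m)]_{E} \notin M/E$. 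I would take $\varphi(x;b)$ to be $t(x,\bar m) = s(b,\bar m')$, respectively $E(t(x,\bar m), s(b,\bar m'))$; the crucial observation is that the colliding element (resp.\ $E$-class) $s(b,\bar m')$ is pinned down by $b$ and parameters from $M$, so it is independent of the realization, and any $a' \models \varphi(x;b)$ again witnesses $a' \nind^{*}_{M} b$.

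The heart of the proof is axiom (6), witnessing, for which I would use the remark after Theorem \ref{criterion} permitting a single global coheir. Let $(b_{i})_{i<\omega}$ be a Morley sequence over $M$ in a global coheir extending $\text{tp}(b/M)$, with $b_{0} = b$; the claim is that the formula $\varphi(x;b)$ above is inconsistent along it. The key computation is that a coheir sequence is $\ind^{*}$-independent in the strongest sense: for $i<j$, if $c \in \langle Mb_{i}\rangle \cap \langle Mb_{j}\rangle$, writing $c = u(b_{i},\bar m_{1}) = v(b_{j},\bar m_{2})$, the equation over $Mb_{i}$ is satisfied by $b_{j}$, so by finite satisfiability in $M$ there is $m'' \in M$ with $u(b_{i},\bar m_{1}) = v(m'',\bar m_{2}) \in M$, whence $c \in M$ — and analogously with $E$-equivalence in place of equality, giving $[c]_{E} \in M/E$. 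Since $\text{tp}(b_{i}/M) = \text{tp}(b/M)$ and $M$, $M/E$ are $\text{Aut}(\mathbb{M}/M)$-invariant, while $s(b,\bar m') \notin M$ (resp.\ $[s(b,\bar m')]_{E} \notin M/E$) by our choice of witness, it follows that the elements $s(b_{i},\bar m')$ are pairwise distinct (resp.\ the classes $[s(b_{i},\bar m')]_{E}$ pairwise distinct). Hence no single $a^{*}$ can have $t(a^{*},\bar m)$ equal to, resp.\ $E$-equivalent to, all of them, so $\{\varphi(x;b_{i}) : i < \omega\}$ is inconsistent. This verifies axiom (6), and Theorem \ref{criterion} delivers the proposition.

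I expect the main obstacle to be the combinatorial bookkeeping behind axioms (1) and (6): one must be careful that a failure of $\ind^{*}$ is genuinely captured by a quantifier-free formula of the ``collision'' form, and in particular that the element or $E$-class causing the failure can be displayed as a term in the parameters $b$ together with finitely many parameters from $M$. This relies on the explicit description of $\text{dcl}$ in $T^{*}_{n}$ recalled in the first paragraph — especially that new objects $\text{eval}(\bar f,d)$ do not spawn new $E$-classes — and on keeping the roles of the two sides straight, since the whole argument turns on the witness being anchored to $b$ rather than to the realization $x$.
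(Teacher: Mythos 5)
Your proposal is correct and follows essentially the same route as the paper: verify the axioms of Theorem \ref{criterion} for $\ind^{*}$, quoting Lemma \ref{independencetheoremfor2functions} for the independence theorem and checking witnessing by showing the ``collision'' formulas ($x=b'$ or $E(x,b')$ with $b'\in\text{dcl}(bM)\setminus M$, resp.\ class not meeting $M$) are $2$-inconsistent along generic sequences. The only (harmless) difference is that you restrict to coheir sequences and argue via finite satisfiability that $\langle Mb_i\rangle\cap\langle Mb_j\rangle=M$, whereas the paper runs the same inconsistency check directly for an arbitrary $M$-invariant Morley sequence using invariance.
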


\begin{proof}
In Lemma \ref{independencetheoremfor2functions}, we showed $\ind^{*}$ satisfies the independence theorem over a model, and the other conditions (1)-(4) in Theorem \ref{criterion} are clear for $\ind^{*}$.  To show (6), notice that if $A \nind^{*}_{M} B$ with $A,B$ definably closed and containing $M$, then either there is some $a \in A$ and $b \in B$ so that $\models E(a,b)$ and the $E$-class of $b$ does not meet $M$ or $a = b$ for some $b \not\in M$.  Suppose $(b_{i})_{i < \omega}$ is a Morley sequence in some global $M$-invariant $q \supseteq \text{tp}(b/M)$.  If the class of $b$ does not meet $M$, then $\neg E(b_{i},b_{j})$ for $i \neq j$ by $M$-invariance so $\{E(x;b_{i}) : i < \omega\}$ is $2$-inconsistent.  Likewise, if $b$ is not in $M$, then $b_{i} \neq b_{j}$ for $i \neq j$ so $\{x = b_{i} : i < \omega\}$ are $2$-inconsistent.  It follows that $\ind^{*} = \ind^{K}$ over models.  
\end{proof}

We note that $\ind^{K}$ satisfies (a form of) local character in $T^{*}_{1}$:

\begin{prop} \label{localcharexample}
For any model $N \models T^{*}_{1}$ and $p \in S(N)$, there is a countable $M \prec N$ so that $p(x)$ does not Kim-fork over $M$.
\end{prop}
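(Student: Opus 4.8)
The plan is to realize $p$ by a tuple $a$, invoke the explicit description of Kim-independence in $T^{*}_{1}$ from Proposition \ref{kimindependencefor2functions}, and build $M$ as a countable elementary submodel of $N$ that is closed under the definable closure of $a$ over it. The preliminary observation is the shape of $\text{dcl}$ in $T^{*}_{1}$: since $T^{*}_{1}$ eliminates quantifiers and $\mathbb{K}_{1}$ has strong amalgamation, $\text{dcl}(X)$ is the $L_{1}$-substructure generated by $X$, and since $\text{eval}(f,\text{eval}(g,o)) = \text{eval}(f,o)$ whenever $E(\text{eval}(g,o),o)$, this substructure has $F$-part $X_{F}$ and $O$-part $X_{O} \cup \{\text{eval}(f,o) : f \in X_{F},\, o \in X_{O}\}$. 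In particular $\text{eval}$ creates no new $E$-classes, so $\text{dcl}(X)/E = X_{O}/E$.

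It suffices to find a countable $M \prec N$ with $a \ind^{*}_{M} b$ for every finite tuple $b$ from $N$: then $a \ind^{K}_{M} b$ for all such $b$ by Proposition \ref{kimindependencefor2functions}, and since a Kim-forking formula in $\text{tp}(a/N)$ over $M$ would use only finitely many parameters from $N$, this gives that $p = \text{tp}(a/N)$ does not Kim-fork over $M$. Because $\text{dcl}(bM) \subseteq N$ and $\text{dcl}(bM)/E \subseteq N/E$ for finite $b \subseteq N$, it is in turn enough to arrange
\[
\text{dcl}(aM) \cap N \subseteq M \qquad\text{and}\qquad \big(\text{dcl}(aM)/E\big) \cap \big(N/E\big) \subseteq M/E .
\]
I would build $M = \bigcup_{k<\omega} M_{k}$ as an elementary chain of countable submodels of $N$, closing off in the style of the proof of Lemma \ref{lem:countable subset for dividing}. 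Into $M_{0}$ put, for each $O$-coordinate $a^{O}_{i}$ of $a$, a witness $n \in N$ with $E(a^{O}_{i},n)$ when $p$ implies such exists, and for each coordinate $a^{F}_{j}$, each coordinate $a^{O}_{i}$, and each pair $(a^{F}_{j},a^{O}_{i})$, a witness $n\in N$ when $p$ implies $a^{F}_{j}=n$, resp. $a^{O}_{i}=n$, resp. $\text{eval}(a^{F}_{j},a^{O}_{i})=n$. Given $M_{k}$, let $M_{k+1} \prec N$ be countable, containing $M_{k}$ and a witness $n \in N$ for every $F$-coordinate $a^{F}_{j}$ of $a$ and every $m \in (M_{k})_{O}$ with $p$ implying $\text{eval}(a^{F}_{j},m)=n$.

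To verify the two displayed inclusions for $M = \bigcup_{k}M_{k}$, one unwinds the description of $\text{dcl}(aM)$: the $E$-class inclusion is immediate since $\text{dcl}(aM)/E = (a_{O}\cup M_{O})/E$ and every class $[a^{O}_{i}]$ meeting $N/E$ was forced into $M/E$ at stage $0$; and an element of $\text{dcl}(aM)\setminus M$ is a coordinate of $a$, or $\text{eval}(a^{F}_{j},a^{O}_{i})$, or $\text{eval}(a^{F}_{j},m)$ with $m\in M_{O}$, or $\text{eval}(m',a^{O}_{i})$ with $m'\in M_{F}$. In the first three cases a witness placed in some $M_{k}$ already names the element if it lies in $N$, and in the last case $E(\text{eval}(m',a^{O}_{i}),a^{O}_{i})$ forces $[a^{O}_{i}]\in N/E$, hence $E(a^{O}_{i},n)$ for some $n\in M_{0}$ and $\text{eval}(m',a^{O}_{i}) = \text{eval}(m',n)\in M$; in all cases the element is already in $M$. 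The one genuinely non-formal point \textemdash{} the main obstacle \textemdash{} is seeing that $\text{dcl}(aM)$ stays small: the potentially dangerous elements $\text{eval}(a^{F}_{j},m)$ arise only for $m$ in the definable closure of $aM$, i.e. for $m\in M_{O}$ and not for arbitrary $m \in N_{O}$, which is exactly what lets the $\omega$-stage closing-off terminate with a countable $M$; dually, the $\text{eval}(m',a^{O}_{i})$ terms cost nothing once the $E$-class condition is in place. (If the variable tuple of $p$ is countably infinite the same construction applies verbatim, with countably many witnesses added per stage.)
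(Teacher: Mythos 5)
Your proposal is correct and follows essentially the same approach as the paper's proof: realize $p$ by $a$, characterize $\ind^{K}$ via $\ind^{*}$ (Proposition \ref{kimindependencefor2functions}), and build $M$ as a union of a countable elementary chain $(M_{k})_{k<\omega}$ of submodels of $N$ that closes off so that $\text{dcl}(aM_{k}) \cap N \subseteq M_{k+1}$ and similarly for $E$-classes. The paper achieves this closing-off simply by downward L\"owenheim--Skolem (without the explicit case-by-case listing of $\text{eval}$-terms you give), and it invokes the $\ind^{*}$-characterization directly with the full $N$ as the right-hand tuple rather than reducing to finite $b \subseteq N$ via finite character of Kim-forking; both routes are fine and amount to the same verification.
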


\begin{proof}
We use the characterization of $\ind^{K}$ from Proposition \ref{kimindependencefor2functions}.  Let $a \models p$ and choose $M_{0} \prec N$ to be an arbitrary elementary submodel.  By induction, construct an elementary chain $(M_{i})_{i < \omega}$ of countable elementary submodels of $N$ so that $\text{dcl}(aM_{i}) \cap N \subseteq M_{i+1}$ and every equivalence class in $\text{dcl}(aM_{i})/E \cap N/E$ is represented by an element of $M_{i+1}$.  Since $M_{i}$ is countable, $\text{dcl}(aM_{i})$ is countable, there is no problem in choosing $M_{i+1}$, by downward L\"owenheim-Skolem.  Let $M = \bigcup_{i < \omega} M_{i}$.  We claim $a \ind^{K}_{M} N$.  Given $c \in \text{dcl}(aM) \cap N$, there is $n$ so that $c \in \text{dcl}(aM_{n}) \cap N$ hence $c \in M_{n} \subseteq M$.  This shows $\text{dcl}(aM) \cap N \subseteq M$.  Arguing similiarly, we have $\text{dcl}(aM) / E \cap N /E \subseteq M/E$.  This shows $a \ind^{K}_{M} N$.  
\end{proof}

\begin{lem}\label{notcosimple}
Modulo $T^{*}_{1}$, the formula $O(x)$ axiomatizes a complete type over $\emptyset$ which is not co-simple.  
\end{lem}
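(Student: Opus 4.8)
The plan is to verify Definition \ref{simplecosimpledef}(2) directly. First, $O(x)$ axiomatizes a complete type $p$ over $\emptyset$: by quantifier elimination for $T^{*}_{1}$, the type of an element over $\emptyset$ is determined by the quantifier-free type of the substructure it generates, and the substructure generated by a single element $b$ of sort $O$ is just $\{b\}$ (there are no $F$-elements to feed into $\mathrm{eval}$), so all elements of sort $O$ realize the same complete type, whose set of realizations is exactly the sort $O$. To see that $p$ is not co-simple I would exhibit a formula $\varphi(x;y)\in L(\emptyset)$ and a tree $(b_\eta)_{\eta\in\omega^{<\omega}}$ of realizations of $p$ witnessing that co-simplicity fails.

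The formula to use is $\varphi(x;y):=\bigl(\mathrm{eval}(x,y)=y\bigr)$, with $x$ a single variable of sort $F$ and $y$ of sort $O$; intuitively $\varphi(f;b)$ says that the selector function $\mathrm{eval}(f,-)$ picks out $b$ as the representative of its $E$-class. I would then build the tree $(b_\eta)_{\eta\in\omega^{<\omega}}$ of elements of sort $O$ in $\mathbb{M}_1$ so that, among the $b_\eta$, the $E$-classes are exactly the singleton $\{b_\emptyset\}$ together with the sets $\{b_{\rho\frown\langle i\rangle}:i<\omega\}$ for $\rho\in\omega^{<\omega}$; equivalently $b_\eta\mathrel{E}b_\nu$ iff $\eta=\nu$ or $\eta$ and $\nu$ have the same immediate predecessor. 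Such a configuration exists since its finitely generated substructures are just finite sets equipped with an equivalence relation and hence lie in $\mathbb{K}_1$, so the configuration embeds into the monster $\mathbb{M}_1$ by model-completeness and saturation. As every $b_\eta$ lies in sort $O$, we have $b_\eta\models p$ for all $\eta$.

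Two checks then finish the argument, with $k=2$. For fixed $\eta\in\omega^{<\omega}$, the elements $b_{\eta\frown\langle i\rangle}$, $i<\omega$, are pairwise $E$-equivalent and pairwise distinct, and a selector function takes a unique value on each $E$-class; hence no single $f\in F$ can satisfy $\varphi(f;b_{\eta\frown\langle i\rangle})$ for two distinct $i$, so $\{\varphi(x;b_{\eta\frown\langle i\rangle}):i<\omega\}$ is $2$-inconsistent. For a branch $\eta\in\omega^{\omega}$, the elements $b_{\eta|\alpha}$, $\alpha<\omega$, lie in pairwise distinct $E$-classes (as $\eta|\alpha$ is a proper initial segment of $\eta|\beta$ when $\alpha<\beta$), so there is no obstruction to a selector picking each of them within its class: every finite subset $\{\varphi(x;b_{\eta|\alpha}):\alpha<n\}$ is realized by some $f\in F$ (adjoining one $F$-element with $\mathrm{eval}(f,b_{\eta|\alpha})=b_{\eta|\alpha}$ stays in $\mathbb{K}_1$), and hence $\{\varphi(x;b_{\eta|\alpha}):\alpha<\omega\}$ is consistent by compactness (equivalently, one finds a single such $f$ by saturation of $\mathbb{M}_1$). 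By Definition \ref{simplecosimpledef}(2) this shows $p$ is not co-simple.

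There is no serious obstacle here; the only points requiring care are (i) confirming that the prescribed $E$-pattern on the tree is genuinely an equivalence relation on $\omega^{<\omega}$ — ``equal or sharing an immediate predecessor'' is reflexive, symmetric and transitive — and therefore realizable in $\mathbb{M}_1$, and (ii) noting that Definition \ref{simplecosimpledef}(2) imposes no indiscernibility requirement on the witnessing tree, so it suffices to produce $(b_\eta)$ with the two consistency properties above rather than an $s$-indiscernible one.
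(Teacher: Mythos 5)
Your proof is correct and takes essentially the same approach as the paper: both use the formula $\mathrm{eval}(x,y)=y$ and arrange $E$-classes so that siblings are $2$-inconsistent (same class) while branches are consistent (distinct classes). The only cosmetic difference is that the paper exhibits a TP$_2$ array $(a_{\alpha,\beta})$ with rows constant on $E$-classes and rows pairwise inequivalent, whereas you build the $\omega^{<\omega}$-indexed tree directly to match Definition~\ref{simplecosimpledef}(2) verbatim.
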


\begin{proof}
That $O(x)$ implies a complete type is clear from quantifier-elimination.  In $O(\mathbb{M}_{1})$, choose an array $(a_{\alpha,\beta})_{\alpha, \beta < \omega}$ of distinct elements so that, for all $\alpha< \alpha' < \omega$, given $\beta, \beta'$, $\mathbb{M}_{1} \models E(a_{\alpha,\beta},a_{\alpha,\beta'})$ and $\mathbb{M}_{1} \models \neg E(a_{\alpha,\beta},a_{\alpha',\beta'})$.  Let $\varphi(x;y)$ be the formula $\text{eval}(x,y) = y$.  It is now easy to check 
\begin{itemize}
\item For all functions $f: \omega \to \omega$, $\{\varphi(x;a_{\alpha, f(\alpha)}) : \alpha < \omega\}$ is consistent
\item For all $\alpha < \omega$, $\{\varphi(x;a_{\alpha, \beta}) : \beta < \omega\}$ is $2$-inconsistent,
\end{itemize}
so $\varphi(x;y)$ witnesses TP$_2$ with respect to parameters realizing $O(x)$.  This shows $O(x)$ is not co-simple.  
\end{proof}

\begin{lem}
Suppose $A \subseteq \mathbb{M}_{1}$.  Then $\text{acl}(A) = \langle A \rangle = A \cup \text{eval}(F(A) \times O(A))$.
\end{lem}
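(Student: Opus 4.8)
The plan is to establish the two equalities in turn: first the concrete description $\langle A \rangle = A \cup \text{eval}(F(A) \times O(A))$, and then $\text{acl}(A) = \langle A \rangle$.

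For the first equality, set $Y = A \cup \text{eval}(F(A) \times O(A))$, where $F(A) = A \cap F$ and $O(A) = A \cap O$; clearly $Y \subseteq \langle A \rangle$, so it is enough to check that $Y$ is an $L_1$-substructure, i.e. closed under $\text{eval}$ (the only nontrivial function symbol). Since $O$ and $F$ partition the universe and $\text{eval}$ takes values in $O$, we have $Y \cap F = F(A)$ and $Y \cap O = O(A) \cup \text{eval}(F(A) \times O(A))$. Given $f \in F(A)$ and $b \in Y \cap O$: either $b \in O(A)$, in which case $\text{eval}(f,b) \in Y$ by definition, or $b = \text{eval}(f',b')$ with $f' \in F(A)$ and $b' \in O(A)$, in which case the selector axioms give $E(b,b')$ and hence $\text{eval}(f,b) = \text{eval}(f,b') \in \text{eval}(F(A)\times O(A)) \subseteq Y$; inputs whose first coordinate is not in $F$ produce nothing new, by the convention governing $\text{eval}$ off $F \times O$. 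Thus iterated applications of $\text{eval}$ collapse to a single step, and $\langle A \rangle = Y$. The point here is simply that $\text{eval}(f,-)$ is constant on $E$-classes while $\text{eval}(f',b')$ is $E$-equivalent to $b'$.

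For the second equality, since $\langle A \rangle \subseteq \text{dcl}(A) \subseteq \text{acl}(A)$, I only need $\text{acl}(A) \subseteq \langle A \rangle$. By finite character of formulas and of terms, $\text{acl}(A) = \bigcup\{\text{acl}(A_0) : A_0 \subseteq A \text{ finite}\}$ and $\langle A \rangle = \bigcup\{\langle A_0 \rangle : A_0 \subseteq A \text{ finite}\}$, so I may assume $A$ is finite; then $B := \langle A \rangle$ is a finite member of $\mathbb{K}_1$ (uniform local finiteness, as shown above), and $\text{acl}(A) = \text{acl}(B)$ because $A \subseteq B \subseteq \text{dcl}(A)$. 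Suppose toward a contradiction that $b \in \text{acl}(B) \setminus B$, and put $C := \langle Bb \rangle \in \mathbb{K}_1$, a finite structure properly containing $B$ with $b \in C \setminus B$. Iterating the strong amalgamation property of $\mathbb{K}_1$ established above, for each $n$ I obtain $D_n \in \mathbb{K}_1$ containing $B$ together with embeddings $\iota_1,\dots,\iota_n : C \to D_n$ fixing $B$ pointwise and with $\iota_i(C) \cap \iota_j(C) = B$ for $i \neq j$; since $b \notin B$, the elements $\iota_1(b),\dots,\iota_n(b)$ are pairwise distinct.

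It then remains to transport this configuration into $\mathbb{M}_1$. As $\mathbb{M}_1$ is a saturated model of $T^{*}_1$, whose age is $\mathbb{K}_1$, and $D_n$ is a finite structure in $\mathbb{K}_1$ extending $B \subseteq \mathbb{M}_1$, the extension property of the Fra\"iss\'e limit \cite[Chapter 7]{hodges1993model} yields an embedding $j : D_n \to \mathbb{M}_1$ fixing $B$ pointwise. By quantifier elimination for $T^{*}_1$, each $j\iota_i(b)$ realizes $\text{tp}(b/B)$, and these realizations are pairwise distinct since their preimages $\iota_i(b)$ are. Hence $\text{tp}(b/B)$ has at least $n$ realizations in $\mathbb{M}_1$ for every $n$, contradicting $b \in \text{acl}(B)$; therefore $\text{acl}(B) = B$, i.e. $\text{acl}(A) = \langle A \rangle$. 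I do not expect a deep obstacle: the substantive bookkeeping is to confirm that strong amalgamation for the finite structures of $\mathbb{K}_1$ iterates to $n$-fold strong amalgamation over $B$ and that the resulting $D_n$ re-embeds into $\mathbb{M}_1$ over $B$ — both standard in the Fra\"iss\'e-theoretic setting above — together with the reduction to finite $A$ and the verification that iterating $\text{eval}$ gives nothing new.
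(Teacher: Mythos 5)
Your proof is correct and follows essentially the same route as the paper: the paper deduces $\text{acl}(A) = \langle A \rangle$ from SAP via the cited fact (Hodges, Theorem 7.1.8), which you simply unpack in full (reduce to finite $A$, iterate strong amalgamation to produce arbitrarily many realizations, re-embed via existential closedness), and the first equality is the same term-collapsing observation, with your $E$-class argument making explicit why iterating $\text{eval}$ produces nothing new. No gaps.
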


\begin{proof}
The equality of $\text{acl}(A)$ and $\langle A \rangle$ follows from SAP for $\mathbb{K}_{1}$ \cite[Theorem 7.1.8]{hodges1993model}.  The axioms of $T^{*}_{1}$ imply that every term of $L_{1}$ is equivalent to one of the form $x$ or $\text{eval}(x,y)$, so $\langle A \rangle = A \cup \text{eval}(F(A) \times O(A))$.   
\end{proof}

We will see that $\ind^{*}$ characterizes dividing when elements on the left-hand side come from $O$.  The following lemma is the key ingredient in proving this:

\begin{lem}\label{dividing}
Suppose $A = \text{dcl}(A) \subseteq \mathbb{M}_{1}$ and $A = \langle a,B \rangle$ for some $a \in O(A)$ and $B = \text{dcl}(B) \subseteq \mathbb{M}_{1}$, where $l(a) = 1$.  Given a sequence $(B_{i})_{i < N}$ of substructures of $\mathbb{M}_{1}$ isomorphic to $B$ over $C = \text{dcl}(C)$ where for $i \neq j$, $B_{i} \cap B_{j} = C$.  Then if $a \ind^{*}_{C} B$, then there is a structure $D \models T_{1}$ and some $a' \in D$ so that
\begin{enumerate}
\item $\langle (B_{i})_{i < N} \rangle \subseteq D$.
\item $\langle a',B_{i} \rangle^{D} \cong_{C} \langle a, B \rangle$ for all $i < N$.  
\end{enumerate}
\end{lem}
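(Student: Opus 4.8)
The plan is to construct $D$ directly, by hand, as an amalgam of $N$ isomorphic copies of $A=\langle a,B\rangle$ over $G:=\langle (B_{i})_{i<N}\rangle$, in which the $N$ copies of the single point $a$ are identified into one element $a'$ while all remaining new elements are amalgamated as freely as possible; the hypothesis $a\ind^{*}_{C}B$ is precisely what makes this identification consistent with the structure. Throughout I use $C=\text{dcl}(C)$ and the isomorphisms $\sigma_{i}\colon B\to B_{i}$ fixing $C$ to identify $C$ with its common image in $B$ and in every $B_{i}$, so $C\subseteq B,B_{i}$. Three preliminary observations: $T_{1}$ is a universal theory, so the substructure $G$ of $\mathbb{M}_{1}$ is a model of $T_{1}$; since $F$ is a sort and $B_{i}\cap B_{j}=C$ for $i\neq j$, we have $F(B_{i})\cap F(B_{j})=F(C)$, and $F(G)=\bigcup_{i<N}F(B_{i})$; and clause (2) of $a\ind^{*}_{C}B$ gives $a\in C$ whenever $a\in B$. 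So, disposing of the trivial case $a\in C$ (take $D=G$, $a'=a$), I may and do assume $a\notin B$.

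First I read off the structure of $A$ over $B$. Since $a\in O$ and $A=\langle a,B\rangle$, every element of $A\setminus B$ lies in the $E$-class of $a$ and has the form $\text{eval}(f,a)$ for some $f\in F(B)$ (terms of $L_{1}$ reduce to $x$ or $\text{eval}(x,y)$, and nested evaluations collapse by the selector law). I split according to whether the $E$-class of $a$ meets $C$. If it meets $C$, say $E(a,c)$ with $c\in C$, then $\text{eval}(f,a)=\text{eval}(f,c)\in B$ for all $f\in F(B)$, so $A=B\sqcup\{a\}$ with $a$ a new point of the class $[c]_{E}$. If it is disjoint from $C$, then clause (1) of $a\ind^{*}_{C}B$ forces it disjoint from $B$ as well, so $Y:=\{a\}\cup\{\text{eval}(f,a):f\in F(B)\}$ is a brand-new $E$-class of $A$ disjoint from $B$; write $Y_{C}:=\text{dcl}(aC)\setminus C=\{a\}\cup\{\text{eval}(f,a):f\in F(C)\}$ for its $C$-definable part.

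Now I define $D$. In the first case let $D:=G\sqcup\{a'\}$ with $a'$ in sort $O$, set $E^{D}(a',x)$ iff $E^{G}(c,x)$, let $\text{eval}^{D}(f,a'):=\text{eval}^{G}(f,c)$ for $f\in F(G)$, and let $\text{eval}^{D}$ agree with $\text{eval}^{G}$ elsewhere. In the second case let $D:=G\sqcup Y_{C}\sqcup\bigsqcup_{i<N}(Y\setminus Y_{C})_{i}$, where each $(Y\setminus Y_{C})_{i}$ is a private copy of $Y\setminus Y_{C}$ and $Y_{C}$ is included with the equalities it carries inside $A$; declare $K:=Y_{C}\sqcup\bigsqcup_{i<N}(Y\setminus Y_{C})_{i}$ to be a single new $E^{D}$-class, and for $x\in K$ define $\text{eval}^{D}(f,x)$ to be $\text{eval}^{A}(f,a)$ when $f\in F(C)$ and to be the copy-$i$ image of $\text{eval}^{A}(\sigma_{i}^{-1}(f),a)$ when $f\in F(B_{i})\setminus F(C)$ — this is unambiguous because the $F(B_{i})$ pairwise meet only in $F(C)$, and the value lands in the shared piece $Y_{C}$ exactly when $\text{eval}^{A}(\sigma_{i}^{-1}(f),a)\in Y_{C}$; let $\text{eval}^{D}$ agree with $\text{eval}^{G}$ off $K$. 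In either case a routine check shows $D\models T_{1}$: the predicates $O,F$ partition $D$; $E^{D}$ is an equivalence relation extending $E^{G}$, since the new class is disjoint from $G$; and each $\text{eval}^{D}(f,-)$ is a selector for $E^{D}$, because its value always lies in, and is constant along, $E^{D}$-classes. Moreover $G$ is a substructure of $D$, which gives clause (1) of the conclusion.

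Finally, set $a':=a$ (viewed inside $Y_{C}\subseteq D$ in the second case), and for each $i<N$ define $h_{i}\colon A\to D$ by $h_{i}|_{B}=\sigma_{i}$, $h_{i}(a)=a'$, and $h_{i}(\text{eval}^{A}(f,a))=\text{eval}^{D}(\sigma_{i}(f),a')$ for $f\in F(B)$. From the description of $A$ over $B$ — in particular that the class of $a$ is disjoint from $B$, respectively that $A=B\sqcup\{a\}$ with $\text{eval}(f,a)=\text{eval}(f,c)$ — one checks that $h_{i}$ is a well-defined injective homomorphism, the only coincidences among the $\text{eval}^{A}(f,a)$ that it must respect being exactly the ones built into $K$. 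Hence $h_{i}$ is an embedding, its image is $\langle h_{i}(a),h_{i}(B)\rangle^{D}=\langle a',B_{i}\rangle^{D}$, and $h_{i}$ fixes $C$; therefore $\langle a',B_{i}\rangle^{D}\cong_{C}\langle a,B\rangle$ for every $i<N$, which is clause (2). The main obstacle is the bookkeeping in the second case: the identifications among the elements $\text{eval}^{D}(f,a')$, $f\in F(G)$, must be chosen so that simultaneously $\text{eval}^{D}$ is well defined and a selector, no two distinct elements of $G$ get collapsed and the new class stays disjoint from $G$, and each $h_{i}$ is an isomorphism onto $\langle a',B_{i}\rangle^{D}$. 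The ``disjoint union over the $C$-part'' presentation of $K$ reconciles these demands, and the role of $a\ind^{*}_{C}B$ is exactly to force the $E$-class of $a$ to be disjoint from $B$ (or else already to meet $C$), which is what permits the new class to be handled cleanly and the copies $B_{i}$ to be glued with nothing beyond $C$ identified.
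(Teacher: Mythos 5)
Your proposal is correct and follows essentially the same route as the paper's proof: split according to whether the $E$-class of $a$ meets $C$, then build $D$ explicitly by adding either one new point equivalent to $c\in C$ or one new $E$-class, with $\text{eval}$ on the new class defined by transporting the structure of $\langle a,B\rangle$ along the $C$-isomorphisms $\sigma_{i}\colon B\to B_{i}$, exactly where the hypothesis $a\ind^{*}_{C}B$ is used. The only (harmless) difference is that the paper reuses a single copy of $Y=\{a\}\cup\text{eval}(F(B),a)$ for all $i$, whereas you glue $N$ private copies of $Y\setminus Y_{C}$ over the shared $C$-part $Y_{C}$, a slightly freer amalgam that requires the same verifications.
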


\begin{proof}
Suppose $A = \langle a, B \rangle$, $(B_{i})_{i < N}$ and $C$ are given as in statement, satisfying (1).  If $a \in C$, the lemma is clear so assume it is not, and therefore $a \not\in B$ by our assumption that $A \models a \neq b$ for all $b \in B \setminus C$.  Moreover, we may assume $B_{0} = B$.  Note that the underlying set of $A$ is $B \cup \{a\} \cup \text{eval}(F(B),a)$.  Let $X = \langle (B_{i})_{i < N} \rangle$.

\textbf{Case 1}:  $A \models E(a,c)$ for some $c \in C$.  In this case, the underlying set of $A$ is $B \cup \{a\} \cup \text{eval}(F(B),c) = B \cup \{a\}$. Let $D$ be the extension of $X$ with underlying set $X \cup \{a\}$ with relations interpreted so that $D \models a \in O \wedge E(a,c)$ and the function $\text{eval}$ defined to extend $\text{eval}^{X}$ and so that $\text{eval}^{D}(d,a) = \text{eval}^{X}(d,c)$ for all $d \in O^{D}$.  It is easy to check that this satisfies (2).  

\textbf{Case 2}:  $A \models \neg E(a,c)$ for all $c \in C$.  By our assumption that $A$ satisfies (1), it follows that $A \models \neg E(a,b)$ for all $b \in B$ and hence the underlying set of $A$ is the disjoint union of $B$ and $\{a\} \cup \text{eval}^{A}(F(B),a)$.  Let $Y = \{a\} \cup \text{eval}^{A}(F(B),a)$.  We will define an $L_{1}$-structure extending $X$ with underlying set $X \cup Y$.  Interpret the sorts $F^{D} = F^{X}$ and $O^{D} = O^{X} \cup Y$.  Define the equivalence relation so that $E^{X} \subset E^{D}$ and $Y$ forms one $E^{D}$-class.  

Fix for all $i < N$ a $C$-isomorphism $\sigma_{i} : B_{i} \to B_{0}$ (assume $\sigma_{0} = \text{id}_{B_{0}}$).  Note that $F^{X} = \bigcup_{i < N} F^{B_{i}}$.  Interpret $\text{eval}^{D}$ to extend $\text{eval}^{X}$ and so that, if $b \in F^{B_{i}}$ and $e \in Y$, 
$$
\text{eval}^{D}(b,e) = \text{eval}^{A}(\sigma_{i}(b),a).
$$
This defines $D \models T_{1}$ and, by construction, the map extending $\sigma_{i}$ and sending $a \mapsto a$ induces an isomorphism $\langle a,B_{i} \rangle^{D} \to \langle a, B_{0} \rangle^{D} = A$ for all $i < N$.  This completes the proof.   
\end{proof}

\begin{cor} \label{dividingmore}
Suppose $F$ is a substructure of $\mathbb{M}_{1}$.  If $a \in O(\mathbb{M}_{1})$ and $l(a) = 1$, then $a \ind^{d}_{F} B$ if and only if $a \ind^{*}_{F} B$.  
\end{cor}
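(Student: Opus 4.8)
The plan is to deduce the corollary from Lemma \ref{dividing} together with the characterization of $\ind^{K}$ as $\ind^{*}$ over models (Proposition \ref{kimindependencefor2functions}) and the basic facts relating dividing to symmetry via Fact \ref{forkingfacts}. First note that one direction is essentially formal: if $a \nind^{*}_{F} B$, then, since $\ind^{*}$ agrees with $\ind^{K}$ over models and $\ind^{d}$ implies $\ind^{K}$ over models by Fact \ref{forkingfacts}(3), a failure of $\ind^{*}$ should propagate to a failure of $\ind^{d}$ once we pass to a model; here one wants to be careful that $F$ need not be a model, so the cleanest route is to reduce to the case where the base is a model by base-extending $F$ to a model $M \supseteq F$ with $M \ind^{*}_{F} aB$ (using existence/extension-type properties of $\ind^{*}$, which follow from its explicit definition in terms of definable closure and $E$-classes) and use base monotonicity of $\ind^{d}$ (Fact \ref{forkingfacts}(2b), which holds for $\ind^{d}$ in any theory) to transfer. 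Alternatively, and more directly, I would argue: $a \nind^{*}_{F} B$ means there is either $b \in \text{dcl}(aF) \cap \text{dcl}(bF)$-type coincidence outside $\text{dcl}(F)$, or an $E$-class coincidence; in either case one can exhibit an $F$-indiscernible sequence in $\text{tp}(B/F)$ along which the witnessing formula ($x = c$ for some $c \in \text{dcl}(aF)\setminus \text{dcl}(F)$, or $E(x,c)$ for a class not meeting $F$) becomes inconsistent — this is exactly the computation appearing in the proof of Proposition \ref{kimindependencefor2functions}, and it shows $a \nind^{d}_{F} B$.

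The substantive direction is $a \ind^{*}_{F} B \implies a \ind^{d}_{F} B$, and this is where Lemma \ref{dividing} is the engine. Suppose $a \ind^{*}_{F} B$ with $a \in O(\mathbb{M}_1)$, $l(a)=1$. To show $\text{tp}(a/FB)$ does not divide over $F$, by Fact \ref{forkingfacts}(1) it suffices to take an arbitrary $F$-indiscernible sequence $(B_i)_{i<\omega}$ with $B_0 = B$ and find $a' \equiv_{FB} a$ with $(B_i)$ being $Fa'$-indiscernible; equivalently, by compactness, it suffices to show for each $N$ that $\bigcup_{i<N} \text{tp}(a/FB_i)$ is consistent. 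I would first replace $B$ by $A := \text{dcl}(aB) \cap \text{dcl}(B)$-enlarged version, i.e. work with $\langle a, \text{dcl}(FB) \rangle$, so that the hypotheses of Lemma \ref{dividing} are literally met with "$B$" there taken to be $\text{dcl}(FB)$ and "$C$" taken to be $\text{dcl}(F)$; note $a \ind^{*}_{F} B$ guarantees $\langle a, \text{dcl}(FB)\rangle = \text{dcl}(a\,\text{dcl}(FB))$ behaves as required and that $a \ind^{*}_{\text{dcl}(F)} \text{dcl}(FB)$. Since $(B_i)$ is $F$-indiscernible we may arrange the $\text{dcl}(FB_i)$ to be pairwise intersecting exactly in $\text{dcl}(F)$ (an $F$-indiscernible sequence can be taken $F$-independent in the weak sense needed — if not literally, extract a further indiscernible sequence or use that $\mathbb{M}_1$ is $\aleph_0$-categorical with SAP so generic amalgamation is available). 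Now Lemma \ref{dividing} produces a model $D \models T_1$ containing $\langle (B_i)_{i<N}\rangle$ and an element $a' \in D$ with $\langle a', B_i\rangle^D \cong_{\text{dcl}(F)} \langle a, \text{dcl}(FB)\rangle$ for all $i<N$. By model-completeness and saturation of $\mathbb{M}_1$, $D$ embeds into $\mathbb{M}_1$ over $\langle (B_i)_{i<N}\rangle$, and the image of $a'$ realizes $\text{tp}(a/FB_i)$ for every $i < N$. Hence $\bigcup_{i<N}\text{tp}(a/FB_i)$ is consistent, and letting $N \to \omega$ via compactness gives non-dividing.

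The main obstacle I anticipate is the bookkeeping around definable closure and the reduction to Lemma \ref{dividing}'s exact hypotheses: Lemma \ref{dividing} is stated for $A = \langle a, B\rangle$ with $B$ definably closed and with the $B_i$'s pairwise meeting exactly in $C$, whereas our $B$ is arbitrary and the base $F$ is not assumed a model. So the real work is (i) checking that $a \ind^{*}_{F} B$ is equivalent to $a \ind^{*}_{\text{dcl}(F)} \text{dcl}(FB)$ (this is immediate from the definition of $\ind^{*}$, which already only refers to definable closures), and (ii) arranging an $F$-indiscernible sequence whose terms' definable closures are suitably disjoint over $\text{dcl}(F)$ — here one may need to first extract, using Ramsey and compactness, an $F$-indiscernible sequence realizing the "generic" amalgamation pattern, which exists because the amalgamation in Lemma \ref{independencetheoremfor2functions}/the SAP for $\mathbb{K}_1$ lets us build arbitrarily long such sequences. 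Once these two points are in place, the rest is a direct application of Lemma \ref{dividing} plus the standard compactness characterization of dividing, with no further surprises.
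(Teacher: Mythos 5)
Your easy direction is fine, and you have correctly identified Lemma \ref{dividing} as the engine, but the main direction has a genuine gap exactly at the point where you invoke that lemma. You want to apply it over the base $C=\text{dcl}(F)$, and so you need the terms of the witnessing indiscernible sequence to satisfy $\text{dcl}(FB_i)\cap\text{dcl}(FB_j)=\text{dcl}(F)$; you propose to ``arrange'' this by extracting a further indiscernible sequence or by generic (SAP) amalgamation. This cannot work: to prove non-dividing you must verify consistency of $\{\varphi(x;c,b_i):i<\omega\}$ for an \emph{arbitrary, given} $F$-indiscernible sequence, and its terms can genuinely intersect above $\text{dcl}(F)$. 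For instance, if $b$ is a single $O$-element whose $E$-class is not represented in $F$ and all the $b_i$ lie in that single class, then for any function element $f\in F$ we have $\text{eval}(f,b_i)=\text{eval}(f,b_j)\notin\text{dcl}(F)$, so every pairwise intersection properly contains $\text{dcl}(F)$. Passing to a subsequence changes nothing (the common element persists), and replacing the sequence by a generic one only yields consistency along generic/invariant sequences, i.e.\ non-Kim-dividing rather than non-dividing --- precisely the distinction that matters in $T^{*}_{1}$, where $E(x,b)$ and $\text{eval}(x,b)=b$ divide but do not Kim-divide (cf.\ Propositions \ref{nouniversal} and \ref{forkingnotdividing}).

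The paper's proof repairs this by changing the base of the amalgamation rather than the sequence. One argues by contradiction with a formula $\varphi(x;c,b)\in\text{tp}(a/FB)$, $c\in F$, and an $F$-indiscernible sequence $(b_i)_{i<\omega}$ with $\{\varphi(x;c,b_i):i<\omega\}$ $k$-inconsistent. Using $a\ind^{*}_{F}B$, one first enlarges $c$ so that every $E$-class represented by both $a$ and $b$ has a representative in $c$; one then sets $B_i=\langle c,b_i\rangle$ and takes $C$ to be the common pairwise intersection $B_i\cap B_j$ (well-defined by $F$-indiscernibility and containing $c$), checks that $a\ind^{*}_{C}B_0$, and applies Lemma \ref{dividing} over this possibly larger base $C$ with $N=k+1$; embedding the resulting amalgam into $\mathbb{M}_1$ over $\langle(B_i)_{i<k+1}\rangle$ and using quantifier elimination contradicts $k$-inconsistency. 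So the missing idea in your write-up is exactly this: the amalgamation must be performed over the actual intersection of the sequence's terms (this is where Case 1 of Lemma \ref{dividing}, with $a$ equivalent to an element of $C$, gets used), not over $\text{dcl}(F)$, and the hypothesis $a\ind^{*}_{F}B$ is what allows you to verify $a\ind^{*}_{C}B$ after absorbing the shared $E$-class representatives into the parameter $c$.
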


\begin{proof}
If $a \nind^{*}_{F} B$ then clearly $a \nind^{d}_{F} B$, so we prove the other direction.  Suppose $a \ind^{*}_{F} B$ and $a \nind^{d}_{F} B$ and we will get a contradiction. Suppose $\varphi(x;c,b)$ witnesses dividing, so $\varphi(x;c,b) \in \text{tp}(a/FB)$ with $c \in F$ and $b \in B$, and there is an $F$-indiscernible sequence $\langle b_{i} : i < \omega \rangle$ with $b_{0} = b$ so that $\{\varphi(x;c,b_{i}) : i < \omega\}$ is $k$-inconsistent for some $k$.  As $a \ind^{*}_{F} B$ we may, by growing the tuple $c$, assume that every equivalence class represented both by $b$ and $a$ is represented by an element of $c$.  Let $B_{i} = \langle c,b_{i} \rangle$, $C = B_{0} \cap B_{i}$ for some/all $i \neq 0$ (by $F$-indiscernibility, this is well-defined and contains $c$) and $A = \langle a,b,c \rangle$.  As $a \ind^{*}_{C} B$, the structures $A$, $C$, and $(B_{i})_{i < k+1}$ satisfy (1) of Lemma \ref{dividing}, and therefore there is $D \in \mathbb{K}_{1}$ and some $a' \in D$ so that $\langle (B_{i})_{i < k+1}\rangle \subseteq D$ and $\langle a',B_{i} \rangle^{D} \cong \langle a, B \rangle^{A}$ for all $i < k+1$.  By embedding $D$ into $\mathbb{M}_{1}$ over $\langle (B_{i})_{i < k+1} \rangle^{D}$ we see that, in $\mathbb{M}_{1}$, $\{\varphi(x;c,b_{i}) : i < k+1 \}$ is consistent by quantifier-elimination.  This is a contradiction.  
\end{proof}

\begin{cor}\label{artemQ}
The theory $T^{*}_{1}$ is NSOP$_1$ and the formula $O(x)$ axiomatizes a complete type which is simple and not cosimple.
\end{cor}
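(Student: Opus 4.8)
The plan is to assemble this corollary entirely from results already proved in this section together with the general theory of Kim-independence. That $T^{*}_{1}$ is NSOP$_{1}$ is exactly Proposition \ref{kimindependencefor2functions}. That $O(x)$ axiomatizes a complete type over $\emptyset$ follows from quantifier elimination, and that this type is not co-simple is Lemma \ref{notcosimple}. So the entire content of the proof is to show that $O(x)$ is a \emph{simple} type, and for that I would apply Proposition \ref{simpletype}(1) with $\pi(x) = O(x)$ and $A = \emptyset$.

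Concretely, I need to verify the hypothesis of Proposition \ref{simpletype}(1): for every formula $\varphi(x;a)$ and every model $M \models T^{*}_{1}$, the partial type $O(x) \cup \{\varphi(x;a)\}$ divides over $M$ if and only if it Kim-divides over $M$. The "if" direction is automatic, since a Morley sequence in an $M$-invariant type witnessing Kim-dividing is in particular an $M$-indiscernible sequence witnessing dividing. For the converse, I would first note that $O(x) \cup \{\varphi(x;a)\}$ is equivalent to the single formula $\psi(x) := O(x) \wedge \varphi(x;a)$ (since here $x$ is a single variable of sort $O$), and discard the inconsistent case, which is trivial. Assuming $\psi(x)$ is consistent and divides over $M$, take any $c \models \psi(x)$: then $\psi(x) \in \text{tp}(c/Ma)$ divides over $M$, so $c \nind^{d}_{M} a$. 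Since $c$ is a single element of sort $O$, Corollary \ref{dividingmore} (with its $F = M$) gives $c \nind^{*}_{M} a$, and then Proposition \ref{kimindependencefor2functions} gives $c \nind^{K}_{M} a$; that is, $\text{tp}(c/Ma)$ Kim-forks over $M$. Running this for \emph{every} realization $c$ of $\psi(x)$ and compactness then yield $\psi(x) \vdash \bigvee_{i<k}\theta_{i}(x)$ with each $\theta_{i}$ a formula over $Ma$ that Kim-divides over $M$, so $\psi(x)$ Kim-forks over $M$, and Proposition \ref{kforkingequalskdividing} (Kim-forking equals Kim-dividing over models, using that $T^{*}_{1}$ is NSOP$_{1}$) upgrades this to Kim-dividing. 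This checks the hypothesis, and Proposition \ref{simpletype}(1) then delivers simplicity of $O(x)$.

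The step that needs the most care is the compactness passage from "$\text{tp}(c/Ma)$ Kim-forks over $M$ for all $c \models \psi(x)$" to "$\psi(x)$ Kim-forks over $M$": one has to use that $p$ Kim-forks over $M$ exactly when $p$ together with the set $\{\neg\theta(x) : \theta \in L(Ma),\ \theta \text{ Kim-divides over } M\}$ is inconsistent, then observe this set together with $\{\psi(x)\}$ is inconsistent and compactify, keeping track of the finite disjunction built into the definition of Kim-forking. Everything else is a straightforward invocation of earlier results; in particular the inputs $\ind^{*} = \ind^{K}$ over models, the description of dividing for single elements of $O$ via $\ind^{*}$, and Kim-forking $=$ Kim-dividing over models are all proved before this corollary.
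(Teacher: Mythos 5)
Your proof is correct, but it takes a genuinely different route from the paper's. The paper proves simplicity of $O(x)$ directly from the local-character characterization in Definition~\ref{simplecosimpledef}(1): given a single element $a\models O(x)$ and any set $B$, it exhibits an explicit small base $C\subseteq B$ of size at most one (either $\{a\}$, a single $E$-representative $\{b\}\subseteq B$, or $\emptyset$) with $a\ind^{*}_{C}B$, and then invokes Corollary~\ref{dividingmore} to conclude $a\ind^{d}_{C}B$. You instead verify the hypothesis of Proposition~\ref{simpletype}(1), using Corollary~\ref{dividingmore} (with $F=M$) together with Proposition~\ref{kimindependencefor2functions} to show that for single elements of sort $O$, $\ind^{d}=\ind^{K}$ over models, and then run the compactness argument (correctly identified as the delicate step) to pass from ``$\text{tp}(c/Ma)$ Kim-forks over $M$ for every $c\models\psi$'' to ``$\psi$ Kim-forks over $M$,'' finally upgrading Kim-forking to Kim-dividing via Proposition~\ref{kforkingequalskdividing}. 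Both approaches ride on Corollary~\ref{dividingmore}, but the paper's is shorter and cleaner because Corollary~\ref{dividingmore} already applies over the small substructures needed for local character, so the Kim-forking/Kim-dividing apparatus and the compactness step are avoided entirely. Your route, on the other hand, is the one that naturally generalizes and is close in spirit to what the paper itself mentions in the remark following the corollary, where an alternative proof via Proposition~\ref{simpletype} is sketched for the complete type over a model. One small caution worth flagging: you implicitly use that for the complete type $\text{tp}(c/Ma)$, ``Kim-forks over $M$'' is equivalent to ``contains a formula over $Ma$ that Kim-divides over $M$'' — this is the point where completeness of the type is essential, and it is exactly what makes the compactness passage via $\psi(x)\cup\{\neg\theta(x):\theta\text{ Kim-divides over }M\}$ go through; you handle it correctly.
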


\begin{proof}
Lemma \ref{notcosimple} shows that $O(x)$ axiomatizes a complete type which is not cosimple.  To show $O(x)$ is simple, we have to show that $\ind^{d}$ satisfies local character on $O(x)$.  So fix any $a \in \mathbb{M}_{1}$ with $\mathbb{M}_{1} \models O(a)$ and any small set $B \subseteq \mathbb{M}_{1}$.  We may suppose $B = \text{dcl}(B)$.  Notice that $\text{dcl}(a) = a$.  If $a \in B$ then $a \ind^{*}_{a} B$.  If $a \not\in B$ but $\mathbb{M} \models E(a,b)$ for some $b \in B$ then $a \ind^{*}_{b} B$.  Finally, if $a$ not $E$-equivalent to any element of $B$ then $a \ind^{*}_{\emptyset} B$.  Corollary \ref{dividingmore} showed $a \ind^{*}_{C} B$ if and only if $a \ind^{d}_{C} B$ for any $a$ with $\mathbb{M} \models O(a)$, so $\ind^{d}$ satisfies local character on $O$.  Therefore $O$ is simple.  
\end{proof}

\begin{rem}
This answers Problem 6.10 of \cite{ChernikovNTP2}. 
\end{rem}

\begin{rem}
Given a model $M \models T^{*}_{1}$, one can consider the complete type $p(x)$ over $M$ axiomatized by saying 
\begin{itemize}
\item $O(x)$
\item $\neg E(x,m)$ for all $m \in O(M)$
\item $\text{eval}(m,x) \neq x$ for all $m \in O(M)$ 
\end{itemize}
In a similar fashion, one can check that this is simple, non-co-simple so, in particular, nothing is gained by working over a model.  In fact, in this situation, we get another proof of the corollary, using Proposition \ref{simpletype}, as we have shown that if $a \models p$, then $a \ind^{d}_{M} b$ if and only if $a \ind^{K}_{M} b$ so $p$ is simple.  
\end{rem}

Proposition \ref{nomorley} above shows that in any non-simple NSOP$_{1}$ theory, there are types over models with no universal Morley sequences in them.  The following gives an explicit example:

\begin{prop}\label{nouniversal}
Given $M \models T_{1}^{*}$, there is a type $p \in S(M)$ with no universal Morley sequence.
\end{prop}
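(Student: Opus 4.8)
The plan is to give an explicit $p \in S(M)$ witnessing, inside $T_1^*$, the general phenomenon of Proposition \ref{nomorley}: I will produce a formula that divides over $M$ but does not Kim-divide over $M$ with parameters realizing $p$, and then run the argument from the proof of Proposition \ref{nomorley}. For $p$ I take (any completion over $M$ of) the partial type $\{O(x)\} \cup \{\neg E(x,m) : m \in O(M)\}$, so that a realization $b \models p$ lies in an $E$-class meeting no element of $O(M)$; the test formula will be $\psi(x;y) := (\text{eval}(x,y) = y)$ with $x$ of sort $F$, so that $\psi(x;b)$ asserts that the function coded by $x$ selects $b$ inside $b$'s class.

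First I would check that $\psi(x;b)$ divides over $M$. The witnessing sequence is $b = b_0, b_1, b_2, \ldots$, a sequence of distinct elements of the $E$-class of $b$; using quantifier elimination and uniform local finiteness of $\mathbb{K}_1$ one sees that the quantifier-free type over $M$ of an increasing tuple from it depends only on its length (all the $b_i$ are distinct, pairwise $E$-equivalent, avoid $O(M)$, and $\text{eval}(f,b_i) = \text{eval}(f,b_0)$ for every $f \in F(M)$ since $E(b_i,b_0)$), so the sequence is $M$-indiscernible, and $\{\psi(x;b_i) : i < \omega\}$ is $2$-inconsistent because a single function cannot select two distinct elements of one class. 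Next I would check that $\psi(x;b)$ does \emph{not} Kim-divide over $M$: since $T_1^*$ is NSOP$_1$ (Proposition \ref{kimindependencefor2functions}), by Kim's Lemma for Kim-dividing (Theorem \ref{kimslemmaforindk}) it is enough to exhibit one global $M$-invariant extension of $p$ along whose Morley sequence $\psi$ stays consistent. Taking a global coheir $q$ of $p$, any Morley sequence $(b_i)_{i<\omega}$ in $q$ over $M$ has the $b_i$ in pairwise distinct $E$-classes, because $E(x,b_j) \wedge O(x)$ is not satisfiable in $M$ when $b_j \models p$, hence $\neg E(x,b_j) \in q|_{Mb_{<i}}$ for $j < i$; and then a one-point extension of $\langle M(b_i)_{i<\omega}\rangle$ by a new $F$-element whose selector picks $b_i$ in the $i$th class and an arbitrary representative elsewhere gives, by model-completeness of $T_1^*$, a realization of $\{\psi(x;b_i) : i < \omega\}$ in $\mathbb{M}_1$.

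With these two facts in hand I would conclude exactly as in the proof of Proposition \ref{nomorley}: if $(b_i)_{i<\kappa}$ were a universal Morley sequence in $p$ — which, by indiscernibility and compactness, we may assume is indexed by $\omega$ — then $b_{<i}$ is $Mb_i$-indiscernible, so $b_{<i} \ind^{d}_{M} b_i$ and hence $b_i \ind^{K}_{M} b_{<i}$ by symmetry over models (Theorem \ref{symmetrycharthm}), making $(b_i)_{i<\omega}$ an $\ind^{K}$-Morley sequence over $M$; then Lemma \ref{consistentindk} forces $\{\psi(x;b_i) : i<\omega\}$ to be consistent (as $\psi(x;b_0)$ does not Kim-divide over $M$), which contradicts universality together with the fact that $\psi(x;b_0)$ divides over $M$.

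The only genuinely delicate steps are the verifications carried out inside $\mathbb{K}_1$ and $T_1^*$ — that the exhibited sequences are $M$-indiscernible (a quantifier-free type computation via quantifier elimination) and that the required selector functions can be adjoined (one-point amalgamations of structures in $\mathbb{K}_1$) — and these are routine given the analysis of $T_1^*$ already developed. An alternative, fully self-contained route avoids Proposition \ref{nomorley} altogether by splitting on whether the terms of a candidate indiscernible sequence of realizations of $p$ are pairwise $E$-equivalent, using $\psi$ in the non-$E$ case and $E(x,y) \wedge x \neq y$ in the $E$ case, but it costs an extra case distinction.
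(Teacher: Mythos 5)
The one step that genuinely fails is hidden in the phrase ``any completion over $M$'' of $\{O(x)\}\cup\{\neg E(x,m):m\in O(M)\}$. This partial type has completions which contain $\text{eval}(f,x)=x$ for some $f\in F(M)$: extend $M$ by a new $E$-class in which $\text{eval}(f,-)$ picks the new point $b$, and embed this model of $T_{1}$ into $\mathbb{M}_{1}$ over $M$ using quantifier elimination. For such a completion $p$ your first claim breaks down: two \emph{distinct} realizations of $p$ can never be $E$-equivalent (both would have to equal the value of $\text{eval}(f,-)$ on their common class), so the proposed witnessing sequence of distinct class-mates of $b$ does not consist of realizations of $p$ and is not $M$-indiscernible; in fact $\psi(x;b)$ does not divide over $M$ at all for such $p$, since any non-constant $M$-indiscernible sequence of realizations must lie in pairwise distinct classes, along which $\psi$ is consistent. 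Your quantifier-free-type computation omits exactly the atomic formulas $\text{eval}(f,x)=x$, $f\in F(M)$, whose truth values must agree along the sequence, and these are what can go wrong. The repair is simply to choose $p$ to be the completion which also contains $\text{eval}(f,x)\neq x$ for all $f\in F(M)$ (the type appearing in the Remark after Corollary \ref{artemQ}); then, choosing the $b_{i}$ in the class of $b$ and outside the small set $\{\text{eval}(f,b):f\in F(M)\}$, your indiscernibility check is correct, the coheir argument showing $\psi(x;b)$ does not Kim-divide is fine as written, and the concluding contradiction via symmetry and Lemma \ref{consistentindk}, exactly as in Proposition \ref{nomorley}, goes through.

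For comparison, the paper's own proof is precisely the ``alternative, fully self-contained route'' you sketch at the end: it takes a candidate universal Morley sequence in $\text{tp}(b/M)$ for $b\in O(\mathbb{M}_{1})\setminus M$, splits on whether its terms are pairwise $E$-equivalent or pairwise inequivalent, and in each case exhibits a formula ($E(x,b)$, respectively $\text{eval}(x,b)=b$) which divides over $M$ yet is consistent along the sequence; no appeal to Kim's lemma, symmetry, or Lemma \ref{consistentindk} is needed. Note that this route requires the same genericity of $b$ (its class must avoid $O(M)$, and no $M$-definable selector may pick it out) that your argument needs, so the careful choice of $p$ is not optional in either approach.
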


\begin{proof}
Pick $b \in O(\mathbb{M})$ not in $M$ and let $p(x) = \text{tp}(b/M)$.  Towards contradiction, suppose $(b_{i})_{i < \omega}$ is a universal Morley sequence in $p$.  

\textbf{Case 1}:  $\mathbb{M} \models E(b_{i},b_{j})$ for all $i,j<\omega$.  

The formula $E(x;b)$ divides over $M$:  choose any $M$-indiscernible sequence $\langle c_{i} : i < \omega\rangle$ with $c_{0} = b$ and $\neg E(c_{i},c_{i+1})$ -- then $\{E(x;c_{i}) : i < \omega\}$ is inconsistent.  But $\{E(x;b_{i}) : i < \omega\}$ is consistent, a contradiction.

\textbf{Case 2}:  $\mathbb{M} \models \neg E(b_{i},b_{j})$ for $i \neq j$.  The formula $\text{eval}(x,b) = b$ divides over $M$ -- choose any $M$-indiscernible sequence $\langle c_{i} : i < \omega \rangle$ with $E(c_{i},c_{j})$ for all $i,j$ and $c_{0} = b$.  Then $\{\text{eval}(x,c_{i}) = c_{i} : i < \omega\}$ is inconsistent (as for any $a$, the function $\text{eval}(a,-)$ takes on only one value on elements of any equivalence class).  But $\{\text{eval}(x,b_{i}) = b_{i} : i < \omega\}$ is consistent, a contradiction.  
\end{proof}

\begin{prop} \label{forkingnotdividing}
In $T^{*}_{1}$, forking does not equal dividing, even over models.
\end{prop}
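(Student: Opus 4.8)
The plan is to produce a model $M\models T^{*}_{1}$, a parameter $b$ from $\mathbb{M}_{1}$, and a formula $\psi(x;b)$ over $Mb$ which forks over $M$ but does not divide over $M$; since dividing always implies forking, this is exactly the failure of forking $=$ dividing, and carrying it out over a model gives the ``even over models'' assertion.

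For the forking half I would write $\psi(x;b)$ as a disjunction $\varphi_{0}(x;c_{0})\vee\varphi_{1}(x;c_{1})$ with $c_{0},c_{1}\in\text{dcl}(Mb)$, each $\varphi_{i}$ dividing over $M$; then $\psi$ forks over $M$ by definition. The dividing formulas used earlier in this section are the natural ingredients: from the proof of Proposition \ref{nouniversal}, for $c\notin M$ the formula $x=c$ divides over $M$, for $[c]_{E}$ disjoint from $M$ the formula $E(x,c)$ divides over $M$, and for $c\notin M$ the formula $\text{eval}(x,c)=c$ (with $x$ of sort $F$) divides over $M$. Choosing $\psi$ to be a disjunction of two suitable such formulas makes the forking half automatic.

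For the non-dividing half I would verify that for every $M$-indiscernible sequence $\langle b_{i}:i<\omega\rangle$ with $b_{0}=b$ the set $\{\psi(x;b_{i}):i<\omega\}$ is consistent. By $M$-indiscernibility the $E$-pattern of the sequence — which coordinates of the $b_{i}$ are $E$-equivalent to one another, and which are $E$-equivalent to fixed elements of $M$ — is determined, so this reduces to a finite case analysis; in each case one builds a common realization by hand, using quantifier elimination for $T^{*}_{1}$ and the fact that $T^{*}_{1}$ is the model completion of $T_{1}$, so that every finite configuration consistent with the universal axioms $T_{1}$ (for instance, a fresh $E$-class on which a prescribed finite set of selectors takes prescribed values) is realized in $\mathbb{M}_{1}$. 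When $x$ is of sort $O$ one can instead use Corollary \ref{dividingmore} together with Proposition \ref{kimindependencefor2functions} to replace ``does not divide over $M$'' by the algebraic condition $\ind^{*}$, which is then checked directly.

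The main obstacle is the non-dividing verification, and it forces a careful choice of $b$. The point is that naive disjunctions \emph{do} divide: if the two parameters $c_{0},c_{1}$ can be spread out independently by some $M$-indiscernible sequence — and for a generic pair of new elements or new $E$-classes they can — then that very sequence makes both disjuncts, hence $\psi$, inconsistent along it. So one must arrange the parameters to be \emph{entangled}: every $M$-indiscernible sequence in $b$ must keep at least one of $c_{0},c_{1}$ tame enough (for example pinned to a single $E$-class, or constant) that the corresponding disjunct stays consistent along the sequence, while at the same time each $\varphi_{i}$ still divides over $M$ via some other, incompatible, spreading of its parameter. Engineering this balance — each disjunct divides over $M$, yet no $M$-indiscernible sequence in $b$ witnesses the dividing of both — is the technical heart of the argument.
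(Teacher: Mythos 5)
Your outline correctly identifies the strategy (a disjunction of two dividing formulas whose dividing witnesses are incompatible) and, more importantly, correctly isolates the technical heart: one must \emph{entangle} the parameters of the two disjuncts so that no single $M$-indiscernible sequence in $b$ can witness dividing for both simultaneously. That is precisely what drives the paper's argument. But you stop short of producing a concrete example, leaving the existence of the entangled configuration as an unresolved engineering problem; as written, this is an incomplete proof rather than a complete one.

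The resolution is simpler than your framing suggests. You set up $c_{0},c_{1} \in \text{dcl}(Mb)$ as two (possibly distinct) parameters to be engineered, but the paper simply takes $c_{0} = c_{1} = b$ for a single $b \in O(\mathbb{M}_{1}) \setminus M$, with the formula
\[
\varphi(x,y;b) \;=\; \bigl(\text{eval}(x,b)=b\bigr) \;\vee\; E(y,b).
\]
Using the \emph{same} parameter in both disjuncts makes the entanglement automatic and tautological. Both disjuncts divide over $M$ (as you note, from the proof of Proposition \ref{nouniversal}), so $\varphi(x,y;b)$ forks over $M$. For non-dividing, the case split you anticipated collapses into exactly two cases by $M$-indiscernibility: any $M$-indiscernible sequence $\langle b_{i}\rangle$ starting at $b$ either has all $b_{i}$ in one $E$-class or all in pairwise distinct $E$-classes. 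In the first case $\{E(y,b_{i})\}$ is consistent; in the second $\{\text{eval}(x,b_{i})=b_{i}\}$ is consistent; either way $\{\varphi(x,y;b_{i})\}$ is consistent. No further quantifier-elimination case analysis is needed once this dichotomy is observed. So: right idea, right obstacle, but the example itself is missing, and the simplification that dissolves the obstacle is to reuse $b$ as the parameter of both disjuncts rather than trying to engineer two separate interdefinable parameters.
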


\begin{proof}
Fix $M \models T^{*}_{1}$.  Let $\varphi(x,y;z)$ be the formula $\text{eval}(x,z) = z \vee E(y,z)$.  Given any $b \in O(\mathbb{M}_{1})$ not in $M$, we claim the formula $\varphi(x,y;b)$ forks but does not divide over $M$.  The proof of Proposition \ref{nouniversal} shows that both $E(x,b)$ and $\text{eval}(x,b) = b$ divide over $M$ so $\varphi(x,y;b)$ forks over $M$.  Given any $M$-indiscernible sequence $\langle b_{i} :i < \omega \rangle$ starting with $b$, either all $b_{i}$'s lie in a single equivalence class, in which case $\{E(y,b_{i}) : i < \omega\}$ is consistent, or they all lie in different classes, in which case $\{\text{eval}(x,b_{i}) = b_{i} : i < \omega\}$ is consistent.  Either way, $\{\varphi(x,y;b_{i}) : i < \omega\}$ is consistent, so $\varphi(x,y;b)$ does not divide over $M$.  
\end{proof}

Recall that a formula $\psi(x;c)$ \emph{quasi-divides} over $M$ if there are finitely many $(c_{i})_{i < k}$ with $c_{i} \equiv_{M} c$ so that $\{\psi(x;c_{i}) : i < k\}$ is inconsistent.  Note that in the proof of Proposition \ref{forkingnotdividing}, the formula $\varphi(x,y;b)$ quasi-divides over $M$:  take $(b_{i})_{i < 4}$ to be 4 elements realizing $\text{tp}(b/M)$ so that $E(b_{0},b_{1})$, $E(b_{2},b_{3})$ and $\neg E(b_{1},b_{2})$.  Then $\{\varphi(x,y;b_{i}) : i < 4\}$ is inconsistent.  We ask if this must always be the case:

\begin{quest}
Suppose $T$ is NSOP$_{1}$, $M \models T$ and the formula $\varphi(x;b)$ forks over $M$.  Does $\varphi(x;b)$ necessarily quasi-divide over $M$?
\end{quest}

In a similar vein, the following problem was suggested by Artem Chernikov:

\begin{quest}
If $T$ is NSOP$_{1}$, $N \prec M \models T$, and $p(x) \in S(M)$ then must it be the case that $p(x)$ forks over $N$ if and only if $p(x)$ divides over $N$?  
\end{quest}

We note that graph-theoretic examples of theories for which forking and dividing are different, but coincide for complete types have been studied by Conant \cite{conant2014forking}.  

\begin{lem}
Any $a$ be a tuple in $F$, $b$ a tuple in $O$, and $C = \text{dcl}(C) \subseteq \mathbb{M}$.  Then $\text{tp}(a,b/C)$ extends to a global $C$-invariant type.
\end{lem}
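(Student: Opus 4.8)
The plan is to build the required type by hand, exploiting quantifier elimination in $T^{*}_{n}$ and the strong amalgamation property of $\mathbb{K}_{n}$. Set $A=\text{dcl}(abC)=\langle abC\rangle$, a small model of $T_{n}$. By quantifier elimination, for any $D=\text{dcl}(D)\supseteq C$ the restriction to $D$ of an extension of $\text{tp}(ab/C)$ is determined by the isomorphism type over $D$ of the $L_{n}$-structure generated by $D$ together with a copy of $ab$ amalgamated with $A$ over $C$; so it suffices to produce, \emph{canonically in $D$}, such an amalgam. I would carry this out for $D=\mathbb{M}$ directly.

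First I would describe the canonical amalgam $N$ of $\mathbb{M}$ and $A$ over $C$. Its universe is $\mathbb{M}\cup A$ glued along $C$, with $O^{N}=O^{\mathbb{M}}\cup O^{A}$ and $F^{N}=F^{\mathbb{M}}\cup F^{A}$, and $E^{N}$ the equivalence relation generated by $E^{\mathbb{M}}\cup E^{A}$, which identifies no new pair of elements, exactly as in the proof that $\mathbb{K}_{n}$ has SAP. The only non-routine point is $\text{eval}^{N}$: on pairs covered by $\text{eval}^{\mathbb{M}}$ or $\text{eval}^{A}$ it agrees with them, and on a cross-pair $(f,e)$ whose $E^{N}$-class meets $C$ the value is the one forced by the selector property; but on a cross-pair $(f,e)$ with, say, $f\in F^{\mathbb{M}}\setminus C$ and $e$ in an $E^{A}$-class $\Sigma$ disjoint from $C$ (or symmetrically), there is in general no canonical value available, so one adjoins to $N$, for each such pair $(f,\Sigma)$, a fresh object $u_{f,\Sigma}$ placed in $\Sigma$ and sets $\text{eval}^{N}(f,-)\equiv u_{f,\Sigma}$ on $\Sigma$. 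One checks, just as in the SAP lemma, that $N\models T_{n}$, that $\mathbb{M}$ and $A$ are substructures of $N$, and (since an $\mathbb{M}$-element can enter $\langle abC\rangle^{N}$ only if it already lies in $C$) that $\langle abC\rangle^{N}=A$. As $N\models T_{n}$ contains $\mathbb{M}$ as a substructure, $N$ embeds over $\mathbb{M}$ into some $\mathbb{M}^{+}\models T^{*}_{n}$, and model-completeness of $T^{*}_{n}$ gives $\mathbb{M}\preceq\mathbb{M}^{+}$; I then set
$$
q \;:=\; \text{tp}^{\mathbb{M}^{+}}(ab/\mathbb{M})\ \in\ S(\mathbb{M}).
$$
By quantifier elimination $q$ is determined by $\text{qftp}^{N}(ab/\mathbb{M})$, and its restriction to $C$ is $\text{qftp}(ab/C)$ since $\langle abC\rangle^{N}=A$; hence $q\supseteq\text{tp}(ab/C)$.

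Finally I would check $C$-invariance. The construction of $N$ depends only on $\mathbb{M}$ together with its isomorphism type over $C$, and it does so functorially: any $\sigma\in\text{Aut}(\mathbb{M}/C)$ lifts to an automorphism $\hat{\sigma}$ of $N$ equal to $\sigma$ on $\mathbb{M}$, equal to the identity on $A$, and sending $u_{f,\Sigma}\mapsto u_{\sigma(f),\Sigma}$ resp.\ $u_{f,\Sigma}\mapsto u_{f,\sigma\Sigma}$. Then for every tuple $\bar{m}$ from $\mathbb{M}$ and every quantifier-free $\varphi$,
$$
N\models\varphi(ab,\bar{m})\iff N\models\varphi(\hat{\sigma}(ab),\hat{\sigma}\bar{m})=\varphi(ab,\sigma\bar{m}),
$$
so $\varphi(x,y;\bar{m})\in q\iff\varphi(x,y;\sigma\bar{m})\in q$; by homogeneity of the monster this is exactly $C$-invariance of $q$.

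I expect the main obstacle to be the canonicity of the amalgam $N$: the SAP lemma only furnishes \emph{some} amalgam, and its proof makes arbitrary choices of $E$-class representatives when assigning the new values of $\text{eval}$, which would destroy $C$-invariance. Replacing those choices by the adjunction of the fresh objects $u_{f,\Sigma}$ is precisely what makes $N$, and hence $q$, canonical; once this is arranged, verifying $N\models T_{n}$ and that $\hat{\sigma}$ is a well-defined automorphism fixing $A$ pointwise is routine, and no cross-$D$ coherence argument is needed since $q$ was produced in one step over $\mathbb{M}$.
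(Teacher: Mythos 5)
Your construction is correct and produces the same generic extension as the paper, but it is packaged quite differently. The paper's proof is purely syntactic: it writes down an explicit axiom scheme over $\mathbb{M}$ (the functions in $x$ take pairwise distinct brand-new values on every $E$-class not represented in $C$, the objects in $y$ lie in classes meeting no element of $\mathbb{M}$ outside those of $C$, etc.), notes that $C$-invariance is immediate because membership of a formula in the scheme depends only on whether its parameters lie in $C$, respectively in classes meeting $C$, and then verifies completeness by the term analysis (every term reduces to $x$ or $\text{eval}(x,y)$, and $E(x,\text{eval}(y,z))$ is equivalent to $E(x,z)$). You instead build the free amalgam $N$ of $\mathbb{M}$ with a fresh copy of $A=\text{dcl}(abC)$ over $C$, adjoining witnesses $u_{f,\Sigma}$ for the otherwise undetermined selector values, embed $N$ into a model of the model companion, and take the type of the copy of $ab$ over $\mathbb{M}$. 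Your amalgam is exactly a semantic realization of the paper's scheme, so the two arguments buy the same thing; the trade-off is that completeness of $q$ is automatic for you (it is the type of an actual tuple), while the burden shifts to checking that $N\models T_{n}$ and that each $\sigma\in\text{Aut}(\mathbb{M}/C)$ lifts to an automorphism $\hat{\sigma}$ of $N$ fixing the copy of $A$ pointwise, which is indeed the right way to get invariance canonically rather than through arbitrary choices of class representatives as in the SAP proof.

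Two points should be tightened, though neither threatens the argument. First, $\text{eval}^{N}$ must also be defined on the fresh objects $u_{f,\Sigma}$ themselves (totality of $\text{eval}$): the selector axiom forces $\text{eval}^{N}(g,u_{f,\Sigma})$ to agree with $\text{eval}^{N}(g,e)$ for any old $e\in\Sigma$, which is already defined (possibly another fresh point), so this is consistent, but it needs to be stated and $\hat{\sigma}$ checked against it. Second, you argue for general $n$, where $\text{eval}$ takes $n$-tuples of functions; there your case split ``$f\in F^{\mathbb{M}}\setminus C$ \dots\ or symmetrically'' omits mixed tuples with coordinates from both $\mathbb{M}\setminus C$ and the copy of $A\setminus C$, which also require fresh values and a specification of how $\hat{\sigma}$ acts on their indices. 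The lemma as used in the paper concerns only $T^{*}_{1}$, where $\text{eval}$ is binary and this issue does not arise.
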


\begin{proof}
Write $a = (a_{0}, \ldots, a_{n-1})$, $b = (b_{0}, \ldots, b_{k-1})$.  We may assume that no equalities occur between the elements of $a$ and of $b$, or between $a,b$ and $C$.  We define a $C$-invariant global type $p(x,y) \in S(\mathbb{M})$ as follows.  The type $p(x,y)$ contains all formulas of $\text{tp}(a,b/C)$ together with the following axiom scheme:
\begin{eqnarray*}
\text{eval}(x_{i},m) \neq m &\in& p(x,y) \text{ for all } i<n, m \in \mathbb{M} \setminus C. \\
\text{eval}(x_{i},m) \neq \text{eval}(x_{j},m) &\in& p(x,y) \text{ for all }i < j < n, m\in \mathbb{M} \text{ with } m/E \not\in C/E. \\
\text{eval}(x_{i},y_{j}) \neq m &\in& p(x,y) \text{ for all } i<n,j < k, m \in \mathbb{M} \setminus C. \\
\text{eval}(m,y_{j}) \neq y_{j} &\in&p(x,y) \text{ for all } j < k, m \in \mathbb{M} \setminus C. \\
%\text{eval}(m,y_{j}) \neq \text{eval}(m,y_{j'}) &\in& p(x,y) \text{ for all }j < j' < k, m \in \mathbb{M} \setminus C.\\
\neg E(y_{j},m) &\in& p(x,y) \text{ for all } j <  k, m \in \mathbb{M} \text{ with } m/E \not\in C/E. 
\end{eqnarray*}
It is clear that this type is consistent and $C$-invariant.  We claim it implies a complete type over $\mathbb{M}$:  note that because $\text{eval}(x,\text{eval}(y,z)) = \text{eval}(x,z)$, every term is equivalent to $x$ or $\text{eval}(x,y)$.  Because $E(x,\text{eval}(y,z))$ is equivalent to $E(x,z)$, every atomic formula is equivalent to an equality of terms or of the form $E(x,y)$.  Equalities of the form $\text{eval}(x_{i},y_{j}) = \text{eval}(x_{i'},y_{j'})$ are implied or negated by $\text{tp}(a,b/C)$, so the truth value of every atomic formula in the variables $x,y$ with parameters in $\mathbb{M}$ is determined by the above.  
\end{proof}

\begin{cor}
The theory $T^{*}_{1}$ is an NSOP$_{1}$ theory for which forking does not equal dividing, yet every type has a global non-forking extension.
\end{cor}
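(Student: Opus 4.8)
The plan is to assemble the statement from results already in hand. That $T^{*}_{1}$ is NSOP$_{1}$ is the case $n=1$ of Proposition \ref{kimindependencefor2functions}, and the fact that forking does not equal dividing, even over models, is Proposition \ref{forkingnotdividing}. So the only thing left to check is that every complete type extends to a global non-forking type.

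For this, I would fix a set of parameters $A$ and $p \in S(A)$; put $A' = \text{dcl}(A)$ and let $p' \in S(A')$ be the unique complete type extending $p$, so that it suffices to produce a global non-forking extension of $p'$. Enumerate a realization of $p'$ and, using that the sorts $O$ and $F$ partition the universe of $\mathbb{M}_{1}$, reorganize it as $(a,b)$ with $a$ a tuple from $F$ and $b$ a tuple from $O$. The Lemma immediately preceding this corollary then produces a global $A'$-invariant type $q \supseteq \text{tp}(a,b/A')$. Since $A' = \text{dcl}(A)$ and types over $A$ coincide with types over $A'$, $q$ is $A$-invariant. Finally, a global $A$-invariant type does not fork over $A$: if $\varphi(x;c) \in q$ and $\langle c_{i} : i < \omega \rangle$ is $A$-indiscernible with $c_{0} = c$, then $c_{i} \equiv_{A} c$, hence $\varphi(x;c_{i}) \in q$ for all $i$ and $\{\varphi(x;c_{i}) : i < \omega\}$ is consistent; so no formula in $q$ divides over $A$, and since $q$ is complete it cannot imply a finite disjunction of formulas each of which divides over $A$ either. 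Thus $q$ is the desired extension, and restricting it to $Ab$-free parameters recovers a global non-forking extension of the original $p$.

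There is essentially no obstacle here: all of the combinatorial content was handled in the preceding lemma, and the remaining steps are the routine reduction of an arbitrary parameter tuple to one lying first in $F$ and then in $O$ — available because $L_{1}$ is two-sorted — together with the standard observation that global invariant types never fork. If desired, one could also remark that this gives a second proof that $T^{*}_{1}$ is not simple, since in a simple theory forking and dividing agree.
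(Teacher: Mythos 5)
Your proposal is correct and is exactly the argument the paper intends: the corollary is stated without proof precisely because it follows immediately from Proposition \ref{kimindependencefor2functions}, Proposition \ref{forkingnotdividing}, and the preceding lemma, with the same routine steps you spell out (passing to $\operatorname{dcl}(A)$, splitting the realizing tuple by sorts, and observing that a global $A$-invariant type neither divides nor forks over $A$). The only cosmetic blemish is the phrase about ``restricting to $Ab$-free parameters''; what is really meant is just permuting the variables back to match $p$, which is harmless.
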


\begin{rem}
This answers Question 7.1(1) of \cite{conant2014forking}, which asked if forking = dividing in every NSOP$_{3}$ theory in which every type has a global non-forking extension, as every NSOP$_{1}$ theory is NSOP$_{3}$ \cite[Claim 2.3]{dvzamonja2004maximality}.  
\end{rem}

Finally, the following proposition gives a counter-example to the form of transitivity mentioned at the beginning of the subsection.  

\begin{prop}
For any model $M \models T^{*}_{2}$, there are $f,g$, and $c$ so that $f \ind^{K}_{M} gc$, $g \ind^{K}_{M} c$, and $fg \nind^{K}_{M} c$.   
\end{prop}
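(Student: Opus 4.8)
The plan is to exploit the algebraic description of Kim-independence in $T^*_n$ given by Proposition \ref{kimindependencefor2functions}: over a model $M \models T^*_2$, $a \ind^K_M b$ holds if and only if $\operatorname{dcl}(aM)/E \cap \operatorname{dcl}(bM)/E \subseteq \operatorname{dcl}(M)/E$ and $\operatorname{dcl}(aM) \cap \operatorname{dcl}(bM) \subseteq \operatorname{dcl}(M)$. So the entire proposition reduces to producing, inside $\mathbb{M}_2$, a configuration of two $F$-tuples $f,g$ (here each a $2$-tuple from the sort $F$, since $n = 2$) and an $O$-element $c$ with prescribed intersection behavior of definable closures. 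The key feature of $T^*_2$ that $T^*_1$ lacks is that $\operatorname{eval}$ takes \emph{two} function-arguments, so a single new object $\operatorname{eval}(f,g,c)$ can be in the definable closure of $\{f,g,c\}$ without being in the definable closure of any of $\{f,g\}$, $\{f,c\}$, or $\{g,c\}$ together with $M$.

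First I would fix $M \models T^*_2$ and choose $c \in O(\mathbb{M}_2)$ with $c \notin M$ and $\neg E(c,m)$ for all $m \in O(M)$, and choose $f = (f_0,f_1)$, $g = (g_0,g_1)$ generic $F$-tuples over $Mc$ — concretely, one builds the structure $\langle M,f,g,c\rangle$ by hand (as in Lemma \ref{independencetheoremfor2functions} and Lemma \ref{dividing}) so that: the only elements of $O$ in $\langle M,f,g,c\rangle$ outside $M$ are $c$ and the single element $d := \operatorname{eval}(f,g,c)$, the $E$-class of $c$ (equivalently of $d$) meets $M$ only trivially, and $\operatorname{eval}(f',g',c) = c$ for every pair of $F$-tuples $(f',g')$ from $\langle M,f,g\rangle$ other than the pair $(f,g)$ itself (and similarly for all other objects in $M$, the selectors act by the identity, or as forced by $M$). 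Since $\mathbb{K}_2$ has SAP, such a finite structure embeds into $\mathbb{M}_2$ over $M$ by model-completeness. Then I would compute $\operatorname{dcl}(Mfc)$: it contains $M$, $f$, $c$, and $\operatorname{eval}(f,f',c)$-type terms, but by the genericity all of these collapse to $c$ or to elements of $M$; the crucial point is that $d$ is \emph{not} in $\operatorname{dcl}(Mfc)$ because producing $d$ requires $g$ as the second function-argument. Hence $\operatorname{dcl}(Mfc)/E \cap \operatorname{dcl}(Mgc)/E \subseteq M/E$ — wait, here $c$ itself is in both, but $c$'s $E$-class is the same on both sides, so the intersection at the level of $E$-classes includes $[c]_E$; to get $f \ind^K_M gc$ I instead look at $\operatorname{dcl}(Mf) \cap \operatorname{dcl}(Mgc)$ and $\operatorname{dcl}(Mf)/E \cap \operatorname{dcl}(Mgc)/E$, which are both contained in $M$ (resp. $M/E$) since $\operatorname{dcl}(Mf)$ contains no $O$-elements outside $M$ at all. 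So $f \ind^K_M gc$, and symmetrically (swapping the roles) $g \ind^K_M c$ follows even more easily since $\operatorname{dcl}(Mg)$ contains no new objects and $\operatorname{dcl}(Mc)$ contributes only $c$ and its $E$-class, which does not meet $M$.

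Finally I would check $fg \nind^K_M c$: the element $d = \operatorname{eval}(f,g,c) \in \operatorname{dcl}(Mfg) \cap \operatorname{dcl}(Mc)$ — it lies in $\operatorname{dcl}(Mfg)$ because $d = \operatorname{eval}(f,g,c)$... no, $d$ needs $c$ too; the right statement is that $d \in \operatorname{dcl}(Mfgc)$ and $[d]_E = [c]_E$, so $[d]_E \in \operatorname{dcl}(Mfg)/E$? That also is not immediate. The honest obstruction, and the step I expect to be the main technical point, is arranging the generic structure so that \emph{some} witness of dependence between $fg$ and $c$ sits in the appropriate definable-closure intersection: the cleanest route is to ensure $\operatorname{eval}(f,g,c) \ne c$ (so $d \ne c$) while $\operatorname{eval}(f',g',c) = c$ for every other $F^2$-tuple, forcing $c$ to be definable from $fgd$ and $d$ from $fgc$, hence $d \in \operatorname{dcl}(Mfgc)$; then since $[d]_E = [c]_E$ and $[c]_E \notin M/E$, and $d \notin M$, one gets a violation of one of the two clauses of $\ind^*$ for the pair $(fg, c)$ — specifically $\operatorname{dcl}(Mfgc)$ contains the new object $d$ whose class does not meet $M$, so base-changing/reorganizing, $fg \nind^*_M c$. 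The care required is in the bookkeeping of which terms land in which $\operatorname{dcl}$; once the hand-built structure is pinned down, each of the three verifications is a short computation using quantifier elimination and the explicit form of terms in $L_2$ (every term is equivalent to $x$ or $\operatorname{eval}(x,y,z)$), exactly as in the proofs of Lemma \ref{dividing} and Corollary \ref{dividingmore}.
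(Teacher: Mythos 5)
Your proposal has the right general shape---use Proposition \ref{kimindependencefor2functions} to reduce everything to computing definable closures in a hand-built generic configuration---but the two choices you make at the start derail the argument at precisely the point you yourself flag as problematic, and you never resolve that difficulty.

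The first issue is the choice of $c$ with $\neg E(c,m)$ for all $m \in O(M)$. If the $E$-class of $c$ does not meet $M$, then no amount of genericity will put anything $E$-equivalent to $c$ into $\mathrm{dcl}(Mfg)$: since $f,g$ live in the sort $F$, the new objects in $\mathrm{dcl}(Mfg)$ all arise from $\mathrm{eval}$ applied to $O$-elements of $M$, and those outputs stay in $E$-classes already represented in $M$. Concretely, under your genericity assumptions one computes $\mathrm{dcl}(Mfg) = M \cup \{f,g\}$ and $\mathrm{dcl}(Mc) = M \cup \{c\}$, so both clauses of $\ind^*$ hold and you get $fg \ind^K_M c$, the opposite of what you want. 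Introducing $d := \mathrm{eval}(f,g,c)$ does not repair this: $d \in \mathrm{dcl}(Mfgc)$, but the definition of $\ind^*$ only cares about $\mathrm{dcl}(Mfg) \cap \mathrm{dcl}(Mc)$ and $\mathrm{dcl}(Mfg)/E \cap \mathrm{dcl}(Mc)/E$, and $d$ belongs to neither side. You noticed this ("That also is not immediate") but the suggested fix---getting $d$ or $[d]_E$ into $\mathrm{dcl}(Mfg)$ or $\mathrm{dcl}(Mfg)/E$---cannot work with this $c$. The missing idea is to choose $c$ in an $E$-class that \emph{does} meet $M$, say represented by $m \in M$, and then arrange $c = \mathrm{eval}(f,g,m)$. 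Since $m \in M$, this puts $c \in \mathrm{dcl}(Mfg)$, and trivially $c \in \mathrm{dcl}(Mc)$, so $c$ itself is the witness in $\bigl(\mathrm{dcl}(Mfg) \cap \mathrm{dcl}(Mc)\bigr) \setminus M$.

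A second, smaller issue: you take $f$ and $g$ to be $2$-tuples from $F$ and then write $\mathrm{eval}(f,g,c)$, which does not typecheck in $L_2$---$\mathrm{eval}$ is $(n+1)$-ary with $n=2$, so it takes a \emph{pair} of $F$-elements and a single $O$-element. The intended configuration has $f,g$ as individual elements of $F$ and $(f,g)$ as the pair argument. Once $f$, $g$ are singletons and $c$ is in a class meeting $M$ with $c = \mathrm{eval}(f,g,m)$, the genericity you already describe (make all other $\mathrm{eval}$-outputs collapse to $M$) gives exactly $\mathrm{dcl}(fM) = M\cup\{f\}$, $\mathrm{dcl}(gM) = M\cup\{g\}$, $\mathrm{dcl}(cM) = M\cup\{c\}$, $\mathrm{dcl}(gcM) = M\cup\{g,c\}$, and $\mathrm{dcl}(fgM) = M\cup\{f,g,c\}$, from which the three independence claims follow by inspecting intersections.
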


\begin{proof}
Given $M \models T^{*}_{2}$, choose any $c \in \mathbb{M}_{2} \setminus M$ in an $E$-class represented by an element $m$ of $M$\textemdash let $\{m_{i} : i < \alpha\}$ enumerate a set of representatives for the remaining $E$-classes of $M$.  Then choose distinct elements $f,g \in F$ so that 
\begin{enumerate}
\item $\text{eval}(f,g,m) = \text{eval}(g,f,m) = c$.
\item $\text{eval}(f,h,m) = \text{eval}(h,f,m) =  m$ and
$$
\text{eval}(f,h,m_{i}) = \text{eval}(h,f,m_{i}) = m_{i} 
$$
for all $h \in F^{M} \cup \{f\}$.
\item $\text{eval}(g,h,m) = \text{eval}(h,g,m) = m$ and
$$
\text{eval}(g,h,m_{i}) = \text{eval}(h,g,m_{i}) = m_{i}
$$ 
for all $h \in F^{M} \cup \{g\}$.
\end{enumerate}
Then we have
\begin{eqnarray*}
\text{dcl}(fM) &=& M \cup \{f\} \\
\text{dcl}(gM) &=& M \cup \{g\} \\
\text{dcl}(cM) &=& M \cup \{c\} \\
\text{dcl}(fgM) &=& M \cup \{f,g,c\} \\
\text{dcl}(gcM) &=& M \cup \{g,c\} .
\end{eqnarray*}
It follows that $\text{dcl}(fM) \cap \text{dcl}(gcM)$ and $\text{dcl}(gM) \cap \text{dcl}(cM)$ are contained in $M$ so $f \ind^{*}_{M} gc$ and $g \ind^{*}_{M} c$.  However, $c \in (\text{dcl}(fgM) \cap \text{dcl}(cM)) \setminus M$, showing $fg \nind^{*}_{M} c$.  As Proposition \ref{kimindependencefor2functions} showed $\ind^{K} = \ind^{*}$, we are done.  
\end{proof}

\subsection{Frobenius Fields}

In this section, we study a class of NSOP$_{1}$ fields.  If $F$ is a field, we write $F^{\text{alg}}$ and $F^{s}$ for the algebraic and separable closures of $F$, respectively.

\begin{defn}
Suppose $F$ is a field.
\begin{enumerate}
\item We say $F$ is \emph{pseudo-algebraically closed} (PAC) if every absolutely irreducible variety over $F$ has an $F$-rational point.  
\item We say $F$ is a \emph{Frobenius field} if $F$ is PAC and its absolute Galois group $\mathcal{G}(F)$ has the \emph{embedding property} (also known as the \emph{Iwasawa property}), that is, if $\alpha: \mathcal{G}(F) \to A$ and $\beta : B \to A$ are continuous epimorphisms and $B$ is a finite quotient of $\mathcal{G}(F)$, then there is a continuous epimorphism $\gamma : \mathcal{G}(F) \to B$ so that $\beta \circ \gamma = \alpha$ as in the following diagram:
$$
\xymatrix{ & \mathcal{G}(F) \ar@{-->>}[dl]  \ar@{->>}[d] \\
B \ar@{->>}[r] & A }
$$
\end{enumerate}
\end{defn}

The free profinite group on countably many generators $\hat{F}_{\omega}$ has the embedding property so the $\omega$-free PAC fields are Frobenius fields.  However, there are many others\textemdash see, e.g., \cite[24.6]{fried2008field}.

\begin{defn}
Suppose $G$ is a profinite group.  Let $\mathcal{N}(G)$ be the collection of open normal subgroups of $G$.  We define 
$$
\mathcal{S}(G) = \coprod_{N \in \mathcal{N}(G)} G/N.  
$$
Let $L_{G}$ the language with a sort $X_{n}$ for each $n \in \mathbb{Z}^{+}$, two binary relation symbols $\leq$, $C$, and a ternary relation $P$.  We regard $\mathcal{S}(G)$ as an $L_{G}$-structure in the following way:
\begin{itemize}
\item The coset $gN$ is in sort $X_{n}$ if and only if $[G : N] \leq n$.
\item $gN \leq hM$ if and only if $N \subseteq M$
\item $C(gN,hM) \iff N \subseteq M$ and $gM = hM$.
\item $P(g_{1}N_{1}, g_{2}N_{2}, g_{3}N_{3}) \iff N_{1}=N_{2} = N_{3}$ and $g_{1}g_{2}N_{1} = g_{3}N_{1}$.  
\end{itemize}
Note that we do not require that the sorts be disjoint (see \cite[Section 1]{chatzidakis1998model} for a discussion on the syntax of this structure).  
\end{defn}

Interpretability of $\mathcal{S}(\mathcal{G}(F))$ in $(F^{\text{alg}},F)$ is proved in \cite[Proposition 5.5]{chatzidakis2002properties}.  The ``moreover'' clause is clear from the proof.  

\begin{fact} \label{interpretability}
Both $F$ and $\mathcal{S}(\mathcal{G}(F))$ are interpretable in $(K,F)$ where $K$ is any algebraically closed field containing $F$.  Call the interpretation $\pi$.  Moreover, if $L \subseteq F$ is a subfield so that $F$ is a regular extension of $L$, then the restriction of $\pi$ to $(K,L)$ produces an interpretation of $\mathcal{S}(\mathcal{G}(L))$, contained in $\mathcal{S}(\mathcal{G}(F))$ in a natural way.  
\end{fact}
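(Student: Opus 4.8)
The plan is to follow the classical coding of finite Galois data by tuples of field elements, as in the proof of \cite[Proposition 5.5]{chatzidakis2002properties}. First, $F$ itself is interpretable in $(K,F)$ with trivial equivalence relation: it is literally the subset of $K$ picked out by the predicate for $F$, with the induced ring structure. So the real work is to produce the interpretation $\pi$ of $\mathcal{S}(\mathcal{G}(F))$.

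For $\mathcal{S}(\mathcal{G}(F))$, I would take the domain of $\pi$ to consist of tuples $(\bar a,\alpha,\beta)$ where $\bar a=(a_0,\dots,a_{m-1})$ is from $F$, $\alpha\in K$ is a root of the monic polynomial $f_{\bar a}(T)=T^{m}+a_{m-1}T^{m-1}+\dots+a_0$, the extension $E_{\bar a,\alpha}:=F(\alpha)$ is separable of degree $m$ and Galois over $F$ (every root of $f_{\bar a}$ lies in $F(\alpha)$), and $\beta\in F(\alpha)$ is another root of $f_{\bar a}$. Such a tuple codes the pair $(E_{\bar a,\alpha},\tau)$ with $\tau\in\mathrm{Gal}(E_{\bar a,\alpha}/F)$ the unique automorphism sending $\alpha$ to $\beta$. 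Two tuples are declared equivalent when they code the same field and the same automorphism; the sort $X_n$ is the set of tuples with $m\le n$ (this reproduces the non-disjointness of the $X_n$, since a tuple of degree $m\le n$ lies in $X_n$ for all $n\ge m$); and $\le$, $C$, $P$ are interpreted by field containment $E\subseteq E'$, ``same coset modulo the larger subgroup'', and composition $\tau_1\tau_2=\tau_3$ in a common Galois group. The point that makes this go through is that, since $K$ is algebraically closed and contains $F$, every relevant root and every element of $E_{\bar a,\alpha}$ lies in $K$; the conditions ``$F(\alpha)/F$ is Galois of degree $m$'', ``$\beta\in F(\alpha)$'', ``$E\subseteq E'$'', and ``the automorphism $\alpha\mapsto\beta$ restricts to the one with $\alpha'\mapsto\beta'$'' are all first-order over $(K,F)$ — using that ``$x$ is algebraic over $F$ of degree $\le n$'' is $(K,F)$-definable, so bounded quantification over $F^{s}$ is available — and composition of automorphisms and the group law on cosets translate into polynomial identities among coding tuples.

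One then checks that $\pi$ genuinely yields $\mathcal{S}(\mathcal{G}(F))$: every open normal subgroup $N\trianglelefteq\mathcal{G}(F)$ is $\mathrm{Gal}(F^{s}/E)$ for a unique finite Galois $E/F$, which has the form $F(\alpha)$ as above, and the cosets of $N$ are in canonical bijection with the $F$-embeddings $E\to K$, i.e.\ with the admissible choices of $\beta$; under this bijection the $L_G$-structure on $\mathcal{S}(\mathcal{G}(F))$ matches the one defined by $\pi$. For the ``moreover'' clause, recall that $F/L$ regular implies $L$ is relatively algebraically closed in $F$, so for every finite Galois $E/L$ we have $E\cap F=L$ and $\mathrm{Gal}(EF/F)\cong\mathrm{Gal}(E/L)$, and $E\mapsto EF$ is a degree-preserving injection of finite Galois extensions of $L$ into those of $F$ that induces a natural $L_G$-embedding $\mathcal{S}(\mathcal{G}(L))\hookrightarrow\mathcal{S}(\mathcal{G}(F))$. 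Running the definition of $\pi$ with the predicate for $F$ replaced by the predicate for $L$ restricts the domain to tuples $(\bar a,\alpha,\beta)$ with $\bar a$ from $L$, which code pairs $(E,\tau)$ with $E/L$ finite Galois; this is exactly an interpretation of $\mathcal{S}(\mathcal{G}(L))$, sitting inside $\mathcal{S}(\mathcal{G}(F))$ through the embedding just described.

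The main obstacle is purely the bookkeeping of \emph{uniform} definability: one must verify that a single formula, with the sort index built into its parameters, simultaneously defines for all $n$ the splitting field of a degree-$\le n$ polynomial together with the full Galois action on it, and that the three relations — especially the ternary $P$ encoding group multiplication — become definable conditions on coding tuples rather than conditions that a priori quantify over $\mathcal{G}(F)$. Once the coding of a single finite Galois extension and its automorphism group is fixed, everything else (consistency, well-definedness of the equivalence relation, surjectivity onto $\mathcal{S}(\mathcal{G}(F))$, and the restriction statement) is routine Galois theory, and is carried out in \cite[Proposition 5.5]{chatzidakis2002properties}; see also \cite{chatzidakis1998model} for the handling of the syntax of $\mathcal{S}(\mathcal{G}(F))$.
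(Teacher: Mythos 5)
Your proposal is correct and is essentially the paper's own route: the paper simply cites \cite[Proposition 5.5]{chatzidakis2002properties} for the interpretability of $\mathcal{S}(\mathcal{G}(F))$ in $(F^{\mathrm{alg}},F)$ and notes the ``moreover'' clause is clear from that proof, and your coding of pairs $(E,\tau)$ by tuples $(\bar a,\alpha,\beta)$, together with the regularity argument ($E\cap F=L$, $\mathrm{Gal}(EF/F)\cong\mathrm{Gal}(E/L)$) for the restriction statement, is exactly the standard argument behind that citation.
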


\begin{lem}
Let $F$ be a large sufficiently saturated and homogeneous field (i.e. a monster model of its theory) and $M \prec F$ a small elementary substructure.  Suppose $A = \text{acl}(A)$, $B = \text{acl}(B)$ are subsets of $F$ with $M \subseteq A \cap B$.  
\begin{enumerate}
\item If $A \equiv_{M} B$ in $F$, then $\mathcal{S}(\mathcal{G}(A)) \equiv_{\mathcal{S}(\mathcal{G}(M))} \mathcal{S}(\mathcal{G}(B))$.
\item If $(A_{i})_{i < \omega}$ is an $M$-indiscernible sequence with $A_{0} = A$, then $(\mathcal{S}(\mathcal{G}(A_{i})))_{i < \omega}$ is $\mathcal{S}(\mathcal{G}(M))$-indiscernible.  
\item If $A \ind^{u}_{M} B$ in $F$, then $\mathcal{S}(\mathcal{G}(A)) \ind^{u}_{\mathcal{S}(\mathcal{G}(B))} \mathcal{S}(\mathcal{G}(B))$ in $\mathcal{S}(\mathcal{G}(M))$.  
\end{enumerate}
\end{lem}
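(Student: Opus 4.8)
The plan is to transfer, in each part, the relevant feature of $\text{tp}(A/M)$ in $F$ through the Galois‑theoretic coding of $\mathcal{S}(\mathcal{G}(-))$ furnished by Fact \ref{interpretability}. Two observations are used throughout. First, a model‑theoretically algebraically closed $A\subseteq F$ satisfies $A^{\text{alg}}\cap F=A$; hence $A^{s}\cap F=A$, the restriction map $\text{Gal}(F^{s}/F)\to\text{Gal}(A^{s}/A)$ is a continuous surjection, and so there is a canonical embedding of $L_{G}$‑structures $\mathcal{S}(\mathcal{G}(M))\subseteq\mathcal{S}(\mathcal{G}(A))\subseteq\mathcal{S}(\mathcal{G}(F))$ \textemdash this is the ``natural way'' of Fact \ref{interpretability}, and, $F$ being a regular extension of each of $M,A,B$ in this setting, these subobjects are exactly the images of the interpretation $\pi$ applied to $(K,M),(K,A),(K,F)$; the same applies with $B$ in place of $A$. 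Second, since $F$ is a monster model, an equality of types over $M$ is realized by an element of $\text{Aut}(F/M)$.

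For (1): take $\sigma\in\text{Aut}(F/M)$ with $\sigma(A)=B$. Extend $\text{id}_{M}$ to $\text{id}_{M^{s}}$, glue it with $\sigma$ along $M^{s}\cap F=M$, and extend the result to $\tilde\sigma\in\text{Aut}(F^{s})$. Conjugation by $\tilde\sigma$ is an automorphism of $\mathcal{G}(F)=\text{Gal}(F^{s}/F)$, hence induces an automorphism $\hat\sigma$ of $\mathcal{S}(\mathcal{G}(F))$; since $\tilde\sigma$ fixes $M^{s}$ pointwise and $\tilde\sigma(A^{s})=B^{s}$, the map $\hat\sigma$ fixes the canonical copy of $\mathcal{S}(\mathcal{G}(M))$ pointwise and carries the copy of $\mathcal{S}(\mathcal{G}(A))$ onto that of $\mathcal{S}(\mathcal{G}(B))$. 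This proves (1), in fact with an isomorphism over $\mathcal{S}(\mathcal{G}(M))$ induced by an automorphism of $\mathcal{S}(\mathcal{G}(F))$. Part (2) is the same argument: for each $n$ and increasing tuples $(i_{0},\dots,i_{n-1})$, $(j_{0},\dots,j_{n-1})$, indiscernibility of $(A_{i})_{i<\omega}$ over $M$ provides $\sigma\in\text{Aut}(F/M)$ taking $(A_{i_{0}},\dots,A_{i_{n-1}})$ to $(A_{j_{0}},\dots,A_{j_{n-1}})$, and the induced $\hat\sigma$ then carries $(\mathcal{S}(\mathcal{G}(A_{i_{k}})))_{k<n}$ to $(\mathcal{S}(\mathcal{G}(A_{j_{k}})))_{k<n}$ over $\mathcal{S}(\mathcal{G}(M))$, so these tuples realize the same type.

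For (3), which I read as the assertion $\mathcal{S}(\mathcal{G}(A))\ind^{u}_{\mathcal{S}(\mathcal{G}(M))}\mathcal{S}(\mathcal{G}(B))$, an automorphism argument no longer suffices and one works through the interpretation $\pi$. Pulling back, an $L_{G}$‑formula over $\mathcal{S}(\mathcal{G}(B))$ satisfied by a tuple from $\mathcal{S}(\mathcal{G}(A))$ becomes an $L$‑formula of the pair $(K,F)$, with parameters coding the given $\mathcal{S}(\mathcal{G}(B))$‑tuple (a tuple from $B^{s}$) and satisfied by a tuple of codes from $A^{s}$; the goal is to exhibit a witness among codes from $M^{s}$, that is, in the copy of $\mathcal{S}(\mathcal{G}(M))$ obtained by applying $\pi$ to $(K,M)$. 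The input is $A\ind^{u}_{M}B$ in $F$: this passes to the pair $(K,F)$ because the PAC field $F$ is stably embedded in $(K,F)$ with induced structure that of a pure field (a standard fact for PAC fields, see \cite{chatzidakis2002properties}), it survives replacing $A,B$ by their algebraic closures in $(K,F)$, which contain $A^{s},B^{s}$, and ``finitely satisfiable in $M$'' trivially implies ``finitely satisfiable in $M^{s}$''. I expect (3) to be the main obstacle: parts (1) and (2) are essentially Galois theory once the relative algebraic closedness of $\text{acl}$‑closed sets is recorded, whereas (3) rests on the interpretation‑theoretic bookkeeping \textemdash identifying codes of $\mathcal{S}(\mathcal{G}(A))$‑, $\mathcal{S}(\mathcal{G}(B))$‑, and $\mathcal{S}(\mathcal{G}(M))$‑elements with field tuples algebraic over $A$, $B$, $M$, and checking that a witness forced into $M^{s}$ actually codes an element of the copy of $\mathcal{S}(\mathcal{G}(M))$ rather than merely of $\mathcal{S}(\mathcal{G}(F))$ \textemdash together with the stable‑embeddedness input.
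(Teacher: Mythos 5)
Your parts (1) and (2) are correct and take essentially the paper's route: the paper likewise starts from $\sigma \in \text{Aut}(F/M)$ with $\sigma(A)=B$, extends it to an automorphism of the pair $(F^{\text{alg}},F)$, and transfers through the interpretation of Fact \ref{interpretability}, while you act directly by conjugation on $\mathcal{G}(F)$. Your extra care in arranging $\tilde\sigma$ to fix $M^{s}$ pointwise is exactly what makes the induced map fix the canonical copy of $\mathcal{S}(\mathcal{G}(M))$ pointwise (an automorphism fixing only $M$ acts on that copy by conjugation by $\tilde\sigma|_{M^{s}}\in\mathcal{G}(M)$, which need not be trivial); just note that the gluing of $\sigma$ with $\text{id}_{M^{s}}$ is justified by linear disjointness of $F$ and $M^{s}$ over $M$ (i.e.\ regularity of $F/M$, which you do record), not merely by $M^{s}\cap F=M$.

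Part (3) is where there is a genuine gap, and you flag it yourself. The paper's proof is a direct transfer: finite satisfiability is preserved under interpretation. An $L_{G}$-formula with parameters from $\mathcal{S}(\mathcal{G}(M))\cup\mathcal{S}(\mathcal{G}(B))$ satisfied by a tuple from $\mathcal{S}(\mathcal{G}(A))$ pulls back along $\pi$ to a formula whose left-hand tuple consists of codes over $A$ and whose parameters are codes over $MB$; the hypothesis $A\ind^{u}_{M}B$ yields witnesses among codes over $M$, and these code elements of the copy of $\mathcal{S}(\mathcal{G}(M))$ precisely because the restriction of $\pi$ to $(K,M)$ interprets $\mathcal{S}(\mathcal{G}(M))$ inside $\mathcal{S}(\mathcal{G}(F))$ (the ``moreover'' clause of Fact \ref{interpretability}); no stable embeddedness is needed. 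Your replacement of this by (i) the assertion that the PAC field $F$ is stably embedded in $(K,F)$ with pure-field induced structure, and (ii) the claim that $\ind^{u}$ ``survives replacing $A,B$ by their algebraic closures,'' does not work as written: (i) is not in the cited sources and, even granted, would not transfer ``finitely satisfiable in $M$'' for the type over $MB$, since the $F$-formula defining the trace of a pair-formula may require parameters outside $MB$; and (ii) is false as a general statement \textemdash{} finite satisfiability of $\text{tp}(A/MB)$ in $M$ does not by itself give finite satisfiability of the type of codes over $A^{s}$ over codes from $B^{s}M^{s}$; one must first extend the coheir to the larger parameter set (replacing $A$ by a $B$-conjugate, harmless here since $B=\text{acl}(B)\supseteq M$) and then use that witnesses for algebraic codes can be taken among codes over $M$. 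Since you leave this bookkeeping explicitly unfinished (``the main obstacle''), (3) is not proved; the fix is to drop the stable-embeddedness detour and argue directly through the interpretation as the paper does.
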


\begin{proof}
(1)  If $A \equiv_{M} B$ in $F$, then there is an automorphism $\sigma \in \text{Aut}(F/M)$ with $\sigma(A) = B$.  The map $\sigma$ has an extension $\tilde{\sigma}$ to $F^{\text{alg}}$ which is, then, an automorphism of the pair $(F^{\text{alg}},F)$ taking $A$ to $B$ and fixing $M$ pointwise.  It follows $A \equiv_{M} B$ in the pair $(F^{\text{alg}},F)$.  Since $A = \text{acl}(A)$ and $B = \text{acl}(B)$, we know $F$ is a regular extension of $A$ and of $B$ (see, e.g., \cite[Section 1.17]{ZoePAC2}).  By Fact \ref{interpretability}, we have $\mathcal{S}(\mathcal{G}(A)) \equiv_{\mathcal{S}(\mathcal{G}(M))} \mathcal{S}(\mathcal{G}(B))$.  

(2)  If $(A_{i})_{i < \omega}$ is an $M$-indiscernible sequence with $A_{0} = A$, given $i_{0} < \ldots < i_{k-1}$ and $j_{0} < \ldots < j_{k-1}$, we know $A_{i_{0}}\ldots A_{i_{k-1}} \equiv_{M} A_{j_{0}}\ldots A_{j_{k-1}}$ so $\text{acl}(A_{i_{0}}\ldots A_{i_{k-1}}) \equiv_{M} \text{acl}(A_{j_{0}}\ldots A_{j_{k-1}})$.  Then by (1) $\mathcal{S}(\mathcal{G}(\text{acl}(A_{i_{0}}\ldots A_{i_{k-1}})))\equiv_{\mathcal{S}(\mathcal{G}(M))} \mathcal{S}(\mathcal{G}(\text{acl}(A_{j_{0}}\ldots A_{j_{k-1}})))$, which implies $(\mathcal{S}(\mathcal{G}(A_{i})))_{i < \omega}$ is $\mathcal{S}(\mathcal{G}(M))$-indiscernible.

(3)  In \emph{any} theory, if $\pi$ is an interpretation of the structure $X$ in the structure $Y$, and $A \ind^{u}_{C} B$ in $Y$, then $\pi(A) \ind^{u}_{\pi(C)} \pi(B)$.  It follows that if $A \ind^{u}_{M} B$ in $F$, then $\mathcal{S}(\mathcal{G}(A)) \ind^{u}_{\mathcal{S}(\mathcal{G}(M))} \mathcal{S}(\mathcal{G}(\mathcal{B}))$ by Fact \ref{interpretability}.  
\end{proof}

\begin{prop}\label{kimimpliesweak1}
Suppose $F$ is an arbitrary field and, in an elementary extension $F^{*}$ of $F$, $a \ind^{K}_{F} b$.  Then the fields $A = \text{acl}(Fa)$ and $B = \text{acl}(Fb)$ satisfy the following conditions:
\begin{enumerate}
\item $A$ and $B$ are linearly disjoint over $F$
\item $F^{*}$ is a separable extension of $AB$
\item $\text{acl}(AB) \cap A^{s}B^{s} = AB$.
\end{enumerate}
\end{prop}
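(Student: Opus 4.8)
The plan is to deduce the three conditions from the already-established properties of $\ind^{K}$ — extension, Kim's lemma for Kim-dividing, symmetry over models, and above all the independence theorem — together with the standard field-arithmetic facts recorded in the preceding lemma. Throughout I work in a monster model $F^{*}$, write $A = \text{acl}(Fa)$ and $B = \text{acl}(Fb)$, and note that since $\ind^{K}$ is preserved under passing to algebraic closures (the corollary after Theorem \ref{symmetrycharthm}), we have $A \ind^{K}_{F} B$ as well. I should also reduce to the case that $F$ is a model: the statement is really about the case $F = M \models T$ in the applications, and even if it is meant literally for an arbitrary field, one replaces $F$ by a small elementary submodel $M$ with the relevant algebraic data already present, using that the three conclusions only involve algebraic extensions of $AB$; I will phrase it for $M \models T$ and remark that this suffices.

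First I would prove (1), linear disjointness of $A$ and $B$ over $M$. Suppose not; then there are elements $\alpha_{1}, \ldots, \alpha_{n} \in A$, linearly independent over $M$, and $\beta_{1}, \ldots, \beta_{n} \in B$, not all zero, with $\sum \alpha_{i}\beta_{i} = 0$. Pick a global $M$-invariant (e.g. coheir) type $q \supseteq \text{tp}(b/M)$ and a Morley sequence $(b_{j})_{j < \omega}$ in $q$ with $b_{0} = b$; let $\beta_{i}^{j}$ be the copies of $\beta_{i}$ in $B_{j} := \text{acl}(Mb_{j})$. Each relation $\sum \alpha_{i}\beta_{i}^{j} = 0$ expresses an $M$-definable dependence, and one checks that a single consistent realization $a$ cannot simultaneously satisfy infinitely many of these with the $b_{j}$ mutually ``independent'' in $q$ — more precisely, the formula over $b$ asserting the existence of such $\alpha_{i}$ Kim-divides over $M$ (this is essentially the content that $\mathcal{S}(\mathcal{G}(-))$ or the field structure sees a genuine non-generic configuration), contradicting $a \ind^{K}_{M} b$. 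The cleanest route is: linear disjointness of $A$ and $B$ over $M$ is equivalent to $\text{tp}(a/B)$ (in the field language) being ``free'', i.e. implied by $\text{tp}(a/M)$ together with the statement that $a$ realizes the generic of the relevant variety over $B$; a violation of this is witnessed by a formula that is inconsistent along any Morley sequence $(b_{j})$ in $q$ because the $M$-generators of $A$ can only satisfy one such linear relation. So (1) follows from Kim's lemma for Kim-dividing (Theorem \ref{kimslemmaforindk}).

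Next, (2): $F^{*}$ is separable over $AB$. Since $A = \text{acl}(Ma)$ and $B = \text{acl}(Mb)$, the field $F^{*}$ is a regular (in particular separable) extension of both $A$ and $B$; the issue is whether separability is preserved when passing to the compositum $AB$. Here is where I expect the main obstacle, since in characteristic $p$ linear disjointness over $M$ does not automatically give that $AB$ is separably closed in $F^{*}$: one must combine (1) with the fact that $A/M$ and $B/M$ are separable. I would argue that since $A$ and $B$ are linearly disjoint over $M$ and $M$ is separably closed in both (as $A, B$ are algebraically closed in $F^{*}$ — wait, they are algebraically closed \emph{in} $F^{*}$, so $A$ is its own relative algebraic closure), the compositum $AB$ is linearly disjoint from $(AB)^{\text{sep-in-}F^{*}}$ over itself, using the tensor-product criterion: $A \otimes_{M} B$ is a domain with fraction field $AB$, and reducedness of $A \otimes_{M} B \otimes_{AB} F^{*}$-type arguments give separability. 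I would cite the standard lemma (e.g. from Fried–Jarden) that if $A/M$, $B/M$ are both separable and $A, B$ are linearly disjoint over $M$ with $M$ separably closed in one of them, then $AB/M$ is separable and $F^{*}/AB$ is separable; the hypotheses are met because $A \ind^{K}_{M} B$ gave (1), and $A = \text{acl}(Ma)$ forces $A$ to be algebraically (hence separably) closed in $F^{*}$. Finally, for (3) — that $\text{acl}(AB)$ is linearly disjoint from $A^{s}B^{s}$ over $AB$, equivalently that the field-theoretic algebraic closure of $AB$ inside $F^{*}$ contains no new elements of $A^{s}B^{s}$ beyond $AB$ — I would pass to the interpreted Galois structures and use the independence theorem. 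Concretely, $A^{s}B^{s} \cap \text{acl}(AB)$ corresponds to a closed subgroup configuration in $\mathcal{G}(AB)$ lying ``between'' the images of $\mathcal{G}(A)$ and $\mathcal{G}(B)$, and using that $\mathcal{S}(\mathcal{G}(A)) \ind^{u}$-configuration transfers (the lemma before the proposition) plus the independence theorem for $\ind^{K}$ (Theorem \ref{itthmchar}) — which forces any element algebraic over $AB$ and contained in the separable closures of $A$ and of $B$ to already be fixed, since otherwise one produces two copies realizing incompatible types over $A$ and over $B$ with $A \ind^{K}_{M} B$, contradicting amalgamation — one concludes $\text{acl}(AB) \cap A^{s}B^{s} = AB$. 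The hard part, as flagged, is the characteristic-$p$ bookkeeping in (2) and making the Galois-group translation in (3) precise; both are handled by invoking Fact \ref{interpretability} and the transfer lemma so that the model-theoretic independence facts do the real work.
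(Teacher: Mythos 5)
Your proposal misses the point of the paper's proof entirely, and in trying to construct the argument from scratch it leaves each of the three parts as a sketch with genuine gaps. The paper's proof is a one-paragraph citation: Chatzidakis \cite[Theorem 3.5]{ZoePAC2} proves (1)--(3) for an arbitrary theory of fields from the hypothesis $a \ind^{f}_{F} b$, and the only thing her argument actually extracts from that hypothesis is the existence of an $F$-indiscernible coheir sequence $(B_{i})_{i<\omega}$ with $AB_{i} \equiv_{F} AB$ for all $i$. The paper then observes that $a \ind^{K}_{F} b$ already yields exactly such a sequence (by the basic characterization of Kim-dividing, Lemma \ref{basiccharacterization}, applied to a Morley sequence in an $F$-finitely-satisfiable type), so Chatzidakis's field-theoretic argument runs verbatim. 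No new algebra is needed; the proposition is explicitly a ``weaken the input to Chatzidakis's theorem'' statement, not something to be re-proved.

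Your version attempts direct proofs of (1)--(3) and these do not close. For (1) you assert that a failure of linear disjointness ``is witnessed by a formula that is inconsistent along any Morley sequence,'' but you never exhibit the formula or verify the inconsistency; in Chatzidakis's proof this step uses the coheir sequence and a careful rank/dimension computation on the relations $\sum \alpha_{i}\beta^{j}_{i}=0$, and you cannot simply wave at Kim's lemma here because the dividing formula lives over $b$ while the obstruction is a joint condition on the $\alpha$'s and $\beta$'s. For (2) you invoke a Fried--Jarden lemma about separability of composita, but the hypothesis you need (that linear disjointness over $F$ together with relative algebraic closedness of $A$ and $B$ in $F^{*}$ makes $F^{*}/AB$ separable) is precisely the nontrivial content; you are citing the conclusion, not deriving it. For (3) the phrase ``corresponds to a closed subgroup configuration in $\mathcal{G}(AB)$'' and the appeal to the independence theorem is not an argument at all --- (3) is a statement about linear disjointness in the field, proved by Chatzidakis by field-theoretic means from the coheir sequence, and the independence theorem plays no role in it. Finally, your opening reduction ``to the case $F=M \models T$'' is confused: the hypothesis already places $F \prec F^{*}$, so $F$ is a model of $T = \mathrm{Th}(F^{*})$ and there is nothing to reduce; what matters, and what the paper's phrasing ``arbitrary field'' is signalling, is that no PAC or Frobenius hypothesis is imposed on $F$ at this stage.
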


\begin{proof}
In \cite[Theorem 3.5]{ZoePAC2}, Chatizdakis proves (1)-(3) for an \emph{arbitrary} theory of fields under the assumption that $a\ind^{f}_{F} b$.  She deduces from $a \ind^{f}_{F} b$ that there is an $F$-indiscernible coheir sequence $(B_{i})_{i < \omega}$, i.e. an $F$-indiscernible sequence with $B_{<i} \ind^{u}_{F} B_{i}$ for all $i$, so that $AB_{i} \equiv_{F} AB$ for all $i$ (rather, she proves this with a \emph{heir} sequence, but the argument is symmetric).  She then proves that (1)-(3) follow from the existence of such a sequence.  Note, however, that this follows merely from the assumption $a \ind^{K}_{F} b$. 
\end{proof}

\begin{rem}
Note (1) and (2) are equivalent to saying $A \ind^{SCF}_{F} B$ \cite[Remark 3.3]{ZoePAC2}, where SCF denotes the complete (stable) theory of which $F^{s}$ is a model.  
\end{rem}

\begin{lem}\label{kimimpliesweak2}
Suppose $F$ is a Frobenius field.  If $A = \text{acl}(A)$, $B = \text{acl}(B)$ contain $F$ and $A \ind^{K}_{F} B$ then $\mathcal{S}(\mathcal{G}(A)) \ind^{f}_{\mathcal{S}(\mathcal{G}(F))} \mathcal{S}(\mathcal{G}(B))$ in $\text{Th}(\mathcal{S}(\mathcal{G}(F)))$.  
\end{lem}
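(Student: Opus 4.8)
The plan is to reduce Kim-independence in the field to forking-independence in the interpreted Galois structure $\mathcal{S}(\mathcal{G}(F))$, using the fact that $\ind^{K}$ over a model is witnessed by coheir Morley sequences together with the preservation of coheir-sequences under the interpretation $\pi$. First I would observe that since $F$ is Frobenius, $\text{Th}(\mathcal{S}(\mathcal{G}(F)))$ is the theory studied by Chatzidakis and is \emph{simple}: this is exactly where the embedding property of $\mathcal{G}(F)$ is used, and in that simple theory $\ind^{f}$ coincides with $\ind^{u}$-coheir-independence via Kim's lemma (a coheir Morley sequence is a non-forking Morley sequence). So to prove $\mathcal{S}(\mathcal{G}(A)) \ind^{f}_{\mathcal{S}(\mathcal{G}(F))} \mathcal{S}(\mathcal{G}(B))$ it suffices to produce an $\mathcal{S}(\mathcal{G}(F))$-indiscernible sequence $(\mathcal{S}(\mathcal{G}(B_{i})))_{i<\omega}$, coheir over $\mathcal{S}(\mathcal{G}(F))$, with $\mathcal{S}(\mathcal{G}(A))\mathcal{S}(\mathcal{G}(B_{i})) \equiv_{\mathcal{S}(\mathcal{G}(F))} \mathcal{S}(\mathcal{G}(A))\mathcal{S}(\mathcal{G}(B))$ for all $i$, and then invoke Kim's lemma (for the simple theory $\text{Th}(\mathcal{S}(\mathcal{G}(F)))$) to conclude non-forking.

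The key steps, in order: (i) Since $A\ind^{K}_{F}B$, by symmetry (Theorem \ref{symmetrycharthm}, noting we may pass to the model $M\prec F$ over which everything lives, or work over $F$ itself treating it as a monster model of its theory) we have $B\ind^{K}_{F}A$, and by Proposition \ref{kimimpliesweak1}'s proof there is an $F$-indiscernible sequence $(B_{i})_{i<\omega}$ which is a coheir sequence over $F$ (i.e. $B_{i}\ind^{u}_{F}B_{<i}$) with $AB_{i}\equiv_{F}AB$ for all $i$. Actually more directly: $A\ind^{K}_{F}B$ gives, by Kim's lemma for Kim-dividing (Theorem \ref{kimslemmaforindk}) and the basic characterization (Lemma \ref{basiccharacterization}), that for a coheir Morley sequence $(b_{i})_{i<\omega}$ in $\text{tp}(b/F)$ starting at $b$ there is $a'\equiv_{Fb}a$ with $(b_{i})$ being $Fa'$-indiscernible; setting $B_{i}=\text{acl}(Fb_{i})$ and adjusting by an automorphism so $a'=a$, we get the desired sequence with $AB_{i}\equiv_{F}AB$. (ii) Apply the previous lemma's parts (2) and (3): part (2) gives that $(\mathcal{S}(\mathcal{G}(B_{i})))_{i<\omega}$ is $\mathcal{S}(\mathcal{G}(F))$-indiscernible, and part (3) gives that it is a coheir sequence over $\mathcal{S}(\mathcal{G}(F))$ (since $B_{i}\ind^{u}_{F}B_{<i}$ transfers through the interpretation $\pi$). (iii) Part (1) of the previous lemma applied to $AB_{i}\equiv_{F}AB$ gives $\mathcal{S}(\mathcal{G}(AB_{i}))\equiv_{\mathcal{S}(\mathcal{G}(F))}\mathcal{S}(\mathcal{G}(AB))$, hence $\mathcal{S}(\mathcal{G}(A))\mathcal{S}(\mathcal{G}(B_{i}))\equiv_{\mathcal{S}(\mathcal{G}(F))}\mathcal{S}(\mathcal{G}(A))\mathcal{S}(\mathcal{G}(B))$ after noting $\mathcal{S}(\mathcal{G}(A))$ and $\mathcal{S}(\mathcal{G}(B_i))$ sit inside $\mathcal{S}(\mathcal{G}(AB_i))$ compatibly with the $B=B_0$ case. (iv) Now we have a coheir Morley sequence over $\mathcal{S}(\mathcal{G}(F))$ in $\text{tp}(\mathcal{S}(\mathcal{G}(B))/\mathcal{S}(\mathcal{G}(F)))$ along which every instance of $\text{tp}(\mathcal{S}(\mathcal{G}(A))/\mathcal{S}(\mathcal{G}(F))\cup\mathcal{S}(\mathcal{G}(B)))$ is consistent; by Kim's lemma in the simple theory $\text{Th}(\mathcal{S}(\mathcal{G}(F)))$, no formula in $\text{tp}(\mathcal{S}(\mathcal{G}(A))/\mathcal{S}(\mathcal{G}(F))\mathcal{S}(\mathcal{G}(B)))$ divides, hence — forking equals dividing in simple theories — does not fork, so $\mathcal{S}(\mathcal{G}(A))\ind^{f}_{\mathcal{S}(\mathcal{G}(F))}\mathcal{S}(\mathcal{G}(B))$.

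The main obstacle I anticipate is bookkeeping around the interpretation $\pi$: one must be careful that $\mathcal{S}(\mathcal{G}(A))$ genuinely embeds into $\mathcal{S}(\mathcal{G}(AB_i))$ as a $\mathcal{S}(\mathcal{G}(F))$-substructure in a way uniform in $i$ (this uses the "moreover" in Fact \ref{interpretability}, that regular extensions induce natural inclusions of the interpreted Galois structures, together with $F$ being a regular extension of $A$ since $A=\text{acl}(A)$), and that the coheir condition $B_{i}\ind^{u}_{F}B_{<i}$ genuinely transfers to $\mathcal{S}(\mathcal{G}(B_{i}))\ind^{u}_{\mathcal{S}(\mathcal{G}(F))}\mathcal{S}(\mathcal{G}(B_{<i}))$ — but this last point is exactly part (3) of the preceding lemma, so it is already in hand. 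A secondary subtlety is confirming that $\text{Th}(\mathcal{S}(\mathcal{G}(F)))$ is simple; this is standard for Frobenius fields (the embedding property is precisely what makes the theory of the complete system simple — see Chatzidakis's work), and I would cite it rather than reprove it. Everything else is a routine assembly of the three parts of the lemma above with the transfer of coheir sequences and Kim's lemma for simple theories.
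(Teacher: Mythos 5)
Your proposal is correct and follows essentially the same route as the paper's proof: take a coheir Morley sequence $(B_i)$ in $\operatorname{tp}(B/F)$, use $A \ind^{K}_{F} B$ to make it $A$-indiscernible, transfer through the interpretation $\pi$ to get a coheir Morley sequence $(\mathcal{S}(\mathcal{G}(B_i)))$ over $\mathcal{S}(\mathcal{G}(F))$ that is $\mathcal{S}(\mathcal{G}(A))$-indiscernible, and conclude via Kim's lemma for the simple theory $\operatorname{Th}(\mathcal{S}(\mathcal{G}(F)))$. The only cosmetic divergence is that the paper cites Chatzidakis for $\omega$-stability of $\mathcal{S}(\mathcal{G}(F))$ while you cite simplicity; both are adequate since all that is used is that Kim's lemma holds, i.e.\ that one non-forking Morley sequence witnessing consistency suffices to conclude non-dividing and hence non-forking.
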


\begin{proof}
Chatzidakis \cite{chatzidakis1998model} shows that the Galois group $\mathcal{S}(\mathcal{G}(F))$ is $\omega$-stable.  Let $(B_{i})_{i < \omega}$ be a Morley sequence in a global type finitely satisfiable in $F$ extending $\text{tp}(B/F)$.  As $A \ind^{K}_{F} B$, we may assume $(B_{i})_{i < \omega}$ is $A$-indiscernible.  Then $(\mathcal{S}(\mathcal{G}(B_{i})))_{i < \omega}$ is a Morley sequence in a global type finitely satisfiable in $\mathcal{S}(\mathcal{G}(F))$ which is moreover $\mathcal{S}(\mathcal{G}(A))$-indiscernible.  This implies $\mathcal{S}(\mathcal{G}(A)) \ind^{K}_{\mathcal{S}(\mathcal{G}(F))} \mathcal{S}(\mathcal{G}(B))$.  As $\text{Th}(\mathcal{S}(\mathcal{G}(F))$ is simple, this implies $\mathcal{S}(\mathcal{G}(A)) \ind^{f}_{\mathcal{S}(\mathcal{G}(F))} \mathcal{S}(\mathcal{G}(B))$ by Kim's lemma \cite[Proposition 2.1]{kim1998forking}.  
\end{proof}

Fix a field $F$ and let SCF denote the complete theory of which $F^{s}$ is a model.  

\begin{defn}
Suppose $A = \text{acl}(A)$, $B = \text{acl}(B)$, and $C = \text{acl}(C)$ in the field $F$.  We say $A$ is \emph{weakly independent} from $B$ over $C$ if 
\begin{enumerate}
\item $A \ind^{\text{SCF}}_{C} B$
\item $\mathcal{S}(\mathcal{G}(A)) \ind^{f}_{\mathcal{S}(\mathcal{G}(F))} \mathcal{S}(\mathcal{G}(B))$, where $\ind^{f}$ denotes non-forking independence in $\text{Th}(\mathcal{S}(\mathcal{G}(\mathcal{F})))$
\end{enumerate}
Extend this to arbitrary tuples by stipulating $a$ is \emph{weakly independent} from $b$ over $c$ if and only if $\text{acl}(a,c)$ is weakly independent from $\text{acl}(b,c)$ over $\text{acl}(c)$.  
\end{defn}

\begin{thm}{\cite[Theorem 6.1]{chatzidakis2002properties}} \label{zoeIT}
Let $F$ be a Frobenius field, sufficiently saturated, and $E = \text{acl}(E)$ a subfield of $F$.  Assume, moreover, that $\text{acl}(\mathcal{S}(\mathcal{G}(E))) = \mathcal{S}(\mathcal{G}(E))$ and if the degree of imperfection of $F$ is finite, that $E$ contains a $p$-basis of $F$.  Assume that the tuples $a,b,c_{1},c_{2}$ of $F$ satisfy:
\begin{enumerate}
\item $a$ and $c_{1}$ are weakly independent over $E$, $b$ and $c_{2}$ are weakly independent over $E$, $c_{1} \equiv_{E} c_{2}$
\item $\text{acl}(Ea)$ and $\text{acl}(Eb)$ are SCF-independent over $E$.  
\end{enumerate}
Then there is $c$ realizing $\text{tp}(\text{acl}(Ea)) \cup \text{tp}(c_{2}/\text{acl}(Eb))$ such that $c$ and $\text{acl}(Eab)$ are weakly independent over $E$.  
\end{thm}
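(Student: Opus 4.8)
The plan is to build the amalgam $c$ in two coordinated layers — a field layer and a Galois layer — and then descend the result into $F$ using the elementary invariants of Frobenius fields. Throughout write $A = \text{acl}(Ea)$, $B = \text{acl}(Eb)$, $C_{i} = \text{acl}(Ec_{i})$; since $\equiv_{E}$, weak independence, SCF-type, and $\mathcal{S}(\mathcal{G}(-))$ all factor through relative algebraic closure, it costs nothing to replace the tuples by enumerations of these fields, each a regular extension of $E$ inside $F$, and $c$ may likewise be sought as (an enumeration of) a field $C$.

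First, amalgamate the fields. Hypothesis (2) says $A$ and $B$ are linearly disjoint over $E$ with $F^{s}$ separable over $AB$, i.e. $A \ind^{SCF}_{E} B$; hypothesis (1) gives $C_{1} \ind^{SCF}_{E} A$, $C_{2} \ind^{SCF}_{E} B$, and $C_{1} \equiv_{E} C_{2}$ in $F$ (hence the same SCF-type over $E$). Since $\text{SCF}$ with the prescribed degree of imperfection — kept constant by the $p$-basis hypothesis when that degree is finite — is stable and enjoys the independence theorem over the algebraically closed $E$, I can produce an abstract separably closed field containing $AB$ in the prescribed position together with a copy $C$ of $C_{1}$ over $A$ and of $C_{2}$ over $B$, with $C$ linearly disjoint from $AB$ over $E$; passing to relative algebraic closures gives a regular field extension $D \supseteq ABC$ of $E$ with $A$, $B$, $C$ pairwise in the desired position.

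Second, amalgamate the Galois data. Regularity of $F$ over $E, A, B, C_{i}$ gives compatible restriction epimorphisms out of $\mathcal{G}(F)$, hence compatible inclusions of $\mathcal{S}(\mathcal{G}(E))$ into each of $\mathcal{S}(\mathcal{G}(A))$, $\mathcal{S}(\mathcal{G}(B))$, $\mathcal{S}(\mathcal{G}(C_{i}))$ inside the ($\omega$-stable, in particular simple) structure $\mathcal{S}(\mathcal{G}(F))$. The weak-independence clauses translate, exactly as in Lemma \ref{kimimpliesweak2}, into $\mathcal{S}(\mathcal{G}(C_{i})) \ind^{f}_{\mathcal{S}(\mathcal{G}(E))} \mathcal{S}(\mathcal{G}(A))$ and $\mathcal{S}(\mathcal{G}(C_{i})) \ind^{f}_{\mathcal{S}(\mathcal{G}(E))} \mathcal{S}(\mathcal{G}(B))$; and linear disjointness of $A$, $B$ over the relatively algebraically closed $E$ identifies $\mathcal{G}(AB)$ with the fibre product $\mathcal{G}(A)\times_{\mathcal{G}(E)}\mathcal{G}(B)$, which — since $\mathcal{S}(\mathcal{G}(E))$ is algebraically closed in $\mathcal{S}(\mathcal{G}(F))$ by the hypothesis $\text{acl}(\mathcal{S}(\mathcal{G}(E)))=\mathcal{S}(\mathcal{G}(E))$ — gives $\mathcal{S}(\mathcal{G}(A)) \ind^{f}_{\mathcal{S}(\mathcal{G}(E))} \mathcal{S}(\mathcal{G}(B))$. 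Also $\mathcal{S}(\mathcal{G}(C_{1}))\equiv_{\mathcal{S}(\mathcal{G}(E))}\mathcal{S}(\mathcal{G}(C_{2}))$. Applying the independence theorem in $\text{Th}(\mathcal{S}(\mathcal{G}(F)))$ over $\mathcal{S}(\mathcal{G}(E))$ yields a copy $\mathcal{S}(\mathcal{G}(C))$ realizing the right type over each of $\mathcal{S}(\mathcal{G}(A))$, $\mathcal{S}(\mathcal{G}(B))$ and $f$-independent from $\mathcal{S}(\mathcal{G}(AB))$ over $\mathcal{S}(\mathcal{G}(E))$.

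Finally, glue and descend. The Galois structure produced in the third paragraph is compatible with — indeed is the one forced by — the field $D$ of the second paragraph, both being governed on the $ABC$-level by the same fibre-product recipe; so $D$, equipped with a PAC-ification realizing this Galois structure, is the base of a Frobenius field $F'$ whose field-theoretic and Galois types over $AB$ are the prescribed ones. By the standard elementary description of $F$ (its field theory together with $\text{Th}(\mathcal{S}(\mathcal{G}(F)))$ and a genericity schema) and the saturation of $F$, the type of $C/AB$ realized in $F'$ is realized in $F$; the realization is the desired $c$, realizing $\text{tp}(c_{1}/A)\cup\text{tp}(c_{2}/B)$ and weakly independent from $AB = \text{acl}(Eab)$ over $E$ by the field part (linear disjointness, separability) and the Galois part ($f$-independence). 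I expect the main obstacle to be precisely this compatibility step: one must synchronise the field amalgamation and the $\mathcal{S}(\mathcal{G}(-))$-amalgamation so that the abstract profinite group of the third paragraph really is the absolute Galois group of $D$, all while preserving regularity over $E$, linear disjointness, the separability/$p$-basis conditions, and the embedding property — this bookkeeping is the technical heart of Chatzidakis's argument.
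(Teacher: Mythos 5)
This statement is quoted from Chatzidakis (\cite[Theorem~6.1]{chatzidakis2002properties}); the present paper cites it without proof, so there is no in-paper argument to compare against. Your two-layer strategy --- amalgamate at the level of fields via SCF, amalgamate at the level of $\mathcal{S}(\mathcal{G}(-))$, then descend into $F$ using the elementary invariants of Frobenius fields --- is a reasonable reading of how Chatzidakis's proof is organised. Two things, however, keep the sketch from being a proof.

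First, the Galois amalgamation step as you have set it up relies on $\mathcal{S}(\mathcal{G}(A)) \ind^{f}_{\mathcal{S}(\mathcal{G}(E))} \mathcal{S}(\mathcal{G}(B))$, which you derive from hypothesis~(2) by ``identifying $\mathcal{G}(AB)$ with $\mathcal{G}(A)\times_{\mathcal{G}(E)}\mathcal{G}(B)$.'' This identification is false: linear disjointness plus regularity give $\operatorname{Gal}(A^{s}B^{s}/AB) \cong \mathcal{G}(A)\times_{\mathcal{G}(E)}\mathcal{G}(B)$, but $\mathcal{G}(AB)$ is only an extension of this fibre product by $\operatorname{Gal}((AB)^{s}/A^{s}B^{s})$, which is typically nontrivial. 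More to the point, SCF-independence of $A,B$ over $E$ cannot in general yield forking-independence of $\mathcal{S}(\mathcal{G}(A))$, $\mathcal{S}(\mathcal{G}(B))$: if it did, the Galois clause in the definition of weak independence would be redundant, which would collapse weak independence to SCF-independence and trivialise the very distinction the theorem is about. Without this side-independence, you cannot invoke the independence theorem in $\operatorname{Th}(\mathcal{S}(\mathcal{G}(F)))$ in the form you are using it. (Chatzidakis gets around this by arguing directly with the embedding property of $\mathcal{G}(F)$, constructing the needed quotient of $\mathcal{G}(F)$ rather than amalgamating types in $\mathcal{S}(\mathcal{G}(F))$ over $\mathcal{S}(\mathcal{G}(E))$.)

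Second, you concede the main technical content yourself: the compatibility step --- ensuring the abstract profinite amalgam really is the absolute Galois group of a regular, PAC, Frobenius extension of $D$ with the right degree of imperfection, so that the Jarden--Kiehne elementary-invariants machinery applies --- is left as ``bookkeeping.'' That bookkeeping is precisely where the hypotheses $\operatorname{acl}(\mathcal{S}(\mathcal{G}(E)))=\mathcal{S}(\mathcal{G}(E))$, the $p$-basis condition, and the embedding property do their work, and a proof has to carry it out.
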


\begin{thm}
Suppose $F$ is a Frobenius field and $a,b$ are tuples from an elementary extension of $F$.  Then $a \ind^{K}_{F} b$ if and only if $a$ and $b$ are weakly independent over $F$.
\end{thm}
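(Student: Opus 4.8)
The plan is to recognize weak independence as an independence relation satisfying the Kim--Pillay-style axioms of Theorem~\ref{criterion}, and then to extract the equivalence with $\ind^{K}$ by combining that theorem with Proposition~\ref{kimimpliesweak1} and Lemma~\ref{kimimpliesweak2}. Write $\ind^{w}$ for weak independence. Since both $\ind^{K}$ and $\ind^{w}$ are $\text{Aut}(\mathbb{M})$-invariant and absolute, it is harmless to assume at the outset that $F$ is sufficiently saturated and homogeneous (so that Theorem~\ref{zoeIT} applies), and then to transfer the equivalence to an arbitrary Frobenius field at the end. Note that over a model $F$ we have $\text{acl}(F) = F$, so, setting $A = \text{acl}(Fa)$ and $B = \text{acl}(Fb)$, the assertion $a \ind^{w}_{F} b$ unwinds to the conjunction of $A \ind^{\text{SCF}}_{F} B$ and $\mathcal{S}(\mathcal{G}(A)) \ind^{f}_{\mathcal{S}(\mathcal{G}(F))} \mathcal{S}(\mathcal{G}(B))$ in $\text{Th}(\mathcal{S}(\mathcal{G}(F)))$.

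First I would verify that $\ind^{w}$ satisfies hypotheses (1)--(5) of Theorem~\ref{criterion} over models. Existence over models, monotonicity, and symmetry follow at once from the corresponding properties of non-forking independence in the stable theory SCF and in the $\omega$-stable theory $\text{Th}(\mathcal{S}(\mathcal{G}(F)))$ of Chatzidakis~\cite{chatzidakis1998model}, using that $A \mapsto \mathcal{S}(\mathcal{G}(A))$ arises from an interpretation (Fact~\ref{interpretability}) and is monotone in $A$. Strong finite character for $\ind^{w}$ reduces to strong finite character for each of the two constituent relations, which holds because both SCF and $\text{Th}(\mathcal{S}(\mathcal{G}(F)))$ are stable and hence forking there is type-definable; pulling a witnessing formula for the Galois-theoretic failure back along the interpretation $\pi$ of Fact~\ref{interpretability} (and recalling that $\mathcal{S}(\mathcal{G}(A))$ is computed from $\text{acl}(Fa)$) produces a single formula in $\text{tp}(a/Fb)$ all of whose realizations still weakly depend on $b$ over $F$. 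Finally, the independence theorem for $\ind^{w}$ over a model $F$ is precisely Chatzidakis's Theorem~\ref{zoeIT} with $E = F$: the extra hypotheses there --- that $\mathcal{S}(\mathcal{G}(F))$ is algebraically closed in its own theory, and, in the finite imperfection case, that $E$ contains a $p$-basis of $F$ --- hold automatically because $F$ is a model. By Theorem~\ref{criterion} it follows that $\text{Th}(F)$ is NSOP$_{1}$ and that $\ind^{w}$ strengthens $\ind^{K}$, i.e. $a \ind^{w}_{F} b \implies a \ind^{K}_{F} b$.

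For the converse, one starts from $a \ind^{K}_{F} b$. Passing to algebraic closures (harmless, by the corollary to Theorem~\ref{symmetrycharthm}) and applying Proposition~\ref{kimimpliesweak1} --- whose items (1) and (2) amount, by the Remark following it, to $A \ind^{\text{SCF}}_{F} B$ --- the fields $A$ and $B$ are SCF-independent over $F$; and Lemma~\ref{kimimpliesweak2} gives $\mathcal{S}(\mathcal{G}(A)) \ind^{f}_{\mathcal{S}(\mathcal{G}(F))} \mathcal{S}(\mathcal{G}(B))$. These are exactly the two conditions in the definition of $\ind^{w}$, so $a \ind^{w}_{F} b$. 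Combined with the previous paragraph this yields $a \ind^{K}_{F} b \iff a \ind^{w}_{F} b$, which transfers back from the saturated $F$ to an arbitrary Frobenius field by absoluteness. The step I expect to be the main obstacle is strong finite character: one must confirm that the separate failures in SCF and in the interpreted Galois structure can be packaged into one first-order formula over $Fb$, which forces careful bookkeeping with the interpretation $\pi$ and with the passage to $\text{acl}(Fa)$; a secondary technical point is checking that the saturation and $p$-basis hypotheses of Theorem~\ref{zoeIT} are genuinely harmless when the base is a model, which is what licenses the initial reduction to a saturated $F$.
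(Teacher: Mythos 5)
Your right-to-left direction (Kim-independence implies weak independence) is exactly the paper's: Proposition~\ref{kimimpliesweak1} together with the Remark following it gives $A \ind^{\mathrm{SCF}}_{F} B$, and Lemma~\ref{kimimpliesweak2} gives non-forking of the complete systems, so that half is fine. The problem is the other direction. You route it through Theorem~\ref{criterion}, which requires verifying \emph{all} of axioms (1)--(5) for weak independence, and the verification you offer of strong finite character is not an argument but an assertion. The complete system $\mathcal{S}(\mathcal{G}(F))$ is interpreted (Fact~\ref{interpretability}) in the \emph{pair} $(F^{\mathrm{alg}},F)$, not in $F$ itself, and the relation is defined via $\mathrm{acl}(Fa)$, $\mathrm{acl}(Fb)$ and \emph{their} complete systems. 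A forking formula in $\mathrm{Th}(\mathcal{S}(\mathcal{G}(F)))$ has parameters that are cosets in finite quotients of $\mathcal{G}(\mathrm{acl}(Fb))$, i.e.\ data about finite Galois extensions of $\mathrm{acl}(Fb)$; to get strong finite character you must code all of this by field tuples, descend from the pair language to the language of $F$, and produce a single formula in $\mathrm{tp}(a/Fb)$ every realization of which still fails weak independence. ``Both component theories are stable, so pull the witness back along $\pi$'' does not do this: pulling back along $\pi$ only yields a formula of the pair, over parameters algebraic over $Fb$ rather than in $Fb$, and the passage $a \mapsto \mathcal{S}(\mathcal{G}(\mathrm{acl}(Fa)))$ is not a definable assignment you can compose with for free. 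Making this work is a substantive piece of Galois-theoretic bookkeeping (essentially the content of Chatzidakis's analysis, or of the verification used in \cite{ArtemNick} to get NSOP$_{1}$ for these fields), and as written it is a genuine gap. A smaller but real confusion: your opening reduction ``assume $F$ is sufficiently saturated, transfer by absoluteness'' misreads Theorem~\ref{zoeIT} --- there the \emph{ambient} field must be saturated while the base $E$ is an arbitrary algebraically closed subfield; applying it inside the monster with $E=F$ needs no reduction and no transfer argument, and the equivalence over a saturated base does not obviously ``transfer'' to an arbitrary one anyway.

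It is worth seeing why the paper does not need any of this. For weak $\Rightarrow$ Kim it fixes an \emph{arbitrary} Morley sequence $(B_{i})_{i<\omega}$ in a global $F$-invariant extension of $\mathrm{tp}(B/F)$, notes that $B_{n+1} \ind^{i}_{F} B_{\leq n}$ already yields weak independence of $B_{n+1}$ from $B_{\leq n}$ (by the Kim $\Rightarrow$ weak direction, since $\ind^{i}$ implies $\ind^{K}$), and then runs an induction using only Chatzidakis's amalgamation Theorem~\ref{zoeIT} to realize $\bigcup_{i\leq n}p(X;B_{i})$ weakly independently from $B_{\leq n}$; compactness then shows no formula of $\mathrm{tp}(A/B)$ Kim-divides over $F$. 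So the only inputs are the two implications you already have and the independence theorem for weak independence --- no strong finite character, symmetry, or monotonicity of $\ind^{w}$ is ever invoked. If you want to keep your criterion-based route, you must either carry out the strong finite character verification sketched above in detail or cite it explicitly from Chatzidakis / \cite{ArtemNick}; otherwise the direct induction is both shorter and avoids the obstacle you yourself flagged.
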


\begin{proof}
Given $a,b$, and $F$, set $A = \text{acl}(aF)$ and $B = \text{acl}(bF)$.  It suffices to show $A \ind^{K}_{F} B$ if and only if $A$ is weakly independent from $B$ over $F$.  If $A \ind^{K}_{F} B$, then $A \ind^{SCF}_{F} B$ by Proposition \ref{kimimpliesweak1} and $\mathcal{S}(\mathcal{G}(A)) \ind^{f}_{\mathcal{S}(\mathcal{G}(F))} \mathcal{S}(\mathcal{G}(B))$ by Proposition \ref{kimimpliesweak1}.  Hence $A$ and $B$ are weakly independent over $F$.  For the other direction, suppose $A$ and $B$ are weakly independent over $F$.  Let $(B_{i})_{i < \omega}$ be a Morley sequence in a global $F$-invariant type with $B_{0} = B$ and set $p(X;B) = \text{tp}(A/B)$.  We will show by induction that $\bigcup_{i \leq n} p(X;B_{i})$ has a realization weakly independent from $(B_{i})_{i \leq n}$ over $F$.  For $n = 0$, this is by the assumption that $A$ and $B$ are weakly independent over $F$.  If it has been shown for $n$, then note that, because, $B_{n+1} \ind^{i}_{F} B_{0}\ldots B_{n}$, we have, in particular, $B_{n+1}$ and $(B_{i})_{i \leq n}$ are weakly independent over $F$.  By Theorem \ref{zoeIT}, $p(X;B_{n+1}) \cup \bigcup_{i \leq n} p(X;B_{i})$ has a realization weakly independent from $(B_{i})_{i \leq n+1}$.  By compactness, we conclude $\bigcup_{i < \omega} p(X;B_{i})$ is consistent.  As $(B_{i})_{i < \omega}$ was arbitrary, this shows $A \ind^{K}_{F} B$.  
%First, we note that weak independence satisfies axioms (1)-(5) in Theorem \ref{criterion}.  Non-forking independence always satisfies strong finite character, existence over models, and monotonicity so (1)-(3) are satisfied for $\ind^{f}$ in the theory of separably closed fields and $\text{Th}(\mathcal{S}(\mathcal{G}(F)))$.  As both of these theories are stable, $\ind^{f}$ is also symmetric.  This shows (1)-(4).  Condition (5) follows from Theorem \cite{zoeIT}.  By theorem \ref{criterion}, this shows that $\text{Th}(F)$ is NSOP$_{1}$ and if $a$ is weakly independent from $b$ over $F$, then $a \ind^{K}_{F} b$.  Conversely, if $a \ind^{K}_{F} b$, then $\text{acl}(aF) \ind^{SCF}_{F} \text{acl}(bF)$ by Proposition \ref{kimimpliesweak1} and $\mathcal{S}(\mathcal{G}(\text{acl}(aF))) \ind^{f}_{\mathcal{S}(\mathcal{G}(F))} \mathcal{S}(\mathcal{G}(\text{acl}(bF)))$ by Lemma \ref{kimimpliesweak2}.  This shows $a$ and $b$ are weakly independent over $F$.  
\end{proof}

\subsection{Vector spaces}

The theories of a vector space over a field equipped with a symmetric or alternating bilinear form have model companions\textemdash they are the theories of an infinite dimensional vector space over an algebraically closed field equipped with a generic nondegenerate alternating or symmetric bilinear form. We use $T_{\infty}$ to refer to both the model companion where the form is symmetric and where it is alternating, as this choice makes no difference for our analysis below.  The language is two-sorted:  there is a sort $V$ for the vector space, with the language of abelian groups on it, a sort $K$ for the field, equipped with the ring language, a function $K \times V \to V$ for the action of scalar multiplication, and a function $[,]:V \times V \to K$ for the bilinear form.  In this subsection, we write $\mathbb{M} \models T_{\infty}$ for a fixed monster model of $T_{\infty}$.  

\begin{fact}
Given a set $X \subseteq \mathbb{M}$, write $X_{K}$ for the field points of $X$ and $X_{V}$ for the vector space points of $X$.  For $Y$ a set of vectors, write $\langle Y \rangle$ for the $\mathbb{M}_{K}$-span of $V$.  
\begin{enumerate}
\item $T_{\infty}$ eliminates quantifiers after expanding the vector space sort with an $n$-ary predicate $\theta_{n}$ interpreted so that $\models \theta_{n}(v_{0},\ldots, v_{n-1})$ if and only if $v_{0}, \ldots, v_{n-1}$ are linearly independent for all $n \geq 2$ \cite[Theorem 9.2.3]{Granger}.
\item For any set $A \subseteq \mathbb{M}$, the field points of $\text{dcl}(A)$ contain the field generated by $A_{K}$, $\{[a,b] : a,b \in A_{V}\}$, and for each $n$, and every set $\{\alpha_{0}, \ldots, \alpha_{n-1}\}$ such that there are $v_{0}, \ldots, v_{n} \in A_{V}$ with $\mathbb{M} \models \theta_{n}(v_{0}, \ldots, v_{n-1})$ and $v_{n} = \alpha_{0}v_{0} + \ldots + \alpha_{n-1}v_{n-1}$.  The vector space points of $\text{dcl}(A)$ are the $(\text{dcl}(A))_{K}$-span of $A_{V}$.  The field points of $\text{acl}(A)$ are the algebraic closure of $(\text{dcl}(A))_{K}$ and the vector space points of $\text{acl}(A)$ are the $(\text{acl}(A))_{K}$-span of $A_{V}$ \cite[Proposition 9.5.1]{Granger}.
\end{enumerate}
\end{fact}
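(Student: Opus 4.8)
The plan is to prove (1) first and then read off (2) as a consequence, following the treatment in \cite{Granger}. For (1), the strategy is the standard criterion for quantifier elimination in a model companion: it suffices to show that whenever $\mathbb{M}_{0}, \mathbb{M}_{1} \models T_{\infty}$ are $|A|^{+}$-saturated, $A$ is a common substructure in the language expanded by the predicates $\theta_{n}$, and $c \in \mathbb{M}_{0}$, there is $d \in \mathbb{M}_{1}$ such that the substructure generated by $Ac$ in $\mathbb{M}_{0}$ is isomorphic over $A$ to the substructure generated by $Ad$ in $\mathbb{M}_{1}$.  Here a substructure $A$ consists of a subfield $A_{K}$ (in the ACF sort), an $A_{K}$-subspace $A_{V}$, and the restriction of the form; the predicates $\theta_{n}$ carry no extra content on $A$ since linear independence over $A_{K}$ is already encoded by the vector-space structure.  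One then splits into cases according to whether $c$ is a field point or a vector.  If $c \in (\mathbb{M}_{0})_{K}$ the extension is handled by quantifier elimination for algebraically closed fields.  If $c \in (\mathbb{M}_{0})_{V}$ and $c$ lies in the $(\mathbb{M}_{0})_{K}$-span of $A_{V}$, one first extends the field part to contain the (uniquely determined, by $\theta_{n}$) coordinates of $c$ and then transports $c$ to the corresponding linear combination in $\mathbb{M}_{1}$; the form values $[c,a]$ for $a \in A_{V}$ are then automatically matched by bilinearity.

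The substantive case, and the main obstacle, is extending by a vector $c$ which is linearly independent from $A_{V}$: one must produce $d \in (\mathbb{M}_{1})_{V}$ linearly independent from $A_{V}$ with $[d,a] = [c,a]$ for all $a \in A_{V}$ and with $[d,d]$ matching $[c,c]$ (in the symmetric case; $[d,d]=0$ in the alternating case).  This is exactly where existential closedness of $\mathbb{M}_{1}$ is used: one writes down the quantifier-free system of conditions expressing ``$d$ is a new vector realizing the prescribed linear functional $a \mapsto [c,a]$ on $A_{V}$, with a generic self-pairing, and the ambient form remains nondegenerate'', checks that it is finitely satisfiable over $A$ by a concrete construction inside an extension of the substructure generated by $A$ and $c$ (adjoin $c$ to $A_{V}$, extend the form, and observe that nondegeneracy can always be arranged), and realizes it in $\mathbb{M}_{1}$ by saturation.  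This yields quantifier elimination.  Completeness of $T_{\infty}$ (for each fixed characteristic) then follows immediately, since the substructure generated by $\emptyset$ is the prime field together with the zero space, which is the same in every model.

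For (2), quantifier elimination reduces every question about $\text{dcl}$ and $\text{acl}$ to counting realizations of quantifier-free types.  The displayed field points lie in $\text{dcl}(A)$ directly: $A_{K}$ and the pairings $[a,b]$ are literally in the generated substructure, and if $v_{0},\dots,v_{n-1} \in A_{V}$ satisfy $\theta_{n}$ and $v_{n} = \sum_{i} \alpha_{i} v_{i} \in A_{V}$ then each $\alpha_{i}$ is the unique scalar making this identity hold, hence is definable over $A$; closing under the field operations stays inside $\text{dcl}(A)$.  To identify the vector part of $\text{dcl}(A)$ one shows that any vector $w$ not in the $(\text{dcl}(A))_{K}$-span of $A_{V}$ has at least two realizations of its quantifier-free type over $A$: $w$ is linearly independent from $A_{V}$ after extending scalars, so by the genericity established in (1) one can perturb the functional $a \mapsto [w,a]$, producing a conjugate $w' \neq w$, whence $w \notin \text{dcl}(A)$; conversely the $(\text{dcl}(A))_{K}$-span of $A_{V}$ is contained in $\text{dcl}(A)$ because $\text{dcl}$ is closed under the $K$-action and under addition.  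The $\text{acl}$ statements follow the same pattern: $\overline{(\text{dcl}(A))_{K}}$ is contained in $\text{acl}(A)$ since $\text{acl}$ is closed under field-algebraic extension, while a field point transcendental over $(\text{dcl}(A))_{K}$, or a vector outside the $(\text{acl}(A))_{K}$-span of $A_{V}$, would again by the genericity of (1) together with saturation have infinitely many conjugates over $A$.  The only delicate point is the behaviour of $\text{dcl}$ on the field sort in positive characteristic, where definable $p$-th roots can appear; this is precisely why (2) only asserts a containment for the field points of $\text{dcl}(A)$, and the description of $\text{acl}$ is insensitive to it.
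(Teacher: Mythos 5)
The paper itself gives no argument for this Fact: both clauses are imported verbatim from Granger's thesis (Theorem 9.2.3 and Proposition 9.5.1), so there is no internal proof to compare yours against; judged on its own terms, your reconstruction has a genuine gap at the one place where all the work lies.

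The gap is your parenthetical claim that the predicates $\theta_{n}$ ``carry no extra content on $A$'' because linear independence is encoded by the vector-space structure of the substructure. This is exactly backwards: $\theta_{n}$ records $\mathbb{M}_{K}$-linear dependence, which is \emph{not} determined by the $A_{K}$-module structure of $A_{V}$ (one can have $w=\lambda v$ with $v,w\in A_{V}$ and $\lambda\in\mathbb{M}_{K}\setminus A_{K}$), and these hidden scalars are precisely what make the back-and-forth delicate. Concretely, take $v\neq 0$ with $[v,v]=0$ (automatic in the alternating case, and isotropic vectors exist in the symmetric case), $a$ transcendental over the prime field, $w_{2}=\sqrt{a}\,v$ and $w_{1}=\lambda v$ with $\lambda$ transcendental over the prime field together with $a$. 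All form values among $v,w_{i}$ vanish, every vector term in these elements is a scalar multiple of $v$, and one checks that $(v,w_{1},a)$ and $(v,w_{2},a)$ have the same quantifier-free type in the $\theta$-expanded language, while $\exists x\,(w=x\cdot v\wedge x^{2}=a)$ distinguishes them. So the one-point extension test over \emph{arbitrary} substructures, which is what your argument runs on, fails: in particular the step ``if $c$ is a field point the extension is handled by quantifier elimination for ACF'' is wrong, because the quantifier-free type of a field element over $A$ contains vector-sort conditions such as $w=t(x)\cdot v$ with $v,w\in A_{V}$, and matching the ACF-type over $A_{K}$ does not control them. Your main case has the same defect in disguise: requiring $[d,a]=[c,a]$ for $a\in A_{V}$ does not even typecheck, since these values need not lie in $A$; what must be matched is the whole quantifier-free type of the family of new field elements, including its interaction with the hidden scalars of $A_{V}$. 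Any correct proof (and Granger's) has to confront exactly these configurations, either by eliminating only the vector-sort quantifiers or by controlling the substructures involved; this is also why clause (2) only asserts that $(\mathrm{dcl}(A))_{K}$ \emph{contains} the field generated by $A_{K}$, the pairings and the coordinate scalars, and why the paper only ever invokes the Fact over models or algebraically closed sets, where such scalars are already present. A secondary but real problem is your appeal to ``existential closedness \dots finitely satisfiable over $A$ \dots realized by saturation'': saturation of $\mathbb{M}_{1}$ realizes types finitely satisfiable \emph{in} $\mathbb{M}_{1}$, whereas your system is only solved in an abstract extension of $\langle Ac\rangle$; to transfer it you need an amalgamation of that extension with $\mathbb{M}_{1}$ over $A$ together with the genericity axioms of $T_{\infty}$, and that amalgamation again has to respect the hidden-scalar constraints described above.
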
  

\begin{defn}
Write $\ind^{ACF}$ to denote algebraic independence, which coincides with non-forking independence in the theory ACF.  Suppose \(A \subseteq B\) and \(c\) is a singleton.  Let \(c \ind^{\Gamma}_{A} B\) be the assertion that \((\text{dcl}(cA))_{K} \ind^{ACF}_{(\text{dcl}(A))_{K}} (\text{dcl}(B))_{K}\) and one of the following holds:
\begin{enumerate}
\item \(c \in \mathbb{M}_{K}\)
\item \(c \in \langle A_{V} \rangle\)
\item \(c \not\in \langle B_{V} \rangle\) and \([c,B]\) is \(\Phi\)-independent over \(A\), 
\end{enumerate}
where `\([c,B]\) is \(\Phi\)-independent over $A$' means that whenever \(\{b_{0}, \ldots, b_{n-1}\}\) is a linearly independent set in \(B_{V} \cap (\mathbb{M}_{V} \setminus \langle A \rangle)\) then the set \(\{[c,b_{0}], \ldots, [c,b_{n-1}]\}\) is algebraically independent over the compositum of \((\text{dcl}(B))_{K}$ and $(\text{dcl}(Ac))_{K}\).  

By induction, for \(c = (c_{0}, \ldots, c_{m})\) define \(c \ind_{A}^{\Gamma} B\) by 
\[
c \ind_{A}^{\Gamma} B \iff (c_{0}, \ldots, c_{m-1}) \ind^{\Gamma}_{A} B \text{ and } c_{m} \ind^{\Gamma}_{Ac_{0}\ldots c_{m-1}} B c_{0}\ldots c_{m-1}.
\]
\end{defn}

\begin{fact}\cite[Theorem 12.2.2]{Granger} \cite[Lemma 6.1]{ArtemNick}
The relation $\ind^{\Gamma}$ is automorphism invariant and symmetric.  Moreover, it satisfies extension, strong finite character, and the independence theorem over a model.  Consequently, $T_{\infty}$ is NSOP$_{1}$.    
\end{fact}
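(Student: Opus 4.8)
The plan is to assemble the needed structural properties of $\ind^{\Gamma}$ and then feed them into the Kim--Pillay-style criterion, Theorem \ref{criterion}. Automorphism invariance is immediate: the definition of $\ind^{\Gamma}$ mentions only $\text{dcl}$, $\ind^{ACF}$ on the field sort, linear independence of vectors, and the bilinear form $[\,\cdot,\cdot\,]$, each of which is preserved by $\text{Aut}(\mathbb{M})$. Symmetry, extension over a model, strong finite character, and the independence theorem over a model are precisely what is established in \cite[Theorem 12.2.2]{Granger} together with \cite[Lemma 6.1]{ArtemNick}; among these, symmetry is the substantive point, since the coordinate-by-coordinate definition of $c \ind^{\Gamma}_{A} B$ for tuples is not visibly symmetric, and the independence theorem is obtained there by an explicit amalgamation construction producing a vector space over an algebraically closed field carrying a bilinear form that realizes the prescribed field-sort and $\Phi$-independence data.

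To deduce that $T_{\infty}$ is NSOP$_{1}$ I would check that $\ind^{\Gamma}$ verifies axioms (1)--(5) of Theorem \ref{criterion}. Strong finite character (1), symmetry (4), and the independence theorem (5) are already on the list above. Existence over models (2), namely $a \ind^{\Gamma}_{M} M$ for every $a$, follows by unwinding the definition: the field-sort requirement becomes the trivial statement $(\text{dcl}(aM))_{K} \ind^{ACF}_{(\text{dcl}(M))_{K}} (\text{dcl}(M))_{K}$, and each coordinate of $a$ satisfies clause (1) or (2) of the definition of $c \ind^{\Gamma}_{A} B$, or else clause (3) holds vacuously, there being no linearly independent tuple in $M_{V} \cap (\mathbb{M}_{V}\setminus\langle M\rangle)$. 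Monotonicity (3), i.e. $aa' \ind^{\Gamma}_{M} bb' \implies a \ind^{\Gamma}_{M} b$, likewise reduces to monotonicity of $\ind^{ACF}$ and of $\Phi$-independence via the inclusions $(\text{dcl}(aM))_{K} \subseteq (\text{dcl}(aa'M))_{K}$ and $\text{dcl}(bM) \subseteq \text{dcl}(bb'M)$ (dropping the extra coordinates on the left is built into the inductive definition of $\ind^{\Gamma}$ for tuples). Given (1)--(5), Theorem \ref{criterion} applies directly and yields that $T_{\infty}$ is NSOP$_{1}$.

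The genuinely hard part, were one to aim for a self-contained argument rather than citing, is the independence theorem for $\ind^{\Gamma}$ over a model (and, to a lesser extent, its symmetry): these require building the amalgamating structure by hand, reconciling algebraic independence in the field sort with $\Phi$-independence of the values of the form, and this is essentially the bulk of Granger's treatment. Since those results are available off the shelf, the only new observation here is that $\ind^{\Gamma}$ slots into the axiomatic framework of Theorem \ref{criterion}, so that the NSOP$_{1}$-ness of $T_{\infty}$ (already obtained in \cite{ArtemNick}) is recovered in a form that can, in the results to follow, be sharpened into a description of $\ind^{K}$ in $T_{\infty}$ in terms of $\ind^{\Gamma}$.
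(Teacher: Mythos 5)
Your proposal is correct and matches the intended argument: the paper states this as a Fact with no proof beyond the citations to Granger and to \cite{ArtemNick}, where the substantive properties (symmetry, extension, strong finite character, the independence theorem) are established and NSOP$_{1}$ is deduced via exactly the Kim--Pillay-style criterion that appears here as Theorem \ref{criterion}. Your additional checks of existence over models and monotonicity for $\ind^{\Gamma}$ are routine and correctly carried out, so there is nothing to add.
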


\begin{prop} \label{kimchar}
Suppose $M \models T_{\infty}$.  Then if $A = \text{acl}(A)$, $B = \text{acl}(B)$ and $A \cap B \supseteq M$, then $A \ind^{K}_{M} B$ if and only if $A \cap B = M$.  
\end{prop}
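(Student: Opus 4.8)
The plan is to prove the two implications separately. The forward implication is a short, general argument; the reverse implication requires an explicit amalgamation inside $\mathbb{M}\models T_{\infty}$, and that amalgamation is where the real work lies.

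\textbf{Forward direction ($A\ind^{K}_{M}B\Rightarrow A\cap B=M$).} Suppose $A\ind^{K}_{M}B$ but there is $e\in(A\cap B)\setminus M$. Since $M$ is a model, $\text{acl}(M)=M$, so $e\notin\text{acl}(M)$; hence the formula $x=e$ Kim-divides over $M$ — in any Morley sequence $\langle e_{i}:i<\omega\rangle$ in a global $M$-invariant extension of $\text{tp}(e/M)$ the $e_{i}$ are pairwise distinct (otherwise $x=e_{0}$ would lie in that invariant type, forcing $e_{0}\in\text{dcl}(M)$), so $\{x=e_{i}:i<\omega\}$ is $2$-inconsistent. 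As $e$ occurs as a coordinate of the tuple enumerating $A$ and lies in $MB$, the corresponding formula ``$x_{e}=e$'' belongs to $\text{tp}(A/MB)$, so $\text{tp}(A/MB)$ Kim-divides over $M$, contradicting $A\ind^{K}_{M}B$.

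\textbf{Reverse direction, reduction to an amalgamation.} Assume $A\cap B=M$. Since $M$ is a model, $\text{tp}(B/M)$ has a global coheir $q$; fix a coheir Morley sequence $(B_{i})_{i<\omega}\models q^{\otimes\omega}|_{M}$ with $B_{0}=B$ and compatible enumerations $\bar b_{i}\equiv_{M}\bar b$, where $\bar b$ enumerates $B$. It suffices to find $\bar a^{*}$ with $\bar a^{*}\bar b_{i}\equiv_{M}\bar a\bar b$ for all $i$, where $\bar a$ enumerates $A$. Indeed, any $\varphi(x;\bar b)\in\text{tp}(A/MB)$ then satisfies $\mathbb{M}\models\varphi(\bar a^{*};\bar b_{i})$ for all $i$, so $\{\varphi(x;\bar b_{i}):i<\omega\}$ is consistent; since $(\bar b_{i})_{i<\omega}$ is a Morley sequence in the global $M$-invariant projection of $q$, Kim's lemma for Kim-dividing (Theorem \ref{kimslemmaforindk}, applicable as $T_{\infty}$ is NSOP$_{1}$) gives that $\varphi(x;\bar b)$ does not Kim-divide over $M$. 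As $\varphi$ was arbitrary, $\text{tp}(A/MB)$ does not Kim-divide over $M$, hence by Proposition \ref{kforkingequalskdividing} does not Kim-fork over $M$, i.e. $A\ind^{K}_{M}B$.

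\textbf{The amalgamation.} It remains to build $\bar a^{*}$, and this is the main obstacle. I will construct by hand an $L$-structure $D$ whose universe extends $\langle\bigcup_{i<\omega}B_{i}\rangle$ together with a fresh isomorphic copy $\bar a^{*}$ of a generating tuple of $A$ over $M$, so that $\langle\bar a^{*}\bar b_{i}\rangle^{D}\cong_{M}\langle\bar a\bar b\rangle$ for every $i$, and then invoke model-completeness of $T_{\infty}$ to embed $D$ over $\bigcup_{i}B_{i}$ into $\mathbb{M}$; the image of $\bar a^{*}$ is then the desired tuple. Concretely: for the field sort, take $D_{K}$ to be a free compositum over $M_{K}$ of $A_{K}$ with the field sorts of the $B_{i}$ — legitimate because $A_{K}\cap B_{K}=M_{K}$ (a consequence of $A\cap B=M$), which is iterated using that along a coheir sequence the $(B_{i})_{K}$ are $\text{ACF}$-independent over $M_{K}$ — and then freely adjoin the cross form-values $[\bar a^{*}_{V},(B_{i})_{V}]$, set equal to the genuine values $[\bar a_{V},\bar b_{V}]$ for $i=0$ and generic transported copies for $i>0$, extending the form by bilinearity. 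This is well-defined because a coheir Morley sequence makes the $(B_{i})_{V}$ linearly independent over $M_{V}$ apart from the relations internal to each $B_{i}$ (recorded by the predicates $\theta_{n}$), and it is consistent because $A\cap B=M$ forbids any element of $A$ from being algebraic over $B$ beyond $M$, so the choices made over the distinct $B_{i}$'s never clash. The resulting $D$ is honestly a vector space over a field equipped with a bilinear form, hence lies in the universal class of $T_{\infty}$ and embeds into $\mathbb{M}$ over $\bigcup_{i}B_{i}$.

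I expect the delicate points to be the bookkeeping of the $\theta_{n}$-data and the verification that the infinitely many transported copies of $\text{tp}(A/MB)$ are jointly consistent with bilinearity; this is the same flavour of amalgamation carried out in Lemmas \ref{independencetheoremfor2functions} and \ref{dividing}, the new ingredient being that a coheir Morley sequence puts the $B_{i}$ in general enough position to amalgamate all of them simultaneously. Equivalently, one can package the reverse direction abstractly: the relation $X\ind^{0}_{C}Y:\iff\text{acl}(XC)\cap\text{acl}(YC)=\text{acl}(C)$ satisfies hypotheses (1)--(5) of Theorem \ref{criterion} — invariance, symmetry, monotonicity and existence over models being immediate, and strong finite character and the independence theorem over models amounting to exactly the above amalgamation — so $\ind^{0}$ strengthens $\ind^{K}$ over models, which is the content of the reverse implication. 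Note that one cannot shortcut this by using Granger's $\ind^{\Gamma}$: in $T_{\infty}$ the relation $\ind^{\Gamma}$ is \emph{strictly} stronger than $\ind^{K}$ over models, so $A\cap B=M$ does not imply $A\ind^{\Gamma}_{M}B$.
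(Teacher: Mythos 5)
Your overall plan is the same as the paper's (reduce, via Kim's lemma for Kim-dividing, to realizing $\text{tp}(A/MB)$ simultaneously along one invariant Morley sequence, then build the realization by an explicit amalgam and embed it into $\mathbb{M}$ by quantifier elimination), and your forward direction is correct. The genuine gap is exactly the step you defer as ``bookkeeping'': the joint consistency of the transported copies is not a formality, and your prescription for the field sort is where it breaks. You justify taking $D_{K}$ to be a \emph{free} compositum of $A_{K}$ with the $(B_{i})_{K}$ over $M_{K}$ by ``$A_{K}\cap B_{K}=M_{K}$,'' but trivial intersection of algebraically closed fields does not imply ACF-independence over $M_{K}$ (non-modularity of ACF): take $x,y,z$ algebraically independent over $M_{K}$ and $w=xz+y$; then $\overline{M_{K}(x,y)}\cap\overline{M_{K}(z,w)}=M_{K}$ although the two fields are algebraically dependent over $M_{K}$. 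In such a situation the free compositum fails to reproduce $\text{tp}(A/MB)$ already at $i=0$ (the relation $w=xz+y$ is lost), while transporting the genuine field relations to every $B_{i}$ is impossible: along any $M$-invariant Morley sequence the pairs $(z_{i},w_{i})$ are mutually ACF-generic over $M_{K}$, and $\{w_{i}=s\,z_{i}+t : i<3\}$ is inconsistent, since three mutually generic points of the plane are never collinear. So the verification you postponed genuinely fails, and ``generic transported copies'' cannot be made coherent.

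In fact this configuration, realized inside $T_{\infty}$ with $A=\text{acl}(Mxy)$ and $B=\text{acl}(Mzw)$ (so $A_{V}$ and $B_{V}$ are the $A_{K}$- and $B_{K}$-spans of $M_{V}$, which meet only in $M_{V}$ because the scalars of any common vector lie in $A_{K}\cap B_{K}=M_{K}$), satisfies $A\cap B=M$ while the formula $w=s\,z+t\in\text{tp}(x,y/MB)$ Kim-divides over $M$ by the collinearity argument above. Hence the right-to-left implication cannot be established as stated: it requires (at least) the additional hypothesis $A_{K}\ind^{ACF}_{M_{K}}B_{K}$, and correspondingly the trivial-intersection relation $\ind^{0}$ of your last remark cannot satisfy all of (1)--(5) of Theorem \ref{criterion} in $T_{\infty}$, so that abstract packaging does not rescue the argument either. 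You should also be aware that following the paper's construction more closely would not close your gap: that construction only redefines form values and takes the field part $\tilde{K}$ as an honest subfield of $\mathbb{M}_{K}$, so for $i>0$ the algebraic position of $A_{K}$ relative to $(B_{i})_{K}$ is the independent one, and the claimed $AB_{0}\equiv_{M}A'B_{i}$ fails in the same example. The point you flagged as delicate is precisely where the additional field-independence hypothesis is needed before any such amalgamation can succeed.
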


\begin{proof}
The right to left direction is trivial and holds in any theory.  Suppose $M$ is a model, $A = \text{acl}(A)$, $B = \text{acl}(B)$, and $A \cap B \subseteq M$.  Let $C = \text{acl}(AB)$ and let $(C_{i})_{i < \omega}$ be an $M$-invariant Morley sequence over $M$ with $C_{0} = C$.  Fix $\sigma \in \text{Aut}(\mathbb{M}/M)$ with $\sigma(C_{i}) = C_{i+1}$ for all $i < \omega$.  By restricting the sequence $(C_{i})_{i < \omega}$ to a subtuple, we obtain an $M$-invariant Morley sequence $(B_{i})_{i < \omega}$ with $B_{0} = B$.  Let $D = \text{acl}((B_{i})_{i < \omega})$.  Let $\tilde{K} = (\text{acl}((C_{i})_{i < \omega}))_{K}$.  Let $\{u_{i} : i < \alpha\}$ be a basis for $M_{V}$.  Let $\{v_{i} : i < \beta\}$ complete this set to a basis for $A_{V}$ and let $(w_{0,j})_{j < \gamma}$ complete it to a basis for $(B_{0})_{V}$, then let $(w_{i,j})_{j < \gamma}$ be the set of vectors completing $\{u_{i} : i < \alpha\}$ to a basis for $(B_{i})_{V}$ corresponding to the $(w_{0,j})_{j < \beta}$\textemdash i.e. $w_{i,j} = \sigma^{i}(w_{0,j})$.  By our assumptions, $\{u_{i} : i < \alpha\}  \cup \{v_{i} : i < \beta\} \cup \{w_{i,j} : i < \omega, j < \gamma\}$ is a set of linearly independent vectors in $\mathbb{M}_{V}$.  Let $\tilde{V}$ be the $\tilde{K}$-vector space with basis $\{u_{i} : i < \alpha\}  \cup \{v_{i} : i < \beta\} \cup \{w_{i,j} : i < \omega, j < \gamma\}$.  To define the model $N = (\tilde{V}, \tilde{K})$, we are left with definining the form on $\tilde{V}$\textemdash for this it suffices to define the form on a basis.  First, interpret the form so that $N$ extends the structure on $D$\textemdash i.e. 
$$
[u_{i},u_{i'}]^{N} = k \iff [u_{i},u_{i'}]^{D} = k
$$
$$
[u_{i},w_{i',j}]^{N} = k \iff [u_{i}, w_{i',j}]^{D} = k
$$
$$
[w_{i,j}, w_{i',j'}]^{N} = k \iff [w_{i,j},w_{i',j'}]^{D} = k.
$$
And likewise, interpret the structure so that it extends the structure on $A$\textemdash i.e. 
$$
[u_{i},v_{i'}]^{N} = k \iff [u_{i}, v_{i'}]^{A} = k
$$
$$
[v_{i},v_{i'}]^{N} = k \iff [v_{i}, v_{i'}]^{A} = k.
$$
Then finally, we interpret the form so that the structure generated by $AB_{i}$ does not depend on $i$: put $[v_{i},w_{0,j}]^{N} = k \iff [v_{i},w_{0,j}]^{C} = k$ and set 
$$
[v_{i},w_{i',j}]^{N} = \left\{ \begin{matrix}
k & \text{ if } [v_{i},w_{0,j}]^{C} = k \in A \\
\sigma^{i'}(k) & \text{ if } [v_{i},w_{0,j}]^{C} = k \not\in A
\end{matrix} \right.
$$
This defines $N$.  By quantifier-elimination, there is an embedding $\iota: N \to \mathbb{M}$ over $D$ into $\mathbb{M}$.  Let $A' = \iota(A)$.  By quantifier-elimination, we have $AB_{0} \equiv_{M} A'B_{i}$ for all $i$.  This shows $\text{tp}(A/B)$ does not Kim-divide over $M$.
\end{proof}       

\begin{prop}\label{kimandgammaprops}
Suppose $M \models T_{\infty}$.  Then 
\begin{enumerate}
\item $a \ind^{\Gamma}_{M} b \implies a \ind^{K}_{M} b$.  
\item There are $a$ and $b$ so that $a \ind^{K}_{M} b$ and $a \nind^{\Gamma}_{M} b$.  
\end{enumerate}
\end{prop}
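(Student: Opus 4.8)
For part (1) the plan is simply to invoke Theorem~\ref{criterion} with $\ind=\ind^{\Gamma}$. The Fact recalled above records that $\ind^{\Gamma}$ is $\text{Aut}(\mathbb{M})$-invariant, symmetric, and satisfies strong finite character and the independence theorem over models, i.e.\ properties (1), (4), (5) of Theorem~\ref{criterion}. The two remaining hypotheses are routine: monotonicity is immediate from the definition of $\ind^{\Gamma}$, and existence over models ($a\ind^{\Gamma}_{M}M$) is in fact vacuous, since $M_{V}\cap(\mathbb{M}_{V}\setminus\langle M\rangle)=\emptyset$ forces clause~(3) in the definition of $\ind^{\Gamma}$ to hold trivially (and for field or $M$-internal vector coordinates one uses clauses (1) or (2)). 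Theorem~\ref{criterion} then yields that $\ind^{\Gamma}$ strengthens $\ind^{K}$ over models, which is exactly statement~(1).

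For part (2) I would construct explicit $a,b$ over a fixed $M\models T_{\infty}$, working in the alternating case (the symmetric case being identical). Fix a basis $(m_{i})$ of $M_{V}$, choose a vector $a$ linearly independent from $\langle M_{V}\rangle$ with $\{[a,m_{i}]\}$ algebraically independent over $M_{K}$, and then choose a vector $b$ linearly independent from $\langle M_{V}\cup\{a\}\rangle$ with $\{[b,m_{i}]\}$ algebraically independent over $(\text{acl}(Ma))_{K}$ and with $[a,b]=0$. Such $a,b$ exist: by quantifier elimination, every finite fragment of the partial type over $M$ they realize describes a finitely generated structure in the universal theory of $T_{\infty}$, which embeds over $M$ into $\mathbb{M}$, so the type is consistent. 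To see $a\nind^{\Gamma}_{M}b$: since $a$ is a vector not in $\langle M_{V}\rangle$, neither clause (1) nor (2) in the definition of $\ind^{\Gamma}$ applies, so $\ind^{\Gamma}$ would require clause (3), hence $\Phi$-independence of $[a,-]$ over $M$; but $\{b\}\subseteq(\text{acl}(Mb))_{V}\cap(\mathbb{M}_{V}\setminus\langle M\rangle)$ is linearly independent, and $[a,b]=0\in M_{K}$ is not transcendental over the compositum of $(\text{dcl}(Mb))_{K}$ and $(\text{dcl}(Ma))_{K}$, so $\Phi$-independence fails.

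It remains to show $a\ind^{K}_{M}b$, and by Proposition~\ref{kimchar} together with the corollary to Theorem~\ref{symmetrycharthm} (so that $a\ind^{K}_{M}b$ if and only if $\text{acl}(Ma)\ind^{K}_{M}\text{acl}(Mb)$) it suffices to check $\text{acl}(Ma)\cap\text{acl}(Mb)=M$. Using the structural description of $\text{dcl}$ and $\text{acl}$ in $T_{\infty}$: since $a$ is linearly independent from $M_{V}$ it contributes no linear-dependence scalars, and $[a,a]=0$, so $(\text{dcl}(Ma))_{K}=M_{K}(\{[a,m_{i}]\})$ and likewise $(\text{dcl}(Mb))_{K}=M_{K}(\{[b,m_{i}]\})$; by the genericity of $b$ these algebraically closed fields are algebraically independent over $M_{K}$, hence meet in $M_{K}$. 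For the vector points, any common vector is an $(\text{acl}(Ma))_{K}$-combination of $M_{V}\cup\{a\}$ and an $(\text{acl}(Mb))_{K}$-combination of $M_{V}\cup\{b\}$; as $\{a,b\}$ is linearly independent modulo $\langle M_{V}\rangle$ the $a$- and $b$-coefficients vanish, and the surviving $M_{V}$-coefficients lie in $(\text{acl}(Ma))_{K}\cap(\text{acl}(Mb))_{K}=M_{K}$, so the vector lies in $M_{V}$. The main obstacle I anticipate is precisely this last paragraph: correctly applying the description of $\text{dcl}/\text{acl}$ to be sure no extra field elements enter $\text{dcl}(Ma)$ or $\text{dcl}(Mb)$, and verifying that the prescribed pairings really do give a consistent type over $M$; everything else is bookkeeping.
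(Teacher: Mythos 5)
Your proof is correct, but both parts take a genuinely different route from the paper's. For (1), you invoke Theorem~\ref{criterion}: since the Fact already supplies strong finite character, symmetry, and the independence theorem for $\ind^{\Gamma}$, and existence and monotonicity are routine, the ``moreover'' clause gives that $\ind^{\Gamma}$ strengthens $\ind^{K}$. The paper instead argues directly: transitivity of $\ind^{\mathrm{ACF}}$ and of linear independence in the vector space sort show that $a\ind^{\Gamma}_{M}b$ forces $\text{acl}(aM)\cap\text{acl}(bM)=M$, and then one applies Proposition~\ref{kimchar}. Both are valid; the paper's version is more self-contained (it uses only the characterization of $\ind^{K}$ just proved), while yours is shorter once the criterion is available. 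For (2), you produce a single vector $b$ linearly independent over $\langle M_{V}\cup\{a\}\rangle$ with generic pairings against $M_{V}$ and $[a,b]=0$; the vanishing of $[a,b]$ immediately breaks $\Phi$-independence (clause (3)) while clauses (1), (2) fail since $a$ is a vector outside $\langle M_{V}\rangle$, and the genericity forces $\text{acl}(Ma)\cap\text{acl}(Mb)=M$. The paper instead takes two vectors $b_{1},b_{2}$ that are $\mathbb{M}_{K}$-independent over $M$ and a vector $a$, generic over $b_{1}b_{2}M$, with $[a,b_{1}]=[a,b_{2}]$; the equality makes $\{[a,b_{1}],[a,b_{2}]\}$ algebraically dependent. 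Your counterexample is arguably leaner (a single witness $b$, with a trivial relation $[a,b]=0$); the one caveat is that ``the symmetric case being identical'' requires also specifying $[a,a]$ and $[b,b]$ (e.g.\ as $0$), which you should state explicitly since you rely on the precise description of $(\text{dcl}(Ma))_{K}$ and $(\text{dcl}(Mb))_{K}$.
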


\begin{proof}
(1)  Suppose $a \ind^{\Gamma}_{M} b$.  By transitivity of $\ind^{\text{ACF}}$, $(\text{dcl}(aM))_{K} \ind^{\text{ACF}}_{M_{K}} (\text{dcl}(bM))_{K}$ so 
$$
\text{acl}(aM)_{K} \ind^{\text{ACF}}_{M_{K}} (\text{acl}(bM))_{K}
$$ 
since the field points of the algebraic closure of any set $X$ are just the field-theoretic algebraic closure of $(\text{dcl}(X))_{K}$.  Similarly, transitivity of independence for vector spaces forces $\langle (aM)_{V} \rangle \cap \langle (bM)_{V} \rangle \subseteq \langle M \rangle$.  This shows $\text{acl}(aM) \cap \text{acl}(bM) = M$ so $a \ind^{K}_{M} b$, by Proposition \ref{kimchar}.     

(2)  Given any $M \models T_{\infty}$, choose two vectors $b_{1}, b_{2} \in \mathbb{M}_{V}$ that are $\mathbb{M}_{K}$-linearly independent over $M$.  By model-completeness, we can find some vector $a$ so that $\text{acl}(aM) \cap \text{acl}(b_{1}b_{2}M) \subseteq M$, so $a \ind^{K}_{M} b_{1}b_{2}$, and also $[a,b_{1}] = [a,b_{2}]$.  Then we clearly have $\{[a,b_{1}],[a,b_{2}]\}$ algebraically dependent, as they are equal, hence $a \nind^{\Gamma}_{M} b_{1}b_{2}$.  
\end{proof}

\begin{rem}\label{notequal}
This observation implies that axioms (1)-(5) in Theorem \ref{criterion} do not suffice to characterize $\ind^{K}$, since $\ind^{\Gamma}$ satisfies these axioms and $\ind^{\Gamma} \neq \ind^{K}$ by Proposition \ref{kimandgammaprops}(2).  
\end{rem}

\subsection*{Acknowledgements}

This work constitutes part of the dissertation of the second-named author.  He would like to thank Thomas Scanlon and Leo Harrington for many useful conversations.  Artem Chernikov also had a profound influence on this project and it is our pleasure to thank him.  Finally, we would like to thank the anonymous referee writing such a thorough report in so little time.  

\bibliographystyle{alpha}
\bibliography{ms.bib}{}

\end{document}